\documentclass[aos]{imsart}
\usepackage{amsmath}
\usepackage{amsfonts}
\usepackage{amsthm}
\usepackage{hyperref}
\usepackage{bbm}
\usepackage{scrextend}
\usepackage{amssymb}

\usepackage{comment}

\usepackage{graphicx}
\graphicspath{ {./images/} }
\setlength{\parindent}{0pt}

\usepackage{xcolor}
\usepackage{dsfont}
\usepackage{natbib}
\usepackage{graphicx}
\usepackage{float}
\usepackage{multibib}

\usepackage{todonotes}
\usepackage{dsfont}
\usepackage{mathrsfs}
\usepackage{bbm}
\usepackage{esint}
\usepackage{cancel}

\newcites{SM}{References}

\setlength{\parindent}{0pt}

\newcommand{\ind}{1\hspace{-0,9ex}1}



\numberwithin{equation}{section}
\newtheorem{theorem}{Theorem}[section]
\newtheorem{lemma}[theorem]{Lemma}

\newtheorem{proposition}[theorem]{Proposition}
\newtheorem{corollary}[theorem]{Corollary}

\newtheorem{condition}[theorem]{Condition}

\newtheorem{example}[theorem]{Example}
\newtheorem{remark}[theorem]{Remark}
\newtheorem{alg}[theorem]{Algorithm}

\usepackage[utf8]{inputenc}

\DeclareMathOperator{\Var}{Var}
\DeclareMathOperator{\E}{E}

\DeclareMathOperator{\tr}{tr}

\DeclareMathOperator{\dd}{d}
\DeclareMathOperator{\diag}{diag}
\DeclareMathOperator{\rank}{rank}

\def\C{\mathbb{C}}

\def\E{\mathbb{E}}

\def\N{\mathbb{N}}
\def\Q{\mathbb{Q}}
\def\R{\mathbb{R}}
\def\S{\mathbb{S}}
\def\Z{\mathbb{Z}}

\def\FF{\mathcal{F}}

\def\LL{\mathcal{L}}

\def\NN{\mathcal{N}}

\def\CC{\mathcal{C}}

\def\N{\mathbb{N}}
\def\P{\mathbb{P}}

\def\X{\mathfrak{X}}

\date{}

\begin{document}
\begin{frontmatter}
		\title{Computationally tractable nonparametric bootstrap of high-dimensional sample covariance matrices
 \thanksref{T1}}
 \runtitle{Bootstrapping high-dimensional sample covariance matrices}
	\thankstext{T1}{Supported by the DFG Research Unit 5381 {\it Mathematical
Statistics in the Information Age}, project number 460867398, DE 502/30-1  and RO 3766/8-1}

	\begin{aug}
		\author[A]{\fnms{Holger}~\snm{Dette}\ead[label=e2]{holger.dette@rub.de}}
		\and
		\author[B]{\fnms{Angelika} ~\snm{Rohde}\ead[label=e3]{angelika.rohde@stochastik.uni-freiburg.de}}
		\address[A]{Ruhr-Universität Bochum\printead[presep={,\ }]{e2}}
		\address[B]{Albert-Ludwigs-Universit\"at Freiburg\printead[presep={,\ }]{e3}}
	\end{aug}
	
	\begin{abstract}
We introduce a new ``$(m,mp/n)$ out of $(n,p)$'' sampling-with-replace\-ment bootstrap for  eigenvalue statistics of high-dimensional sample covariance matrices based on $n$ independent $p$-dimensional random vectors. { As it only uses $q=\lfloor mp/n\rfloor $ coordinates of the observations in a subsample of size $m \ll n $  from the original data, it is computationally tractable for  large scale data.} 
In the high-dimensional scenario $p/n\rightarrow c\in (0,\infty)$, this fully nonparametric bootstrap is shown to consistently reproduce the empirical spectral measure if $m/n\rightarrow 0$. If $m^2/n\rightarrow 0$, it approximates correctly the distribution of linear spectral statistics. The crucial component is a suitably defined Representative Subpopulation Condition which is shown to be verified in a large variety of situations. Our proofs are conducted under minimal moment requirements and incorporate  delicate results on non-centered quadratic forms, combinatorial trace moments estimates as well as a conditional bootstrap martingale CLT which may be of independent interest.
	\end{abstract} 
	
	\begin{keyword}[class=MSC]
		\kwd{62G09, 60F05}		
	\end{keyword}
	
	\begin{keyword}
		\kwd{High-dimensional sample covariance matrix}
		\kwd{subsampling}
		\kwd{representative subpopulation}
		\kwd{$(m,mp/n)$ out of $(n,p)$ bootstrap}
	\end{keyword}
	
\end{frontmatter}
	

\section{Introduction}  \label{sec1}
  \def\theequation{1.\arabic{equation}}	
	\setcounter{equation}{0}
 
Let $Y_1,\dots, Y_n$ be independent, identically distributed $p$-dimensional centered random vectors with covariance matrix $\Sigma_n$ and corresponding sample covariance matrix
\begin{equation} \label{1.0}
\widehat \Sigma_n=\frac{1}{n}\sum_{i=1}^nY_iY_i^\top.
\end{equation}
We denote by   $\hat{\lambda}_{1},\dots, \hat{\lambda}_{p}$  its eigenvalues, which are central objects in  Principal Component Analysis (PCA).
Classical  text books \citep[see, for example,][]{anderson2003} provide asymptotic distributional results for {\color{blue}eigenvalue statistics} 
 of the sample covariance matrix if the dimension   $p$ is fixed and the sample size converges to infinity. These
 limit distributions  are non-trivial, even in the Gaussian case,
and depend in an intricate way on the unknown spectral distribution of population covariance matrix.  In such situations
 bootstrap is an interesting alternative as it
 often has the ability to automatically address these difficulties by estimating unknown quantities  by resampling.
 If the dimension is fixed and the sample size converges to infinity, 
  the distribution of the eigenvalues of the sample covariance matrix can be consistently estimated
  by (nonparametric) bootstrap, where  the resampling procedure has to be adapted, if there exist eigenvalues with multiplicity
  larger than one
  \citep[see][among others]{beran1985,duembgen1993,Halletal2003}.

On the other hand, in big data analysis the sample size $n$ and the dimension $p$ are often  large and  distributional  approximations derived under the fixed $p$
scenario are usually not very accurate \citep[see][]{johnstone2006}.  In particular it is well known that if $p=p(n)$ increases proportionately with $n$,
the  eigenvalues  of  the  sample  covariance  matrix  are  more  dispersed  than  their population counterparts.
The limiting spectral  distribution (LSD) is described  in terms  of its Stieltjes transform as the solution of  the Mar\u{c}enko-Pastur (MP) equation,
which relates the asymptotic behavior of the sample  to the population eigenvalues.
\citep{Marcenko1967,silverstein1995}.  If $\min_{p,n\to \infty} p/n = c < 1 $ and
$\Sigma_n= I_{p}$, where $I_{p}$ denotes the $p\times p$ identity matrix, the 
 limiting spectral distribution can be determined explicitly and is supported on the interval$[(1-\sqrt{c})^{2}, (1+ \sqrt{c})^{2}]$.
Similar results can be derived in the case $c\geq 1$, $\Sigma_n= I_{p}$.
 However,  for a general population covariance  matrix an explicit form, even for its support,  is difficult to obtain because the MP equation 
 is very hard to solve \citep[see][for some work in this direction]{elkaroui2008}.

Our goal is to bootstrap linear spectral statistics of very high-dimensional sample covariance matrices in a computationally tractable way.  
However, resampling methods for linear spectral statistics are  computationally expensive in large-scale problems as each bootstrap replication requires computation of a $p \times p$
 covariance matrix from  $n$ observations and its eigenvalues resulting in $O(np^2) + O(p^3)$ computations and therefore, $O\big ( (np^2+ p^3)B \big ) $ computations in total, where $B$ is the number of bootstrap replications. Moreover, 
results of  \cite{karpur2016,karpur2019} indicate that  the  classical  bootstrap
for the LSD 
is untrustworthy when the problem is genuinely high-dimensional. More precisely, in Theorem S2.2 in the supplementary material of their paper, \cite{karpur2016} showed that the limiting spectral distribution (LSD) of the bootstrapped covariance matrix is completely different from that of $\widehat \Sigma_n$. 
To  support these statements we show in  the left part of Figure \ref{fig1}  the (simulated) density of the limit distribution of the empirical 
spectral measure  of a  sample covariance matrix  and a histogram of the eigenvalues of the sample covariance matrix from  a bootstrap sample 
drawn randomly  with replacement.   The dimension is $p=40000$, the  sample size is $n=80000$ and the population covariance matrix is a diagonal matrix with $20000$ diagonal elements equal to $2$ and the remaining equal to $1$.  One can clearly see that the ``classical'' $n$ out of $n$ bootstrap does not yield a reasonable approximation of the empirical spectral distribution. {As the LSD occurs explicitly in the limiting distribution of linear spectral statistics, there is no hope that the $n$ out of $n$ bootstrap consistently approaches the correct distribution for linear spectral statistics if it fails for approximating the LSD. }

\begin{figure}[tbp]
	\centering 
	\includegraphics[width=70mm,height=70mm]{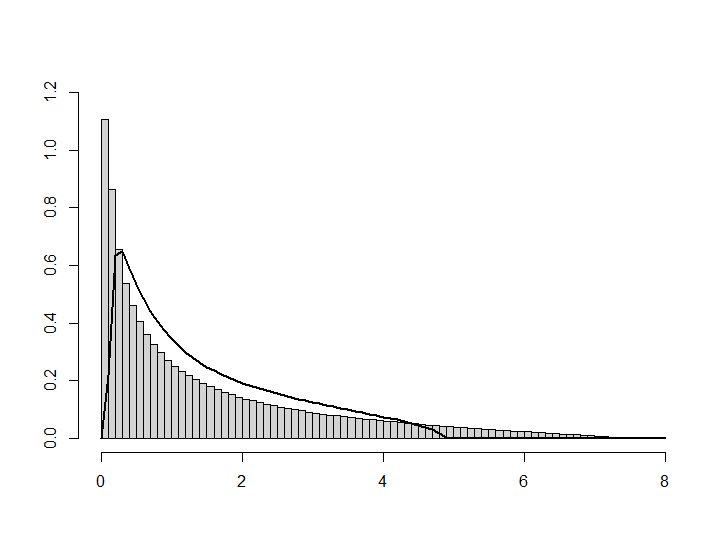}~
 	\includegraphics[width=70mm,height=70mm]{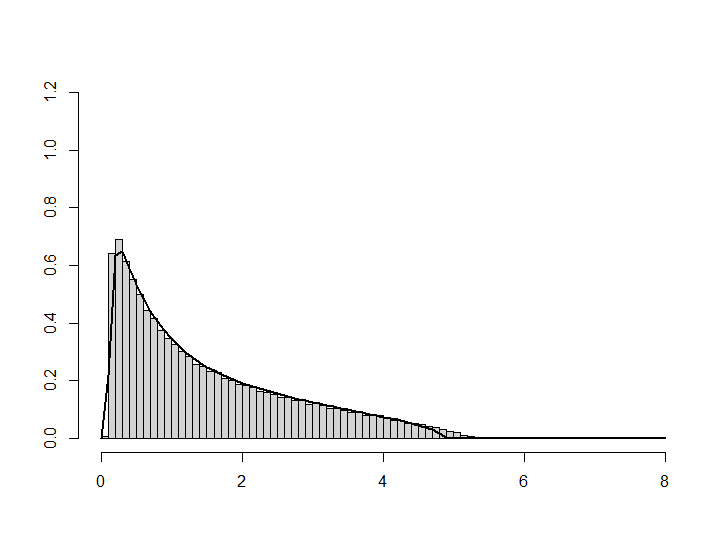}~
 \vspace{-.8cm}
	\caption{Left panel: Eigenvalue histogram of an empirical covariance matrix from a bootstrap sample drawn randomly  with replacement; Right panel: Eigenvalue histogram of an empirical covariance matrix from the  bootstrap sample drawn  
 by the $(m,mp/n)$ out of $(n,p)$ bootstrap proposed in this paper. Solid line (in both panels) the density
of the limiting spectral  distribution. The sample size is $n=80000$, the dimension   $p=40000$ and the population covariance matrix is a diagonal matrix with $50\%$ of the entries equal to $1$ and $50\%$  equal to $2$.
}
	\label{fig1}
\end{figure}

{In this article, we to provide a
  powerful, fully nonparametric and computationally tractable tool  to obtain accurate approximations for the distribution of
  linear spectral statistics of the sample covariance matrix   in the  high-dimensional context.} Our approach is
based on  the   traditionally in a wider range  applicable 
$m$ out of $n$  bootstrap \citep{PolitisRomano1994,bickel1997}, which has already
been investigated to approximate the eigenvalue distribution in the case where the dimension is fixed.
However, the use of this approach in the high-dimensional setting presents another challenge as it
 does not even preserve the limiting ratio $c$ of dimension and sample size if $m\ll n$, which
   appears already explicitly in the characterizing Mar\u{c}enko-Pastur equation for the Stieltjes transform
   of the LSD     \citep[see][]{Marcenko1967,silverstein1995}.
To address this difficulty, we propose to also select (possibly by a  random  mechanism) 
$q=\lfloor mp/n\rfloor $ coordinates from the estimator  for the covariance matrix obtained from the   subsample of $m$ observations
such that the  ratio of dimension and sample size remains (asymptotically) unchanged.
This    procedure will be called {\it ``$(m,mp/n)$ out of $(n,p)$  bootstrap''}  throughout this paper and  is based on the crucial observation that in many situations of interest, a subvector of $Y_1$,    selected according to an appropriate random sampling mechanism,  provides a covariance matrix, say $\tilde \Sigma_n$, with
a similar spectral distribution as  the covariance matrix $\Sigma_n$ of the full vector $Y_1$. 
We will prove that under the so-called Representative Subpopulation Condition and minimal moment requirements,  the   ``$(m,mp/n)$ out of $(n,p)$''  bootstrap provides a consistent approximation of the Mar\u{c}enko-Pastur distribution if $m=o(n)$. Moreover,  it consistently mimics the distribution of linear spectral statistics (LSS') of the sample covariance matrix 
if $m = o(\sqrt{n})$. Appealingly, the simultaneously reduced dimension and sample size make its implementation  computationally tractable even if original dimension and sample size are very large. In the right panel of Figure \ref{fig1} we show the histogram of the empirical spectral distribution where the sample is obtained by  ``$(m,mp/n)$ out of $(n,p)$''  bootstrap
with subsample size $m=8000$, and where the $p$-dimensional data is projected on $q= mp/n= 4000$  randomly chosen coordinates. We observe a reasonable approximation of the limiting distribution.

We conclude this section with a discussion of  related work on bootstrap for the spectrum of  high-dimensional covariance matrices. \cite{karpur2016,karpur2019} investigated 
the nonparametric bootstrap and demonstrated that this method is  in general not a reliable  tool  for statistical inference in the high-dimensional regime. They also argued  that for the largest eigenvalues 
the nonparametric bootstrap performs as it does in finite dimension  if  the population covariance matrix can be  well approximated by a finite rank matrix. 
\cite{HanXoZhou2018}  proposed a multiplier bootstrap based on a high-dimensional Gaussian approximation to approximate the distribution of the largest eigenvalue of the sample covariance matrix assuming a spherical population covariance matrix.
However, the validity of this procedure can only be proved under very restricted assumptions on the increasing dimension, that is $p=o(n^{1/9})$.
\cite{yao2022rates} derived  finite sample bounds for the Kolmogorov distance between the distribution of the largest eigenvalue  and a bootstrap distribution obtained by sampling with replacement in terms of the effective rank of the population covariance matrix and sample size. 
More recently, \cite{ding2023extreme} investigated the extreme eigenvalues of the sample covariance matrix  under the generalized elliptical model. As a  special case,  they considered a factor model and developed a 
multiplier bootstrap test for the number of factors  by investigating the stochastic properties of the first few eigenvalues of the bootstrap sample covariance matrix \citep[see also][who directly focus on a high-dimensional factor model]{yuzhaozhou2024}.  
\\
While most of this work has its focus on the extreme eigenvalues, the bootstrap for  linear spectral statistics of high-dimensional covariance matrices is much less explored. 
\cite{lopblaaue2019} proposed   a  parametric type  bootstrap method in the high-dimensional setting    sampling
 bootstrap data from a proxy distribution that is parameterized by estimates of the eigenvalues and kurtosis.  Roughly speaking, these authors suggested 
to generate  a matrix of the form \eqref{1.0} from  independent random vectors  with  iid  Pearson distributed entries (matching  the first four moments asymptotically)
 and to multiply the   resulting matrix  from the left and the right by a square root of the  diagonal matrix containing the spectrum. 
 We also  mention the paper of \cite{wanglopes2022} who developed a parametric type bootstrap for linear spectral statistics in the high-dimensional elliptical model, which uses the specific structure of this model and  also requires the estimation of a diagonal matrix containing the spectrum. {These  bootstrap approaches  are statistically powerful  and provide accurate approximations within their respective modeling frameworks.
However, their construction relies on repeated estimation of spectral characteristics from  high-dimensional  sample covariance matrices based on $n$ bootstrap observations. As a consequence, their computational complexity can become substantial. Moreover, validity is only guaranteed under the existence of moments of at least order $8$. }
 In contrast to these authors,
 the bootstrap procedure proposed here  is completely nonparametric, does not require estimation of the spectrum of the population covariance matrix  and {its computational complexity is substantially lower as it only uses $q=\lfloor mp/n\rfloor $ coordinates of the observations in a subsample of size $m \ll n $  from the original data.  
 Moreover, the ``$(m,mp/n)$ out of $(n,p)$''} is provably consistent under minimal moment assumptions. In particular, 
we  do not need  assumptions  on the limiting spectrum of the population covariance matrix  which are usually required to   make  its estimation possible.

\section{Preliminaries}
\label{prel} 
  \def\theequation{2.\arabic{equation}}	
	\setcounter{equation}{0}

For any Hermitian matrix $A\in\C^{p\times p}$ with eigenvalues $\lambda_1(A),\dots, \lambda_p(A)$, 
$$
\mu^{A}=\frac{1}{p}\sum_{i=1}^p\delta_{\lambda_i(A)}
$$
denotes its (normalized) spectral measure. For any matrix $A$, we write $A^\top$ for its transpose of $A$ and $\bar{A}$ for its complex conjugate.  
For $1 \leq r \leq \infty$, we   denote by $\| A \|_{S_r}= \big ( \sum_{j=1}^p\lambda_j(A\bar A^{\top})^{r/2} \big )^{1/r} $ the 
Schatten-$r$-norm  of the matrix $A$. The Stieltjes transform of a distribution $G$ on the real line is given by  
  $m_G:    \C^+ \rightarrow \C^+$ with
\begin{equation*}
  m_G (z) = \int {1 \over \lambda -z } dG(\lambda),
\end{equation*}
where  $\mathbb{C}^+=\{z\in\C\arrowvert\, \Im(z)>0\}$ denotes the upper complex half-plane. 
 If $\mu_n$, $n\in\N$, and $\mu$ are finite signed measures on a common measurable space, $\mu_n\Rightarrow \mu$ denotes weak convergence of $(\mu_n)_{n\in\N}$ to  $\mu$.

\smallskip

{\it Model assumptions. } Aligning with the common framework in random matrix theory, we shall work under the same type of conditions and  study a triangular array of $p=p(n)$-dimensional observations $Y_1,\dots, Y_n$ of the form
\begin{equation}\label{eq: Annahme_an_Y}
  Y_i= A_n X_i, \ \ \ i=1,\dots,n.
\end{equation}
Here, 
$X_i = (X_{i1} ,X_{i2} , \ldots )^{\top }$  $(i\in\N)$  are independent identically distributed (iid) infinite dimensional  random vectors  and $A_n$ is a  $p \times \infty  $ matrix   such that the following assumptions  are satisfied:
\begin{itemize}
\item[(A1)] The  $(p \times \infty)  $-matrix $A_n$ has  square summable rows and
$\sup_{n \in \mathbb{N} } \| A_n \|_{S_\infty} < \infty$. 
\item[(A2)]  
$p/n\rightarrow c$ for some real constant $c>0$ as $n\rightarrow\infty$. 
\item[(A3)] The  vector $X_1$ has   iid entries $X_{1k}$, $k\in\N  $, with $\E X_{11}=0$ and $\E X_{11}^2=1$.
\end{itemize}
Note that under these  conditions, the random variable $Y_1 = A_n X_1$ is well defined as 
   limit in $L^2(\mathbb{P})$ 
   with covariance matrix
   $$
   \Sigma_n= \E [Y_1Y_1^\top ] = A_n A_n^{\top }~. 
   $$ 
   As concerns normal  approximation of linear spectral statistics, the existence of the  fourth moment $\E X_{11}^4<\infty$ is known to be necessary. Therefore, we shall impose in that case the stronger assumption
\begin{itemize}
    \item[(A3+)] In addition to assumption (A3),  $\E X_{11}^4=3$. 
\end{itemize}
Coincidence of the third and fourth moment with those of the standard normal distribution can be avoided in the CLT for linear spectral statistics of high-dimensional covariance matrices at the expense of additional regularity assumptions on the eigenvectors, see \cite{najimyao2016}. Here, we refrain from this generalization to keep the technical expenditure as small as possible.
We  emphasize  that model \eqref{eq: Annahme_an_Y}  was also considered in 
\cite{zou2022clt} and contains the commonly
  used
  model
\begin{equation}
\label{eq: Annahme_an_Y1}
Y_i=\Sigma_n^{1/2}\tilde X_i,
 \end{equation}
 where $\tilde X_i$ is a $p$-dimensional with iid entries $\tilde  X_{ik}$  ($\E \tilde X_{11}=0$, $\E \tilde X_{11}^2=1$) 
 and  $\Sigma_n^{1/2}$  is the square root of the $p \times p $ matrix  $\Sigma_n$.
 For model \eqref{eq: Annahme_an_Y1}, it is well-known  that if  $(\mu^{\Sigma_n})$ is weakly convergent
  as $p,n \to \infty$ and   $p/n\to c\in (0,\infty)$, that is\begin{equation}\label{eq: Annahme_an_mu}
\mu^{\Sigma_n} \Rightarrow H\ \ \text{as}\ n \rightarrow\infty
\end{equation}
for some  distribution  $H$, the limiting spectral distribution (LSD)  of the sample covariance matrix exists and is given by the  Mar\u{c}enko-Pastur law 
{ $\mu_{c,H}^0$}  whose  Stieltjes transform
can be characterized   as the unique solution of the Mar\u{c}enko-Pastur (MP) equation 
\begin{align} \label{hd10}
 {m_{\gamma,H}^0} (z) = \int \frac{1}{\lambda (1-\gamma-\gamma z m_{\gamma,H}^0  (z))-z} dH(\lambda)
 \end{align}
 for $\gamma =c$.
These results were extended to model  \eqref{eq: Annahme_an_Y}
 by  \cite{zou2022clt}. 
 Finally, we define  for a 
distribution   $G$ on the real line ${\underline{m}_{\gamma,G}^0}$ 
as  the unique solution in $\C^+$ of the equation
\begin{equation}
\label{eq: mbarnull}
{\underline{m}_{\gamma,G}^0}(z)=-\bigg(z-\gamma\int\frac{t}{1+t\underline{m}_{\gamma,G}^0(z)}\dd G(t)\bigg)^{-1}.
\end{equation}
Note that 
$$
{\underline{m}_{\gamma,G}^0}(z)=-\frac{1-\gamma}{z}+\gamma {{m}_{\gamma,G}^0}(z), 
$$
where ${{m}_{\gamma,G}^0}(z)$ is the solution of 
the equation \eqref{hd10} for $H=G$.

\section{Representative subpopulations and the ``$(m,mp/n)$ out of $(n,p)$''  bootstrap}
 \label{sec2}
   \def\theequation{3.\arabic{equation}}	
	\setcounter{equation}{0}

The $m$ out of $n$ sampling-with-replacement bootstrap with $m = o(n)$
  provides a powerful methodology  in situations where the classical bootstrap  ``resampling with replacement''  does not work \citep{PolitisRomano1994,bickel1997}.  Moreover, modern massive data sets clearly favor a comparatively small subsampling size
in view of computational advantages.
  However, in
   the high dimensional regime $p/n \to c >0$ as $n\to \infty$,  the  properties of
the LSD  and  LSS' depend sensitively of the ``proportion'' $c$
 and the application of this methodology is  questionable as a  sample covariance matrix based
on a random sample of $m$ observations from $Y_{1}, \ldots , Y_{n}$  with $m=o(n)$  would exhibit an asymptotic behavior as in the case $c= \infty$.
To address this difficulty, we propose to also sample (by a  possibly random  coordinate projection $\Pi_n$) $q=\lfloor mp/n\rfloor $  coordinates of each of the $m$ observations in the bootstrap sample 
such that  $q/m \to c $ as $n\to \infty$, $m\to \infty$, $m=o(n)$. This    procedure will be called {\it  ``$(m,mp/n)$ out of $(n,p)$  bootstrap''}   throughout this paper. Interpreting  the entries $Y_{i1},\dots, Y_{ip}$  of each vector $Y_i$ as data of $p$ individuals in some population, our approach originates from the idea of selecting a {\bf representative subpopulation} of size $q$ which shares the statistical properties of interest  with the full population. {An appropriate strategy to pick the subpopulation implements prior knowledge or rather a model assumption on the data generating process.
\begin{center}
\hspace{-8cm}
 \begin{tikzpicture}[
    node distance=1.2cm,
    every node/.style={align=center}
]
\node (top) {\hspace{1.7cm}\, $Y_1, \dots, Y_n 
    \overset{\text{iid}}{\sim} 
    \left(0, \Sigma_n \right)$
        };
\node (middle) [below of=top] {
    $\hspace{3.7cm} \Pi_n Y_1, \dots, \Pi_n Y_n \mid \Pi_n
    \overset{\text{iid}}{\sim} 
    \left(0, \Pi_n \Sigma_n \Pi_n^{\top} \right)$
};
\node (bottom) [below of=middle] {
    $\hspace{9.1cm}\, Z_1^*, \dots, Z_m^* \mid Y_1,\dots, Y_n,\Pi_n
    \overset{\text{iid}}{\sim} 
    \widehat{\mathbb{P}}_n^{\Pi_n} 
    = \frac{1}{n} \sum_{i=1}^{n} 
    \delta_{\Pi_n Y_i}
    $\quad($q/m=p/n$)
};
\draw[->] (top) -- (middle);
\draw[->] (middle) -- (bottom);
\end{tikzpicture}
\end{center} 
}

 In terms of covariance matrices, moving from $Y_i$ to $\Pi_n Y_i$ corresponds to randomly select a principal submatrix $\Pi_n\Sigma_n\Pi_n^\top$ out of the original population covariance matrix. The fact that the spectral distribution explicitly enters the normal approximation of linear spectral statistics  necessarily  requires  that the spectral distributions  of the population covariance matrix and the randomly selected principal submatrix are (approximately) the same: 
  $$
  \mu^{\Sigma_n}\approx \mu^{\Pi_n\Sigma_n\Pi_n^\top}.
  $$ 
  {
  As a principal submatrix usually does not share the original spectral distribution however, the question raises whether an appropriate (random) coordinate selection strategy $\Pi_n$ can be realized -- especially without knowledge of $\Sigma_n$ -- in a significant number of problems of interest. We illustrate by the examples below that in many situations, structural assumptions on the population covariance matrix are plausible, which either correspond to a (composite) null hypothesis in a statistical test  or simply to a model assumption on the state of nature, under which such a representative subpopulation selection strategy $\Pi_n$ actually does exist.}

\smallskip
In model \eqref{eq: Annahme_an_Y} satisfying (A1) -- (A3), suppose that $(\Sigma_n)_n$ with $\Sigma_n=A_nA_n^\top \in\R^{p\times p}$ is a sequence of covariance matrices and $(\Pi_{n})_n$ with $\Pi_{n}:\R^p\rightarrow\R^q$ and $q=o(p)$ is a possibly random sequence of coordinate projections.
\begin{condition}[Representative subpopulation condition]\label{def: rsc} The sequence  $(\Sigma_n,\Pi_n)_n$ is said to satisfy the the Representative Subpopulation Condition   if the following is satisfied:
\begin{itemize}
    \item[(1)] With $\tilde\Sigma_n=\Pi_n\Sigma_n\Pi_n^\top$,  \begin{equation}\label{eq: similarity3}
d_{BL} \Big ( \mu^{\tilde\Sigma_n}, \mu^{ \Sigma_n }  \Big)  \longrightarrow 0\ \    \text{in probability}\ \ \text{as}\ n \rightarrow \infty, 
\end{equation}
where $d_{BL}$ denotes the dual bounded Lipschitz metric (cf.~\eqref{eq: dBL} in Section \ref{sec4}). \item[(2)] For almost all realizations of $(\Pi_n)$, there exists a decomposition of the form 
\begin{equation}\label{decompproj}
\Pi_n A_n  =  L_n  + R_n ~,
\end{equation}
where the sets of non-zero entries of the matrices $L_n$ and $R_n$ are disjoint, the matrix $ L_n $ has at most $ q'= O(q)$  non-zero columns, and $\mathbb{E}_{\Pi_n}
\big [ ||  R_n   ||_{S_{2}}^{2} \big ] = o (1)$ as $n\rightarrow\infty$. 
\end{itemize}
\end{condition}

Assumption (A1)  implies in particular that the spectral norm of the matrix $
\tilde \Sigma_n = \Pi_n \Sigma_n \Pi_n $ is uniformly bounded in $n \in \mathbb{N}$.

\begin{remark}[Stability under perturbations] {If $(\Pi_n,\Sigma_n)$ satisfies spectral similarity~\eqref{eq: similarity3} and $\Sigma_n=A_nA_n^\top$ and $\Gamma_n=B_nB_n^\top$ in model \eqref{eq: Annahme_an_Y} fulfill  $\tr \big(\Gamma_n-\Sigma_n)(\Gamma_n-\Sigma_n)^{\top}\big)=o(q)$ or
 $\rank(\Sigma_n-\Gamma_n)=o(q)$,
then  also $(\Pi_n,\Gamma_n)$ satisfies \eqref{eq: similarity3} due to Theorem A.43 and Corollary~A. 41 in \cite{baisilverstein2010}, together with Lemma C.13 in \cite{JurczakRohde}.  Note that the light-tail condition \eqref{decompproj} is always satisfied under appropriate summability conditions on the rows of $A_n$ resp.~$B_n$, independently of $\Pi_n$.}
\end{remark}

\begin{example}[Diagonal covariance matrices]\normalfont \label{ex2}
Let
$$ (\Sigma_n  )_{n\in \N}=  (\text{diag} (g_{1}, \ldots , g_{p}))_{n\in \N} $$
denote a sequence of positive
semi-definite diagonal matrices satisfying \eqref{eq: Annahme_an_mu}, that is
\begin{equation}\label{hd15}
  \frac{1}{p} \sum^p_{i=1} \delta_{g_i} \Rightarrow H
\end{equation}
for some distribution  $H$. Let $\Pi_n:\R^p\rightarrow\R^q$ be the random coordinate projection which picks $q$ out of $p$ components uniformly at random. Then
$$
\E\, \mu^{\Pi_n\Sigma_n\Pi_n^\top}=\mu^{\Sigma_n},
$$
and by Theorem 1
of \cite{chatterjee2009}  and  \eqref{hd15}, it follows that \eqref{eq: similarity3} is fulfilled. Moreover, \eqref{decompproj} also holds using $L_n  = \Pi_n  A_n \in \mathbb{R}^{q \times \infty}, R_n = 0  \in \mathbb{R}^{q \times \infty}$.
\end{example}

More generally, let $M$ be an arbitrary Hermitian matrix of order $n$ and $k$
be a positive integer less than $n$. \cite{chatterjee2009} prove the remarkable result that if $k$ is large, the distribution of eigenvalues on the real line is almost the same for almost all principal submatrices of $M$
of order $k$.
{Note  that  in general, this distribution does not coincide with the spectral distribution of $M$. However, this is true for diagonal matrices $M$.}

\begin{example}[Symmetric Toeplitz and block Toeplitz matrices] \label{ex1}\normalfont  ~~~
\begin{itemize}
\item[(i)]
If the components  of the vectors  $Y_i=(Y_{i,\ell})_{\ell=1, \ldots p}$  are  defined by  a stationary process, then it follows by Wold's theorem \citep[see][]{BrockwellDavis}
that
 \begin{equation}\label{ho1}
   Y_{i,\ell}  = \sum^\infty_{j=0} b_j X_{i,p-\ell+j+1}
 \end{equation}
 where $\sum_{j=0}^{\infty} b_j^2  < \infty$ and for each $i$  the random variables in the vector   $X_{i}= (X_{i,j})_{j \in \mathbb{N}}$ are uncorrelated.
If the random variables $X_{i,j}$ are independent with $\E X_{11}=0$, $\E X_{11}^2=1$
(as assumed in the present paper)
we obtain a representation of the form \eqref{eq: Annahme_an_Y}, where the $p \times \infty $ matrix $A_n$ is given by
  \begin{eqnarray}\label{antoep}
   A_n  &=& \left( \begin{array}{ccccccccc}
                       0 & 0  & \ldots & \ldots & {\ldots} &0 & b_0 & b_1 & {\ldots}\\
                      0 & 0  & \ldots & \ldots & {\ldots} & b_0 & b_1 & b_2 & {\ldots}\\
                      \vdots  &  \vdots & \ddots & \ddots & \ddots & \ddots &\ddots & \ddots & {\cdots} \\
                   0   &  0 & b_0 & b_1 & \cdots  &  b_{p-4} &  b_{p-3}  & b_{p-2}  &\cdots \\
              0   & b_0 & b_1 & b_2 & \cdots  & b_{p-3}& b_{p-2}  & b_{p-1} & \cdots \\
                     b_0 & b_1 & b_2 & b_3    & \cdots & b_{p-2} &  b_{p-1}  & b_{p}  & {\ldots}
                            \end{array} \right) . 
 \end{eqnarray}
Then the $p\times p$ autocovariance matrix
 $
 \Sigma_n = A_n A^\top_n  = (t_{|i-j|})^p_{ij=1}
 $
is a Toeplitz matrix, where 
 $$
 t_k= {\rm Cov} (Y_{i,\ell}, Y_{i,\ell - k}) = \sum^\infty_{j=k} b_j b_{j-k}= \sum^\infty_{j=0} b_j b_{j+k}
 $$
 (note that $t_{- \ell} = t_\ell$). In particular,  $\Sigma_n $
 is a $p\times p$ principal minor of the   fixed (infinite)   Toeplitz matrix  $(t_{i-j})_{ij\in \N }$.
 Now, if $\sum^\infty_{\ell = 0}|t_\ell| < \infty$, it follows from Szeg\"{o}'s theorem \citep [see][]{gresze1958} that the normalized spectral distribution of $\Sigma_n$ satisfies  \eqref{eq: Annahme_an_mu},
  where the limiting distribution $H$ is supported on the interval $(-\pi, \pi]$. More precisely, $\mu^{\Sigma_n} \Rightarrow H$ as $n \to \infty $, where the measure $H$ is defined by
 $$
 H ( ( - \alpha, \beta ]) = \frac {1}{2 \pi} \lambda \big ( \big \{ t \in (-\pi, \pi]  \big |  \alpha < T(e^{it}) \leq \beta  \big \}  \big )
 $$
 {with $\lambda$ denoting the Lebesgue measure}
 and 
 $$
 T(z) = t_0 + 2 \sum^\infty_{\ell = 1} t_\ell  ( z^\ell +z^{-\ell } ) 
 $$
 is the Laurent series with coefficients $(t_\ell)_{\ell \in \mathbb{Z}}$.
If $\tilde \Sigma_n   $ is a $q \times q$ principal minor of $\Sigma_n$,  then obviously
its spectral distribution converges weakly to $H$ as $q \to \infty$. Consequently, if  $Y_{1,sub}$ consists of  $q$ consecutive entries of
$Y_1$, its covariance matrix is equal to $\Sigma_q$,  and spectral similarity holds in the sense of \eqref{eq: similarity3}.\\
Note that it is not even necessary to rely on a random sampling mechanism. Now, let $ \Pi_n :\R^p\rightarrow\R^q$ denote any coordinate projection which selects q consecutive  components of  the vector $Y_{1}\in\R^p$,
defined  in \eqref{ho1}. Then, with the definition of the matrix $A_n $ in \eqref{antoep}, a  decomposition of the form \eqref{decompproj}  holds with
  \begin{eqnarray*} 
\label{hd4}
  ~~~~~
   L_n  &=& \left( \begin{array}{ccccccccccc}
                0  &0  & 0& \cdots& 0 & b_0 & \cdots & b_q & 0 & 0 & \cdots \\
                0  & 0 &0 & \cdots& b_{0} & b_1 & \cdots & b_{q+1} & 0 & 0 & \cdots \\
                   \vdots   &  \vdots  & \vdots & \ddots &\vdots&  \vdots &  \ddots & \vdots  & \vdots & \vdots &  \ddots \\
                                 0  & 0  & b_0 &   \cdots  &  b_{q-4}&  b_{q-3} & \cdots & b_{2q-3} & 0 & 0 & \cdots \\
                             0    & b_0   & b_1 &  \cdots&   b_{q-3}& b_{q-2}& \cdots & b_{2q-2} & 0 & 0 & \cdots \\
                             b_0 & b_1  & b_2 & \cdots  & b_{q-2}& b_{q-1}&\cdots & b_{2q-1} & 0 & 0 & \cdots
                \end{array}  \right) ~
 \end{eqnarray*}
 and 
 \begin{eqnarray*} 
   R_n   &=& \left( \begin{array}{ccccccc}
                     0 & 0  & \cdots & 0 & b_{q+1} & b_{q+2} & \cdots \\
                 0 & 0  & \cdots & 0 & b_{q+2} & b_{q+3} & \cdots \\
              \vdots & \vdots &  \ddots  & \vdots & \vdots & \vdots & \ddots \\
                     0 & 0 & \cdots & 0 & b_{2q-2} & b_{2q-1} & \cdots \\
                     0 & 0 & \cdots & 0 & b_{2q-1} & b_{2q} & \cdots \\
		 0 & 0 & \cdots & 0
		  & b_{2q}  & b_{2q+1} & \cdots
                \end{array}  \right)~
 \end{eqnarray*}
 (note that  both matrices have $q$ rows and  that  the first $2q$ columns of the matrix $R_n  $ have zero entries).
 Now, under the additional assumption that
 \begin{equation*}  
\sum^\infty_{ \ell=1}  \ell b^2_\ell < \infty,
\end{equation*}
  it is easy to see that
 \begin{eqnarray*}
  \| L_n \|^2_{S_2} &=&   q \sum^q_{i=0}   b^2_i    +  \sum^{2q-1}_{i=q+1}   (2q-i) b^2_i   =
   O(q) \\
    \| R_n   \| ^2_{S_2} &=& \sum^q_{i=1} \sum^\infty_{\ell = i} b^2_{q+ \ell} \leq
    q \sum^\infty_{\ell = q+1} b^2_\ell \leq \sum^\infty_{ \ell=q+1}  \ell b^2_\ell = o(1)
 \end{eqnarray*}
 as $n \to \infty$ (a similar argument applies if $q$ consecutive components of $Y_{1}$ are sampled from a uniformly distributed position  on the set $\{ 1, \ldots , p-q+1 \}$). Hence,
  the triangular array $(Y_1,\dots, Y_n)$   satisfies  the   Representative Subpopulation Condition
  (note that the first assumption in Condition \ref{def: rsc} was shown in Example \ref{ex1}).

\item[(ii)] 
 If $p= \tilde p r$ for some $\tilde p, r \in \mathbb{N}$, similar results are available
 in the case where the components of the vectors $Y_i$ can be decomposed in $\tilde p$ block of length $r$, that is
  $$
  Y_i = \big( Y^{(1)\top }_i, \ldots, Y^{(\tilde p)\top }_i \big)^\top  \qquad i=1,\ldots,n,
  $$
 which are defined by a vector moving average model of order $\ell \leq \tilde p - 1$, that is
  $$
  Y^{(s)}_i = \sum^\ell_{j=0} B_j \varepsilon_{i, \tilde p- s+j+1}~,~~s=1, \ldots , \tilde p .
  $$
  Here, ($\varepsilon_{i,j})_{i \in \mathbb{N}, j \in \mathbb{N}_0}$ is an array of independent $r$-dimensional vectors with $\mathbb{E} [\varepsilon_{ij}]=0 \in \mathbb{R}^r$ and Var$(\varepsilon_{ij})=I_r$, $I_r$ denotes the $r$-dimensional identity matrix and $B_0, \ldots, B_k$ are given $r \times r$ matrices. In this case, it is easy to see that the population covariance matrix $\Sigma_{n}$ of $Y_i$
is a banded block Toeplitz matrix, that is
 \begin{equation}\label{hd11}
   \Sigma_n = \big( T_{|i-j|} I \{ |i-j| \leq \ell \} \big)^{\tilde p}_{i,j=1}~,
 \end{equation}
 where $T_0, \ldots, T_{\ell}$ are symmetric $r \times r$ matrices defined by
 $$
T_s  =  \sum_{j=0}^{\ell   - s } B_jB_{j+\ell}^\top ~,~~s=0,\ldots \ell -1 
 $$
 (and $T_{\ell  +1} = \ldots = T_{\tilde p -1} =I_r$).
 If $\tilde p \to \infty$,  the LSD of the population covariance matrix
 exists and  can be
 characterized in terms of an equilibrium problem \citep[see][]{delvaux2012}. However, an explicit form is only possible in very special cases. For example, if $\ell =1$, $T_{1}$
 is a  a non-singular matrix, such that $\lim_{n \to \infty} T^{-n}_1 T_0 T_1^n = T_{\rm lim}$ exists, the LSD is absolute continuous with respect to the Lebesgue measure with density
 $$
 f(t) = \frac {1}{r}  \mbox{tr} \big( X_{T_1,T_{\rm lim}} (t)  \big)
 $$
 where $X_{T_1,T_{\rm lim}} $ is the density of the  matrix measure of orthogonality corresponding to matrix Chebyshev polynomials of the first kind
 with recurrence coefficients ${T_1,T_{\rm lim}}$ in  $\mathbb{R}^{r \times r}$ \citep[see, for example,][]{durlopsaff1999}.
 Now, if  $\tilde \Sigma_n   $ is a $q \times q$ principal  minor of $\Sigma_n$ maintaining the block structure  of $\Sigma_n$,  then   \eqref{eq: similarity3}
 obviously   holds.
\end{itemize}

\end{example}

\begin{example}[Representative subpopulations]\normalfont \label{ex3}
In a recent paper,   \cite{fan2019} investigated properties of  the  LSD of   variance components in linear random  effect models.
In the  simplest case of a random effect  ANOVA  model  with $k$ factors, we have
\begin{equation}\label{m1}
   \tilde Y_{ij} = M_i + s_{ii} X_{ij} \qquad \qquad j=1,\ldots,p_i \ ; \quad i=1,\ldots,k,
 \end{equation}
 $s_{11}, \ldots , s_{kk}>0$  are constants,
 $ \{X_{ij} | ~j=1,\ldots,p_i; i=1,\ldots,k\}$ are independent random variables with $\mathbb{E} [X_{ij}]=0$, ${\rm Var}(X_{ij})=1 $ and  $M=(M_1,\ldots,M_k)^\top  $ is a $k$-dimensional random vector with covariance matrix $T=(\tau_{ij})^k_{i,j=1}$ representing the group effects.  In this case,  the vector
 $Y_1$ can be decomposed in $k$ groups, that is
 $$
 Y^\top  _1 = (\tilde Y_{11}, \ldots, \tilde Y_{1p_1}, \tilde Y_{21}, \ldots, \tilde Y_{2p_2}, \ldots, \tilde Y_{k_1}, \ldots, \tilde Y_{kp_k})^\top  \in \R^p~,
 $$
 where $p= \sum_{i=1}^k p_i$.
Using the notation
  $s_{ii} = \sqrt{\sigma_{ii}- \tau_{ii}}$   with $\sigma_{ii}=s^2_{ii} + \tau_{ii}$ $(i=1,\ldots,k)$,
  the covariance matrix  of the vector $Y_{1}$ is 
  the   positive semi-definite symmetric block matrix
 $$
  \Sigma_n  =
\left(   \begin{matrix}
 G_{11}  & G_{12} & G_{13}  &  \dots  &  G_{1k}\\
 G_{21}  & G_{22}  & G_{23}  &  \dots  &  G_{2k}\\
  \vdots   &  \vdots &  \vdots   &  \ddots  &   \vdots \\
  G_{k1}  & G_{k2}  & G_{k3}  &  \dots  &  G_{kk}
\end{matrix}
\right)
$$
  with  blocks
$$
G_{ii} =
\left(   \begin{matrix}
 \sigma_{ii}  & \tau_{ii}
  &  \dots  &   \tau_{ii} \\
  \tau_{ii}   & \sigma_{ii}
   &  \dots  &   \tau_{ii} \\
  \vdots   &  \vdots
   &  \ddots  &   \vdots \\
   \tau_{ii}   & \tau_{ii}
    &  \dots  &  \sigma_{ii}
\end{matrix}
\right)\in \mathbb{R}^{ p_i \times p_i},~
G_{ij} = \tau_{ij}
\left(   \begin{matrix}
1 & 1
  &  \dots  &  1 \\
1 & 1
  &  \dots  &  1 \\
  \vdots   &  \vdots
    &  \ddots  &   \vdots \\
1 & 1
 &  \dots  &  1 \\
\end{matrix}
\right) \in \mathbb{R}^{ p_i \times p_j}
 ~(i \not = j ).
$$
Simple calculation shows that  the matrix $  \Sigma_n$
 has at most $2k$ different eigenvalues and that there are $k$ eigenvalues $\lambda_i = \sigma_{ii}-\tau_{ii}$ of multiplicity
 $p_i-1$ ($i=1, \ldots , k$).  Therefore, if 
 $k=o(p)$ and there exist nonnegative constants $ \omega_1,   \ldots  \omega_k $ such that   $\sum_{i=1}^{k } \omega_{i} =1 $ and 
 $$\max^k_{i=1} \Big| \frac {p_i}{p}- \omega_i \Big| = o(1)\ \ \text{ as }n \to \infty ,\, p_i \to \infty,$$
 the sequence $(\mu^{\Sigma_n})_{n \in \mathbb{N}} $ of spectral measures converges weakly to the  discrete measure $\sum^k_{i=1} \omega_i \delta_{\lambda_i}$. 
If  $\Pi_n$ is such that $Y_{1,sub}=\Pi_nY_1$ consists of $q$ out of $p$ different entries of the vector $Y_1$ uniformly sampled at random without replacement, its covariance matrix ${\tilde \Sigma_n}=\Pi_n\Sigma_n\Pi_n^\top$   has  again at most $2k$ different eigenvalues and  there exist $k$ eigenvalues $\lambda_i = \sigma_{ii}-\tau_{ii}$ of multiplicity
 $\max(q_i-1,0)$ ($i=1, \ldots , k$), where $(q_1, \ldots , q_k)$ is  a multivariate hypergeometrically distributed random variable with parameters $((p_1, \ldots , p_k),q)$.
Hence,   property \eqref{eq: similarity3} is satisfied. Note that model \eqref{m1} in  Example \ref{ex3} can be rewritten as 
 \begin{equation}\label{m1a}
   Y_1 = EM+SX~,
 \end{equation}
 where
 \begin{eqnarray*}
 X &
 =& \big  ( X_{11}, \ldots, X_{1p_{1}}, \ldots,  X_{k1}, \ldots, X_{kp_{k}} )^{\top } \in \mathbb{R}^{p },\\
  M
  & =&  \big  ( M_{1},  \ldots, M_{k}
 )^{\top } \in \mathbb{R}^{k },
 \end{eqnarray*}
  the $p \times k$ matrix $E$ and the $ p  \times p$ matrix $S$ are  defined by
 \begin{eqnarray*}
   E &=& \left ( \begin{array}{cccc}
                   1_{p_1} & & & \\
                   & 1_{p_2} & & \\
                   & & \ddots  \\
                   & & & 1_{p_k}
                 \end{array} \right ) \in \mathbb{R}^{p\times k} ~,~~
                  \label{emat}
   S = \left ( \begin{array}{cccc}
                  s_{11} I_{p_1} & & & \\
                   & s_{22} I_{p_2} & & \\
                   & & \ddots  \\
                   & & & s_{kk} I_{p_k}
                 \end{array} \right )  \in \mathbb{R}^{p\times p}~,
 \end{eqnarray*}
  respectively, and
 $1_{p_\ell} = (1, \ldots, 1)^\top   \in \mathbb{R}^{p_\ell}$ (all other entries in the  matrices $E$ and $S$ are $0$).
Model \eqref{m1a} can alternatively be represented (in distribution) as
 \begin{equation}\label{m1c}
   Y_1 = \tilde EZ
 \end{equation}
 where $Z= (X^\top  , U^\top  )^\top   \in \mathbb{R}^{p+k}, U = (U_1,\ldots,U_k)^\top  $ is a vector with iid entries independent of $X$, such that $\mathbb{E}[U_i]=0$, ${\rm Var}(U_i)=1$ and the $p \times (p+k)$ matrix $\tilde E$ is given by
 $$
 \tilde E  = ( S  : ET^{1/2})~.
 $$
Model \eqref{m1c} can obviously written in the form \eqref{eq: Annahme_an_Y}. Moreover, the matrix  $L_n $ in the decomposition \eqref{decompproj} is given by
$q$ randomly drawn rows from the matrix $\tilde E$, while the matrix $R_n  $ has only $0$ entries. As the matrix   $S$ is diagonal,  the matrix $L_n$  has at most $q+k = O(q)$ non-zero columns, and 
 the sequence  $(\Sigma_n,\Pi_n)_n$ satisfies the  Representative Subpopulation Condition.

\end{example}

The Representative Subpopulation Condition being granted {for $q=mp/n$ with $m\ll n$}, we propose the following resampling scheme.\newpage

\begin{alg}[   $(m,mp/n)$ out of $(n,p)$ Bootstrap]  \label{alg1}
Let $Y_1,\dots, Y_n$ be drawn from \eqref{eq: Annahme_an_Y}.
\begin{itemize}
\item[(i)] For the coordinate selection projection $\Pi_n$ of the Representative Subpopulation condition, form the $q$-dimensional random variables $\Pi_nY_1,\dots,\Pi_nY_n$. 
\item[(ii)] Conditional on $Y_1,\dots, Y_n$ and $\Pi_n$, draw an iid-sample $Z_1^*,\dots, Z_m^*$ from the 
measure
$$
\hat{\P}_n^{\Pi_n}=\frac{1}{n}\sum_{i=1}^n\delta_{\Pi_nY_i}.
$$
 \item[(iii)]  Output  the estimator
$\displaystyle \hat \Sigma_{n}^{*} = {1 \over m} \sum_{i=1}^{m}Z_i^*{Z_i^*}^{\top }$
 and its  spectral distribution $\mu^{\hat \Sigma_{n}^{*}} $.
\end{itemize}
\end{alg}

{Note that the Representative Subpopulation condition is a structural model assumption on the data generating process, which either corresponds to a (composite) null hypothesis in a statistical test  or simply to a model assumption on the state of nature (which might be plausible or can be tested). Exploiting structure allows to significantly reduce dimension and enables to run our new bootstrap at low computational cost, but necessarily comes at the expense of a qualitative model hypothesis. While appealingly no specific  type of structure on $\Sigma_n$ is required in the Representative Subpopulation condition which allows applicability in a large variety of situations, a suitable selection strategy $\Pi_n$ always implements prior knowledge on the data generating process.}
In the following section, we will show that under appropriate assumptions, Algorithm \ref{alg1}  yields a consistent bootstrap estimate of the  LSD and of the distribution of linear spectral statistics.

\section{Probabilistic properties of the ``$(m,mp/n)$ out of $(n,p)$'' bootstrap}  \label{sec3}
  \def\theequation{4.\arabic{equation}}	
	\setcounter{equation}{0}

In view of the failure of the classical sampling-with-replacement bootstrap, it is apparent that independence of the bootstrap observations $(Z_1^*, \ldots , Z_m^*) = (\Pi_nA_nX_1^* , \ldots , \Pi_nA_n X_m^*)$ 
 conditionally on the original data $Y_1, \ldots , Y_n$ and $\Pi_n$ cannot be sufficient to successfully run classical arguments in the conditional bootstrap world for proving  consistency of our new approach. 
Indeed,  the vectors $X_i^*$ do not satisfy Assumption (A3) any longer (conditional on $X_1,\dots, X_n$); in particular, they do not possess the essential structure of independent components  which however is a crucial requirement for   the classical MP law and the CLT of linear spectral statistics to hold. 

\subsection{Spectral distribution}Our first result demonstrates that $\widehat\Sigma_n^*$ mimics the sample covariance matrix in terms of spectral distributions. Besides being of interest in its own, this is a necessary ingredient  for the CLT for linear spectral statistics studied later as the limiting spectral distribution of the sample covariance matrix explicitly enters the limiting variance expression of linear spectral statistics.

\begin{theorem}[Spectral distribution]\label{thm: Boot1}
Grant assumptions (A1) -- (A3).
Assume that the   Representative Subpopulation Condition~\ref{def: rsc} is satisfied with $q=mp/n$. If $m = o(n)$, then
\begin{align*}
{d_{BL} \Big ( 
\mu^{\widehat \Sigma_n}, \mu^{\widehat \Sigma_n^*}\Big ) \longrightarrow  0 ~~\text{ in probability. } } 
\end{align*}

\end{theorem}
As concerns the  proof of Theorem \ref{thm: Boot1},
note that 
the derivation of the classical MP-law via the Stieltjes transform method has  two major steps:
\begin{itemize}
    \item[(1)] to establish the concentration of the Stieltjes transform  of the bootstrap spectral measure around its conditional expectation (see equation \eqref{eq: 5.1} in the online supplement); 
\item[(2)]to prove that the conditional expectation approaches the solution of a particular MP equation  (see equation \eqref{eq: 5.1b} in the online supplement).
\end{itemize} 
Whereas (1)  can be carried out by adapting classical martingale arguments due to the conditional independence of the bootstrap observations, carrying out  (2) is substantially more involved. 
 At this point,  it starts to matter that  there may be  ties in the bootstrap sample when studying quadratic forms of the type
$$
Z_1^{*\top} A(Z_2^*, \ldots , Z_m^*) Z_1^* -   {\rm tr}  \Big( \Pi_n\Sigma_n\Pi_n^{\top}    A(Z_2^*, \ldots , Z_m^*) \Big). 
$$
Here, $A(Z_2^*,\dots, Z_m^*)$  is a matrix containing the resolvent of $\frac{1}{m}\sum_{j=2}^mZ_i^*{Z_i^*}^{\top}$ as a building block. Although $Z_1^*$ is conditionally independent of $A(Z_2^*,\dots, Z_m^*)$, these expressions are not centered any longer and therefore do not satisfy classical moment  bounds for centered quadratic forms. Moreover, both, the vector $Z_1^*$ as well as the matrix  $ A(Z_2^*, \ldots , Z_m^*)$,
depend in an intricate way on the sample $X_1, \ldots , X_n$, which makes estimates on the unconditional expectation rather delicate, see Section \ref{sec52a}. When performing  the ``$(m,mp/n)$ out of $(n,p)$''  bootstrap, the probability of generating ties in the bootstrap sample turns out to be sufficiently small for the required approximation quality if $m=o(n)$. 

\begin{remark}
{\rm 
Note that the finite second moment of $X_{11}$  is necessary to  define $\Sigma_n$. 
Therefore, it is the the weakest possible requirement for $\widehat\Sigma_n$ and its spectral distribution  to be meaningful.
}
\end{remark}


\subsection{Extremal eigenvalues}

A further important step in the proof of the CLT for linear spectral statistics are estimates on the probability of exceedance for the extremal eigenvalues of $\widehat{\Sigma}_{n}^{*}$ from the support of the limiting spectral measure. Our next result  
 shows that $\widehat{\Sigma}_n^*$ even shares these properties with the sample covariance matrix to a large extent. Note that for the latter, \cite{BAI1988166} proved boundedness of $\E X_{11}^4$ to be the weakest condition to ensure that the {  $\limsup $} of its spectral norm stays finite almost surely.
\begin{theorem}[Extremal eigenvalues]
\label{thm: EW1}
Grant  assumptions   (A1) -- (A3), $\E X_{11}^4<\infty$  and assume  that 
 $\Sigma_n=I_{p}$. Let $\Pi_n:\R^p\rightarrow \R^q$ be a possibly random coordinate projection with $q=mp/n$.
\begin{itemize}
    \item[(a)] 
    If $m= o(\sqrt{n}) $,  then there exists a constant $K_{\rm  right}>0$
    such that  
\begin{align} \label{ang12}
\P\Big(\big\Arrowvert \widehat{\Sigma}_{n}^{*}\big\Arrowvert_{S_{\infty}}>K_{\rm  right}\Big)= o(m^{-l})\ \ \ \text{for 
    every $l\in \N$. } 
\end{align}
Moreover, if $m = o(\log n) $, then 
\eqref{ang12} even holds  
for every $K_{\rm  right}>(1+\sqrt{c})^2$.
    \item[(b)] 
  If $m= o(\sqrt{n}) $, then  we have   for every $K_{\rm left}<(1-\sqrt{c})^2 \mathds{1}_{(0,1) } (c) $
 \begin{align*}
\P\Big(\lambda_{\min}\big(\widehat{\Sigma}_{n}^{*}\big)<K_{\rm left}\Big)= o(m^{-l})\ \ \ \text{for every $l\in\N$}.
 \end{align*}
 \item[(c)] If $m= o(\sqrt{n}) $, then  
 $\limsup_{n\to \infty} \E  \big\Arrowvert \widehat{\Sigma}_{n}^{*}\big\Arrowvert_{S_{\infty}}^\ell  < \infty$ for all $\ell \in \N$. 
 \end{itemize}
 \end{theorem}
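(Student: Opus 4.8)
\textit{Strategy.} The plan is to deduce all three parts from a single quantitative input: a sharp bound on the expected trace moments of $\widehat\Sigma_n^*$. Since $\widehat\Sigma_n^*$ is positive semidefinite, $\|\widehat\Sigma_n^*\|_{S_\infty}^k\le \tr\big((\widehat\Sigma_n^*)^k\big)$ for every $k\in\N$, so Markov's inequality reduces (a) and (c) to estimates of the form
\begin{equation*}
 \E\big[\tr\big((\widehat\Sigma_n^*)^k\big)\big]\ \le\ q\,C^k\qquad\text{for all }k\le k_n, \tag{$\ast$}
\end{equation*}
for an appropriate constant $C$ and a range $k_n\to\infty$ (I would take $k_n=\lceil(\log m)^2\rceil$); the lower edge in (b) then needs a companion argument. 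So the plan is: (1) prove $(\ast)$; (2) optimise over $k$ to obtain (a) and (c); (3) handle $\lambda_{\min}$.

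\textit{Step 1 (trace moments --- the core).} Expanding $\widehat\Sigma_n^*=\tfrac1m\sum_{i=1}^m Z_i^*Z_i^{*\top}$ with $Z_i^*=\Pi_nA_nX_i^*$ gives
\begin{equation*}
\tr\big((\widehat\Sigma_n^*)^k\big)=\frac1{m^k}\sum_{i_1,\dots,i_k=1}^m\ \prod_{j=1}^k (X_{i_j}^*)^\top B\,X_{i_{j+1}}^*,\qquad B:=A_n^\top\Pi_n^\top\Pi_nA_n,\ \ i_{k+1}:=i_1,
\end{equation*}
where, because $\Sigma_n=A_nA_n^\top=I_p$ and $\Pi_n\Pi_n^\top=I_q$, the matrix $B$ is an \emph{orthogonal projection of rank $q$} (so $\|B\|_{S_\infty}=1$ and $\tr B=\tr B^2=q$). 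I would take first the bootstrap expectation, grouping the $k$-tuples $(i_1,\dots,i_k)$ by their coincidence pattern --- a partition $\pi$ of $\{1,\dots,k\}$, there being $m(m-1)\cdots(m-|\pi|+1)\le m^{|\pi|}$ tuples of pattern $\pi$ --- and then the expectation over the data. The finest pattern (all indices distinct) contributes $\tfrac{(m)_k}{m^k}\,\E\,\tr\big((\Pi_n\widehat\Sigma_n\Pi_n^\top)^k\big)$, which is only \emph{of order $q$}: $\Pi_n\widehat\Sigma_n\Pi_n^\top$ is a $q\times q$ compression of $\widehat\Sigma_n$ to a vanishing fraction $q/p\to0$ of coordinates, and since $m=o(\sqrt n)$ forces $q=o(\sqrt n)$, it equals $I_q$ up to an operator‑norm error $o(1)$. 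It is therefore essential \emph{not} to discard the coarser patterns: the repeated bootstrap draws (``ties'') are exactly what reinstates the Mar\u{c}enko--Pastur spread, and the aspect ratio governing the limit is $q/m\to c$, \emph{not} $p/n$ or $q/p$. For a pattern with $|\pi|=k-j$ one is left, after the data expectation, with a combinatorial sum over the component‑index matchings of the $X_i$'s weighted through the projection $B$, which is bounded by the estimates on (non‑centred) quadratic forms developed for Theorem~\ref{thm: Boot1}; the $j$ block merges carry factors $(m)_{|\pi|}/m^k\le m^{-j}$ that must be weighed against the growth of that inner sum. Carrying out this bookkeeping --- the technical heart, shared with Theorem~\ref{thm: Boot1} --- yields $(\ast)$: when $m=o(\sqrt n)$ the tie corrections stay controllable over the whole range $k\le k_n$, but in general only with a constant $C=C(c)$ that may exceed $(1+\sqrt c)^2$; under the stronger $m=o(\log n)$ one has $q\asymp m=o(\log n)$, the admissible exponents $k\le(\log m)^2$ are then so small that the corrections are $o(1)$, and $C$ may be taken to be $(1+\sqrt c)^2+\varepsilon$ for any fixed $\varepsilon>0$.

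\textit{Step 2 (parts (a) and (c)).} By $(\ast)$ and Markov, $\P\big(\|\widehat\Sigma_n^*\|_{S_\infty}>K\big)\le q\,(C/K)^{k_n}$. Fixing $K=K_r$ with $C/K_r<1$, the right‑hand side is $o(m^{-l})$ for every $l$ (since $q\le 2m$ eventually and $k_n/\log m\to\infty$), which is the first assertion of (a); if $m=o(\log n)$ one may instead run Step~1 with $C=(1+\sqrt c)^2+\varepsilon$ for $\varepsilon$ small enough that $(1+\sqrt c)^2+\varepsilon<K_r$, and the same computation gives the second assertion for any $K_r>(1+\sqrt c)^2$. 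For (c), Jensen's inequality gives $\E\|\widehat\Sigma_n^*\|_{S_\infty}^\ell\le\big(\E\,\tr((\widehat\Sigma_n^*)^k)\big)^{\ell/k}\le(q\,C^k)^{\ell/k}$ for $k\ge\ell$; taking $k=\lceil\log q\rceil$ yields $\limsup_n\E\|\widehat\Sigma_n^*\|_{S_\infty}^\ell\le(eC)^\ell<\infty$ (equivalently, integrate the tail bound just proved).

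\textit{Step 3 (part (b)) and the main obstacle.} Part (b) is non‑vacuous only for $c<1$ (otherwise $K_l<0\le\lambda_{\min}(\widehat\Sigma_n^*)$). First, a large‑deviation bound for the multinomial multiplicities shows that, for every $\delta>0$, the event that more than $\delta m$ bootstrap draws collide has probability at most $(Cm/n)^{\delta m}=o(m^{-l})$ for all $l$ since $m=o(\sqrt n)$; on its complement at least $(1-\delta)m>q$ of the $Z_i^*$ are distinct, so $\widehat\Sigma_n^*$ has full rank $q$. On that event I would bound $\P\big(\lambda_{\min}(\widehat\Sigma_n^*)<K_l\big)$ for $K_l<(1-\sqrt c)^2$ by the resolvent route: the conditionally expected Stieltjes transform of $\mu^{\widehat\Sigma_n^*}$ just left of $(1-\sqrt c)^2$ is pinned down by the bootstrap Mar\u{c}enko--Pastur equation (the analysis behind Theorem~\ref{thm: Boot1}), while concentration around it follows from the conditional martingale argument together with the same non‑centred quadratic‑form estimates; tracking the errors gives the exceedance probability $o(m^{-l})$. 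The crux throughout is Step~1: there is no term‑by‑term shortcut --- Cauchy--Schwarz destroys exactly the cancellation that keeps $\tr((\widehat\Sigma_n^*)^k)$ of order $q$ rather than $q^k$ --- and, unlike the $m$‑out‑of‑$n$ bootstrap in fixed dimension, the ties must be retained because they carry the Mar\u{c}enko--Pastur law; the hypothesis $m=o(\sqrt n)$ is precisely what keeps the higher‑order tie corrections negligible over the exponent range needed to beat arbitrary powers of $m$, while the sharp constant $(1+\sqrt c)^2$ is recovered only under the stronger $m=o(\log n)$. For (b), the left‑edge estimate --- together with first ruling out rank deficiency of $\widehat\Sigma_n^*$ --- is a second, essentially parallel, technical difficulty.
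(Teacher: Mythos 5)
Your high-level strategy for parts (a) and (c) — bounding $\E\,\tr\big((\widehat\Sigma_n^*)^k\big)$ for $k\to\infty$ and applying Markov's inequality, with the key observation that the bootstrap ties must be retained and that the governing aspect ratio is $q/m\to c$ rather than $p/n$ — coincides with the paper's. The gap is that your core estimate $(\ast)$ is asserted rather than proved, and the reduction you propose (to "the non-centred quadratic-form estimates developed for Theorem~\ref{thm: Boot1}") cannot deliver it. Those estimates (Proposition~\ref{lemma: formel 2.1}) are fixed-order moment bounds with constants $K_p(z)$ that are not tracked in $p$; iterating them along a cyclic product of length $k\asymp\log m$ loses a multiplicative constant at each step and cannot produce a bound of the form $q\,C^k$ with a single controlled $C$ uniformly over the whole exponent range. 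What the paper actually does is a direct combinatorial argument: it writes $\widehat\Sigma_n^*=\frac1m\sum_{i=1}^nW_i(\Pi_nX_i)(\Pi_nX_i)^\top$ with multinomial weights, expands the trace into $\E[W_{j_1}\cdots W_{j_k}]\,\E[X_{i_1j_1}\cdots X_{i_1j_k}]$, reruns the Yin--Bai--Krishnaiah graph counting with the $I$-line over $\{1,\dots,q\}$ and the $J$-line over $\{1,\dots,n\}$, and supplies the essential new ingredient as Lemma~\ref{multi}: a sharp bound on mixed multinomial moments showing that each distinct sample index contributes a factor $m/n$ up to $(1+c_\gamma m^{1+\gamma}/n)^{k}$, which is exactly what converts the $n^{l-r}$ from the graph count into $m^{l-r}$ and replaces $p$ by $q$ and $n$ by $m$. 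Nothing in your write-up plays the role of this lemma; the "bookkeeping" you defer to is the unproved heart of the theorem. A secondary problem: with only $\E X_{11}^4<\infty$ and truncation at $\delta_m\sqrt m$, the combinatorial bounds are available only for $k=O(\log m)$ (the error terms carry powers of $\delta_m\sqrt m$ that must be beaten by the $m^{-j}$ tie factors), so your exponent range $k\le(\log m)^2$ is not attainable under the stated hypotheses.

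For part (b), your preliminary collision bound ruling out rank deficiency is fine, but the main estimate cannot be obtained by the resolvent route you sketch. The Stieltjes-transform analysis behind Theorem~\ref{thm: Boot1} pins down $\E^*\underline m_n^*(z)$ only to accuracy $O(m^{-1}+m/n)$ and controls fluctuations only at the level of second or fourth conditional moments; such polynomial error bounds can never yield an exceedance probability that is $o(m^{-l})$ for \emph{every} $l$. The paper instead follows Bai--Yin (1993): it reduces the event $\{\lambda_{\min}(\widehat\Sigma_n^*)<K_l\}$ to deviations of $\big\Arrowvert\widehat\Sigma_n^*-cI_q-\diag(\widehat\Sigma_n^*)\big\Arrowvert_{S_\infty}$ and of $\big\Arrowvert\diag(\widehat\Sigma_n^*)-I_q\big\Arrowvert_{S_\infty}$, and bounds both by the same high-order moment machinery (Markov with exponent $k\asymp\log m$, again powered by Lemma~\ref{multi}) — the only technology in play that produces super-polynomially small probabilities. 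You would need to replace your Step 3 by this kind of argument.
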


A few comments on the proof are in order. For the classical covariance matrix, corresponding bounds  in \cite{yinbaikri1988} and \cite{baiyin1993} are based on trace moment estimates, which are deduced by   graph theory involving combinatorial arguments. 
Since our results do not contain bounds conditional (and potentially uniform) on $X_1,\dots, X_n$, we  were able to develop essentially two types of manipulation of their original combinatorial arguments in order to extend their results for the sample covariance matrix  to the bootstrap setting  as follows. 
  \begin{itemize}
  \item[(1)]  Let  $(W_1,\cdots, W_n)^\top $ 
  denote a  vector with a multinomial distribution with parameter $(m,(\frac{1}{n},\cdots,\frac{1}{n}))$, independent of $X_1,\dots, X_n$, that is
 $(W_1,\cdots, W_n)^\top \sim \mathcal{M} 
 (m,(\frac{1}{n},\cdots,\frac{1}{n}))$. Using the representation 
  $$
  \widehat{\Sigma}_{n}^{*}= \frac{1}{m}\sum_{i=1}^n W_i(\Pi_n X_i)(\Pi_n X_i)^{\top},
  $$
we aim at bounding expectations of the type
  \begin{align*}
  \E\tr \big(\widehat{\Sigma}_{n}^{*}\big)^k&=  \E \tr \Big(\frac{1}{m}\sum_{j=1}^n W_j(\Pi_n X_j)(\Pi_n X_j)^{\top}\Big)^k\\
  &=\frac{1}{m^k}\sum_{\substack{i_1,\dots, i_k\in\{1,\dots, q\}\\ j_1,\dots, j_k\in\{1,\dots, n\}}} 
\E \big [ W_{j_1}\dots W_{j_k} \big ] 
\E \big [ X_{i_1j_1}X_{i_2j_1}X_{i_2j_2}\cdots X_{i_kj_k}X_{i_1j_k}
\big ]
\end{align*}
The difference to the analysis of 
\cite{yinbaikri1988}
for $\widehat \Sigma_n$ are the additional factors $\E \big( W_{j_1}\dots W_{j_k}\big) $ as well as the range of indices $\{1,\dots, q\}, \{1,\dots, n\}$ instead of $\{1,\dots, p\}, \{1,\dots, n\}$ in the above expression. Note that $n/p=\mathcal{O}(1)$ while $n/q\rightarrow\infty$. To address these problems, we prove in Section \ref{sec7}  the following result
by deriving sharp bounds on mixed  moments of a multinomial distribution with parameters $m$ and $(\frac{1}{n},\cdots,\frac{1}{n})$.

\begin{lemma} \label{multi}
Assume 
that $(W_1, \ldots, W_n) \sim \mathcal{M} (m, \frac {1}{n},\ldots,\frac {1}{n})$ and denote 
 $k_m = \lfloor \gamma  \log m  \rfloor $  for some   $\gamma >0 $. Then 
there exists a constant $c_\gamma > 0 $
such that  
  $$
  \max_{ k  \leq  k_m   }  ~
   \max_{ \substack{
   s_1,\ldots,s_n \in \N_{0} \\  \sum_{j=1}^ns_j =k} }
   \Big(\frac {n}{m}\Big)^{\sum_{j=1}^n \mathds{1} \{ s_j \geq 1 \} }
  \mathbb{E}[W^{s_1}_1 \ldots W^{s_n}_n ] \leq 
  \Big  ( \ 
1 +   c_\gamma  {m^{1+\gamma}   \over n }  
 \Big )  ^{k_m} . 
  $$ 
\end{lemma}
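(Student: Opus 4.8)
The plan is to reduce the bound on mixed moments of the multinomial vector to a purely combinatorial estimate on the number of ways a word of length $k_m$ can be built from a limited alphabet, weighted by the appropriate powers of $m/n$. First I would recall the exact formula for mixed factorial-type moments of the multinomial: for $s_1,\dots,s_n\in\N_0$ with $\sum_j s_j=k_m$, one has
\begin{align*}
\E\big[W_1^{s_1}\cdots W_n^{s_n}\big]
=\sum_{\substack{a_{j,\ell}\ge 1,\ \ell\le r_j\\ \sum_{j,\ell} a_{j,\ell}=k_m}}
\binom{m}{a_{1,1},\dots,a_{n,r_n}}\prod_{j,\ell}S(s_j,\cdot)\,\frac{1}{n^{\#}}\,,
\end{align*}
where the Stirling numbers $S(s_j,r_j)$ count the set partitions of the $s_j$ copies of coordinate $j$ into $r_j$ nonempty blocks, and $\#=\sum_j r_j$ is the total number of blocks; the factor $m!/(m-\#)!\le m^{\#}$ comes from choosing which of the $m$ independent trials realize each block. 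Writing $d=\sum_{j=1}^n\mathds 1\{s_j\ge 1\}$ for the number of occupied coordinates, the prefactor $(n/m)^d$ then has to absorb the $n^{-\#}m^{\#}$ coming from each term, leaving $(m/n)^{\#-d}$ times a sum of products of Stirling numbers.

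The key step is then to organize the sum over block-structures by the ``excess'' $e=\#-d\ge 0$: there are at most $d\le k_m$ occupied coordinates, and for a fixed excess $e$ the number of admissible $(r_j)_j$ with $\sum_j r_j=d+e$ is bounded combinatorially, while $\sum_{r_1+\dots}\prod_j S(s_j,r_j)\le$ (number of set partitions of $k_m$ elements into $d+e$ blocks with a prescribed coarsening) $\le k_m^{\,O(e)}$ by the standard bound on Bell/Stirling numbers in this regime. Since $k_m=\lfloor\gamma\log m\rfloor$, we have $k_m^{\,O(e)}\le m^{\gamma'\!e}$ for a suitable $\gamma'$ depending only on $\gamma$. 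Summing the resulting geometric-type series
\begin{align*}
\max_{k\le k_m}\Big(\tfrac nm\Big)^{d}\E[W_1^{s_1}\cdots W_n^{s_n}]
\ \le\ \sum_{e\ge 0}\binom{k_m}{e}\Big(c\,\frac{m}{n}\,m^{\gamma'}\Big)^{e}
\ \le\ \Big(1+c_\gamma\,\frac{m^{1+\gamma}}{n}\Big)^{k_m},
\end{align*}
after renaming constants, yields the claim; the binomial $\binom{k_m}{e}$ and the factor $(1+\,\cdot\,)^{k_m}$ arise because the excess is distributed among at most $k_m$ coordinates. Here I would be slightly careful that the statement fixes $\sum_j s_j=k_m$ but then takes $\max_{k\le k_m}$ — the role of $k$ is only to index which power of $\widehat\Sigma_n^*$ one is expanding, so the bound for the top degree $k_m$ dominates and the inner maximum is harmless.

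The main obstacle I expect is getting the exponent of $m$ in the per-excess cost exactly right: one must show that the combinatorial multiplicity associated with raising the number of blocks by one unit contributes only a polynomial-in-$k_m$ factor (hence $m^{\gamma+o(1)}$), not something like $m^{1}$, since an extra factor of $m$ per excess unit would destroy the bound. This requires carefully separating the ``$m$ comes from trials'' part (which is already accounted for by $m^{\#}$) from the ``$k_m$ comes from choosing partition structure'' part, and using that each occupied coordinate has $s_j\le k_m$ copies so $S(s_j,r_j)\le S(k_m,\cdot)\le k_m!\le k_m^{k_m}$ only in the worst aggregate, but telescoped over coordinates the total is $\le k_m^{\,e}$ up to constants. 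Making this telescoping precise — ideally via an inductive or generating-function argument on $n$ and on the total degree — is where the real work lies; the rest is bookkeeping with binomial coefficients.
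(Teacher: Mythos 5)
Your overall architecture (expand the mixed moment over set partitions via Stirling numbers of the second kind, use the exact factorial moments of the multinomial to extract a factor $(m/n)^{\#\text{blocks}}$, and organize the remaining sum by the excess $e$ of blocks over occupied coordinates) is sound and close in spirit to the paper's argument. The paper's actual route is slightly cleaner: it first proves the factorization $\E[\prod_\ell (W_\ell)_{j_\ell}]\le\prod_\ell\E[(W_\ell)_{j_\ell}]$ for falling factorials (by grouping the factors of $m(m-1)\cdots(m-J+1)$), which reduces everything to the single-coordinate bound $(n/m)\,\E[W_\ell^{s}]\le 1+c_\gamma m^{1+\gamma}/n$ for $s\le k_m$; the product over at most $k_m$ occupied coordinates then gives the claim without any joint excess bookkeeping.

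The genuine gap is in your key combinatorial estimate. You claim that for fixed excess $e$ the partition multiplicity satisfies $\sum_{(r_j):\sum r_j=d+e}\prod_j S(s_j,r_j)\le k_m^{O(e)}$, i.e.\ is polynomial in $k_m=\lfloor\gamma\log m\rfloor$, and that a ``telescoping over coordinates'' yields $\le k_m^{e}$ up to constants. This is false already for $d=1$, $s_1=k_m$, $e=1$: then the sum is a single Stirling number $S(k_m,2)=2^{k_m-1}-1\asymp m^{\gamma\log 2}$, which grows polynomially in $m$, not polylogarithmically. More generally a coordinate split into $j$ blocks contributes a factor of order $j^{s}\le j^{k_m}=m^{\gamma\log j}$, which is a genuine power of $m$ whose exponent grows with $j$. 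The bound you want, namely that the unit cost of excess is $O(m^{\gamma'}\cdot m/n)$, is still true, but it does not follow from any ``standard Bell/Stirling bound''; it requires the explicit estimate $a_{s,j}\le\binom{s}{j}j^{s-j}\le \tfrac{s^j}{j!}j^{s}$ together with a comparison of $\gamma\log j$ against a linear function of $j$ (the paper chooses $j^*(\gamma)$ with $\gamma\log j\le j/4-3/2$ for $j\ge j^*$, handles the finitely many $3\le j<j^*$ by a constant, and isolates the dominant $j=2$ term $a_{s,2}\le 2^{s}\le m^{\gamma\log 2}\le m^{\gamma}$, which is exactly what produces the exponent $1+\gamma$ in the final bound). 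Without this step your geometric series has no valid justification for its ratio, so the proposal as written does not close.
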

Similarly, we derive a bound for  
$$
 \E \Big [ \tr \big(\diag \big(\widehat{\Sigma}_{n}^{*}\big)^r- I_q\big)^k \Big ] 
$$ 
by evaluating the arising expectation of products of coordinates of $W$ while using the already established bounds in \cite{baiyin1993} on the corresponding products of  coordinates of the $X_i$'s. Note the reduced dimension from $p$ to $q$, the reduced scaling by $m$ instead of $n$, but the index $i$ still ranges in $\{1,\dots, n\}$.
  \item[(2)]  We insert  a probability conditional on $W$ when evaluating a tail  bound on 
  $$
  \max_{i=1,\dots, m}\sum_{j=1}^q \arrowvert X_{ij}^*\arrowvert^l =\max_{\substack{i\in\{1,\dots, n\}:\\ W_i\not=0}}\sum_{j=1}^q \arrowvert X_{ij}\arrowvert^l 
  $$
  in order to avoid the maximum running over a set of cardinality $n$ instead of (at most) $m$.
Note at this point that this conditioning argument is not admissible for the probabilities in the statement of Theorem \ref{thm: EW1}, because  conditionally on $W$ the matrix $\widehat{\Sigma}_{n}^{*} 
$ does {\it not} have the same distribution as $m^{-1}\sum_{i=1}^m(\Pi_n X_i)(\Pi_n X_i)^\top$ (our bootstrap samples with replacement). 
Moreover, although sampling with and without replacement approximate each other in Kolmogorov distance by $\mathcal{O}(m^2/n)$ and the conditioning argument works for sampling without replacement, this approximation is by far too weak  to transfer the tail bounds formulated in the theorem.

\end{itemize}

 \begin{corollary}
\label{thm: EW1a}
Grant  assumptions   (A1) -- (A3) and $\E X_{11}^4<\infty$. 
Assume that the   Representative Subpopulation Condition~\ref{def: rsc} is satisfied with $q=mp/n$.
 Let $c' = \limsup (q'/m)$, { where $q'$ is the number  of  non-zero columns of  the matrix $ L_n $
in the decomposition \eqref{decompproj} of the Representative Subpopulation Condition.}
\begin{itemize}
    \item[(a)] 
    If $m= o(\sqrt{n}) $,  then there exists a constant $K_{\rm  right}>0$
    such that  
\begin{align} \label{ang12}
\P\Big(\big\Arrowvert \widehat{\Sigma}_{n}^{*}\big\Arrowvert_{S_{\infty}}>K_{\rm  right}\Big)= o(m^{-l})\ \ \ \text{for 
    every $l\in \N$}.
\end{align}
If $m = o(\log n) $, then 
\eqref{ang12} holds even  
for every $K_{\rm  right}> \limsup_{n \in \mathbb{N} } \| \Sigma_n \|_{S_\infty}
(1+\sqrt{c'})^2$.
    \item[(b)] 
  If $m= o(\sqrt{n}) $, then  we have   for any $K_{\rm left}< \liminf_{n \in \mathbb{N} } \lambda_{\min}(\Sigma_n )  (1-\sqrt{c'})^2 $ 
 \begin{align*}
\P\Big(\lambda_{\min}\big(\widehat{\Sigma}_{n}^{*}\big)<K_{\rm left}\Big)= o(m^{-l})\ \ \ \text{for every $l\in\N$ }.
 \end{align*}
 \end{itemize}
 \end{corollary}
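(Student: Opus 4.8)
\emph{Overall strategy.}
The plan is to reduce the assertion to Theorem~\ref{thm: EW1} by conditioning on the random coordinate selection $\Pi_n$ and exploiting part~(2) of the Representative Subpopulation Condition~\ref{def: rsc}. Since $Z_i^*=\Pi_nA_nX_i^*$, writing $X$ for the matrix whose columns are the bootstrap draws $X_1^*,\dots,X_m^*$ gives $\widehat\Sigma_n^*=\frac1m(\Pi_nA_nX)(\Pi_nA_nX)^\top$. With the decomposition $\Pi_nA_n=L_n+R_n$, let $P_{\mathcal J}$ be the coordinate projection onto the at most $q'$ non-zero columns of $L_n$, so $L_n=\widetilde L_nP_{\mathcal J}$ with $\widetilde L_n\in\R^{q\times q'}$. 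Then, conditionally on $\Pi_n$, the matrix $\widehat T_n^*:=\frac1m\sum_{i=1}^m(P_{\mathcal J}X_i^*)(P_{\mathcal J}X_i^*)^\top$ is exactly a sampling-with-replacement bootstrap covariance matrix of the type treated in Theorem~\ref{thm: EW1}: it is formed from $m$ draws out of $n$ iid standardized $q'$-vectors $P_{\mathcal J}X_1,\dots,P_{\mathcal J}X_n$ (covariance $I_{q'}$), and $q'/m$ stays bounded with $\limsup q'/m=c'$. The proof of Theorem~\ref{thm: EW1} uses only this structure — the multinomial moment bound of Lemma~\ref{multi} together with the combinatorial trace-moment estimates for products of iid standardized coordinates from \cite{yinbaikri1988,baiyin1993} — and therefore goes through with $q$ replaced by $q'$ and $c$ by $c'$: for $m=o(\sqrt n)$ there is a bounded $K$ with $\P(\|\widehat T_n^*\|_{S_\infty}>K\mid\Pi_n)=o(m^{-l})$, the same holding for every $K>(1+\sqrt{c'})^2$ if $m=o(\log n)$, and $\P(\lambda_{\min}(\widehat T_n^*)<K\mid\Pi_n)=o(m^{-l})$ for every $K<\max\{(1-\sqrt{c'})^2,0\}$ if $m=o(\sqrt n)$.

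\emph{Passing from $\widehat T_n^*$ to $\widehat\Sigma_n^*$.}
Working conditionally on a typical $\Pi_n$, write $\Pi_nA_nX=L_nX+R_nX$ and apply Weyl's inequality to the Hermitian perturbation $(\Pi_nA_nX)(\Pi_nA_nX)^\top=(L_nX)(L_nX)^\top+E$ with $\|E\|_{S_\infty}\le 2\|L_nX\|_{S_\infty}\|R_nX\|_{S_\infty}+\|R_nX\|_{S_\infty}^2$. This reduces $\|\widehat\Sigma_n^*\|_{S_\infty}$ to $\frac1m\|L_nX\|_{S_\infty}^2=\|\widetilde L_n\widehat T_n^*\widetilde L_n^\top\|_{S_\infty}\le\|\widetilde L_n\|_{S_\infty}^2\,\|\widehat T_n^*\|_{S_\infty}$, and $\lambda_{\min}(\widehat\Sigma_n^*)$ to $\frac1m\lambda_{\min}\big((L_nX)(L_nX)^\top\big)\ge\lambda_{\min}(\widehat T_n^*)\,\lambda_{\min}(L_nL_n^\top)$ (when $c'<1$; note $q'\ge q$ in all the examples), in both cases up to an error governed by $\frac1m\|R_nX\|_{S_\infty}^2\le\frac1m\|R_nX\|_{S_2}^2=\frac1m\sum_{i=1}^m\|R_nX_i^*\|^2$, whose conditional mean given $\Pi_n$ equals $\tr(R_nR_n^\top)=\|R_n\|_{S_2}^2$. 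The factors $\|\widetilde L_n\|_{S_\infty}^2=\|L_nL_n^\top\|_{S_\infty}$ and $\lambda_{\min}(L_nL_n^\top)$ are then tied to the spectrum of $\widetilde\Sigma_n=\Pi_n\Sigma_n\Pi_n^\top$ via $L_nL_n^\top=\widetilde\Sigma_n-\Pi_nA_nR_n^\top-R_nA_n^\top\Pi_n^\top+R_nR_n^\top$, the estimate $\|\Pi_nA_nR_n^\top+R_nA_n^\top\Pi_n^\top-R_nR_n^\top\|_{S_\infty}\le 2\|\Sigma_n\|_{S_\infty}^{1/2}\|R_n\|_{S_2}+\|R_n\|_{S_2}^2$, and Cauchy interlacing ($\lambda_{\min}(\Sigma_n)\le\lambda_{\min}(\widetilde\Sigma_n)$ and $\|\widetilde\Sigma_n\|_{S_\infty}\le\|\Sigma_n\|_{S_\infty}$); this accounts for the $\Sigma_n$-spectral factors and for $c'$ in (a) and (b). Part~(b) is vacuous when $c'\ge1$, and ties in the bootstrap sample, which occur with probability $O(m^2/n)=o(1)$, are negligible; taking $\mathbb{E}_{\Pi_n}$ then assembles (a) and (b).

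\emph{Main obstacle.}
The genuinely delicate point is the contribution of $R_n$. Condition~\ref{def: rsc} controls $\|R_n\|_{S_2}^2$ only in $\mathbb{E}_{\Pi_n}$-mean, and since only $\E X_{11}^4<\infty$ is available one cannot push the Markov bound for the perturbation term $\frac1m\sum_{i=1}^m\|R_nX_i^*\|^2$ beyond second order; consequently this term is merely $o_{\mathbb{P}}(1)$ rather than negligible with probability $1-o(m^{-l})$, as the statement demands. To obtain the stated polynomial rate one must instead carry the splitting $\Pi_nA_n=L_n+R_n$ directly through the combinatorial trace-moment expansion underlying Theorem~\ref{thm: EW1} (Section~\ref{sec7}), absorbing the $R_n$-terms order by order for all $k\le k_m=\lfloor\gamma\log m\rfloor$ while tracking the reduced dimension $q'$ inside Lemma~\ref{multi}; this bookkeeping is the principal technical burden. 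In the three motivating examples, however, $R_n\equiv 0$ or $\|R_n\|_{S_2}^2=o(1)$ deterministically, so the additive-decomposition argument above is already complete there.
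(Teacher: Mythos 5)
Your core argument is exactly the paper's: reduce to the $q\times q'$ matrix $L_n$, apply Theorem~\ref{thm: EW1} to the $q'\times q'$ Gram matrix $\tfrac1m\sum_i X_i^*X_i^{*\top}$ (identity population covariance, aspect ratio $q'/m\to c'$), then get (a) from submultiplicativity of $\|\cdot\|_{S_\infty}$ and (b) from $x^\top\widehat\Sigma_n^*x\ge\lambda_{\min}\bigl(\tfrac1m\sum_iX_i^*X_i^{*\top}\bigr)\,x^\top L_nL_n^\top x$ together with interlacing for $\lambda_{\min}(L_nL_n^\top)=\lambda_{\min}(\widetilde\Sigma_n)\ge\lambda_{\min}(\Sigma_n)$. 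That is precisely the paper's (three-line) proof, so on this part you are done.

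Two remarks on where you go beyond, or slightly astray from, the paper. First, your ``main obstacle'' paragraph is a legitimate observation that the paper does not address: the official proof simply says ``by the discussion in Section~\ref{sec52} we can assume $R_n=0$'', but the reduction established there is only $d_{BL}\bigl(\mu^{\widehat\Sigma_n^*},\mu^{\widehat\Sigma_{n,L_n}^*}\bigr)=o_{\P}(1)$, an in-probability statement about spectral measures, which (as you correctly note) cannot by itself deliver a tail bound of order $o(m^{-l})$ for the extreme eigenvalues when $R_n\neq0$, given that Condition~\ref{def: rsc}(2) controls $\|R_n\|_{S_2}^2$ only in $\E_{\Pi_n}$-mean. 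So your caution is well founded; the paper does not carry out the order-by-order absorption of $R_n$ into the trace-moment expansion that you sketch, it just invokes the reduction. Second, a minor slip: discarding ties ``with probability $O(m^2/n)=o(1)$'' is not compatible with the claimed $o(m^{-l})$ rate and is also unnecessary -- Theorem~\ref{thm: EW1} is proved via the multinomial representation $\widehat\Sigma_n^*=\tfrac1m\sum_{i=1}^nW_i(\Pi_nX_i)(\Pi_nX_i)^\top$ and Lemma~\ref{multi}, so repeated draws are handled directly and no conditioning on a tie-free event is needed.
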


\subsection{Linear spectral statistics}
\label{sec43}
Finally, we study   linear spectral statistics\begin{align}\label{eq: AR1003}
\hat{T}_n^*(f)
=\sum_{j=1}^{q}f\big(\hat{\lambda}_j^*\big)
=q\int f(x)\dd
\mu^{\widehat\Sigma_n^*}
(x)
,
\end{align}
where $\hat{\lambda}_1^*,  \dots, \hat{\lambda}_{q}^*$ denote the eigenvalues of the matrix $\widehat\Sigma_n^*$. To keep the technical expenditure as small as possible,  we restrict attention to functions  $f$ which are  analytic   
in a region of the complex plane containing the support of $\mu^{\widehat\Sigma_n^*}$ finally. As shown in \cite{najimyao2016}, this restriction on $f$  can be 
relaxed in the CLT for classical sample covariance matrices by representing the linear spectral statistic with the help of Helffer–Sj\"ostrand's formula instead of the Cauchy integral formula. {Note that a finite fourth moment $\E X_{11}^4<\infty$ is necessary for the CLT on 
\begin{align}
    \label{revd1}
\hat T_n(f)=\sum_{j=1}^{p}f\big(\hat{\lambda}_j\big)
=p\int f(x)\dd
\mu^{\widehat\Sigma_n }
\end{align}
to hold.}

\begin{theorem}[Linear spectral statistics]\label{thm: new}
Grant assumptions (A1) -- (A3+).
Assume that the   Representative Subpopulation Condition~\ref{def: rsc} is satisfied with $q=mp/n$.
Let  $f$ be a real-valued function which is analytic  in  a region
of 
 the complex plane containing the interval
$
I= \big [ K_{\rm left} , 
K_{\rm  right}  \big]
$, 
where $K_{\rm left}$ and $K_{\rm  right}$ are the constant in Corollary \ref{thm: EW1a}. Furthermore, assume that $  m= o(\sqrt{n})$. 
Then 
\begin{align}
\label{det205a}
&d_{BL}\Big[\LL\Big(\hat{T}_n^*(f)
 - {m\over n} \hat T_n (f) ~
\Big \arrowvert ~ Y_1,\dots, Y_n \Big)  , \LL\Big(\hat{T}_n(f) 
 - p \int f d \mu_{p/n,{\mu^{\Sigma_n}}}^0
\Big)\Big]\longrightarrow_{\P}0 ,
\end{align}
where $d_{BL}$ denotes the dual bounded Lipschitz metric. 
\end{theorem}

\begin{remark}
   {\rm Note that it  follows from the proof  of Theorem \ref{thm: new} that 
\begin{align}
\label{det205}
&d_{BL}\Big[\LL\Big(\hat{T}_n^*(f)
- {m\over n} \hat T_n (f)  - \hat d_n 
\Big \arrowvert Y_1,\dots, Y_n \Big)  , \LL\Big(\hat{T}_n(f) 
 - \E [ \hat T_n (f) ]
\Big)\Big]\longrightarrow_{\P}0 ,
\end{align}
where  
$$
\hat d_n (f)   = -\frac{1}{2\pi i}\ointctrclockwise f(z)\frac{\frac{p}{n}\int \underline{m}_{\mu_{\widehat{\Sigma}_n}}(z)^3t^2\big(1+t\underline{m}_{\mu_{\widehat{\Sigma}_n}}(z)\big)^{-3}\dd \mu_{\widehat{\Sigma}_n}(t)}{\Big(1-\frac{p}{n}\int \underline{m}_{\mu_{\widehat{\Sigma}_n}}(z)^2t^2\big(1+t\underline{m}_{\mu_{\widehat{\Sigma}_n}}(z)\big)^{-2}\dd \mu_{\widehat{\Sigma}_n}(t)\Big)^2}\dd z ~.
$$
We emphasize that $\hat T_n(f)$ and $\hat d_n (f) $ purely depend on the data and can be easily computed.
   } 
\end{remark}

\begin{remark}
{\rm
 It is interesting to note that although the ``$(m,mp/n)$ out of $(n,p)$''-bootstrap  consistently mimics the spectral distribution of the sample covariance matrix  if $m=o(n)$, consistently matching expectation and variance in the CLT of linear spectral statistics  requires $m^2=o(n)$. Again, this prerequisite comes from  moment bounds on non-centered quadratic forms, this time however of uniform type over a specific sequence of curves $({\cal C}_n)_{n \in \mathbb{N}}$ in the complex plane, namely on
\begin{align*}
 & \sup_{z_1,z_2 \in  {\cal C}_n} 
\E  \big\arrowvert Z_1^{*\top} A_{z_1,z_2 }(Z_2^*, \ldots , Z_m^*) Z_1^* -   {\rm tr}  \big( \Pi_n\Sigma_n\Pi_n^{\top}    A_{z_1,z_2 }(Z_2^*, \ldots , Z_m^*) \big)  \big\arrowvert^p
\end{align*}
for $p\geq 2$ (see Proposition \ref{lemma: formel 2.1a}) and
\begin{align*}
  & \sup_{z_1,z_2 \in  {\cal C}_n} 
\E^*  \big\arrowvert Z_1^{*\top} A_{z_1,z_2 }(Z_2^*, \ldots , Z_m^*) Z_1^* -   {\rm tr}  \big( \Pi_n\Sigma_n\Pi_n^{\top}    A_{z_1,z_2 }(Z_2^*, \ldots , Z_m^*) \big)  \big\arrowvert^p 
\end{align*}
 for $p=2,4$ (see Proposition~\ref{lemrev5}). 
 As the expressions in there appear in the proof  with an additional factor $q$ as compared to the proof of Theorem \ref{thm: Boot1}, they cause the requirement $m^2= o(n)$, see, for example,  \eqref{det111}, \eqref{det112}, \eqref{det113}, \eqref{det114}   and \eqref{det115}. } 
\end{remark}

 Building on Theorems \ref{thm: Boot1} and  \ref{thm: EW1}, the core of the proof of   Theorem  \ref{thm: new} 
consists in proving a  functional central limit theorem  for (an appropriately  truncated version of) the bootstrap  Stieltjes process conditional  on the original sample in probability (Propositions ~\ref{prop}  and \ref{revprop1} in the online supplement) as follows:
\begin{itemize}
    \item[(i)] We formulate and prove a (conditional) bootstrap version of the classical Martingale CLT (Theorem~\ref{thm: bmclt} in the online supplement).   
    \item[(ii)] We represent the  centered bootstrap Stieltjes process as a  martingale difference sum (conditional on the original observations and the projection $\Pi_n$) and verify the conditions of Theorem~\ref{thm: bmclt} in (i).     The crux is, however, to prove stochastic convergence of the sum of conditional squared moments  in equation \eqref{eq: 3.10}  -- corresponding to \eqref{eq: cond 1} in Theorem~\ref{thm: bmclt} -- to the {\it right} limit (required for bootstrap consistency).
 \item[(iii)] {Given weak convergence of the conditional finite dimensional distributions in probability, we continue with proving 
 conditional tightness in probability  
 in Section \ref{sec953} of the online supplement that is sufficient to deduce the functional central limit theorem for  the bootstrap Stieltjes
process (Proposition~\ref{prop} in the online supplement). }
    \item[(iv)] {As concerns verification of conditional tightness in probability, we cannot rely our analysis on the  quadratic moment estimates as in the proof of \cite{baisil2004} or  \cite{najimyao2016} of the spectral CLT for high-dimensional   sample covariance matrices because they are evaluated under the conditional distribution in our case and therefore  still random. To this aim, we derive uniform quadratic  moment bounds of Glivenko-Cantelli type on the increments of the  bootstrap Stieltjes process. Their derivation makes essential use of Corollary \ref{thm: EW1a} and the above mentioned Propositions \ref{lemma: formel 2.1a} and \ref{lemrev5}.}   
    \item[(v)]  {In the same spirit we prove uniform convergence of the (random) conditional expectation of the bootstrap Stieltjes process in probability and derive the explicit limit, see Section \ref{proofrevprop1} for details. }
    \end{itemize}
Rewriting  $f(x)$ on the right-hand side in \eqref{eq: AR1003} by the Cauchy integral formula as complex curve integral and applying Fubini's theorem (see equation \eqref{eq: red 1} in the online supplement), the statement of Theorem \ref{thm: new} then follows 
by an application of the continuous mapping theorem.

 \begin{remark}[Beyond $\E X_{11}^4=3$]\normalfont
    For the mathematical analysis of the new ``$(m,mp/n)$ out of $(n,p)$''  bootstrap, we have restricted attention to random variables $X_{11}$ in model \eqref{eq: Annahme_an_Y}  with $\E X_{11}^4=3$, corresponding to the fourth moment of the $\mathcal{N}(0,1)$  distribution. As clarified in \cite{baisil2004} with formula (1.15), this allows to significantly simplify covariances of quadratic forms, which in our case require a much more sophisticated consideration nevertheless. Relaxing the assumption $\E X_{11}^4=3$ requires primarily a strengthened  version of the Representative Subpopulation Condition \ref{def: rsc}, which  has to guarantee that $\Pi_n\Sigma_n\Pi_n^\top$ mimics all features of $\Sigma_n$ that contribute to the Gaussian approximation of linear spectral statistics. Whereas  solely the spectral distribution enters expectation and variance of the Gaussian approximation under conditions (A1), (A2) and (A3+), properties of the eigenvectors play also a role if $\E X_{11}^4$ is finite but $\not=3$ as pointed out in \cite{najimyao2016}. Specifically, with  
$$
T_{\Sigma_n}(z)=\bigg(-zI_p+\Big(1-\frac{p}{n}\Big)\Sigma_n-z\frac{p}{n}m_{\frac{p}{n},\,\mu^{\Sigma_n}}^0(z)\Sigma_n\bigg)^{-1}
$$
embodying some kind of deterministic equivalent to the resolvent $(\widehat\Sigma_n-zI_p)^{-1}$, the term
$$
\Theta_{\Sigma_n}(z_1,z_2)= \big(\E X_{11}^4-3\big)\frac{p}{n}\frac{1}{p}\sum_{i=1}^p\frac{\partial}{\partial z_1}\big( z_1T_{\Sigma_n}(z_1)\big)_{ii}\frac{\partial}{\partial z_2}\big(z_2 T_{\Sigma_n}(z_2)\big)_{ii}
$$
enters the approximating covariance structure of the Stieltjes process, which not only depends on the spectrum of $\Sigma_n$ but also on its eigenvectors. Likewise, an additional term $\Gamma_{\Sigma_n}$ involving both, eigenvectors and spectrum, enters the expecation if $\E X_{11}^4\not=3$. Thus, in addition to the spectral similarity condition \eqref{eq: similarity3}, the approximations 
\begin{equation*}
\Arrowvert \Theta_{\Sigma_n}-\Theta_{\Pi_n\Sigma_n\Pi_n^\top}\Arrowvert_{\mathcal{C}_n\times\mathcal{C}_n}\longrightarrow 0\quad\text{and}\quad \Arrowvert \Gamma_{\Sigma_n}- \Gamma_{\Pi_n\Sigma_n\Pi_n^\top}\Arrowvert_{\mathcal{C}_n\times \mathcal{C}_n}\longrightarrow 0
\end{equation*}
as $n\rightarrow\infty$ then necessarily enter the Representative Subpopulation Condition. This being granted  we expect that \eqref{det205a} continues to hold.
\end{remark}

\section{Finite sample properties} \label{sec6}
  \def\theequation{5.\arabic{equation}}	
	\setcounter{equation}{0}

In this section, the finite sample properties of the  new  ``$(m,mp/n)$ out of $(n,p)$''  bootstrap  are illustrated by means of a  simulation study. 
On the one hand we  study the impact of the choice of $m$ on  the approximation of the MP distribution by the new bootstrap. On the other hand we implement a data adaptive rule for this choice and study the  performance of the ``$(m,mp/n)$ out of $(n,p)$'' bootstrap for the approximation of LLS in the context of hypotheses testing.

\subsection{Approximation of the MP-distribution}

We consider the following three cases
for the population covariance matrix
\begin{align}
    \label{sim1}
    (a):  ~
    \Sigma_n =\left( \begin{array}{cc}
                      2 I_{p/2} & 0  \\
                      0 & I_{p/2}
                            \end{array} \right) ,~~  (b):  ~
    \Sigma_n =\left( \begin{array}{ccc}
                      4 I_{p/4} & 0 &0  \\
                      0 & I_{p/2}  &0  \\
                                   0  &0  & 2 I_{p/4},
                            \end{array} \right) 
                            ~~  (c):  ~
    \Sigma_n  = \big ( 0.25 ^{|i-j]} \big )_{i,j=1, \ldots p} 
\end{align}

The approximation quality  of the MP-distribution by  Theorem \ref{thm: Boot1}  
is visualized in Figure \ref{figsim1} - \ref{figsim3} corresponding to the three cases (a) - (c) in \eqref{sim1}, respectively. 
For the matrices (a) and (b) we use a  random coordinate projection picking  $q$ out of $p$ components uniformly at random,  while in the case (c) the random projection picks $q$ consecutive components starting at a randomly selected coordinate $i \in \{ 1, \ldots , p-q+1\}$.
We show the histogram of eigenvalues of the $p \times p$ empirical covariance matrix $\hat \Sigma_n$  and
compare it with a histogram of the eigenvalue distribution  of the  matrix  $\hat \Sigma_n^*$ obtained by the ``$(m,mp/n)$ out of $(n,p)$''  bootstrap, where
we consider an average over $B=20$ bootstrap replications. To investigate the effect of the choice of $m$ on the performance of the ``$(m,mp/n)$ out of $(n,p)$'' we consider  the cases $m=n/5$, $m=n/10$ and $m=n/20$.
The sample size is $n=10000$ and the dimension is $p=5000$.
 We observe that for  all chocices of $m$ under consideration the ``$(m,mp/n)$ out of $(n,p)$''  bootstrap  qualitatively reproduces the eigenvalue distribution of the the matrix $\widehat \Sigma_n$.  The qualitatively ``best''  approximation  is obtained for $n=m/10$ in all three scenarios. 
 {\small\begin{figure}[t]
	\includegraphics[width=60mm, height=35mm]{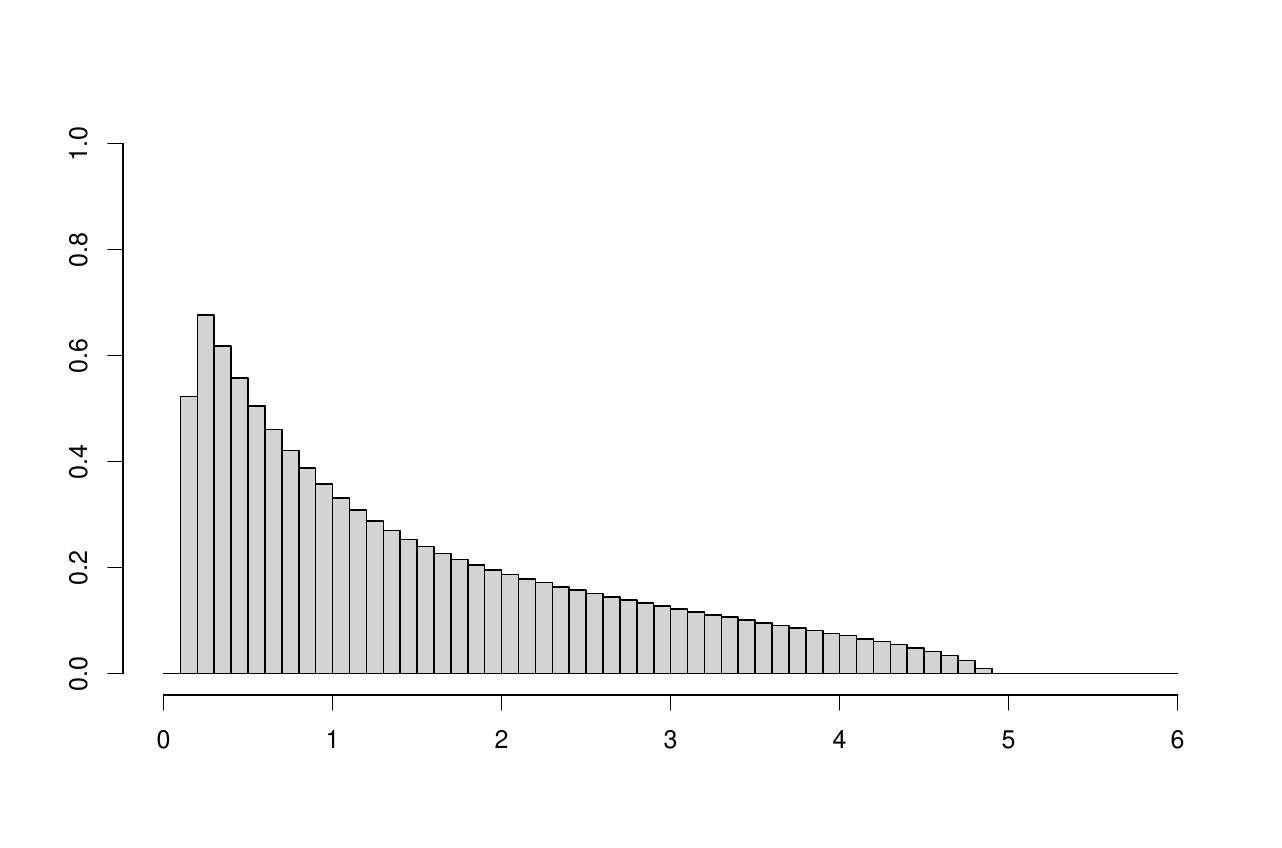} 
 	\includegraphics[width=60mm, height=35mm]{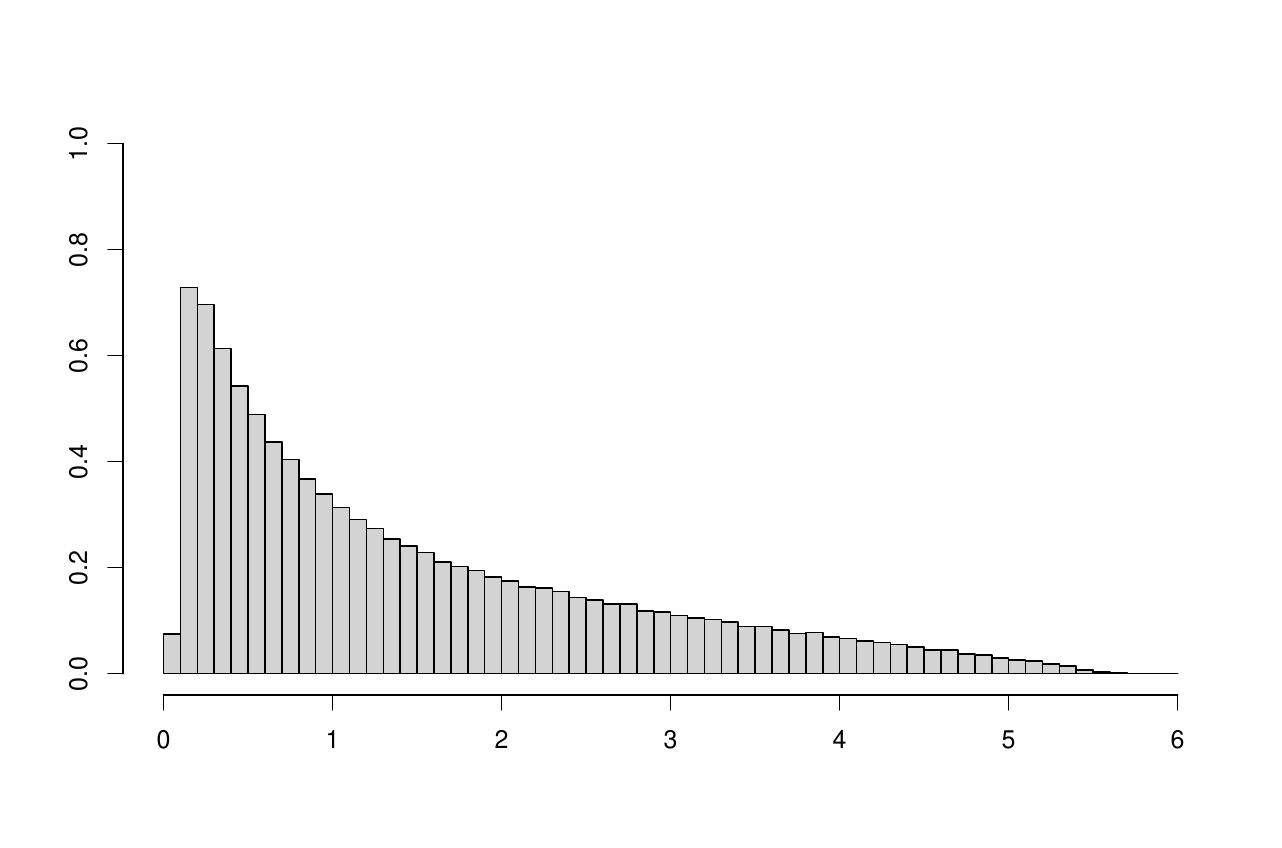} 
    \\ 
\includegraphics[width=60mm, height=35mm]{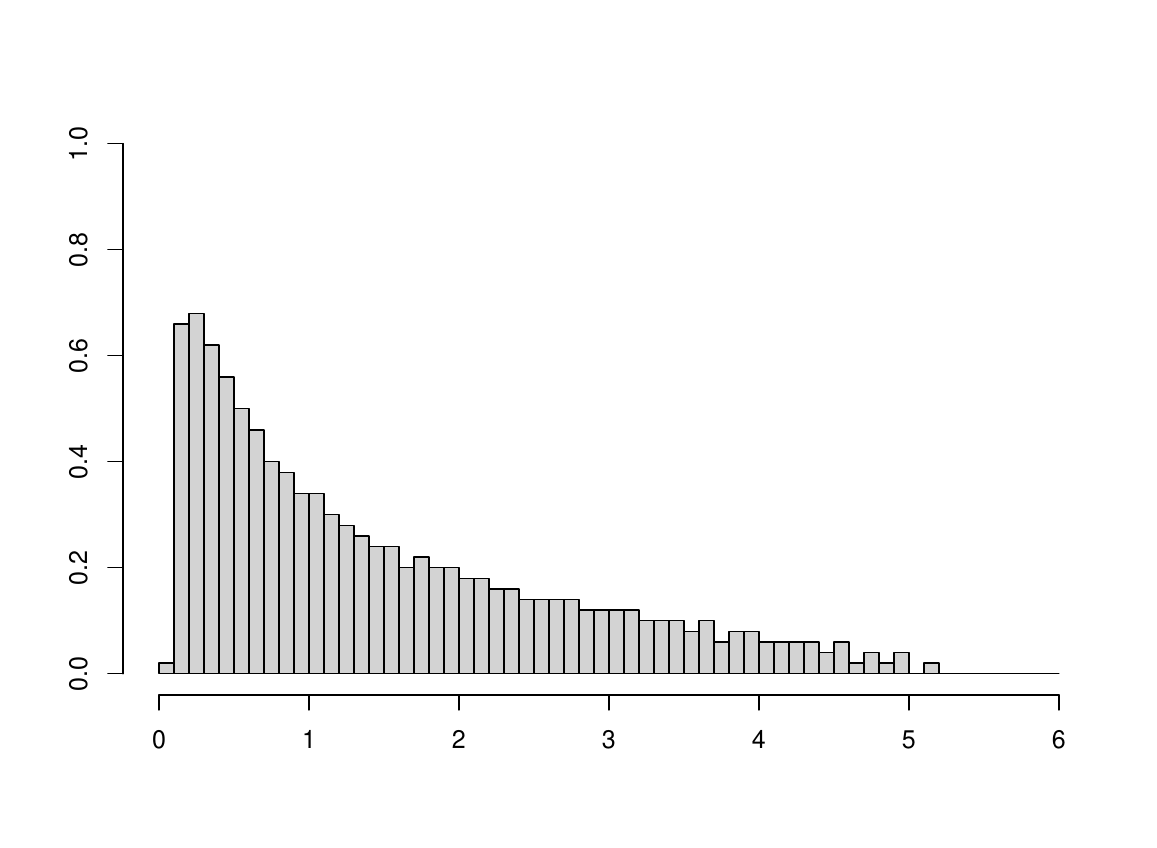} 
 \includegraphics[width=60mm, height=35mm]{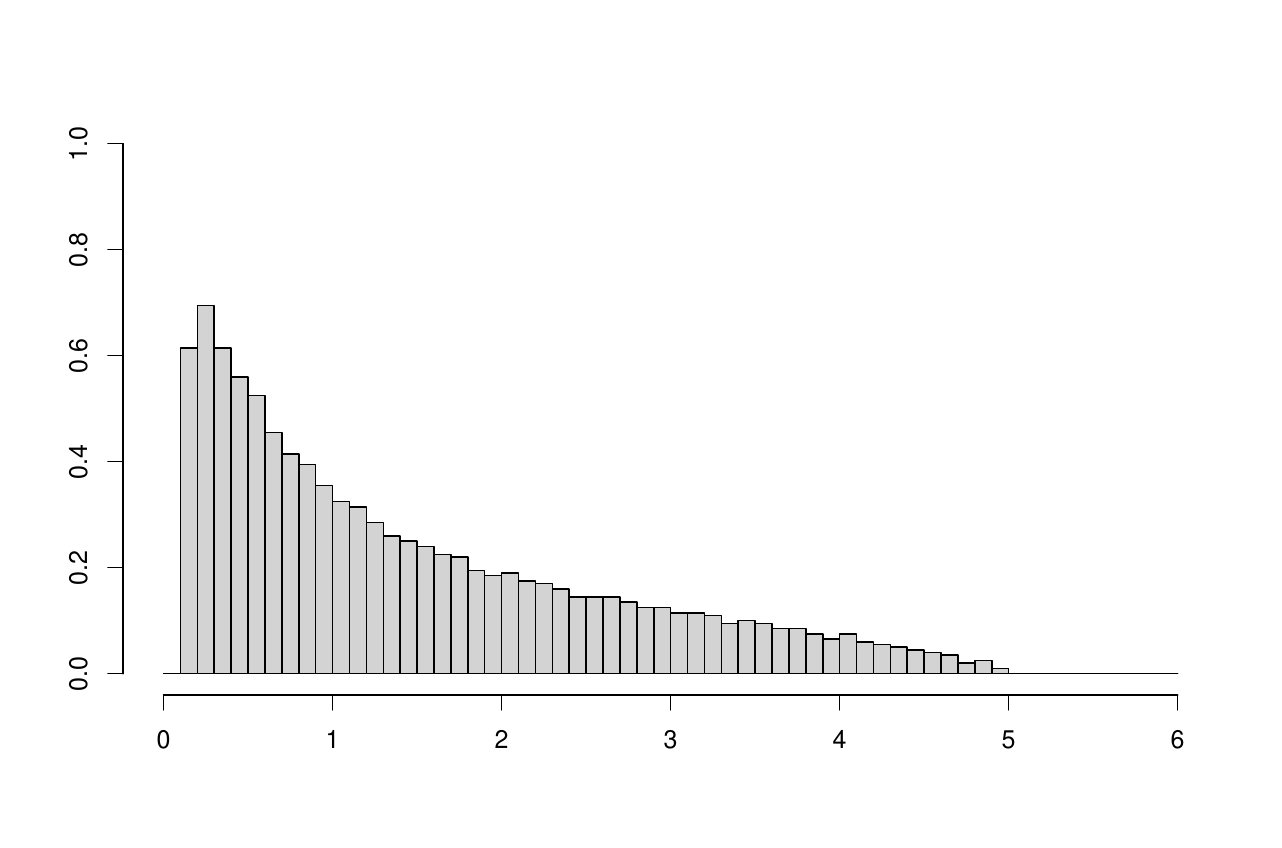} 
	\caption{Histograms of eigenvalues of the empirical covariance matrix $\widehat \Sigma_n$ (upper left panel) and of the empirical covariance matrix $\widehat \Sigma_n^*$ obtained by  ``$(m,mp/n)$ out of $(n,p)$''  bootstrap for different choices of $m$ (upper right panel: $m=n/5$; lower left panel: $m=n/10$; lower right panel: $m=n/20$).  The sample size is $n=10000$ and the dimension  is  $p=5000$, and data is generated with the population covariance matrix  (a)  in \eqref{sim1}.
    \label{figsim1}
}
\end{figure}}

{\small\begin{figure}[t]
	\includegraphics[width=60mm, height=35mm]{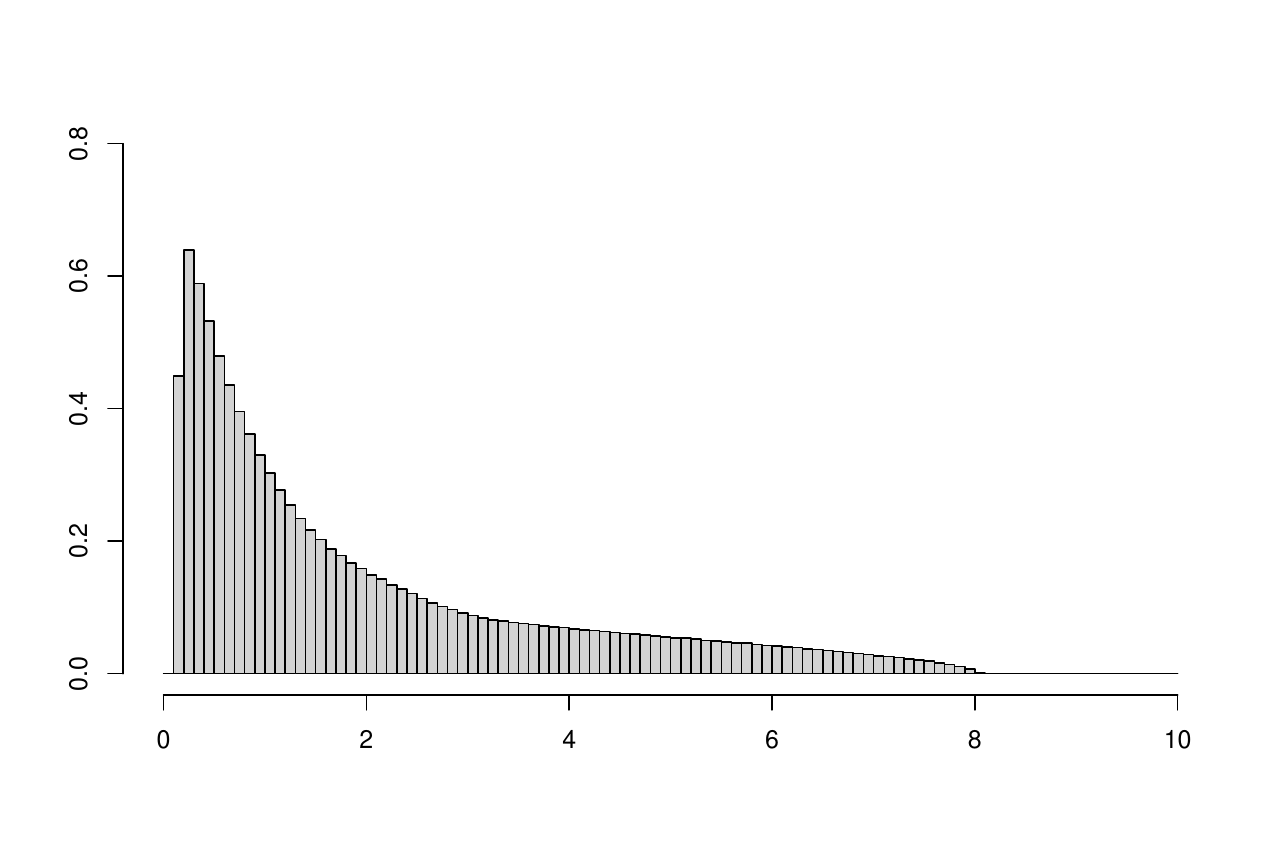} 
 	\includegraphics[width=60mm, height=35mm]{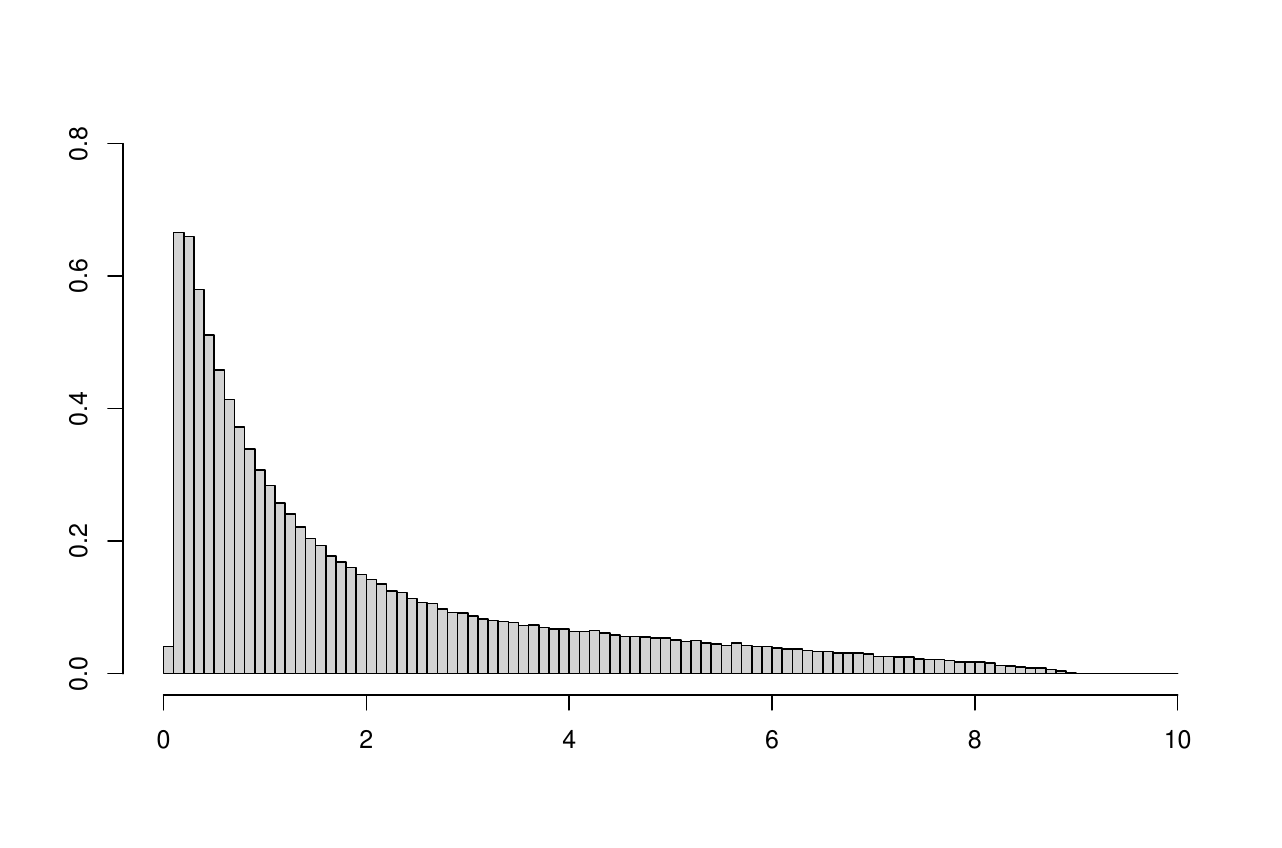} 
    \\ 
\includegraphics[width=60mm, height=35mm]{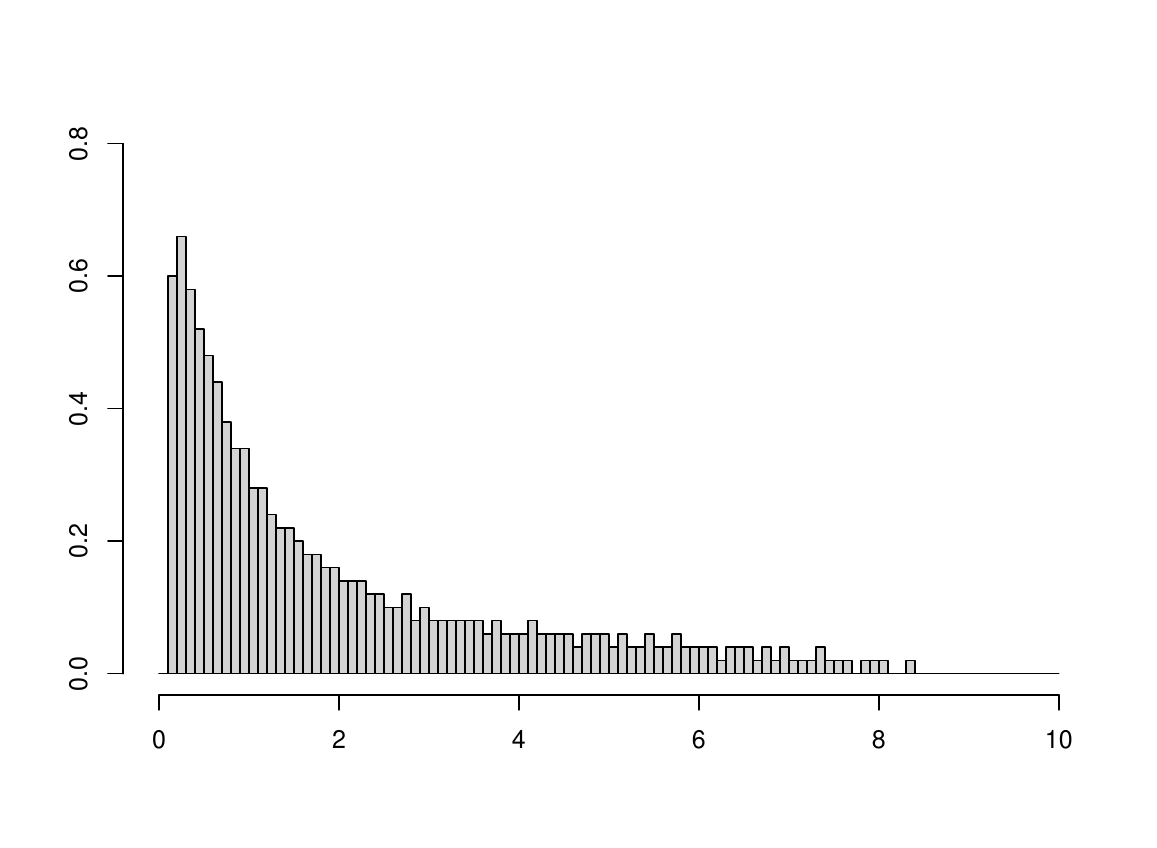} 
 \includegraphics[width=60mm, height=35mm]{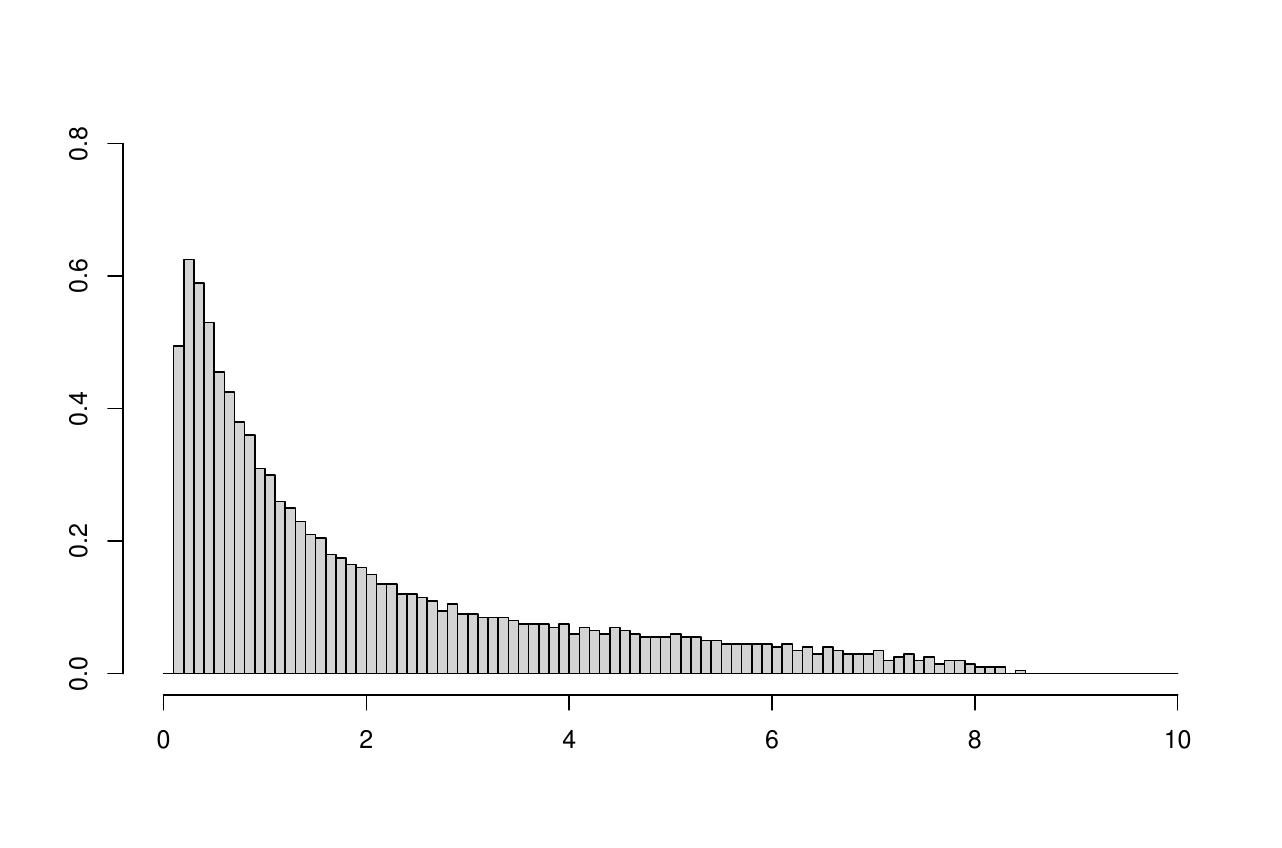} 
	\caption{Histograms of eigenvalues of the empirical covariance matrix $\widehat \Sigma_n$ (upper left panel) and of the empirical covariance matrix $\widehat \Sigma_n^*$ obtained by  ``$(m,mp/n)$ out of $(n,p)$''  bootstrap for different choices of $m$ (upper right panel: $m=n/5$; lower left panel: $m=n/10$; lower right panel: $m=n/20$).  The sample size is $n=10000$ and the dimension  is  $p=5000$, and data is generated with the population covariance matrix  (b)  in \eqref{sim1}.
    \label{figsim2}
}
\end{figure}}

{\small\begin{figure}[H]
	\includegraphics[width=60mm, height=35mm]{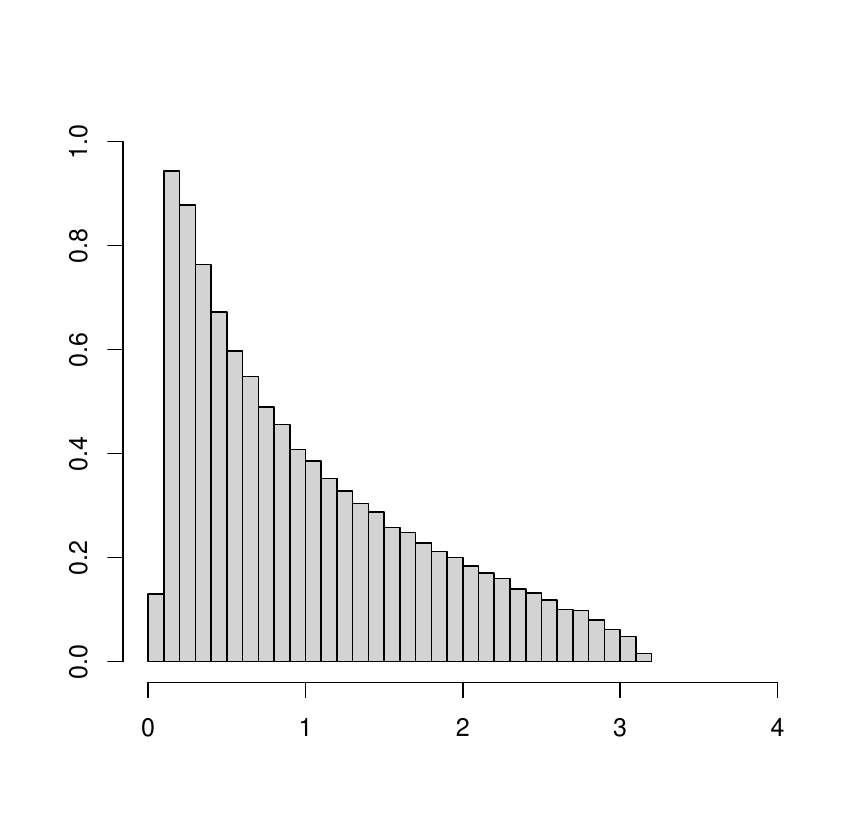} 
 	\includegraphics[width=60mm, height=35mm]{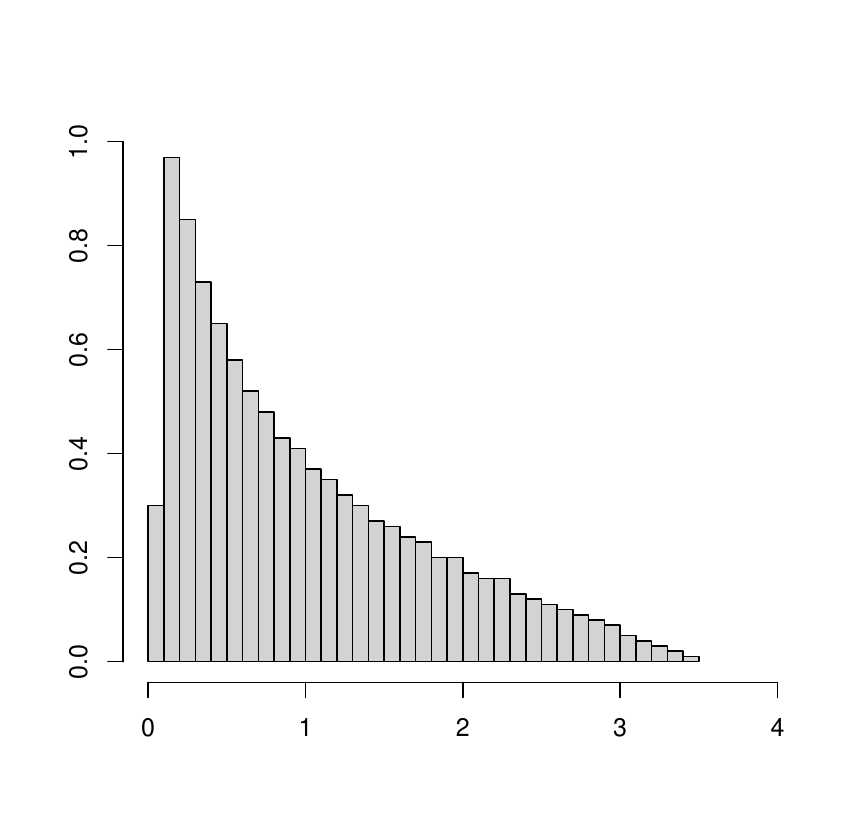} 
    \\ 
\includegraphics[width=60mm, height=35mm]{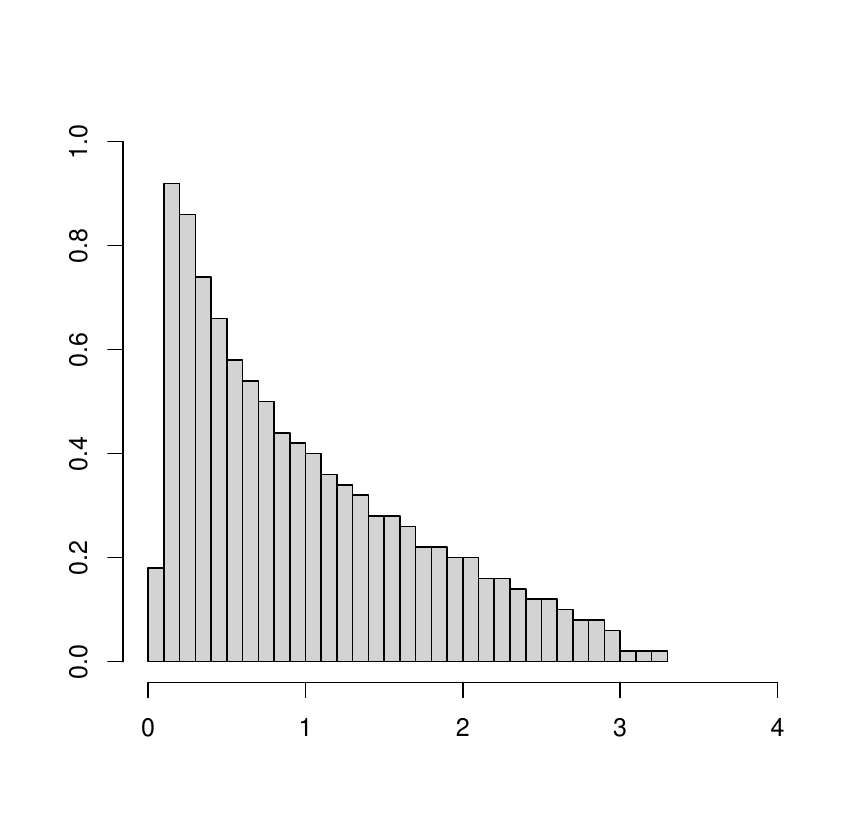} 
 \includegraphics[width=60mm, height=35mm]{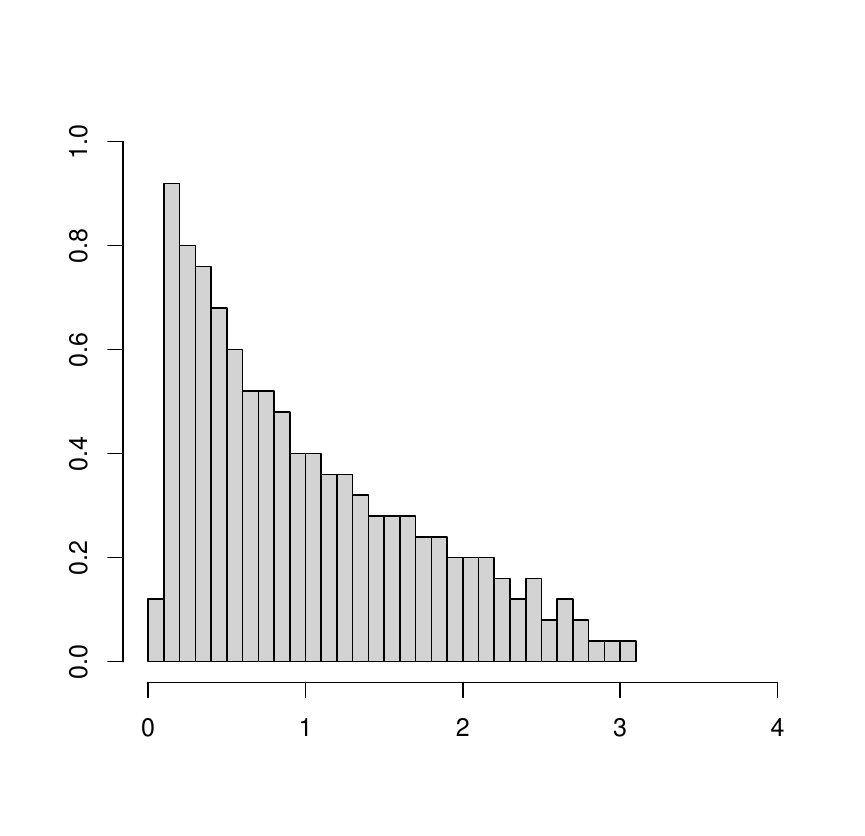} 
	\caption{Histograms of eigenvalues of the empirical covariance matrix $\widehat \Sigma_n$ (upper left panel) and of the empirical covariance matrix $\widehat \Sigma_n^*$ obtained by  ``$(m,mp/n)$ out of $(n,p)$''  bootstrap for different choices of $m$ (upper right panel: $m=n/5$; lower left panel: $m=n/10$; lower right panel: $m=n/20$).  The sample size is $n=10000$ and the dimension  is  $p=5000$, and data is generated with the population covariance matrix  (c)  in \eqref{sim1}.
  \label{figsim3}
}
\end{figure}}
Next, we study  the quality of the  approximation of the LSD 
by the 
``$(m,mp/n)$ out of $(n,p)$''  bootstrap
 for     a larger sample size $n=80000$ and different ratios of $p/n$, choosing $m=n/10$ as suggested by  discussion in  the previous paragraph. 
The corresponding results are displayed in Figure \ref{figsim1a} below, where 
the different columns correspond to the ratio 
$p/n = 25\%$, $50\%$, $75\%$ and the different rows to the cases (a)  and (b) in \eqref{sim1} for the population covariance matrix.
{\small\begin{figure}[H]
	\includegraphics[width=50mm]{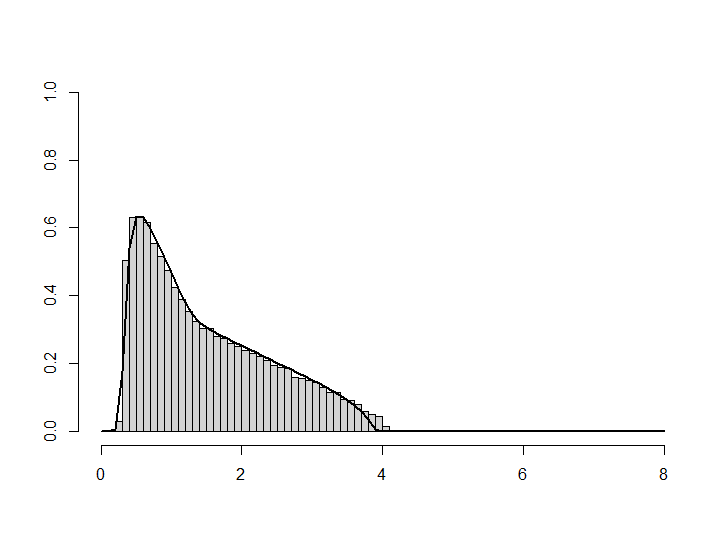} ~ \hspace{-.75cm}
 	\includegraphics[width=50mm]{EXa_50.png} ~ \hspace{-.75cm}
  \includegraphics[width=50mm]{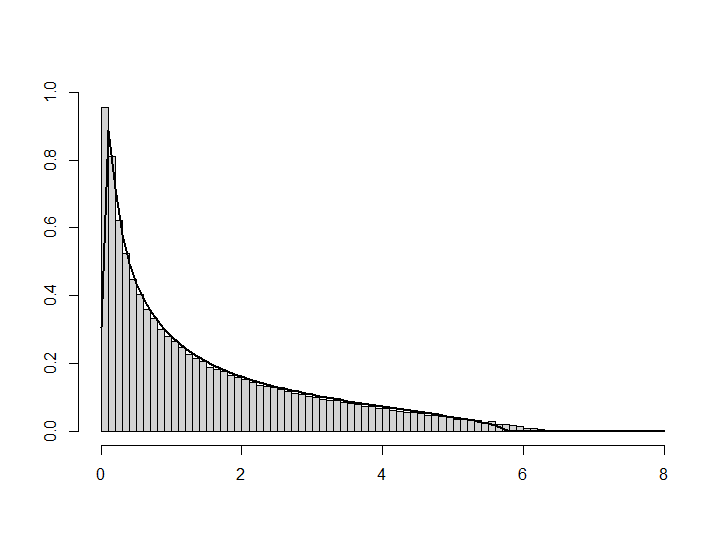} 
  \\
  \includegraphics[width=50mm]{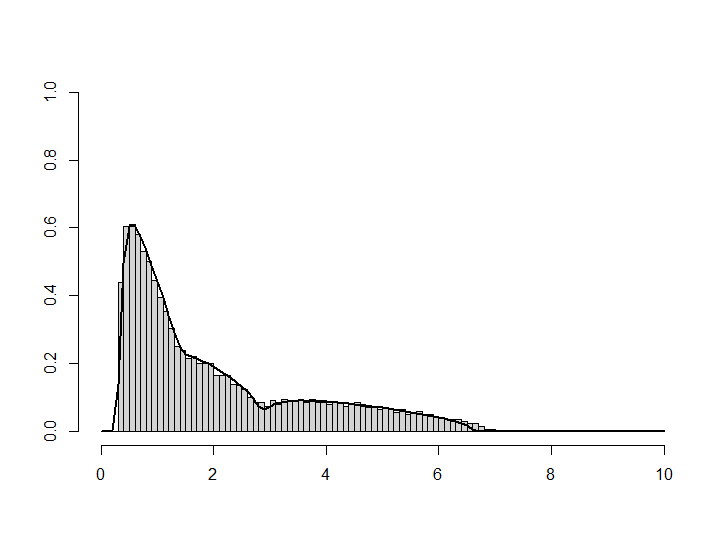} ~ \hspace{-.75cm}
 	\includegraphics[width=50mm]{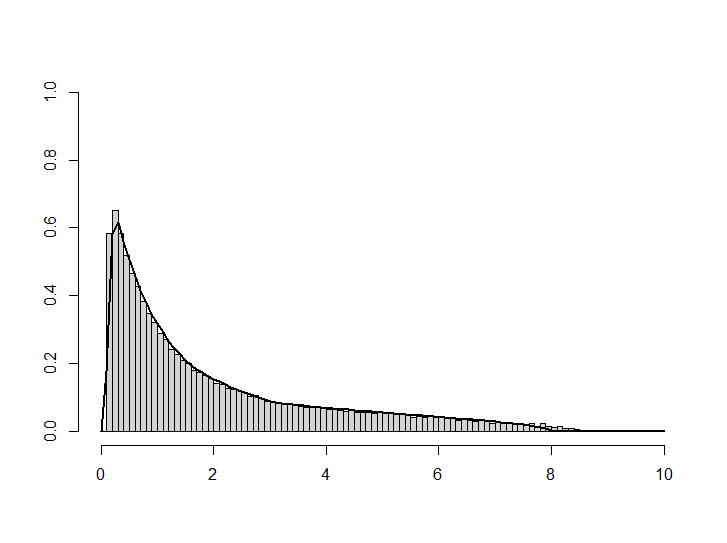} ~ \hspace{-.75cm}
  \includegraphics[width=50mm]{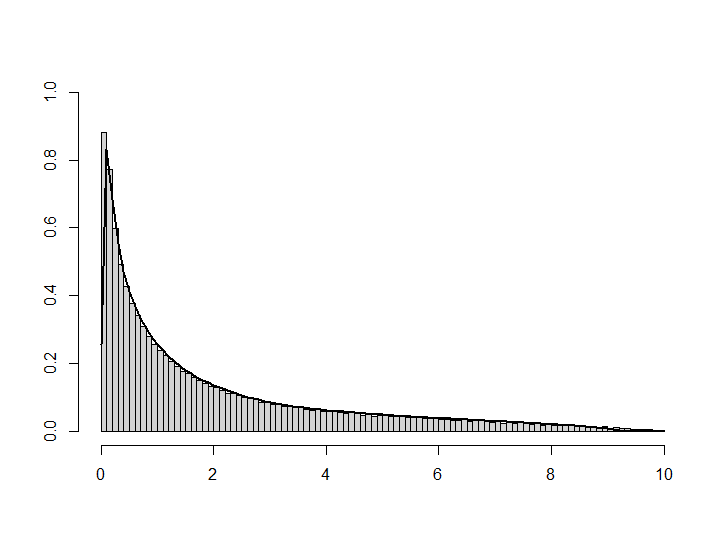}
	\caption{Density of the limiting spectral distribution (solid line) and the histogram of the ``$(m,mp/n)$ out of $(n,p)$'' bootstrap.  The sample size is $n=80000$ and the dimension  is  $p=20000$
 (left column), $p=40000$
 (middle column) and $p=60000$
 (right column), where $m=n/10$.  The upper and lower rows correspond to the different scenarios (a) and (b)   for the population covariance matrix $\Sigma_n$ in \eqref{sim1}.
 \label{figsim1a}
}
\end{figure}}
Each plot in Figure \ref{figsim1a} contains the density of the  limiting spectral distribution
of $\widehat \Sigma_n$  determined  numerically (solid line) and a 
histogram of the spectral distribution of  bootstrap estimate  $\widehat \Sigma_n^*$ (without averaging over different bootstrap runs).  We observe a very good approximation of the limiting spectral distribution while the bootstrap is computationally feasible as we used $m=n/10$.

\subsection{An application to hypotheses testing} \label{sec52sim}

In this section we study the performance  of the ``$(m,mp/n)$ out of $(n,p)$'' bootstrap for LSS in the context of hypotheses testing. More specifically, we are interested in the problem of testing if  the covariance matrix $\Sigma$ from an sample of iid $p$-dimensional random vectors is the identity matrix, that is 
\begin{align}
    \label{data1}
    H_0: \Sigma_n=I_{p} ~ \text{ versus } ~ H_1: \Sigma_n\not = I_{p}
\end{align}
Several tests have been suggested for this problem, and for the sake of brevity we restrict ourselves to a test proposed by   \cite{ledoitwolf2022}, which rejects the null hypothesis for large values of the statistic
\begin{align}
    \label{dsim2}
    \hat T_n = {\rm tr} \big ( \big ( \widehat \Sigma_n - I_p \big )^2\big )  =  {\rm tr} \ \big ( \widehat \Sigma_n ^2\big ) -2 {\rm tr} \ \big ( \widehat \Sigma_n \big ) + p ,
\end{align}
and corresponds to the choice $f(x) = x^2-2x+1$ in \eqref{revd1}.   We have implemented a bootstrap test based on the approximation \eqref{det205a} in Theorem \ref{thm: new}, where under the null hypothesis, that is  $\Sigma=I_p$, the centering term for $\hat T_n$ is given by 
$$
p \int f (x) d \mu_{p/n,\delta_{\{1 \}}}^0 =  p \Big ( {p \over n } +1 \Big )  -2 p  + p = {p^2 \over n }.
$$ 
Therefore, the resulting test is given by  
\begin{align}
    \label{test}
    \hat T_n - {p^2 \over n }  >  q_{1-\alpha}^*~, 
\end{align}
where $q_{1-\alpha}^*$ denotes the $(1-\alpha)$-quantile of the bootstrap  distribution of $\hat{T}_{m,n}^*(f) - {m\over n} \hat T_n (f)$ and  we now make the dependence of the bootstrap statistic on the sample size $m$ explicit in the notation $\hat{T}_{m,n}^*(f)$ and 
we use a  random coordinate projection picking  $q$ out of $p$ components uniformly at random.
To demonstrate that the ``$(m,mp/n)$ out of $(n,p)$'' bootstrap  is computationally feasible even for large sample sizes, we  choose $n=80 000$ and the dimension $p=20000$.

For the choice of the size $m$ of the subsample in the ``$(m,mp/n)$ out of $(n,p)$ bootstrap''  we  consider   two methods. The first is  an  adaptive rule introduced by \cite{BickelSakov2008}
that consists of the following steps.
\begin{enumerate}
    \item[(i)] Fix $K \in \mathbb{N}$. For each \( j = 0, 1, 2, \ldots , K \), define \( m_j = \lceil \psi^j n \rceil \), where \( \psi  \in (0,1) \) is some fixed parameter, and let \( {\cal L}^*_j \) denote the distribution of  the bootstrap  statistic \( \hat T^*_{m_j,n} \) conditional on the observed sample \(  Y_1,  \ldots,  Y_n \).
    
    \item[(ii)] For some metric \( d \) consistent with weak convergence, let \( J \) be the smallest \( j \) that minimizes the distance \(  d({\cal L}^*_j, {\cal L}^*_{j+1}) \).
       \item[(iii)] Use \( {\cal L}^*_J \) as the bootstrap approximation.
\end{enumerate}
An alternative adaptive method for choosing $m$ was recently introduced by \cite{dettekroll}, who proposed to replace step (ii)  in this procedure  and works as follows:
\begin{enumerate}
    \item[(ii')] For some metric $d $ consistent with weak convergence let $J$ denote the smallest $j$  minimizing the sum of distances $\sum_{k=1}^K d ({\cal L}_j^*, {\cal L}_{k}^* )$
\end{enumerate}
In the following discussion we compare both methods for the  calculation of  the quantiles of  the test \eqref{test} by  the ``$(m,mp/n)$ out of $(n,p)$  bootstrap'', where we use the Kolmogorov  distance as metric $d$ and choose  $\psi=0.75$ and $K=30$ (starting in step (i) with $j=10$) .

We first generate   $p+1$-dimensional vectors $X_i=(X_{i1}, \ldots , X_{ip+1})^\top$ with independent identically distributed entries.  From these vectors we calculate the data $Y_i=(Y_{i1}, \ldots , Y_{ip})^\top$, where  for $r \in \{0, 1, \ldots , p \} $
\begin{equation} \label{simmod1}
 Y_{ij} = 
 \begin{cases}
     a X_{ij+1} + b X_{ij} & \text {  if  } j=1 , \ldots  ,   r  \\ 
     X_{ij+1}  & \text {  if  } j= r
     +1 , \ldots , p
 \end{cases}   
\end{equation}
and the constants $a \geq 0 $ and $b \geq 0 $ are chosen such that   $\text{Var} (Y_{ij} )= a^2+b^2 =1$  and $\text{Cov} (Y_{ij-1}, Y_{ij} ) = \rho$, that is
$$
a^2= \frac{1+\sqrt{1-4\rho^2}}{2}, \quad  b^2 = \frac{\rho^2}{a^2} = \frac{1-\sqrt{1-4\rho^2}}{2}.
$$
This means that the population covariance matrix $\Sigma_n$ is a tri-diagonal matrix with diagonal elements given by  $1$ and $r $ entries equal to $\rho $ on the first off-diagonal (all other elements are $0$).  Note that the case $r=0 $ corresponds to the null hypothesis in \eqref{data1}. We choose $\rho=0.05$ and for the distribution of the random variables $X_{ij}$ we use a standard normal distribution and a $\chi^2$ with $20$ degrees of freedom normalized such that it has expectation $0$ and variance $1$. Note that by  this case $\E [X_{ij}^4] = {18 \over 5}  \not = 3$. 

The simulated rejection probabilities of the test \eqref{test}, which have been calculated by $1000$  simulation runs and $500$ bootstrap replications,   are displayed in Table \ref{tab1} for different values of $r$, where the nominal level is $\alpha=0.05$. The two selection rules of \cite{BickelSakov2008}
 and \cite{dettekroll} for choosing  the size $m$ of the subsample yield  very similar results. From the left  part of the observe a good approximation of the nominal level by the bootstrap test \eqref{test} for   the normal distribution.  The test also exhibits reasonable power under the alternative. For example, if $200$ elements in the first off-diagonal are given by $\rho=0.05 \not =0$, the test rejects in $53.5\%$ of the cases if the method \cite{BickelSakov2008} is used and in $52.1\%$ of the cases if  the method \cite{dettekroll} is used to choose $m$. 

In the right part of Table \ref{tab1} we show the corresponding results  for the $\chi^2$-distribution. The results are qualitatively the same with a slightly worse approximation of the nominal level under the null hypothesis by the method of \cite{dettekroll} (and as a consequence slightly lower power). In particular the bootstrap tests exhibits some robustness against the violation of the assumption $\E [ X_{ij}^4]=3$.

\begin{table}[h]
\begin{center}
\begin{tabular}{c|ccccc||ccccc}
 & \multicolumn{5}{c||}{normal distribution} & \multicolumn{5}{c}{$\chi^2$-distribution} \\$r/p$ & 0\% & 1\% & 2.5\% & 5\% & 10\% &   0\% & 1\% & 2.5\% & 5\% & 10\% \\
\hline
BS & 0.058 & 0.535 & 0.990 & 1.000 &  1.000&  0.046 & 0.440 &  0.979&  1.000&  1.000  \\
DK     & 0.054 & 0.521 &  0.993 & 1.000 &  1.000 & 0.038  & 0.328  & 0.931  & 1.000  & 1.000    
\end{tabular}
\end{center}
\caption{Empirical rejection probabilities of the bootstrap test \eqref{test}. The size $m$ of the subsample in the ``$(m,mp/n)$ out of $(n,p)$'' bootstrap was chosen by the method of \cite{BickelSakov2008} (BS) and the method of \cite{dettekroll} (DK) and $r/p$ represents the proportion of elements in the first off-diagonal equal to $\rho=0.05\not=0$.
Left part: standard normal distribution. Right part: standardized $\chi^2$-distribution. \label{tab1}}
\end{table}

To our best knowledge, there are two high-dimensional bootstrap methods for linear spectral statistics, which could be considered in a comparison with our approach, namely the ``parametric'' bootstrap proposed by \cite{lopblaaue2019} and   its extension by \cite{wanglopes2022} to elliptical models. As both methods are very similar in spirit and have the same  high computational complexity (see the discussion below), we restrict ourselves to a comparison with the method in \cite{lopblaaue2019}, which is applicable without the assumption of an elliptical model. The following table contrasts the conditions under which bootstrap validity is proved.

\begin{table}[h!]
\begin{center}
\begin{tabular}{|c|c|}
\hline
\cite{lopblaaue2019} & ``$(m,mp/n)$ out of $(n,p)$''  \\
\hline  \hline
$p/n\rightarrow c\in (0,\infty)\setminus\{1\}$& $p/n\rightarrow c>0$\\
\hline
$\mathbb{E} X_{11}=0$, $\mathbb{E} X_{11}^2=1$ &$\mathbb{E} X_{11}=0$, $\mathbb{E} X_{11}^2=1$ \\
\hline
 $\phantom{\overset{,}{I}}\mathbb{E} X_{11}^8<\infty$&\\
 + regularity condition on&$\mathbb{E}X_{1}^4=3$\\
 eigenvectors if  $\mathbb{E}X_{11}^4\not=3$ &\\
\hline
$\limsup_n\Arrowvert\Sigma_n\Arrowvert_{S_{\infty}}<\infty$&$\limsup_n\Arrowvert\Sigma_n\Arrowvert_{S_{\infty}}<\infty$\\
\hline
$\textrm{supp}(H)=$ finite union & Representative\\
of closed intervals& Subpopulation Condition \\
\hline
\end{tabular}
\end{center}
    \caption{Assumptions under which Bootstrap validity is guaranteed.} 
\end{table}

It is not possible to run simulations for the procedure in \cite{lopblaaue2019} with the sample sizes considered in Table \ref{tab1} (even on an HPC  cluster), because their algorithm requires $O\big ( (np^2 + p^3) (B+1) \big ) $ operations. In contrast, our method   scales with  $$
O\big ( (np^2 + p^3)   \big ) + O\big ( (mq^2 + q^3) B   \big )   = O\big ( (np^2 + p^3)   \big )
$$
operations (note that $m = o(\sqrt{n})$). To compare both methods, we consider the model \eqref{simmod1} with normal distributed entries,  smaller sample size  $n=10000$ and  dimension  $p=5000$, where we choose {$\rho=0.1$}. The rejection probabilities of the ``$(m,mp/n)$ out of $(n,p)$'' bootstrap  proposed in this paper and the test of \cite{lopblaaue2019} are  displayed in Table \ref{tab2}. Both procedures provide a reasonable approximation of the nominal level, while the test of \cite{lopblaaue2019} has slightly larger power. In the right column of the table we show the computation time (in seconds) of both procedures for {\bf one} simulation run. We observe that the ``$(m,mp/n)$ out of $(n,p)$'' yields substantial computational savings  although it uses several values of $m$ to identify the appropriate size of the subsample by the method in \cite{dettekroll}. We emphasizes that we used a computation faster version of the bootstrap test of \cite{lopblaaue2019}, where we implemented the statistic \eqref{dsim2} directly instead of its eigenvalue version $ \sum_{i=1}^p \hat \lambda_i^2 - 2 \sum_{i=1}^p \hat \lambda_i + p $. If one use this version,  one run of the bootstrap of \cite{lopblaaue2019} takes about $17.500.000$ second ($\approx 4.7$ hours), while the ``$(m,mp/n)$ out of $(n,p)$'' bootstrap needs $39$ seconds, if it calculates the eigenvalues explicitly. 
\begin{table}[h]
\begin{center}
\begin{tabular}{c|ccccc|c|}
 $r/p$ & 0\% & 1\% & 2.5\% & 5\% & 10\% & time \\
\hline
\cite{lopblaaue2019} & 0.058 & 0.181 & 0.546 & 0.964 &  1.000 & 4938.2 \\
``$(m,mp/n)$ out of $(n,p)$''     & 0.054 & 0.161 &  0.470 & 0.957 &  1.000 & 11.4  
\end{tabular}
\end{center}
\caption{Empirical rejection probabilities the  test \eqref{test}, where  the  quantile is obtained by the  bootstrap procedure proposed in  \cite{lopblaaue2019} and the   ``$(m,mp/n)$ out of $(n,p)$'' bootstrap proposed in this paper  (with $m$ chosen by the  method in \cite{dettekroll}). The sample size is $n=10000$, the dimension is $p=5000$ and 
$r/p$ represents the proportion of elements in the first off-diagonal equal to $\rho=0.1\not=0$. The right column in the table shows the computation time (in seconds) of {\bf one} simulation run. 
\label{tab2}}
\end{table}

\color{black}

\section{Conclusions} \label{sec6a}
  \def\theequation{6.\arabic{equation}}	
	\setcounter{equation}{0}
 
Thinking of the $p$ components $Y_{i,1},\dots Y_{i,p}$ of each observation vector $Y_i$ as data of the same $p$ individuals, our approach originates from the idea  of  selecting a subpopulation which is representative for the full population concerning  the statistics of interest --    here the spectral distribution. {A suitable selection strategy implements prior knowledge or rather a structural model assumption on the state of nature, i.e.~the data generating process}. Building on the so-called  Representative Subpopulation Condition, we have then introduced a fully nonparametric and computationally tractable bootstrap of  high-dimensional sample covariance matrices. This ``$(m,mp/n)$ out of 
$(n,p)$'' bootstrap   provably possesses desirably consistency properties,  which we have exemplarily demonstrated for estimating the spectral distribution itself and for linear spectral statistics.  Besides obvious technical extensions of studying LSS' under less restrictive circumstances, let us conclude with two essential open problems which are left for future work: 
\begin{itemize}
\item[(i)]Our results on the extremal eigenvalues prompt the question whether the approach may even be successful for distributional approximation of the largest eigenvalue. Here, the particularly interesting feature is the phase transition in its limiting behavior, depending on whether some suitably separated spike in the population covariance matrix is present or not, see 
\cite{10.1214/009117905000000233} for the complex  and \cite{Paul2007} for the  real Gaussian case. Although our current mathematical formalization of the Representative Sub\-population Condition  is insensitive for individual eigenvalues, it is worth being investigated if the  ``$(m,mp/n)$ out of $(n,p)$'' bootstrap is successful under this condition when  there are no spikes in the population  spectrum. 

\item[(ii)] What has been essential are the upper bounds $m=o(n)$ (for spectrum consistency) and $m^2=o(n)$ (for LSS' consistency), respectively. However, our {theoretical} results do not provide any guidance on how to choose $m$ in an optimal way so far. Even  if the underlying population covariance matrix is a multiple of the identity such that there is no extra bias in the population spectrum by moving to a subpopulation, the optimal  choice of $m$ is a challenging open problem. The reason is that its investigation  requires sharp quantitative bounds on the distance between the conditional bootstrap and the original distribution, which we have derived so far only in parts.

\end{itemize}

\bigskip

\textbf{ Acknowledgements.} 
This work  was  supported by the  
 DFG Research unit 5381 {\it Mathematical Statistics in the Information Age}, project number 460867398.    The authors would like to thank Nina Dörnemann
 for useful discussions and  Patrick Bastian and Thomas Lam for their help with the numerical examples.
  The authors are particularly grateful to  three  unknown referees and the associate editor for their  constructive comments on an earlier version of this paper, which led to a substantial improvement of our work.

\bibliographystyle{apalike}

\setlength{\bibsep}{2pt}

\bibliography{references}

\begin{thebibliography}{}

\bibitem[Bai, 1999]{bai1999}
Bai, Z. (1999).
\newblock Methodologies in spectral analysis of large dimensional random
  matrices, a review.
\newblock {\em Statistica Sinica}, 9(3):611 -- 677.

\bibitem[Bai and Silverstein, 1998]{baisil1998}
Bai, Z.~D. and Silverstein, J.~W. (1998).
\newblock {No eigenvalues outside the support of the limiting spectral
  distribution of large-dimensional sample covariance matrices}.
\newblock {\em The Annals of Probability}, 26(1):316 -- 345.

\bibitem[Bai and Silverstein, 2004]{baisil2004}
Bai, Z.~D. and Silverstein, J.~W. (2004).
\newblock {CLT} for linear spectral statistics of large dimensional sample
  covariance matrices.
\newblock {\em Annals of Probability}, 32:553--605.

\bibitem[Bai and Silverstein, 2010]{baisilverstein2010}
Bai, Z.~D. and Silverstein, J.~W. (2010).
\newblock {\em Spectral Analysis of Large Dimensional Random Matrices}.
\newblock Springer, New York.

\bibitem[Bai and Yin, 1993]{baiyin1993}
Bai, Z.~D. and Yin, Y.~Q. (1993).
\newblock {Limit of the Smallest Eigenvalue of a Large Dimensional Sample
  Covariance Matrix}.
\newblock {\em The Annals of Probability}, 21(3):1275 -- 1294.

\bibitem[Billingsley, 1968]{billingsley1999}
Billingsley, P. (1968).
\newblock {\em Convergence of probability measures}.
\newblock Wiley Series in Probability and Statistics: Probability and
  Mathematical Statistics. John Wiley \& Sons Inc., New York, second edition.

\bibitem[Li and Mathias, 1999]{limath1999}
Li, C. and Mathias, R. (1999).
\newblock The {L}idskii-{M}irsky-{W}ielandt theorem -- additive and
  multiplicative versions.
\newblock {\em Numer. Math.}, 81:377--413.

\bibitem[Riordan, 1958]{riordan1958}
Riordan, J. (1958).
\newblock {\em An Introduction to Combinatorial Analysis}.
\newblock Wiley, New York.

\bibitem[Silverstein and Bai, 1995]{silversteinbai1995}
Silverstein, J. and Bai, Z. (1995).
\newblock On the empirical distribution of eigenvalues of a class of large
  dimensional random matrices.
\newblock {\em Journal of Multivariate Analysis}, 54(2):175--192.

\bibitem[Silverstein, 1995]{silverstein1995}
Silverstein, J.~W. (1995).
\newblock Strong convergence of the empirical distribution of eigenvalues of
  large-dimensional random matrices.
\newblock {\em Journal of Multivariate Analysis}, 55:331--339.

\bibitem[Yin et~al., 1988]{yinbaikri1988}
Yin, Y.~Q., Bai, Z.~D., and Krishnaiah, P.~R. (1988).
\newblock On the limit of the largest eigenvalue of the large dimensional
  sample covariance matrix.
\newblock {\em Probability Theory and Related Fields}, 78(4):509--521.

\bibitem[Zou et~al., 2022]{zou2022clt}
Zou, T., Zheng, S., Bai, Z., Yao, J., and Zhu, H. (2022).
\newblock Clt for linear spectral statistics of large dimensional sample
  covariance matrices with dependent data.
\newblock {\em Statistical Papers}, 63(2):605--664.

\end{thebibliography}


\begin{thebibliography}{}

\bibitem[Anderson, 2003]{anderson2003}
Anderson, T.~W. (2003).
\newblock {\em Multivariate Statistical Analysis}.
\newblock John Wiley \& Sons, New York.

\bibitem[Bai et~al., 1988]{BAI1988166}
Bai, Z., Silverstein, J.~W., and Yin, Y. (1988).
\newblock A note on the largest eigenvalue of a large dimensional sample
  covariance matrix.
\newblock {\em Journal of Multivariate Analysis}, 26(2):166--168.

\bibitem[Bai and Silverstein, 1998]{baisil1998}
Bai, Z.~D. and Silverstein, J.~W. (1998).
\newblock {No eigenvalues outside the support of the limiting spectral
  distribution of large-dimensional sample covariance matrices}.
\newblock {\em The Annals of Probability}, 26(1):316 -- 345.

\bibitem[Bai and Silverstein, 2004]{baisil2004}
Bai, Z.~D. and Silverstein, J.~W. (2004).
\newblock {CLT} for linear spectral statistics of large dimensional sample
  covariance matrices.
\newblock {\em Annals of Probability}, 32:553--605.

\bibitem[Bai and Silverstein, 2010]{baisilverstein2010}
Bai, Z.~D. and Silverstein, J.~W. (2010).
\newblock {\em Spectral Analysis of Large Dimensional Random Matrices}.
\newblock Springer, New York.

\bibitem[Bai and Yin, 1993]{baiyin1993}
Bai, Z.~D. and Yin, Y.~Q. (1993).
\newblock {Limit of the Smallest Eigenvalue of a Large Dimensional Sample
  Covariance Matrix}.
\newblock {\em The Annals of Probability}, 21(3):1275 -- 1294.

\bibitem[Baik et~al., 2005]{10.1214/009117905000000233}
Baik, J., Arous, G.~B., and P{\'e}ch{\'e}, S. (2005).
\newblock {Phase transition of the largest eigenvalue for nonnull complex
  sample covariance matrices}.
\newblock {\em The Annals of Probability}, 33(5):1643 -- 1697.

\bibitem[Beran and Srivastava, 1985]{beran1985}
Beran, R. and Srivastava, M.~S. (1985).
\newblock Bootstrap tests and confidence regions for functions of a covariance
  matrix.
\newblock {\em Ann. Statist.}, 13(1):95--115.

\bibitem[Bickel et~al., 1997]{bickel1997}
Bickel, P., G{\"{o}}tze, F., and van Zwet, W. (1997).
\newblock Resampling fewer than $n$ observations: gains, losses, and remedies
  for losses.
\newblock {\em Statistica Sinica}, 7(1):1--31.

\bibitem[Bickel and Sakov, 2008]{BickelSakov2008}
Bickel, P.~J. and Sakov, A. (2008).
\newblock On the choice of $m$ in the $m$ of $n$ bootstrap and confidence
  bounds for extrema.
\newblock {\em Statistica Sinica}, 18(3):967--985.

\bibitem[Billingsley, 1968]{billingsley1999}
Billingsley, P. (1968).
\newblock {\em Convergence of probability measures}.
\newblock Wiley Series in Probability and Statistics: Probability and
  Mathematical Statistics. John Wiley \& Sons Inc., New York, second edition.

\bibitem[Brockwell and Davis, 1998]{BrockwellDavis}
Brockwell, P. and Davis, R. (1998).
\newblock {\em Time Series: Theory and Methods}.
\newblock Springer.

\bibitem[Chatterjee and Ledoux, 2009]{chatterjee2009}
Chatterjee, S. and Ledoux, M. (2009).
\newblock An observation about submatrices.
\newblock {\em Electron. Commun. Probab.}, 14:495--500.

\bibitem[Delvaux, 2012]{delvaux2012}
Delvaux, S. (2012).
\newblock Equilibrium problem for the eigenvalues of banded block toeplitz
  matrices.
\newblock {\em Mathematische Nachrichten}, 285(16):1935--1962.

\bibitem[Dette and Kroll, 2024]{dettekroll}
Dette, H. and Kroll, M. (2024).
\newblock A simple bootstrap for chatterjee’s rank correlation.
\newblock {\em Biometrika}, 112(1):asae045.

\bibitem[Ding et~al., 2023]{ding2023extreme}
Ding, X., Xie, J., Yu, L., and Zhou, W. (2023).
\newblock Extreme eigenvalues of sample covariance matrices under generalized
  elliptical models with applications.

\bibitem[D{\"{u}}mbgen, 1993]{duembgen1993}
D{\"{u}}mbgen, L. (1993).
\newblock On nondifferentiable functions and the bootstrap.
\newblock {\em Probability Theory and Related Fields}, 95:125--140.

\bibitem[Duran et~al., 1999]{durlopsaff1999}
Duran, A., Lopez-Rodriguez, P., and Saff, E. (1999).
\newblock Zero asymptotic behaviour for orthogonal matrix polynomials.
\newblock {\em Journal d'Analyse Math{\'{e}}matique}, 78:37--60.

\bibitem[El~Karoui, 2008]{elkaroui2008}
El~Karoui, N. (2008).
\newblock Spectrum estimation for large dimensional covariance matrices using
  random matrix theory.
\newblock {\em Annals of Statistics.}, 36(6):2757--2790.

\bibitem[El~Karoui and Purdom, 2016]{karpur2016}
El~Karoui, N. and Purdom, E. (2016).
\newblock The bootstrap, covariance matrices and {PCA} in moderate and
  high-dimensions.
\newblock {\em arXiv:1608.00948}.

\bibitem[El~Karoui and Purdom, 2019]{karpur2019}
El~Karoui, N. and Purdom, E. (2019).
\newblock The non-parametric bootstrap and spectral analysis in moderate and
  high-dimension.
\newblock In Chaudhuri, K. and Sugiyama, M., editors, {\em Proceedings of the
  Twenty-Second International Conference on Artificial Intelligence and
  Statistics}, volume~89 of {\em Proceedings of Machine Learning Research},
  pages 2115--2124. PMLR.

\bibitem[Fan and Johnstone, 2019]{fan2019}
Fan, Z. and Johnstone, I.~M. (2019).
\newblock Eigenvalue distributions of variance components estimators in
  high-dimensional random effects models.
\newblock {\em Ann. Statist.}, 47(5):2855--2886.

\bibitem[Grenander and Szego, 1958]{gresze1958}
Grenander, U. and Szego, G. (1958).
\newblock {\em Toeplitz Forms and their Applications}.
\newblock University of California Press, Berkeley and Los Angeles.

\bibitem[Hall et~al., 2009]{Halletal2003}
Hall, P., Lee, Y.~K., Park, B.~U., and Paul, D. (2009).
\newblock Tie-respecting bootstrap methods for estimating distributions of sets
  and functions of eigenvalues.
\newblock {\em Bernoulli}, 15(2):380--401.

\bibitem[Han et~al., 2018]{HanXoZhou2018}
Han, F., Xu, S., and Zhou, W.-X. (2018).
\newblock {On Gaussian comparison inequality and its application to spectral
  analysis of large random matrices}.
\newblock {\em Bernoulli}, 24(3):1787 -- 1833.

\bibitem[Johnstone, 2006]{johnstone2006}
Johnstone, I.~M. (2006).
\newblock High dimensional statistical inference and random matrices.
\newblock {\em in Proc. International Congress of
  Mathematicians,2006,http://arxiv.org/abs/math/0611589}.

\bibitem[Jurczak and Rohde, 2017]{JurczakRohde}
Jurczak, K. and Rohde, A. (2017).
\newblock {Spectral analysis of high-dimensional sample covariance matrices
  with missing observations}.
\newblock {\em Bernoulli}, 23(4A):2466 -- 2532.

\bibitem[Ledoit and Wolf, 2002]{ledoitwolf2022}
Ledoit, O. and Wolf, M. (2002).
\newblock {Some hypothesis tests for the covariance matrix when the dimension
  is large compared to the sample size}.
\newblock {\em The Annals of Statistics}, 30(4):1081 -- 1102.

\bibitem[Lopes et~al., 2019]{lopblaaue2019}
Lopes, M.~E., Blandino, A., and Aue, A. (2019).
\newblock {Bootstrapping spectral statistics in high dimensions}.
\newblock {\em Biometrika}, 106(4):781--801.

\bibitem[Mar{\v{c}}enko and Pastur, 1967]{Marcenko1967}
Mar{\v{c}}enko, V.~A. and Pastur, L.~A. (1967).
\newblock Distribution of eigenvalues for some sets of random matrices.
\newblock {\em Mathematics of the {USSR}-Sbornik}, 1(4):457--483.

\bibitem[Najim and Yao, 2016]{najimyao2016}
Najim, J. and Yao, J. (2016).
\newblock Gaussian fluctuations for linear spectral statistics of large random
  covariance matrices.
\newblock {\em Annals of Applied Probability.}, 26(3):1837--1887.

\bibitem[Paul, 2007]{Paul2007}
Paul, D. (2007).
\newblock Asymptotics of sample eignestructure for a large dimensional spiked
  covariance model.
\newblock {\em Statistica Sinica}, 17:1617--1642.

\bibitem[Politis and Romano, 1994]{PolitisRomano1994}
Politis, D.~N. and Romano, J.~P. (1994).
\newblock {Large sample confidence regions based on subsamples under minimal
  assumptions}.
\newblock {\em The Annals of Statistics}, 22:2031 -- 2050.

\bibitem[Silverstein, 1995]{silverstein1995}
Silverstein, J.~W. (1995).
\newblock Strong convergence of the empirical distribution of eigenvalues of
  large-dimensional random matrices.
\newblock {\em Journal of Multivariate Analysis}, 55:331--339.

\bibitem[Wang and Lopes, 2023]{wanglopes2022}
Wang, S. and Lopes, M.~E. (2023).
\newblock {A bootstrap method for spectral statistics in high-dimensional
  elliptical models}.
\newblock {\em Electronic Journal of Statistics}, 17(2):1848 -- 1892.

\bibitem[Yao and Lopes, 2022]{yao2022rates}
Yao, J. and Lopes, M.~E. (2022).
\newblock Rates of bootstrap approximation for eigenvalues in high-dimensional
  pca.
\newblock {\em arXiv preprint arXiv:2208.03050}.

\bibitem[Yin et~al., 1988]{yinbaikri1988}
Yin, Y.~Q., Bai, Z.~D., and Krishnaiah, P.~R. (1988).
\newblock On the limit of the largest eigenvalue of the large dimensional
  sample covariance matrix.
\newblock {\em Probability Theory and Related Fields}, 78(4):509--521.

\bibitem[Yu et~al., 2024]{yuzhaozhou2024}
Yu, L., Zhao, P., and Zhou, W. (2024).
\newblock Testing the number of common factors by bootstrapped sample
  covariance matrix in high-dimensional factor models.
\newblock {\em Journal of the American Statistical Association}, 0(0):1--12.

\bibitem[Zou et~al., 2022]{zou2022clt}
Zou, T., Zheng, S., Bai, Z., Yao, J., and Zhu, H. (2022).
\newblock Clt for linear spectral statistics of large dimensional sample
  covariance matrices with dependent data.
\newblock {\em Statistical Papers}, 63(2):605--664.

\end{thebibliography}

\newpage

\appendix

\begin{center}
    {\sc \bf Supplement to : Computationally tractable nonparametric bootstrap of high-dimensional 
    sample covariance matrices}
\end{center}

\section{Preliminaries and further notation}
  \def\theequation{A.\arabic{equation}}	
	\setcounter{equation}{0}
\label{sec4} For technical convenience,  we will assume throughout this supplement that 
the sequence of spectral measures $(\mu^{\Sigma_{n}})$ is weakly convergent, that is,
\begin{align} \label{det108}
\mu^{\Sigma_{n}}\Rightarrow H\ \ \text{as}\ n \rightarrow\infty
\end{align}
 for some  limiting distribution $H$.
 Note that this does not impose any further restriction on our results, because
  Assumption (A1) implies 
tightness of the sequence 
$(\mu^{\Sigma_{n}})$  such that we can restrict attention in all proofs to weakly convergent subsequences anyway.  

\medskip

Recall that our model \eqref{eq: Annahme_an_Y} extends the classical setting with observation vectors $Y_i=\Sigma_n^{1/2}X_i$, $i=1,\dots, n$. The next lemma states  that the well-known MP-limit for the spectrum of the sample covariance matrix remains valid in model \eqref{eq: Annahme_an_Y}.

\begin{lemma} 
\label{prop1} 
Let $\Sigma_n=A_nA_n^\top $ and $\sup_{n \in \mathbb{N} } \| A_n\|_{S_\infty} < \infty$, then
$$
\mu^{\widehat\Sigma_n}-\mu^{0}_{\Sigma_n}\Rightarrow 0\ \ \ \text{almost surely,}
$$
where $\mu^{0}_{\Sigma_n}$ is the measure corresponding to the solution of the 
MP-equation \eqref{hd10}  for $\gamma =p/n $ and $H=\mu^{\Sigma_n}$.
\end{lemma}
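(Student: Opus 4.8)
\textbf{Proof plan for Lemma \ref{prop1}.}

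The plan is to reduce the general model $Y_i = A_n X_i$ with $A_n$ a $p\times\infty$ matrix to the classical setting via a truncation/approximation argument on the columns of $A_n$, and then invoke the known Mar\v{c}enko--Pastur result. First I would fix $\varepsilon>0$ and split $A_n = A_n^{(K)} + A_n^{(>K)}$, where $A_n^{(K)}$ keeps only the first $K$ columns of $A_n$ and $A_n^{(>K)}$ collects the remaining ones. Since the rows of $A_n$ are square summable and $\sup_n \|A_n\|_{S_\infty}<\infty$, one can choose $K=K(\varepsilon)$ (depending on $n$ a priori, but uniformly controllable via the tightness of $(\mu^{\Sigma_n})$) so that $\frac1p\|A_n^{(>K)}\|_{S_2}^2 = \frac1p\,\mathrm{tr}\big(A_n^{(>K)}A_n^{(>K)\top}\big)$ is small. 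The corresponding sample covariance matrix $\widehat\Sigma_n$ then differs from the one built from $Y_i^{(K)} = A_n^{(K)}X_i$ by a matrix whose expected normalized trace norm is small; a rank/Hoffman--Wielandt or Bai--Yin type perturbation inequality for the Lévy distance between spectral measures (e.g. $L^3(\mu^A,\mu^B)^3 \le \frac1p\mathrm{tr}(A-B)(A-B)^\top \cdot \frac1p\mathrm{tr}(A+B)^2$-style bounds) shows that the Lévy distance between $\mu^{\widehat\Sigma_n}$ and $\mu^{\widehat\Sigma_n^{(K)}}$ is $o_{\P}(1)$ up to a term controlled by $\varepsilon$.

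Next I would handle the finite-column case: for fixed truncation level $K$, $Y_i^{(K)} = A_n^{(K)} X_i$ where now only finitely many coordinates of $X_i$ enter, and $A_n^{(K)}$ is a genuine $p\times K$ matrix. Writing $B_n := A_n^{(K)}$, we have $\widehat\Sigma_n^{(K)} = \frac1n B_n \big(\sum_i X_i^{(K)}X_i^{(K)\top}\big) B_n^\top$ with $X_i^{(K)}\in\R^K$ having iid mean-zero, unit-variance entries. This is exactly (a rescaled version of) the classical separable-covariance / generalized Mar\v{c}enko--Pastur model, for which the Stieltjes-transform limit is known \citep{silverstein1995,zou2022clt}; the LSD is the measure whose Stieltjes transform solves \eqref{hd10} with $H$ replaced by the limit of $\mu^{B_nB_n^\top}$. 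One then checks that $\mu^{B_nB_n^\top} = \mu^{A_n^{(K)}A_n^{(K)\top}}$ converges, as $K\to\infty$ after $n\to\infty$, to $\mu^{\Sigma_n}$'s limit $H$, using again $\frac1p\mathrm{tr}(A_n A_n^\top - A_n^{(K)}A_n^{(K)\top}) = \frac1p\|A_n^{(>K)}\|_{S_2}^2 \to 0$. By continuity of the MP map $H\mapsto \mu_H^0$ with respect to weak convergence (standard, since the defining equation \eqref{hd10} depends continuously on $H$), $\mu^0_{\Sigma_n^{(K)}} - \mu^0_{\Sigma_n} \Rightarrow 0$.

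Finally I would assemble the pieces by a triangle-inequality argument in the Lévy metric: $L(\mu^{\widehat\Sigma_n},\mu^0_{\Sigma_n}) \le L(\mu^{\widehat\Sigma_n},\mu^{\widehat\Sigma_n^{(K)}}) + L(\mu^{\widehat\Sigma_n^{(K)}},\mu^0_{\Sigma_n^{(K)}}) + L(\mu^0_{\Sigma_n^{(K)}},\mu^0_{\Sigma_n})$, where the first and third terms are made small by choosing $K$ large (uniformly in $n$, thanks to (A1) and the tightness remark), and the middle term vanishes almost surely as $n\to\infty$ by the classical result. A diagonalization over a sequence $\varepsilon\downarrow 0$ upgrades this to the stated almost sure convergence. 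The main obstacle I anticipate is making the truncation in $K$ uniform in $n$: a priori $A_n$ changes with $n$ and square-summability of rows is only a per-$n$ statement, so one must exploit $\sup_n\|A_n\|_{S_\infty}<\infty$ together with the tightness of $(\mu^{\Sigma_n})$ (equivalently, uniform integrability of the diagonal of $\Sigma_n$) to get a single $K(\varepsilon)$ that works along the relevant subsequence; this is also exactly where the reduction to weakly convergent subsequences noted after \eqref{det108} is used. The perturbation bound in the Lévy metric and the continuity of the MP map are routine and I would not belabor them.
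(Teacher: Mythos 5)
Your plan diverges substantially from the paper's proof, and it contains a genuine gap at the step you yourself flag as the main obstacle. You want a truncation level $K=K(\varepsilon)$, uniform in $n$, such that $\frac1p\|A_n^{(>K)}\|_{S_2}^2$ is small, and you propose to extract it from $\sup_n\|A_n\|_{S_\infty}<\infty$ together with tightness of $(\mu^{\Sigma_n})$. This cannot work: take $A_n$ whose $i$-th row is the $(i+n)$-th standard basis vector of $\ell^2$. Then $\|A_n\|_{S_\infty}=1$, every row is trivially square summable, and $\Sigma_n=I_p$, so $\mu^{\Sigma_n}=\delta_1$ is as tight and convergent as one could wish — yet for every fixed $K$ one has $A_n^{(K)}=0$ and $\frac1p\|A_n^{(>K)}\|_{S_2}^2=1$ once $n>K$. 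Tightness of the spectral measures sees only $A_nA_n^\top$ and carries no information about how the mass of $A_n$ is distributed across columns, so no uniform $K(\varepsilon)$ exists under (A1). The repair is to let $K_n\to\infty$ with $n$ (possible since $\|A_n\|_{S_2}<\infty$ for each fixed $n$), but then your "finite-column classical case" is a $p\times K_n$ separable model with growing and essentially uncontrolled inner dimension, not the fixed-$K$ Mar\v{c}enko--Pastur model; making Silverstein's theorem apply there requires convergence of $K_n/n$ and of $\mu^{A_n^{(K_n)\top}A_n^{(K_n)}}$ along subsequences, at which point you have reconstructed most of the general result you were trying to avoid. A second, smaller issue: the lemma asserts almost sure convergence, and your perturbation step controls $\frac{1}{pn}\tr\big(A_n^{(>K)}\mathbb{X}\mathbb{X}^\top A_n^{(>K)\top}\big)$ only in expectation under just two moments on $X_{11}$; upgrading that to an almost sure statement for a triangular array is not routine.

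For comparison, the paper's proof is a one-step reduction: Theorem 1 of \cite{zou2022clt} already covers the model $Y_i=A_nX_i$ with $A_n$ of size $p\times\infty$, so all that must be checked is their Lindeberg-type condition. The key observation is that the SVD of $A_n$ gives $\max_i\|a_i\|\le\|A_n\|_{S_\infty}$ for the columns $a_i$, whence
\begin{align*}
\frac{1}{np\eta^2}\sum_{i}\sum_{j=1}^n\|a_i\|^2\,\E\big[|X_{ij}|^2\mathds{1}\{\|a_i\||X_{ij}|>\sqrt n\,\eta\}\big]
\le \frac{1}{\eta^2}\,\frac{\|A_n\|_{S_2}^2}{p}\,o(1)=o(1),
\end{align*}
using $\frac1p\|A_n\|_{S_2}^2=\frac1p\tr\Sigma_n\le\sup_n\|A_n\|_{S_\infty}^2$. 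If you are willing to cite \cite{zou2022clt} (as you do in your second paragraph), the entire truncation apparatus is unnecessary; if you are not, the truncation must be $n$-dependent and the argument becomes considerably heavier than sketched.
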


\begin{proof}
Assume first that the matrix $A_n$ has $k < \infty$ non-vanishing columns, let $A_{n}=(a_1\ldots, a_k)= UDV^\top  \in \mathbb{R}^{p \times k} $ be  the  singular value decomposition  of  the matrix $A_{n}$ and let $e_i\in \mathbb{R}^k$ 
 denote 
 the  $i$th unit vector, then we obtain for the $i$th column
$    a_i=A_{n}e_i =UDV^\top e_i $   of $A_{n}$. This implies   
\begin{align*}
    \| a_i\| ^2
    = e_i^\top  ~VDU^\top ~ UDV^\top e_i
    = e^\top _i V D^2 V^\top  e_i = \sum^d_{j=1} d^2_i v^2_{ij}~,
\end{align*}
where $(v_{i1}, \ldots, v_{ip})^\top $ is the $i$th column  of the matrix  $V$ and $d_{1} \geq  d_{2}  \geq \ldots  \geq 0 $  are the singular values of the matrix $A_{n}$.
Note that  $\sum^p_{j=1} v_{ij}^2 = 1$, which implies  $  \| a_i\| ^2 \leq d_{1}$.
Consequently, if   $d_1 = \|  A_{n}\| _{S_\infty} < \infty $  it follows that  
\begin{align*}
    \max^k_{i=1}\| a _i\|  \leq d_{1} < \infty. 
\end{align*}
This statement is also correct in the case $k=\infty$ (just use a truncation and a limiting argument). 
With this inequality 
we obtain for  any  $\eta >0$  
\begin{align*}
   &  \frac{1}{np\eta^2}~\sum^k_{i=1} \sum^n_{j=1} \|  a_i \| ^2_2~\mathbb{E}\left[|X_{ij}|^2I\{\| a_i\|  | X_{ij}|>\sqrt{n}\eta\} \right] \\
    &  \qquad \quad \leq \frac{1}{p\eta^2} \sum^k_{i=1} \| a_i\| ^2 \mathbb{E} \left[ |X_{i1}|^2 I\{\| A_n \| _{S_\infty} | X_{i1}|>\sqrt{n} \eta~\right\}]\\
    & \qquad \quad  = \frac{1}{\eta^2} \frac{1}{p} \| A_n\| _{S_2}^2 ~o(1) = o(1), 
\end{align*}
where we have used the fact that $p/n\rightarrow c$. Consequently 
the Lindeberg-type condition  in \citeSM{zou2022clt}  is satisfied, and the assertion follows from their Theorem 1.
\end{proof}

\subsection{Further notation} \label{sec51} 
In the following discussion, $\mathbb{E}$
denotes the expectation, $\mathbb{E}_X$
the expectation with respect to $X_1, \ldots , X_n$,  $\mathbb{E}_{\Pi_n} $
with respect to $\Pi_n $ (note that $X$ and $\Pi_n$ are independent),  
$\mathbb{E}^*$ denotes the expectation with respect to the (random) measure 
$\hat{\mathbb{P}}_n^{\otimes m} $ and   
$\mathbb{E}^*_j$ is the conditional expectation 
 operator corresponding to $\hat{\P}_n$ with respect to the $\sigma$-field generated by $X_1^*,\dots, X_j^*$.
Subsequently, $q$ equals $\lfloor m p/n\rfloor$. We will also frequently  make use of the abbreviations 
\begin{align}
r_j^*&=\frac{1}{\sqrt{m}} L_n X_{j}^*\in \mathbb{R}^q\\
D^*(z)&=\sum_{j=1}^mr_j^*r_j^{*^\top}-zI_q \in \mathbb{R}^{q \times q}
\label{ang2} \\
D_j^*(z)&=D^*(z)-r_j^*r_j^{*^\top}\\
\label{ang3}
\varepsilon_j^*(z) &=r_j^{*^\top}{D_j^{*}}(z)^{-1}r_j^*-\frac{1}{m}\tr\Big( L_n^\top {D_j^*}(z)^{-1}L_n\Big)\\
{\delta_j^*(z)}
&=r_j^{*^\top}{D_j^*}(z)^{-2}r_j^*-\frac{1}{m}\tr\Big( L_n^{^\top}{D_j^*}(z)^{-2} L_n\Big)=\frac{\dd}{\dd z}\varepsilon_j^*(z)\\
\label{ar11a}
\beta_j^*(z)&=\frac{1}{1+r_j^{*^\top}{D_j^*}(z)^{-1}r_j^*}\\
\bar{\beta}_j^*(z)&=\frac{1}{1+m^{-1}\tr\big(L_n L_n^{^\top}{D_j^*}(z)^{-1}\big)}\\
b_n^*(z)&=\frac{1}{1+m^{-1}\E^*\tr\big(
L_nL_n^\top{D_1^*}(z)^{-1}\big)}.
\\
\label{ar11}
D_{ij}^*(z)&=D_j^*(z)-r_i^* {r_i^{*}}^\top \\
\label{ar20}
\beta_{ij}^*(z)&=\frac{1}{1+ {r_i^{*}}^\top D_{ij}^*(z)^{-1}r_i^*}\\
\label{ar13}
b_{1}^*(z)
&{=\frac{1}{1+m^{-1}\E^*\tr \big( L_n L_n^\top {D_{12}^*}(z)^{-1}\big)}} \\
\label{ang3a}
\varepsilon_{ij}^*(z) &=r_j^{*^\top}{D_{ij}^{*}}(z)^{-1}r_j^*-\frac{1}{m}\tr\Big( L_n^\top {D_{ij}^*}(z)^{-1}L_n\Big) \\ 
\label{ar20a}
\bar \beta_{ij}& =\frac{1}{1+m^{-1}\tr\big(L_n L_n^\top{D_{ij}^*}(z)^{-1}\big)} \\
\label{gamma1} 
\gamma_j^* (z) &=   {1 \over \beta_1^* (z) } - {1 \over b_n^*(z)}  =  r_j^{*^\top}{D_j^*}(z)^{-1}r_j^* -  {1 \over m} \E^*\tr\big(
L_nL_n^\top{D_j^*}(z)^{-1}\big) 
\end{align} 

\medskip
For any real-valued bounded function $f$, its supremum norm is denoted by $\Arrowvert f\Arrowvert_{\sup}$. If $f$ is defined on some metric space $(X,d_X)$ and Lipschitz in addition, then its bounded-Lipschitz norm is defined as
$\Arrowvert f\Arrowvert_{BL}=\max\big(\Arrowvert f\Arrowvert_{\sup},\Arrowvert f\Arrowvert_L\big)$ with
$$
\Arrowvert f\Arrowvert_L:=\sup_{x\not= y}\frac{\arrowvert f(x)-f(y)\arrowvert }{d_X(x,y)}.
$$
Correspondingly, we write $BL:=\big\{f:X\rightarrow \R\big\arrowvert\, \Arrowvert f\Arrowvert_{BL}<\infty\big\}$ for the  space of bounded Lipschitz functions. With slight abuse of notation, $d_{BL}$ denotes the dual bounded-Lipschitz metric on the space of probability measures on $(X,\mathcal{B}(\mathcal{X}))$, i.e.
\begin{align}\label{eq: dBL}
    d_{BL}(\mu,\nu)&:=\sup\bigg\{\int fd\mu-\int fd\nu:\ \Arrowvert f\Arrowvert_{BL}\leq 1\bigg\}.
    \end{align}
    If $(X,d_X)$ is separable, then $d_{BL}$ metrizes weak convergence for probability measures on $(X,\mathcal{B}(\mathcal{X}))$. 
    On the space of probability measures on $(\R,\mathcal{B}(\R))$, recall  furthermore    the Kolmogorov metric $d_K$ and the L\'evy metric $d_L$, given by
\begin{align*}
d_K(\mu,\nu)&:=\big\Arrowvert \mu((-\infty,\cdot])-\nu((-\infty,\cdot])\big\Arrowvert_{\sup}
\end{align*} 
and    \begin{align*}
d_L(\mu,\nu)&:=\Big\{\varepsilon>0\Big\arrowvert \mu((-\infty,x-\varepsilon])-\varepsilon\leq \nu((-\infty,x])\leq\mu((-\infty,x+\varepsilon])+\varepsilon\ \text{for all }x\in\R\Big\},
    \end{align*} 
respectively. We will frequently make use of the well-known relation $d_L\leq d_K$. Finally, $C$ and $K$ denote numerical constants which do not depend on the variable parameters in the respective expressions unless explicitly indicated. Their value may change from line to line.

\section{Proof of  Theorem \ref{thm: Boot1}} 
\label{sec5} 
  \def\theequation{B.\arabic{equation}}	
	\setcounter{equation}{0}

\subsection{Reduction to $L_n$} \label{sec52} 
In this subsection we will prove that  we  can replace the matrix $\Pi_n A_n$ in \eqref{decompproj}
by the matrix $L_n$. Moreover, we also show that we can restrict ourselves to centered and standardized random vectors with uniformly bounded iid components.

With
$\widehat \Sigma_{n,L_n}^*:=m^{-1} L_n X^*X^{*\top} L_n^\top $, where $ L_n X^* = (L_nX^*_1, \ldots ,  L_nX^*_m) \in \mathbb{R}^{q \times m} $
we shall prove that
\begin{equation}
    \label{hd65}
d_{BL}\left(\mu^{\widehat \Sigma_n^*},\mu^{\widehat \Sigma_{n,L_n}^*}\right)=  o_{\mathbb{P}} (1). 
\end{equation}
To this aim, note first that by the definition of the dual bounded Lipschitz metric and inequality (1.2) in \citeSM{limath1999}, 
$$
d_{BL}\left(\mu^{\widehat \Sigma_n^*},\mu^{\widehat \Sigma_{n,L}^*}\right)\leq\frac{1}{q}\left\|\frac{1}{m}
L_nX^*X^{*\top} L_n^\top - \widehat \Sigma_n^* \right\|_{S_1}.
$$
Next, 
\begin{eqnarray*}
  \left\|  \frac{1}{m} L_n X^*X^{*\top} L_n^\top  - \widehat \Sigma_n^* \right\|_{S_1}  \leq &  A_1 + A_2 + A_3
\end{eqnarray*}
with
\begin{eqnarray*}
  A_1 &=& \frac {1}{m} \big \|  L_n X^* X^{*\top}  R_n^\top  \big \|_{S_1} \\
  A_2 &=& \frac {1}{m} \big \|  R_n X^* X^{*\top} L^\top _n \big \|_{S_1} \\
  A_3 &=& \frac {1}{m} \big \|  R_n X^* X^{*\top}  R_n^\top  \big \|_{S_1}   
\end{eqnarray*}
and $L_n$ and $R_n$ are defined by the decomposition \eqref{decompproj}.
We now show that $A_1$, $A_2$ and $A_3$ are of order $o_{\mathbb{P}}(q)$ starting with $A_3$. By the Cauchy-Schwarz inequality for Schatten norms,
\begin{align*}
    \frac{1}{m}\E\Arrowvert  R_n X^* X^{*\top}  R_n^\top \Arrowvert_{S_1}&\leq \frac{1}{m}\E\Big(\Arrowvert R_n X^*\Arrowvert_{S_2}\Arrowvert X^{*\top} R_n^\top \Arrowvert_{S_2}\Big)
   \\&
    =\frac{1}{m}\E\tr\left( R_n X^* X^{*\top}  R_n^\top \right)
    = \E_{\Pi_n} \big [
    \Arrowvert  R_n\Arrowvert_{S_2}^2 
    \big ] = o(q),
\end{align*}
where we used Condition  \ref{def: rsc}.
As concerns $A_1$ and $A_2$, we obtain similarly
\begin{align*}
    \frac{1}{m}\E\Arrowvert  R_n X^*X^{*\top}  L^\top _n\Arrowvert_{S_1}&\leq \frac{1}{m}\E\Big(\Arrowvert  R_n  X^*\Arrowvert_{S_2}\Arrowvert X^{*\top}  L^\top _n\Arrowvert_{S_2}\Big)\\
    &=\frac{1}{m}\E^{1/2}\tr\left( R_n  X^* X^{*\top}  R_n^\top \right)\E^{1/2}\tr(  L_nX^*X^{*\top}  L_n^\top )\\
    &= 
    \E_{\Pi_n}^{1/2} \big [ \Arrowvert  R_n\Arrowvert_{S_2}^2 \big ]
    \E_{\Pi_n}^{1/2} \big [
    \Arrowvert  L_n\Arrowvert_{S_2}^2
    \big ] = o(q). 
\end{align*}
Here, we used (A1) and the Representative  Subpopulation Condition \ref{def: rsc} to get
$$
\sup_{n\in\N}\E_{\Pi_n} 
    \Arrowvert  L_n\Arrowvert_{S_2}^2\leq q\cdot\sup_{n\in\N} \E_{\Pi_n} 
    \Arrowvert  \Pi A_n\Arrowvert_{S_\infty}^2 \leq q\cdot\sup_n\Arrowvert A_n\Arrowvert_{\S_{\infty}}^2=\mathcal{O}(q).  
    $$
Combining these estimates yields 
\eqref{hd65}.
Therefore, we will assume in the following discussion  
that, given the random projection $\Pi_n$,
\begin{equation} \label{hol1}
\tilde{\Sigma}_n=\Pi_n \Sigma_n\Pi_n^\top 
=  L_n L_n^\top 
\end{equation}
and correspondingly 
$$
\widehat{\Sigma}_{n}^{*}  = 
{1 \over m } L_nX^*X^{*\top} L_n^\top ~,
$$
 where $L_n$  is a $q \times q'$ matrix 
 satisfying $\| L_n \|_{S_\infty} \leq \alpha < \infty $ (for all $n \in \mathbb{N}$)
 and $X^* $ is an $  q'\times m$ matrix and $q' =  O(q)$.
 Moreover, without loss of generality, we assume in the following discussion that the corresponding matrix $X$ is of dimension $ q'\times n $  and work conditionally on the projection $\Pi_n$. 
 
\subsection{Reduction to  uniformly bounded iid components} \label{sec52aa} 
Note that arguments in this section  do not depend on the projection matrix  $\Pi_n$.
We  now show that without loss of generality, we may assume that the random variables $X_{ij}$ are centered, standardized and bounded.  
To this aim, we will prove in what follows that
\begin{equation}\label{eq: truncation}
 \limsup_{p\rightarrow\infty} \E^* d_L^2 \left(\mu^{\widehat{\Sigma}_{n}^{*}},\mu^{\check\Sigma_n^{*K}}\right)\longrightarrow 0\ \ \text{in probability}
\end{equation}
as $K\rightarrow\infty$, where  $\check\Sigma_n^{*K}=m^{-1}  L_n \check X^*\check X^{*\top} L_n^\top $ is built from the truncated, centered and standardized random variables 
$$
\check{X}_{ij}=\check{X}_{ij}(K)=\frac{X_{ij}\mathds{1}\{\arrowvert X_{jk}\arrowvert \leq K\}-\E X_{ij}\mathds{1}
\{\arrowvert X_{jk}\arrowvert \leq K\}}{\sqrt{\mathrm{Var} ( X_{ij}\mathds{1}
\{\arrowvert X_{jk}\arrowvert \leq K\} ) }}
$$
 for an arbitrary constant $K>0$. 
This is sufficient as it will turn out that the weak limit of $\mu^{\check \Sigma_n^{*K}}$ in probability does not depend on $K$. 
Define the matrices $\tilde X=\tilde X(K)$ and
$\tilde\Sigma_{n}^{*K}=m^{-1}  L_n\tilde X^* \tilde  X^{*\top} L_n^\top  $, where
$$
\tilde X_{ij}=X_{ij}\mathds{1}
\{\arrowvert X_{jk}\arrowvert \leq K\}.
$$
Next, define for any $\delta>0$ the event
$$
\Delta_{j,p,n}=\Big\{\frac{1}{n}\Big\arrowvert \sum_{i=1}^nX_{ij}^2-\E X_{ij}^2\Big\arrowvert\vee \frac{1}{n}\Big\arrowvert \sum_{i=1}^n
\Big ( X_{ij}^2\mathds{1}\{\arrowvert X_{ij}\arrowvert>K\}-\E X_{ij}^2\mathds{1}\{\arrowvert X_{ij}\arrowvert>K\} \Big ) \Big\arrowvert <\delta\Big\}.
$$
With this notation, we introduce the Hermitian matrices
$$
\hat\Sigma_{n}^{*'}=\frac{1}{m}  L_n{X^*}'
{X^*}^{'\top }
 L_n^\top \ \ \text{and}\ \ \Sigma_{n}^{*'}=\frac{1}{m} L_n\tilde {X^*}'
{\tilde {X^*}}^{'\top }
L_n^\top 
$$
with
$
X'_{ij}=X_{ij}\mathds{1}_{\Delta_{j,p,n}} \ \text{and}\  \tilde X'_{ij}=\tilde X_{ij}\mathds{1}_{\Delta_{j,p,n}}.
$
Then
\begin{align}\label{eq: 5.1a}
d_L&\Big(\mu^{\widehat \Sigma_n^*},
\mu^{\check \Sigma_{n}^{*K} }
\Big)\leq d_L\Big(\mu^{\widehat \Sigma_n^*},\mu^{\hat\Sigma_{n}^{*'}}\Big)+d_L\Big(\mu^{\widehat \Sigma_{n}^{*'}},\mu^{\tilde\Sigma_{n}^{*'}}\Big)+d_L
\Big(\mu^{\tilde{\Sigma}_{n}^{*'}},
\mu^{\check \Sigma_{n}^{*K}} \Big)
\end{align}
For  the second term in \eqref{eq: 5.1a}, 
we have by Theorem A.38 in \citeSM{baisilverstein2010}, the Lidskii-Wielandt perturbation bound (1.2) in \citeSM{limath1999},
$
\limsup_n\| L_n\|_{S_{\infty}}\leq \sup_n\Arrowvert A_n\Arrowvert_{S_{\infty}}$ by the Representative Subpopulation Condition \ref{def: rsc}, H\"older's inequality for Schatten norms, and the Cauchy-Schwarz inequality 
\begin{align*}
\E^*d_L^2&\Big(\mu^{ \widehat \Sigma_{n}^{*'}},\mu^{\tilde\Sigma_{n}^{*'}}\Big)\\
&\leq \E^*\frac{1}{q}\sum_{j=1}^q\Big\arrowvert \lambda_i( \widehat \Sigma_{n}^{*'})-\lambda_i(\tilde\Sigma_{n}^{*'})\Big\arrowvert\\
&\leq \E^*\frac{1}{q}\big\Arrowvert \widehat \Sigma_{n}^{*'} -\tilde\Sigma_{n}^{*'}\big\Arrowvert_{S_1}\\
&\leq \E^*\frac{1}{qm}\Big\Arrowvert X^{*'}X^{*'\top   }-\tilde X^{*'}\tilde  X^{*'\top }\Big\Arrowvert_{S_1} 
\Big(\sup_{n \in \mathbb{N}}
\Arrowvert  A_n\Arrowvert_{S_{\infty}}^2+o(1)\Big) \\
&= \E^*\frac{1}{qm}\Big\Arrowvert (X^{*' }-\tilde X^{*' })(X^{*'}-\tilde X^{*' })^\top +(X^{*' }-\tilde X^{*' })\tilde X^{*'\top}+\tilde X^{*' }(X^{*' }-\tilde X^{*' })^\top \Big\Arrowvert_{S_1}
  \\
 &\hspace{90mm}
\cdot\Big(\sup_{n \in \mathbb{N}}
\Arrowvert  A_n\Arrowvert_{S_{\infty}}^2+o(1)\Big)\\
&\leq \E^*\frac{1}{qm}\bigg(\Big\Arrowvert (X^{*' }-\tilde X^{*' })(X^{*' }-\tilde X^{*' })^\top \Big\Arrowvert_{S_1}+2 \big\Arrowvert X^{*' }-\tilde X^{*' } \big\Arrowvert_{S_2}\big\Arrowvert \tilde X^{*' }\big\Arrowvert_{S_2}\bigg)
\\
 &\hspace{90mm}
\cdot\Big(\sup_{n \in \mathbb{N}}
\Arrowvert  A_n\Arrowvert_{S_{\infty}}^2+o(1)\Big)\\
&\leq \frac{1}{qm}\bigg\{\E^*\tr\big((X^{*' }-\tilde X^{*' })(X^{*' }-\tilde X^{*' })^\top \big)\\
&\quad\quad\quad\quad +2\Big(\E^*\tr\big((X^{*' }-\tilde X^{*' })(X^{*' }-\tilde X^{*' })^\top \big)\Big)^{1/2}\big(\E^*\tr\big(\tilde X^{*' }\tilde{X}^{*'\top }\big)^{1/2}\bigg\}
\\
 &\hspace{90mm}
\cdot\Big(\sup_{n \in \mathbb{N}}
\Arrowvert  A_n\Arrowvert_{S_{\infty}}^2+o(1)\Big).
\end{align*}
But with $K'>0$ being a uniform upper bound on $q'/q$  we obtain 
\begin{align*}
\sup_p\frac{1}{qm}\E^*\tr\big((X^{*' }-\tilde X^{*' })(X^{*' }-\tilde X^{*' })^\top \big)&=\sup_p\frac{1}{qm}\sum_{i=1}^m\sum_{j=1}^{q'}\E^*\big(X_{ij}^{*' }-\tilde X_{ij}^{*' })^2\\
&=\sup_p\frac{1}{q}\sum_{j=1}^{q'}\frac{1}{n}\sum_{i=1}^n\big(X_{ij}'-\tilde X_{ij}' )^2\\
&\leq \sup_p\frac{1}{qn}\sum_{j=1}^{q'}\mathds{1}_{\Delta_{j,p,n}}\sum_{i=1}^nX_{ij}^2\mathds{1}\{\arrowvert X_{ij}\arrowvert>K\}\\
&\leq K'\big(\E X_{11}^2\mathds{1}\{\arrowvert X_{11}\arrowvert >K\big\}+\delta\big),
\end{align*}
while
\begin{align*}
\sup_p\frac{1}{qm}\E^*\tr\big(\tilde X^{*' }\tilde{X}^{*'\top }\big)&=\sup_p\frac{1}{qn}\sum_{j=1}^{q'}\mathds{1}_{\Delta_{j,p,n}}\sum_{i=1}^nX_{ij}^2\mathds{1}\{\arrowvert X_{ij}\arrowvert\leq K\}
\leq   K'\big(\E X_{11}^2+2\delta\big).
\end{align*}
Summarizing these calculations, we obtain for the second term in \eqref{eq: 5.1a}  the estimate
\begin{align*}
\limsup_{p \to \infty}
\E^*d_L^2\Big(\mu^{\widehat \Sigma_{n}^{*' }},\mu^{\tilde\Sigma_{n}^{*' }}\Big)
\leq 
&\  
K'
\sup_n \Arrowvert A_n\Arrowvert_{S_{\infty}}^2
\Big \{ 
 \big (\E X_{11}^2\mathds{1}\{\arrowvert X_{11}\arrowvert >K\big\}+\delta \big  )
\\ 
&\ \  + 
2 \big ( 
\E X_{11}^2\mathds{1}\{\arrowvert X_{11}\arrowvert >K\big\}+\delta 
 \big ) ^{1/2}
\big( 
\E X_{11}^2+2\delta  \big)^{1/2} 
\Big \}. 
\end{align*}
For a corresponding estimate of the first term in \eqref{eq: 5.1a} we note that $\P(\Delta_{j,p,n})\rightarrow 1$ as $n\rightarrow\infty$ by the weak law of large numbers. Moreover, for fixed $n$, the value $\P(\Delta_{j,p,n})$ is the same for all $j\in\{1,2,\dots, p\}$. Hence, for sufficiently large $n$,
\begin{align*}
\P\Big(\sum_{j=1}^{q'}\mathds{1}_{\Delta_{j,p,n}^c}\geq \delta q\Big)&\leq \P\Big(\sum_{j=1}^{q'}\big(\mathds{1}_{\Delta_{j,p,n}^c}-\P(\Delta_{j,p,n}^c)\geq \frac{1}{2}\delta q\Big) 
\leq \exp \Big(-\frac{\delta^2q}{2K'}\Big)
\end{align*}
by Hoeffding's inequality. The Borel-Cantelli lemma then reveals
$$
\limsup_{p\rightarrow\infty}\frac{1}{q}\sum_{j=1}^{q'}\mathds{1}_{\Delta_{j,p,n}^c}<\delta
$$
almost surely (with the exceptional set not depending on the sequence of $\Pi_n'$s). Using that $d_L\leq d_K$, where $d_K$ denotes the Kolmogorov distance,  Theorem A.43 of \citeSM{baisilverstein2010}, the inequality $\rank (AB)\leq \min(\rank(A),\rank(B))$ we obtain 
\begin{align*}
\limsup_{p\rightarrow\infty} d_L\Big(\mu^{\widehat \Sigma_n^*},\mu^{\hat\Sigma_{n}^{*' }}\Big)&\leq\limsup_{p\rightarrow\infty} d_K\Big(\mu^{\widehat \Sigma_n^*},\mu^{\hat\Sigma_{n}^{*' }}\Big)\\
&\leq \limsup_{p\rightarrow\infty}\frac{1}{q}\rank \Big(\widehat \Sigma_n^*-\widehat \Sigma_{n}^{*' }\Big)\\
&\leq  \limsup_{p\rightarrow\infty}\frac{1}{q}\rank \Big(X^*X^{*\top}-X^{*' }X^{*'\top }\Big)\\
&\leq \limsup_{p\rightarrow\infty}\frac{1}{q}\sharp\Big\{j\in \{1,\dots, q'\}: \mathds{1}_{\Delta_{j,p,n}^c}=1\Big\}
\leq \delta~~~\text{a.s.} 
\end{align*}
Here the  fourth inequality  follows from
    \begin{align*}
        \text{rank} \Big (X^* X^{*' }-X^{*' } X^{*'\top } \Big ) &=  \text{rank} \Big (\sum^m_{i=1} (X^*_i X^{*\top}_i-X_i^{*' }X_i^{*'\top })\Big )\\
       & =  \text{rank}  \Big ( \sum^n_{i=1} \delta_i \big ({X_i X_i^\top  - {X_i'} X_{i}^{'\top } } \big ) \Big )
    \end{align*}
  where $\delta _i  \in \sharp \{ j \in \{1, \ldots, n \} ~|~X_j^*=X_i \} 
  \in \{ 0,\ldots, m\}$ 
  with $\sum^n_{i=1}, \delta _i=m$ and the fact that the $j$ the row and column of the $q' \times  q'$ matrix $\sum^n_{i=1} \delta_i (X_i X_i^\top  - X_i X_i^{'\top})$ are the $0$-vector  if $\mathds{1}_{\Delta_{j,p,n}^c}=0$.

The third term in \eqref{eq: 5.1a} can be  bounded by $\delta > 0 $ analogously. Summarizing the estimates for the terms on the right-hand side of \eqref{eq: 5.1a}, we obtain 
\begin{align*}
\limsup_{p\rightarrow\infty}&\, \E^*d_L^2\Big(\mu^{\widehat \Sigma_n^*},\mu^{\check \Sigma_{n}^{*K}}\Big)\\
\leq\ & 2\delta + \Big[K'
\sup_n \Arrowvert A_n\Arrowvert_{S_{\infty}}\big(\E X_{11}^2\mathds{1}\{\arrowvert X_{11}\arrowvert >K\big\}+\delta\big)\\
&+ 2 K' \sup_n \Arrowvert A_n\Arrowvert_{S_{\infty}}\big(\E X_{11}^2\mathds{1}\{\arrowvert X_{11}\arrowvert >K\big\}+\delta\big)^{1/2}\big(\E X_{11}^2+2\delta\big)^{1/2}\Big]^{1/2}.
\end{align*}
almost surely. Since $\delta > 0 $ may be chosen arbitrarily small,
it follows that  
\begin{align*}
\limsup_{p\rightarrow\infty}&
\ \E^* d_L^2\Big(\mu^{\widehat \Sigma_n^*},\mu^{\check \Sigma_{n}^{*K}}\Big)
\\& 
\leq K'\sup_n \Arrowvert A_n\Arrowvert_{S_{\infty}}\Big[\E X_{11}^2\mathds{1}\{\arrowvert X_{11}\arrowvert >K\big\}+ 2 \big(\E X_{11}^2\mathds{1}\{\arrowvert X_{11}\arrowvert >K\big\}\big)^{1/2}\Big]
\end{align*}
almost surely. Now, the last expression can be made arbitrarily small for $K$ sufficiently large, independently of the projection $\Pi_n$. Since the centralization of the truncated random variables $\tilde X_{ij}$ leads to a finite rank perturbation of $\tilde\Sigma_{n}$ (uniformly in $p$), we may assume the entries $\tilde X_{ij}$ to be centered. Next, as in the truncation step by replacing there $\mathds{1}\{\arrowvert X_{ij}\arrowvert\leq K\}$ with
$1/ \sqrt{\mathrm{Var} ( X_{ij}\mathds{1}
\{\arrowvert X_{jk}\arrowvert \leq K\} ) }$
in the definition of $\tilde X$, we may assume the entries to be standardized since the variance of the truncated variables converges to one as the truncation level tends to infinity, which 
completes the proof of \eqref{eq: truncation}. 

\smallskip

 Note that the matrix $L_n$ can be a random matrix,  but it  is independent of $X_1, \ldots  , X_n$ as well as from $X_1^*,\dots, X_m^*$. As a a consequence of Section \ref{sec52} and \ref{sec52aa}, 
 we assume from now on that  conditional on $\Pi_n$, the variables $X_{ij}$ are centered, standardized and bounded, that the vectors $X_i$ have $q'=O(q)$ components and that  the matrix $L_n$ is of dimension $q \times q' $.

\subsection{A first non-standard result on quadratic forms} \label{sec52a}

In this section, we derive moment bounds on 
$$
X_1^{*\top}
 C^*(X_2^*,\dots, X_m^*)X_1^*-\tr C^*(X_2^*,\dots, X_m^*)
$$
for 
     the particular matrices 
   \begin{eqnarray}
   \label{eq: C_i1}
   C^*  &=&L_n^\top  D_1^*(z)^{-1} L_n  ,   \\
 \label{eq: C_ia1}  
    C^*  & =& L_n 'D_1^*(z)^{-1} 
   (\E^*\underline{m}_n^*(z)\tilde\Sigma_n+I_q)^{-1}
  L_n  ~.
   \end{eqnarray}

 \begin{proposition}
 \label{lemma: formel 2.1}
For any $p\geq 2$, there exists some constant $K_p(z) >0$, such that for any $n\in \N$, 
\begin{align}
   \label{ar3}
 \E_X
 \big\arrowvert X_1^{*\top}
 C^*X_1^*-\tr C^* \big\arrowvert^p  \leq  
 {K_p (z) }
 \Big( m^{p/2}+
 \frac{m^{p+1}}{n}
 \Big), 
\end{align}
where $C^*$ is  either given by  \eqref{eq: C_ia1}  or \eqref{eq: C_i1} and  the constant  $K_p(z)$ 
 depends only $z$ and $p$.
 \end{proposition}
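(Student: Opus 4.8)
The plan is to exploit the leave-one-out structure of $C^*$: in both cases \eqref{eq: C_i1} and \eqref{eq: C_ia1} the matrix $D_1^*(z)=\sum_{j=2}^m r_j^*r_j^{*\top}-zI_q$, and hence $C^*$, is a function of $X_2^*,\dots,X_m^*$ (and, for \eqref{eq: C_ia1}, of the original sample through $\mathbb{E}^*\underline{m}_n^*(z)$) but not of $X_1^*$. Writing $X_1^*=X_{I_1}$ with $I_1$ uniform on $\{1,\dots,n\}$ and independent of the remaining resampling indices $(I_2,\dots,I_m)$, I would split $\mathbb{E}_X|X_1^{*\top}C^*X_1^*-\tr C^*|^p$ (the relevant expectation being over both the original sample and the resampling) according to the tie event $T:=\{I_1\in\{I_2,\dots,I_m\}\}$ and its complement. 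The key point is that on $T^c$ the vector $X_{I_1}$ is --- conditionally on $(I_1,\dots,I_m)$ and on all of the data except $X_{I_1}$ --- a fresh vector with i.i.d.\ centred, standardised, bounded components and is independent of $C^*$, so there the quadratic form $X_1^{*\top}C^*X_1^*-\tr C^*$ is genuinely centred; meanwhile $\mathbb{P}(T)=1-(1-1/n)^{m-1}\le m/n$ is small, and on $T$ it suffices to bound $|X_1^{*\top}C^*X_1^*-\tr C^*|$ deterministically.

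Before carrying this out I would record the deterministic operator-norm bound $\|C^*\|_{S_\infty}\le\kappa(z)$ valid for both matrices. Since $\Im z>0$ and $D_1^*(z)$ is a positive semidefinite matrix minus $zI_q$, one has $\|D_1^*(z)^{-1}\|_{S_\infty}\le 1/\Im z$. For \eqref{eq: C_ia1} I would additionally use that $\underline{m}_n^*(z)$ is the Stieltjes transform of the companion (probability) spectral measure $\mu^{\mathrm{comp}}$ of $\widehat\Sigma_n^*$, so that for every $t\ge 0$, $\Im\big(z(1+t\,\underline{m}_n^*(z))\big)=\Im z+t\int\lambda\,\Im z\,|\lambda-z|^{-2}\,d\mu^{\mathrm{comp}}(\lambda)\ge\Im z$; taking $\mathbb{E}^*$ and using $\tilde\Sigma_n\succeq 0$, every eigenvalue of $\mathbb{E}^*\underline{m}_n^*(z)\tilde\Sigma_n+I_q$ has modulus at least $\Im z/|z|$, whence $\|(\mathbb{E}^*\underline{m}_n^*(z)\tilde\Sigma_n+I_q)^{-1}\|_{S_\infty}\le |z|/\Im z$. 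Together with $\|L_n\|_{S_\infty}\le\alpha$ from the reduction in Sections~\ref{sec52}--\ref{sec52aa}, this gives $\|C^*\|_{S_\infty}\le\kappa(z)$, hence $\|C^*\|_{S_2}\le (q')^{1/2}\kappa(z)$ and $\|C^*\|_{S_p}\le (q')^{1/p}\kappa(z)$, where $q'=O(q)$ with $q/m\to c$ so that $q'\le Km$.

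On $T^c$ I would condition on $(I_1,\dots,I_m)\in T^c$ and on $(X_j)_{j\ne I_1}$, which fixes $C^*$ exactly in case \eqref{eq: C_i1} and up to a perturbation $\Delta C^*$ with $\|\Delta C^*\|_{S_\infty}=O(1/n)$ in case \eqref{eq: C_ia1} (changing the single data vector $X_{I_1}$ changes $\mathbb{E}^*\underline{m}_n^*(z)$ --- a resampling average of $m^{-1}$-normalised traces of resolvents in which $X_{I_1}$ enters with expected multiplicity $m/n$ --- by $O(1/n)$ by the rank inequality for Stieltjes transforms, and the inverse then moves by $O(1/n)$ by the resolvent identity, the bound $\|(\cdot)^{-1}\|_{S_\infty}\le|z|/\Im z$ being preserved under the conditional expectation). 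Treating $C^*$ (resp.\ its $X_{I_1}$-independent version) as deterministic, the classical moment inequality for centred quadratic forms in i.i.d.\ components (see, e.g., Lemma~B.26 in \citeSM{baisilverstein2010}) together with $\mathbb{E}|X_{11}|^4\le K^2$, $\mathbb{E}|X_{11}|^{2p}\le K^{2p-2}$ and the trace bounds above gives a conditional $p$-th moment of order $C_p\big((\mathbb{E}|X_{11}|^4)^{p/2}\|C^*\|_{S_2}^{\,p}+\mathbb{E}|X_{11}|^{2p}\|C^*\|_{S_p}^{\,p}\big)=O\big((q')^{p/2}+q'\big)=O(m^{p/2})$ (using $p\ge 2$), while the residual contributes $\big(|X_{I_1}^\top\Delta C^*X_{I_1}|+|\tr\Delta C^*|\big)^p\le\big(2q'K^2\|\Delta C^*\|_{S_\infty}\big)^p=O\big((m/n)^p\big)\le O(m^{p+1}/n)$; taking the outer expectation keeps these orders. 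On $T$, the bound $\|C^*\|_{S_\infty}\le\kappa(z)$ gives $|X_{I_1}^\top C^*X_{I_1}-\tr C^*|\le q'(K^2+1)\kappa(z)\le C(z)m$, so $\mathbb{E}_X\big[|X_1^{*\top}C^*X_1^*-\tr C^*|^p\mathds{1}_T\big]\le C(z)^pm^p\,\mathbb{P}(T)\le C(z)^p\,m^{p+1}/n$. Adding the two contributions yields \eqref{ar3}.

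The main obstacle --- and the reason the right-hand side of \eqref{ar3} carries the extra term $m^{p+1}/n$ rather than just $m^{p/2}$ --- is the tie event: there $X_1^*$ reappears inside $C^*$, the quadratic form is no longer centred, no cancellation is available, and smallness must be extracted purely from $\mathbb{P}(T)=O(m/n)$. Making this precise requires getting the leave-one-out conditioning right (which randomness to freeze, and in which order, so that the classical centred-quadratic-form bound genuinely applies on $T^c$) and, for the matrix \eqref{eq: C_ia1}, controlling the mild residual dependence of $C^*$ on $X_1^*$ through $\mathbb{E}^*\underline{m}_n^*(z)$; everything else is routine once $\|C^*\|_{S_\infty}\le\kappa(z)$ is in place.
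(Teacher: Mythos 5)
Your argument is correct and rests on the same underlying observation as the paper's proof -- the only obstruction to the classical centred-quadratic-form bound is that $X_1^*$ may reappear inside $C^*$, and this happens with probability (or expected multiplicity) $O(m/n)$ -- but the technical execution is genuinely different. The paper first uses the exchangeability identity \eqref{eq: anfang} to replace the bootstrap draw $X_1^*$ by the fixed vector $X_1$, and then applies the Sherman--Morrison formula to peel off the dependence of $D_1^*(z)$ on $X_1$ as an explicit rank-one correction weighted by the multiplicity $\Delta_n^*$; the correction is then controlled via the denominator estimate (3.4) of Bai--Silverstein (1998) and moment bounds on $\Delta_n^*$. You instead split directly on the tie event $T=\{I_1\in\{I_2,\dots,I_m\}\}$, obtain the $m^{p/2}$ term from Lemma~B.26 on $T^c$ after the correct conditioning (your treatment of the residual dependence of \eqref{eq: C_ia1} on $X_{I_1}$ through $\E^*\underline{m}_n^*(z)$, via a coupled rank-inequality argument giving an $O(1/n)$ scalar perturbation, is sound), and on $T$ you use the almost-sure bound $|X_1^{*\top}C^*X_1^*-\tr C^*|\le C(z)\,m$ together with $\P(T)\le m/n$. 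This is simpler than Sherman--Morrison, and it works here precisely because Section~\ref{sec52aa} has already reduced to components bounded by a fixed constant, so $\|X_1^*\|^2=O(q')=O(m)$. What the paper's finer decomposition buys is robustness: under the $\delta_m\sqrt{m}$-truncation used for Proposition~\ref{lemma: formel 2.1a}, your crude bound on the tie event would inflate to $O((\delta_m^2m^2)^p\,m/n)$ and fail, whereas the Sherman--Morrison route (with the self-normalising denominator) survives. So your proof is valid for the stated proposition but would not transfer to its companion result without modification.
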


A natural idea of proving this result is to
first 
condition on $X_1, \ldots , X_n$ and applying 
Lemma B.26 of \citeSM{baisilverstein2010} to 
$$
\E_X^* \big\arrowvert X_1^{*\top}
 C^*X_1^*-\tr C^* \big\arrowvert^p .
 $$
 However, this approach fails as the components 
 of the vector $X_1^*$ are conditional on $X_1, \ldots , X_n$ neither independent nor normalized. 
 Therefore, a proof of the estimate 
 \eqref{ar3} for the unconditional expectation  relies on a different argument, which  originates  the condition $m = o(n)$. Note that in the unconditional world, the vector $X_1^*$ and the matrix $C^*$ are not independent any longer.

      \begin{proof}[Proof of Proposition \ref{lemma: formel 2.1}]Since 
     \begin{align}\label{eq: anfang}
   \E \big [ \E^*  \big [  \arrowvert {X_1^*}^\top C^*X_1^*-\tr C^*\arrowvert^p ~ \big |  X_2^*, \ldots , X_m^* \big ] \big ] &=\E \Big [  \frac{1}{n}\sum_{j=1}^n\arrowvert X_j ^\top C^*X_j-\tr C^*\arrowvert^p\Big ] \\
   & =  \E\arrowvert X_1^\top C^*X_1-\tr C^*\arrowvert^p,  
   \nonumber 
   \end{align} 
   it is sufficient to deduce the bound for the right-hand side, that is 
   $$\E \arrowvert X_1^\top C^*X_1-\tr C^*\arrowvert^p 
    \leq 
   K_p(z)  
 \Big( m^{p/2}+
 \frac{m^{p+1}}{n}  \Big ) \ .
   $$
Because of $X_1^*,\dots, X_m^*\overset{iid}{\sim}\hat{\P}_n$, the matrix $C^*$  in \eqref{hd54}  depends on $X_1$ and therefore, standard results results on centered random quadratic forms   as Lemma B.26 in \citeSM{baisilverstein2010} are not directly applicable. 
\medskip

For $i=1,\dots, m $ define 
 \begin{align}\label{eq: xsterntilde}
   \tilde{X}_i^* & =X_i^*I\{X_i^*\not= X_1\} , \ \ 
   \tilde r_i^*  = {1\over \sqrt{m} } L_n  \tilde{X}_i^* ,
   \end{align}
   and write 
\begin{align}
\label{det34}
    \Delta_n^*=\sharp\big\{i\in\{2 ,\dots,m\}: X_{i}^*=X_1\big\}~, ~~  \bar \Delta_n^*=\sharp\big\{i\in\{1 ,\dots,m\}: X_{i}^*=X_1\big\}.
\end{align}
    We note that
    \begin{align}
        \label{hd50}
        \E ^*[ \Delta_n^* ] = {m - 1  \over n} ~, ~~ 
          \E ^*[ \bar \Delta_n^* ] = {m    \over n} .
    \end{align}
For any matrix $A$ built from $X_1^*,\dots, X_m^*$, we write $\tilde A$ for the corresponding matrix which arises by replacing $X_i^*$ by $\tilde X_i^*$, $i=1,\dots, m$, in the definition of $A$. Furthermore, we introduce 
\begin{align}
    \label{hd40}
 C 
 ^*  =L_n ^{\top}D_1^*(z)^{-1} 
  B 
   L_n  \\
       \label{hd40a}
 \check{ C} 
 ^*  =L_n ^{\top}D_1^*(z)^{-1} 
 \tilde B 
   L_n  
 \end{align}
 where 
\begin{align}
\label{hd52}
B &=  ( \E^*\underline{m}_n^*(z)
    \tilde\Sigma_n+I_q)^{-1} \\ 
\tilde B &=  ( \widetilde{\E^*\underline{m}_n^*(z)
   } \tilde\Sigma_n+I_q)^{-1}
   \label{hd53}
\end{align}
   and  
   $$
 \widetilde{\E^*\underline{m}_n^*(z)} 
 =  \E^* 
  {q \over m} \Big \{ {1 \over q} 
 {\rm tr} \Big [ 
 \sum_{j=1}^m \tilde r_j^*\tilde r_j^{*\top}-zI_q  \Big ]
 ^{-1} 
 \Big \}  - {1- q/m \over z }~.
 $$
 Note that  the difference between the terms 
 $\E^*\underline{m}_n^*(z)$
 and $ \widetilde{\E^*\underline{m}_n^*(z)} $
 consists in the fact that the sum in the 
 latter term does not contain the variable
 $X_1$ anymore. 
 In a first step, we replace  $ C 
 ^* $   by $ \check{ C} 
 ^* $  
 with an error of order $O(m/n)$.  For this purpose,
 we use the identity $A^{-1}_1 -A_2^{-1} = A^{-1}_1  (A_2-A_1) A_2^{-1}  $ together with the Sherman-Morrison formula to obtain 
 \begin{align*}
      \E  &\big\arrowvert  X_1^{\top}( C^* -  \check  C^*)    X_1\big\arrowvert^p     \\
     &=   
      \E   
      \Big | 
     X_1^{\top} L_n^{\top}D_1^*(z)^{-1} ( \widetilde{ \E^*\underline{m}_n^*(z)}
      \tilde\Sigma_n+I_q)^{-1}\\
      &\ \ \ \ \ \ \ \ 
     \ \cdot\E^*
      \Big [
     {1 \over m^2}    \frac{\bar \Delta_n^* X_1^\top  L_n^\top  \tilde D^*(z)^{-2}L_nX_1}{1+\frac{1}{m}X_1^\top L_n^\top \tilde{D}^*(z)^{-1}L_nX_1}\Big ]
    \tilde \Sigma_n
     ( \E^*\underline{m}_n^*(z)\tilde\Sigma_n+I_q)^{-1}
      L_n  X_1 \Big |^p \\
       & \leq 
       {1 \over m^{2p} }
       \E   \Big [  | X_1^{\top} X_1 |^{2p}   (\bar  \Delta_n^*)^p   \| L_n \|_{S_\infty }^{6p} 
       \max \Big (\frac{4\Arrowvert L_n\Arrowvert_{S_{\infty}}^2}{\Im(z)},2 \Big )^{2p} \frac{{\arrowvert z\arrowvert}^p}{\Im(z)^{4p}}
       \Big ]
       \\
       & \leq  K_p(z)  {m \over n}   ,
 \end{align*}
 where we have used  Lemma 2.3   
\citeSM{silverstein1995} together with $\Arrowvert \tilde D^*(z)^{-1}\Arrowvert_{S_{\infty}}\leq 1/\Im(z)$ and $\arrowvert 1/(1+\frac{1}{m}X_1^\top L_n^\top \tilde{D}^*(z)^{-1}L_nX_1)\arrowvert \leq \arrowvert z\arrowvert/\Im(z)$ for  the  first inequality, \eqref{hd50}, and the fact that the $X_i$  and $X_i^*$  are 
 $q´= O(q) = O(m)$ dimensional vectors 
 with   uniformly bounded  components
 (by the arguments in Section \ref{sec52}
 and \ref{sec52aa}). 
 Similarly,  we have  
  \begin{align*}
    &  \E \big\arrowvert  {\rm tr } ( C^* -  \check  C^*) \big\arrowvert^p    \leq K_p(z)  {m \over n}  .
 \end{align*}
 and it follows that 
\begin{align}
    \label{hd51}
    \E \big  \arrowvert X_1^{\top} C ^*X_1 -\tr C ^*
    -\big (  X_1^{\top} \check C ^*X_1 -\tr \check C ^* \big ) 
    \big \arrowvert^p 
      \leq K_p(z)  {m \over n}   .
  \end{align}
   Consequently, it is sufficient to show the assertion for the matrix 
 \begin{align}
    \label{hd54}
       C^*  =L_n ^{\top}D_1^*(z)^{-1} M L_n ~,
   \end{align}
      where $M$  a potentially random matrix, which is, conditionally on $X_1, \ldots , X_n$ independent of $X_1^*$,  depending only on  $X_2, \ldots , X_n$ 
      with almost surely bounded spectral norm 
      (uniformly in $n$).   We then apply the result for     the matrices 
 $M=I_q$ and $M= \tilde B $  in \eqref{hd53} (note the latter has a uniformly bounded spectral norm by Lemma 2.3  in 
\citeSM{silverstein1995}).

   By the Sherman-Morrison formula,
  \begin{align}\label{eq: property}
   C^*&= \tilde{C}^*
 - 
 \frac{\Delta_n^*  L_n^\top    \tilde{D}_1^*(z)^{-1}{1 \over \sqrt{m}}  L_n  X_1X_1^{\top}  {1 \over \sqrt{m}}  L_n^\top      \tilde{D}_1^*(z)^{-1}   L_n }
 {1+{1 \over m} \Delta_n^*X_1^{\top}  L_n^\top     \tilde{D}_1^*(z)^{-1}  L_n  X_1}  \cdot M \\ 
 \nonumber  
 &= \tilde{C}^*
 - \frac{ {1 \over {m}}  \Delta_n^*    \tilde  C^*  X_1X_1^{\top}  \tilde  C^* } 
 {1+{1 \over m} \Delta_n^*X_1^{\top} \tilde  C^* X_1} \cdot M 
 \end{align}
    where $\tilde{D}_1^*$ is defined as $D_1^*$ with $X_2^*,\dots, X_m^*$ replaced by $\tilde{X}_2^*,\dots, \tilde{X}_m^*$
    and 
 \begin{align}
     \label{det36}
    \tilde  C^*=  \tilde C(\tilde X_2^*,\dots, \tilde X_m^*)   = L_n^\top  \tilde{D}_1^*(z)^{-1} L_n \cdot M .
 \end{align}
  Note that the matrix $ \tilde  C^*$ does not depend on the random variable $X_1$ anymore. Therefore, inserting the conditional expectation with respect to $\tilde{X}_2^*,\dots, \tilde{X}_m^*$, Lemma B.26 in \citeSM{baisilverstein2010}  reveals
   \begin{align} \label{eq: step1}
 \E \big\arrowvert X_1^\top
 \tilde  C^* X_1-\tr  \tilde  C^* 
 \big\arrowvert^p \le  K _p (z) m^{p/2} ~.
\end{align}
Consequently, it remains  to derive a bound 
for  the difference 
\begin{align}
\label{hd12}
  \tilde  C^* - C^*   = \frac{ {1 \over {m}}  \Delta_n^*    \tilde  C^*  X_1X_1^{\top}  \tilde  C^* }
 {1+{1 \over m} \Delta_n^*X_1^{\top} \tilde  C^* X_1} 
 \cdot M , 
 \end{align}
 that is, a bound on 
    \begin{align} \label{hd55}
 \E \big\arrowvert X_1^\top (
 \tilde C^* -   C^*  ) X_1-\tr ( \tilde  
 C^* -    C^*  )
 \big\arrowvert^p  .
\end{align}
Employing the  estimate (3.4) in 
\citeSM{baisil1998}
for the denominator yields 
\begin{align}
\nonumber
\bigg\arrowvert \frac{\frac{\Delta_n^*}{m}X_1^{\top}
\tilde C ^*X_1}{1+\frac{\Delta_n^*}{m}X_1^{\top} \tilde  C^*X_1}\bigg\arrowvert&=\bigg\arrowvert \frac{\frac{\Delta_n^*}{m}X_1^{\top}\tilde  C^*X_1}{1+\frac{\Delta_n^*}{m}X_1^{\top}\tilde  C^*X_1}\bigg\arrowvert I_{\big\{\frac{\Delta_n^*}{m} \arrowvert X_1^{\top}\tilde  C^*X_1\arrowvert >2\big\}}\\
\nonumber
&\quad\quad\quad  + \bigg\arrowvert \frac{\frac{\Delta_n^*}{m}X_1^{\top} \tilde  C^*X_1}{1+\frac{\Delta_n^*}{m}X_1^{\top}\tilde  C^*X_1}\bigg\arrowvert I_{\big\{ \frac{\Delta_n^*}{m} \arrowvert X_1^{\top}\tilde  C^*X_1\arrowvert \leq2\big\}}\\
\label{ar5} 
&\leq 2\Big (1+ \frac{\arrowvert z\arrowvert}{ \Im (z) } \Big ),
\end{align}
and we obtain (using \eqref{hd12} twice) 
 \begin{align} 
 \E \Big\arrowvert & X_1^\top 
( C^* -  \tilde  C^*)  X_1-\tr   (C^* -  \tilde  C^*)
 \Big\arrowvert^p \nonumber \\
 &\leq 2^{p-1} \E \big\arrowvert  X_1^{\top}( C^* -  \tilde  C^*)  X_1\big\arrowvert^p+ 2^{p-1}\E \big \arrowvert \tr   (C^* -  \tilde  C^*)\big\arrowvert ^p\nonumber\\
&
\le  2^{2p-2}\bigg(1+\frac{\arrowvert z\arrowvert }{ \Im (z) }\bigg)^{p-1}\E \bigg[ 
\frac{
\arrowvert  {1 \over m} \Delta_n^*  X_1^\top \tilde  C^*  X_1
\arrowvert}
{ | 1+{1 \over m} \Delta_n^*X_1^\top   \tilde  C^* X_1 | }
\big\arrowvert  X_1^\top  \tilde  C^*  M X_1\big\arrowvert ^p \bigg]
\nonumber  \\
&\hspace{20mm}+ 2^{p-1}   {   1  \over m^p}
\E  \bigg\arrowvert  \Delta_n^*  \frac{X_1^\top \tilde  C^* M \tilde  C^*   X_1}{1+{1 \over m} \Delta_n^*X_1^{\top} \tilde  C^* X_1}\bigg\arrowvert^p
\nonumber\\
&\le 2^{2p-2}\bigg(1+\frac{\arrowvert z\arrowvert }{ \Im (z) }\bigg)^{p-1}
\frac{\arrowvert  z\arrowvert}{ \Im (z) } { 1  \over m} \E  \Big [  \Delta_n^*
\big\arrowvert X_1^{\top}\tilde C^* X_1\big\arrowvert 
\big\arrowvert X_1^{\top}\tilde C^* M X_1\big\arrowvert^{p}
\Big ] 
\label{hol1a}
\\
 &\hspace{20mm}+ 2^{p-1} {|z|^p  \over \Im  ^p (z) } { 1 \over m^p}\E \big \arrowvert    \Delta_n^*      X_1^\top \tilde  C^*  M  \tilde  C^*  X_1
 \big\arrowvert^p\nonumber\\
&\le 2^{2p-2}\bigg(1+\frac{\arrowvert z\arrowvert }{ \Im (z) }\bigg)^{p-1}
\frac{\arrowvert  z\arrowvert}{ \Im (z) } { 1  \over m} \E  \Big [  \Delta_n^* E_{X_1}  \big [ 
\big\arrowvert X_1^{\top}\tilde C^* X_1\big\arrowvert 
\big\arrowvert X_1^{\top}\tilde C^* M X_1\big\arrowvert^{p} \big ]  
\Big ] 
\nonumber\\
 &\hspace{20mm}+ 2^{p-1} {|z|^p  \over \Im  ^p (z) } { 1  \over m^p}\E \Big [  (\Delta_n^* )^p E_{X_1} \big [ 
 \big \arrowvert      X_1^{\top}\tilde  C^*  M  \tilde  C^*  X_1
 \big\arrowvert^p \big ] \Big] \nonumber\\
& \leq K_p(z) \Big( \frac{m^{p+1}}{n} \Big),
\end{align}
where we used the fact that $X_1$ has uniformly bounded components and that the spectral norms of $\tilde C^*$ and $M$ are also uniformly bounded.
Combining this result with \eqref{eq: step1} completes the proof. 
\end{proof}

\subsection{Remaining part of the  proof}
\label{sec53}

Let $\mu^{\widehat \Sigma_n^*}$ denote the spectral measure of the matrix $\widehat \Sigma_n^*$ and denote by 
$m_n^*: \mathbb{C}^+ \to \mathbb{C}^+ $
the corresponding Stieltjes transform,
that is 
\begin{align}
\label{hd56} 
m_n^*(z)&=\frac{1}{q}\tr\Big[\big(\hat \Sigma_n^{*} -z I_q \big)^{-1}\Big]
\end{align}
We define 
$$
\underline{m}_n^*(z)=\frac{q}{m}m_n^*(z)-\Big(1-\frac{q}{m}\Big)\frac{1}{z}
$$
and denote by 
$\mu^{\tilde \Sigma_n}$  the spectral distribution of the matrix $\tilde{\Sigma}_n$ defined in \eqref{hol1}.
Finally,  we define   
\begin{align}
    \label{det102}
\underline{\tilde m}_n^0=\underline{m}_{\frac{p}{n},\mu^{\tilde \Sigma_n} }^0 
\end{align}
as  the solution of the equation \eqref{eq: mbarnull} with $G = \mu^{\tilde \Sigma_n}$ and $\gamma=p/n$.
In order to show
\begin{equation}\label{eq: final:1}
\mu^{\widehat \Sigma_n^*}-\mu^{\widehat \Sigma_n}\Longrightarrow 0 \ \ \text{in probability}
\end{equation}
we will 
prove in Subsection \ref{sec54}  and \ref{sec55}  that, conditionally on $\Pi_n$, 
\begin{align}\label{eq: 5.1b}
\big\arrowvert\E^* \underline{m}_n^*(z)-\underline{\tilde m}_n^0(z)\big\arrowvert
& = o_{\P} (1) + 
\mathcal{O}_{\P}\Big(  \frac{m}{n} \Big)~, \\
\label{eq: 5.1}
\big\arrowvert\E^* \underline{m}_n^*(z)-
\underline{m}_n^*(z)
\big\arrowvert
& =\mathcal{O}_{\P}\Big(\frac{1}{\sqrt{m}}\Big)~.
\end{align}
As a consequence of the previous two steps, 
\begin{align}
   \label{ar8} 
\big\arrowvert\underline{m}_n^*(z)-\underline{\tilde m}_n^0(z)\big\arrowvert={o}_{\P}(1)
\end{align}
for any $z\in\C^+$ conditionally on $\Pi_n$.  Note that  both terms in this expression depend on the random projection $\Pi_n$.

Due to condition \eqref{eq: similarity3} in the Representative Subpopulation Condition,  we have 
\begin{align}
| \label{det103}
\underline{m}_{\frac{p}{n},\mu^{\Sigma_n}}^0 (z) 
-\underline{\tilde m}_n^0(z)\big | = o_{\P}
(1) 
\end{align}
for all $z \in \mathbb{C}^+$,  because the solution of the MP-equation  \eqref{hd10} is continuous in $H$ (with respect to the topology of weak convergence); see  formula  (3.10) and the discussion in the lines below in \citeSM{baisil1998}.
Therefore, 
(see equation \eqref{eq: mbarnull} with $\gamma =p/n$) we arrive at 
\begin{align}
   \label{ar8a} 
\big\arrowvert\underline{m}_n^*(z)-
\underline{m}_{\frac{p}{n},\mu^{\Sigma_n}}
^0(z) \big\arrowvert =
o_{\P}
(1) ~.
\end{align}

Let $\C_0^+=\{z_1,z_2,\dots\}$  be  a countable dense subset of $\C^+$ and denote by  $(k_n)_{n\in\N}$  an arbitrary subsequence of $(n)_{n\in\N}$. Due to the characterization of stochastic convergence in terms of almost sure convergence, there exists some subsubsequence $(k_n')_{n\in\N}$ such that
$$
\big\arrowvert\underline{m}_{k_n'}^*(z_1)
-
\underline{m}^0_{\mu^{\Sigma_{k_n'}}}
(z_1)\big\arrowvert\rightarrow 0\ \text{a.s.} 
$$
where here and subsequently, the dependence on $\gamma = {p(k_n') / k_n'}$ is   suppressed.
Denote the exceptional null set by $N_1\subset\Omega$. Due to \eqref{eq: final:1} again, there exists a subsequence $(k_n'')_{n\in\N}$ of $(k_n')_{n\in\N}$ such that
$$
\big\arrowvert\underline{m}_{k_n''}^*(z_2)-\underline{m}^0_{\mu^{\Sigma_{k_n''}}}
(z_2)\big\arrowvert\rightarrow 0
$$
outside a null set $N_2$. Continuing inductively and applying finally  the Cantor diagonalization principle, we extract a subsequence $(\tilde{k}_n)_{n\in\N}$ of $(k_n)_{n\in\N}$ such that   
$$
\big\arrowvert\underline{m}_{\tilde{k}_n}^*(z)-
\underline{m}^0_{\mu^{\Sigma_{\tilde k_n}}}
(z)\big\arrowvert\rightarrow 0\ \ \forall\, z\in\C_0^+
$$
outside the null set $N=\bigcup_{j\in\N}N_j$. For any $\ell \in\N$, let $\C_\ell^+:=\{z\in\C^+: \Im(z)>1/\ell,\arrowvert z\arrowvert\leq \ell \}$. Then $\arrowvert\underline{m}^0_{\mu^{\Sigma_{\tilde k_n}}}
(z)\arrowvert\leq l$ and $| \underline{m}_{\tilde{k}_n}^*(z)\arrowvert\leq l$ for all $z\in \C_l^+$. By Vitali's convergence theorem,  
$$
\big\arrowvert\underline{m}_{\tilde{k}_n}^*(z)-
\underline{m}^0_{\mu^{\Sigma_{\tilde k_n}}}
(z)\big\arrowvert\rightarrow 0\ \ \forall\, z\in\C_{l}^+\ \ \text{a.s.}
$$
As this convergence is true for every $l\in \N$, we conclude
$$
\big\arrowvert\underline{m}_{\tilde{k}_n}^*(z)-
\underline{m}^0_{\mu^{\Sigma_{\tilde k_n}}}
(z)\big\arrowvert\rightarrow 0\ \ \forall\, z\in\C^+\ \ \text{a.s.}
$$
But as $\underline{m}^0_{{p \over n},\mu^{\Sigma_n}}
(z)\rightarrow \underline{m}_{c,H}^0(z)$ $\forall\, z\in\C^+$ and 
 $m_{c,H}^0$ is the Stieltjes transform of a probability 
 measure $\mu_{c,H}^0$ with compact support, this implies 
 weak convergence of 
 $$(\mu^{\widehat \Sigma_{\tilde k_n}^*})_{n \in \N} \Longrightarrow
 \mu_{c,H}^0 $$ almost surely.  Since $(k_n)_{n\in\N}$ was an arbitrary subsequence,
$
\mu^{\widehat \Sigma_n^*}\Rightarrow  \mu_{c,H}^0 \ \ \text{in probability.}
$
Finally, by the triangle inequality, 
Lemma \ref{prop1}  and $\mu^{0}_{{p\over n} , \Sigma_n}\Rightarrow \mu_{c,H}^0$,  we have  
$$ d_{BL}\big(\mu^{\widehat \Sigma_{\check k_n}^*}, \mu^{\widehat \Sigma_{\tilde k_n}}\big)\longrightarrow 0\ \ \text{a.s.} $$
and therefore  
$
d_{BL}\big(\mu^{\widehat \Sigma_{n}^*}, \mu^{\widehat \Sigma_{n}}\big)\longrightarrow 0
$
in probability.

\subsection{Proof of \eqref{eq: 5.1b}} 
\label{sec54}

 Recall the definition of the population 
covariance matrix $\Sigma_n$ and   that the matrix
$
\tilde{\Sigma}_n=\Pi_n \Sigma_n\Pi_n^\top 
=  L_n L_n^\top 
$  
in \eqref{hol1} is the population 
covariance matrix
corresponding to sub-sampling process,  where $L_n \in \mathbb{R}^{q \times q'}$. Note 
 that $\tilde{\Sigma}_n$ can be a random object which is independent of 
$X_1, \ldots , X_n$. With the notation from Section \ref{sec51} we can rewrite $\widehat{\Sigma}_{n}^{*}$ as
\begin{align*}
\widehat \Sigma_n^{*}  &=\sum_{j=1}^mr_j^*r_j^{*\top}\in\R^{q\times q}.
\end{align*}
Next, we define (for $\ z\in\C^+$) the Stieltjes transform$$
\underline{m}_n^*(z)=\frac{q}{m}m_n^*(z)-\Big(1-\frac{q}{m}\Big)\frac{1}{z} ~,
$$
where ${m}_n^*(z)$ denotes the Stieltjes transform 
of the spectral measure $\mu^{\widehat \Sigma_n^*}$
of the  matrix $\widehat \Sigma_n^*$ defined in \eqref{hd56} 
(note that the supports of the measures  corresponding to $\underline{m}_n^*(z)$ and ${m}_n^*(z)$
 differ by $\arrowvert m-q\arrowvert$ zeros only).

\medskip

 As in expression (5.2) of 
\citeSM{baisil1998}, we obtain the identity
\begin{align}
\label{r3}
\begin{split}
\frac{q}{m}&\int\frac{\dd \mu^{\tilde \Sigma_n} (t)}{1+t\E^*\underline{m}_n^*(z)}+z\frac{q}{m}\E^*\big(m_n^*(z)\big)\\
&=\E^*\bigg\{\beta_1^* (z)\bigg[r_1^{*\top}D_1^*(z)^{-1}(\E^*\underline{m}_n^*(z)\tilde\Sigma_n+I_q)^{-1}r_1^*\\
&\hspace{30mm}-\frac{1}{m}\tr(\E^*\underline{m}_n^*(z)\tilde\Sigma_n+I_q)^{-1}\tilde\Sigma_n\E^*\big(D^*(z)^{-1}\big)\bigg]\bigg\}.
\end{split}
\end{align}
Rewriting the left-hand side
\begin{align*}
\frac{q}{m}&\int\frac{\dd \mu^{\tilde \Sigma_n} (t)}{1+t\E^*\underline{m}_n^*(z)}+z\frac{q}{m}\E^*\big(m_n^*(z)\big)\\
&=\frac{q}{m}\bigg(1-\int\frac{t\E^*\underline{m}_n^*(z)}{1+t\E^*\underline{m}_n^*(z)}\dd \mu^{\tilde \Sigma_n} (t)\bigg)+z\E^*\big(\underline{m}_n^*(z)\big)+\Big(1-\frac{q}{m}\Big)\\
&=\E^*\big(\underline{m}_n^*(z)\big)\bigg[z-\frac{q}{m}\int\frac{t}{1+t\E^*\underline{m}_n^*(z)}\dd \mu^{\tilde \Sigma_n} (t)+\frac{1}{\E^*\underline{m}_n^*(z)}\bigg]
\end{align*}
and recalling that 
\begin{equation}
    \label{hol2}
    z-\frac{q}{m}\int\frac{t}{1+t\underline{\tilde m}_n^0(z)}d\mu^{\tilde \Sigma_n}  (t) +\frac{1}{\underline{\tilde m}_n^0(z)}=0
\end{equation}
by \eqref{eq: mbarnull}, we start with establishing the estimate 
\begin{align}
\E \bigg\arrowvert \E^*&\bigg\{\beta_1^*(z)\bigg[r_1^{*\top}D_1^*(z)^{-1}(\E^*\underline{m}_n^*(z)\tilde\Sigma_n+I_q)^{-1}r_1^*\nonumber\\
&\hspace{30mm}-\frac{1}{m}\tr\Big((\E^*\underline{m}_n^*(z)\tilde\Sigma_n+I_q)^{-1}\tilde\Sigma_n\E^*\big(D^*(z)^{-1}\big)\Big)\bigg]\bigg\}\bigg\arrowvert\nonumber\\
&=\mathcal{O}\Big(\frac{1}{m} + \frac{m}{n} \Big).\label{eq: Viii}
\end{align}
This is carried out in the subsequent Steps (i) -- (iii). Their proofs are given at the end of this paragraph. 
\begin{itemize}
\item[(i)] 
We shall prove the bound
\begin{align}
\frac{1}{m}\E\bigg\arrowvert \tr&\Big(\big(\E^* \underline{m}_n^*(z) \tilde \Sigma_n+I_q\big)^{-1}\tilde \Sigma_n \E^*\big(D^*(z)^{-1}\big)\Big)\nonumber\\
&-\tr\Big(\big(\E^*\underline{m}_n^*(z) \tilde \Sigma_n+I_q\big)^{-1}\tilde \Sigma_n \E^*\big(D_1^*(z)^{-1}\big)\Big)\bigg\arrowvert=\mathcal{O}(m^{-1}).\label{eq: 1.1}
\end{align}
(uniformly with respect to the projection $\Pi_n$).
\item[(ii)]The next aim is to verify
\begin{align}
\E &\Big\arrowvert \frac{1}{m}\tr \Big(\big(\E^* \underline{m}_n^*(z)\tilde\Sigma_n+I_q\big)^{-1}\tilde \Sigma_nD_1^*(z)^{-1}\Big)\nonumber\\
&\hspace{30mm}-\frac{1}{m}\tr \Big(\big(\E^* \underline{m}_n^*(z)\tilde \Sigma_n+I_q\big)^{-1}\tilde \Sigma_n\E^*\big(D_1^*(z)^{-1}\big)\Big)\Big\arrowvert^2\nonumber\\
&=\mathcal{O}(m^{-1}).\label{eq: 5.8}
\end{align}
\item[(iii)]We shall deduce \eqref{eq: Viii}.
\end{itemize}
Finally, we prove \begin{equation}\label{eq: (iv)}
\arrowvert \E^* \underline{m}_n^*(z)\arrowvert\geq c(z)+\mathcal{O}_{\P}(m^{-1}).
\end{equation}
for some constant $c(z)>0$. Because of the identity
\begin{align}
    \label{ar9}
    z\underline{m}_n^*(z)=-\frac{1}{m}\sum_{j=1}^m\beta_j^*(z)
\end{align}
\citepSM[cf. identity (2.2) in ][]{silverstein1995}, we have $\arrowvert \E^* \underline{m}_n^*(z)\arrowvert =\arrowvert z\arrowvert^{-1}\arrowvert \E^*\beta_1^*(z)\arrowvert$. But it follows from \eqref{eq: bound gamma} and the inequalities $\arrowvert b_n^*(z)\arrowvert,\arrowvert \beta_1^*(z)\arrowvert\leq \arrowvert z\arrowvert/\Im(z)$ that 
\begin{align}
\E \arrowvert \E^*\beta_1^*(z)-b_n^*(z)\arrowvert
&= \E\Big\arrowvert \E^*\big( b_n^*(z)\beta_1^*(z)\gamma_1^*(z)\big)\Big\arrowvert\nonumber
 \\
 &
\leq \frac{\arrowvert z\arrowvert^2}{\big(\Im(z)\big)^2}\E^{1/2}_X\big\arrowvert \gamma_1^*(z)\big\arrowvert^2 
=\mathcal{O}
\Big ( { 1 \over m^{1/2} }
+ \sqrt{\frac{m}{n}}  \Big)
,\label{eq: VHa}
\end{align} 
while
\begin{align*}
\arrowvert b_n^*(z)\arrowvert \geq \frac{1}{1+\Arrowvert\tilde{\Sigma}_n\Arrowvert_{S_{\infty}}/\Im(z)},
\end{align*}
which is bounded away from zero uniformly 
in $\Pi_n$ and  $n\in\N$. Hence, \eqref{eq: (iv)} is verified.

\medskip
Having established \eqref{eq: Viii} and \eqref{eq: (iv)}, we conclude that 
\begin{equation}
\label{hol12}
\arrowvert \omega_n(z)\arrowvert=\mathcal{O}_\P  \big ( m^{-1} + m/n ) 
\end{equation}
with
\begin{align}
\label{hol3}
\omega_n(z)&:=z-\frac{q}{m}\int\frac{t}{1+t\E^*\underline{m}_n^*(z)}\dd \mu^{\tilde \Sigma_n} (t)+\frac{1}{\E^*\underline{m}_n^*(z)},
\end{align}
that is, $\E^*\underline{m}_n^*(z)$ is an approximate solution to the fixed point equation \eqref{hol2} for $\underline{\tilde m}_n^0$. Next, we may rewrite
\eqref{hol3}
\begin{align}\label{eq: 3.9}
\E^* \underline{m}_n^*(z)= -\bigg(z-\frac{q}{m}\int\frac{t}{1+t\E^*\underline{m}_n^*(z)}\dd \mu^{\tilde \Sigma_n} (t)+\omega_n(z)\bigg)^{-1}. 
\end{align}
From \eqref{eq: 3.9} and the 
equation \eqref{hol2} 
 we get the identity
\begin{align}
\label{hol11}
\E^* \underline{m}_n^*(z)-\underline{\tilde{m}}_n^0(z)= \big(\E^* \underline{m}_n^*(z)-\underline{\tilde{m}}_n^0
(z) \big)\kappa_n(z) + \omega_n(z)\underline{\tilde{m}}_n^0(z)\E^* \underline{m}_n^*(z)
\end{align}
with
\begin{align*}
\kappa_n(z)=\frac{q}{m}\frac{\int\frac{t^2}{(1+t\E^*\underline{m}_n^*(z))(1+t\underline{\tilde m}_n^0(z))}\dd
\mu^{\tilde \Sigma_n} (t)}{\Big(-z+\frac{q}{m}\int\frac{t}{1+t\E^*\underline{m}_n^*(z)}\dd\mu^{\tilde \Sigma_n} (t)-\omega_n(z)\Big)\Big(-z+\frac{q}{m}\int\frac{t}{1+t\underline{\tilde m}_n^0(z)}\dd\mu^{\tilde \Sigma_n}(t)\Big)}.
\end{align*}
An application of the Cauchy-Schwarz inequality, the identity
$$
\Im(\underline{\tilde m}_n^0(z))=\frac{\Im(z)+\Im(\underline{\tilde m}_n^0(z))\frac{q}{m}\int\frac{t^2 \dd \mu^{\tilde \Sigma_n} (t)  }{\arrowvert 1+t\underline{\tilde m}_n^0(z)\arrowvert^2
} }{\Big\arrowvert -z+\frac{q}{m}\int\frac{t}{1+t\underline{\tilde m}_n^0(z)}\dd \mu^{\tilde \Sigma_n} (t)\Big\arrowvert^2}
$$
(which follows from \eqref{eq: mbarnull}), and a similar identity for the second factor (which follows from \eqref{hol3})
yields 
\begin{align*}
    \arrowvert \kappa_n(z)\arrowvert &\leq\left[\frac{\frac{q}{m}\Im(\E^*\underline{ m}_n^*(z))\int\frac{t^2}{\arrowvert 1+t\E^*\underline{ m}_n^*(z)\arrowvert^2}\dd
   \mu^{\tilde \Sigma_n} (t)}{\Im(z)+\Im(\E^* \underline{m}_n^*(z))\frac{q}{m}\int\frac{t^2}{\arrowvert 1+t\E^*\underline{ m}_n^*(z)\arrowvert^2}\dd
   \mu^{\tilde \Sigma_n} (t)  + \Im(\omega_n(z))}\right]^{1/2}\\
    &\quad\quad\quad\quad\quad\quad\quad\quad\quad\quad\quad\quad \times\left[\frac{\frac{q}{m}\Im(\underline{\tilde m}_n^0(z))\int\frac{t^2}{\arrowvert 1+t\underline{\tilde m}_n^0(z)\arrowvert^2}\dd
    \mu^{\tilde \Sigma_n}
    (t)}{\Im(z)+\Im(\underline{\tilde m}_n^0(z))\frac{q}{m}\int\frac{t^2}{\arrowvert 1+t\underline{\tilde m}_n^0(z)\arrowvert^2}\dd
   \mu^{\tilde \Sigma_n} (t) }\right]^{1/2}~.
    \end{align*}
 In the case $\arrowvert \Im(\omega_n(z))/\Im(z)\arrowvert <1$ this  in turn can be   bounded by
    $$
    \left[\frac{\frac{q}{m}\Im(\underline{\tilde m}_n^0(z))\int\frac{t^2}{\arrowvert 1+t\underline{\tilde m}_n^0(z)\arrowvert^2}\dd\mu^{\tilde \Sigma_n} (t)}{\Im(z)+\Im(\underline{\tilde m}_n^0(z))\frac{q}{m}\int\frac{t^2}{\arrowvert 1+t\underline{\tilde m}_n^0(z)\arrowvert^2}\dd
    \mu^{\tilde \Sigma_n} (t) }\right]^{1/2}.
    $$
Therefore,
\begin{align*}
&\liminf_n\, \P\Big(\arrowvert \kappa_n(z)\arrowvert<1\big)\\
&\ \ \geq \liminf_n\P\left(\left[\frac{\frac{q}{m}\Im(\underline{\tilde m}_n^0(z))\int\frac{t^2}{\arrowvert 1+t\underline{\tilde m}_n^0(z)\arrowvert^2}\dd
\mu^{\tilde \Sigma_n}
(t)}{\Im(z)+\Im(\underline{\tilde m}_n^0(z))\frac{q}{m}\int\frac{t^2}{\arrowvert 1+t\underline{\tilde m}_n^0(z)\arrowvert^2}\dd
\mu^{\tilde \Sigma_n}
(t) }\right]^{1/2}<1,\ \arrowvert \Im(\omega_n(z))/\Im(z)\arrowvert <1\right)\\
&\ \ =1
\end{align*}
because $\Im(\omega_n(z))\rightarrow_{\P}0$ and 
$$
\frac{\Im(\underline{\tilde m}_n^0(z))\frac{q}{m}\int\frac{t^2}{\arrowvert 1+t\underline{\tilde m}_n^0(z)\arrowvert^2}\dd
\mu^{\tilde \Sigma_n}
(t)}{\Im(z)+\Im(\underline{\tilde m}_n^0(z))\frac{q}{m}\int\frac{t^2}{\arrowvert 1+t\underline{\tilde m}_n^0(z)\arrowvert^2}\dd
\mu^{\tilde \Sigma_n}
( t) }\longrightarrow_{\P}  \frac{\Im(\underline{ m}_0(z))c\int\frac{t^2}{\arrowvert 1+t\underline{m}_0(z)\arrowvert^2}\dd H (t)}{\Im(z)+\Im(\underline{m}_0(z))c\int\frac{t^2}{\arrowvert 1+t\underline{m}_0(z)\arrowvert^2}\dd {H}(t) }<1
$$
(note that the support of  $\mu^{\tilde \Sigma_n}$ is uniformly bounded 
because $\sup_{n \in \mathbb{N}} \| L_n \|_{S_\infty } < \infty$, by assumption).
At the same time, $\arrowvert \E^*\underline{m}_n^*(z)\arrowvert $ and $\arrowvert \underline{\tilde m}_n^0(z)\arrowvert$ are bounded from above by $1/\Im(z)$.
Hence,  \eqref{eq: 5.1b} follows from
\eqref{hol12} and  \eqref{hol11} .

\begin{itemize}
\item {\it Proof of (i).} By the Sherman-Morrison formula applied to the matrix  $D^*(z)-D_1^*(z)$, we may rewrite the left-hand side of \eqref{eq: 1.1} as
\begin{align}
\frac{1}{m}&\E\bigg\arrowvert \tr\big(\E^*  \underline{m}_n^*(z) \tilde \Sigma_n+I_q\big)^{-1}\tilde \Sigma_n \E^*\big(D^*(z)^{-1}\big)\nonumber\\
&\hspace{10mm}-\tr\big(\E^* \underline{m}_n^*(z) \tilde \Sigma_n+I_q\big)^{-1}\tilde \Sigma_n \E^*\big(D_1^*(z)^{-1}\big)\bigg\arrowvert\nonumber\\
&=\frac{1}{m}\E\bigg\arrowvert \E^*\Big(\beta_1^*(z)r_1^{*\top}D_1^*(z)^{-1}\big(\E^* \underline{m}_n^*(z) \tilde \Sigma_n+I_q\big)^{-1}\tilde \Sigma_n D_1^*(z)^{-1}r_1^*\Big)\bigg\arrowvert.\label{eq: Vi}
\end{align}
Next, as $\underline{m}_n^*(.)$ is a Stieltjes transform and the class of Stieltjes transforms is closed under convex combination, $\E^*\underline{m}_n^*$ is a Stieltjes transform again, such that Lemma 2.3  in 
\citeSM{silverstein1995} implies
\begin{equation}\label{eq: lemma 2.3}
\Big\Arrowvert (\E^*\underline{m}_n^*(z)\tilde\Sigma_n+I_q)^{-1}\Big\Arrowvert_{S_{\infty}}\leq \max\bigg(\frac{4\Arrowvert\tilde\Sigma_n\Arrowvert_{S_{\infty}}}{\Im(z)},2\bigg).
\end{equation}
Using additionally the estimates $\arrowvert \beta_1^*(z)\arrowvert\leq \arrowvert z\arrowvert/\Im(z)$, $\Arrowvert D_1^*(z)^{-1}\Arrowvert_{S_{\infty}}\leq 1/\Im(z)$,  we find that \eqref{eq: Vi} can be  bounded by
\begin{align*}
\frac{\arrowvert z\arrowvert}{(\Im(z))^3}\max\bigg(\frac{4\Arrowvert\tilde\Sigma_n\Arrowvert_{S_{\infty}}^3}{\Im(z)},2\Arrowvert\tilde\Sigma_n\Arrowvert_{S_{\infty}}^2\bigg)\mathcal{O}(m^{-1}).
\end{align*}
\item {\it Proof of (ii).} Using the representation by a telecope sum  and recalling the notation \eqref{ar20} yields 
\begin{align*}
D_1^*(z)^{-1}-\E^*\big[D_1^*(z)^{-1}\big]&=\sum_{j=2}^m\big(\E_j^*-\E_{j-1}^*\big)D_1^*(z)^{-1}\\
&=\sum_{j=2}^m \big(\E_j^*-\E_{j-1}^*\big)\Big(D_1^*(z)^{-1}-D_{1j}^*(z)^{-1}\Big)\\
&=-\sum_{j=2}^m  \big(\E_j^*-\E_{j-1}^*\big)\Big(D_{1j}^*(z)^{-1}r_j^*r_j^{*\top}D_{1j}^*(z)^{-1}\beta_{1j}^*(z)\Big)
\end{align*}
and the fact that these $(m-1)$ summands are orthogonal with respect to $\E^*$, we obtain
\begin{align}
\E&\Big\arrowvert \frac{1}{m}\tr \Big(\big(\E^* \underline{m}_n^*(z)\tilde\Sigma_n+I_q\big)^{-1}\tilde \Sigma_n D_1^*(z)^{-1}\Big)\nonumber\\
&\hspace{30mm}-\frac{1}{m}\tr \Big(\big(\E^* \underline{m}_n^*(z)\tilde \Sigma_n+I_q\big)^{-1}\tilde \Sigma_n\E^*\big(D_1^*(z)^{-1}\big)\Big)\Big\arrowvert^2\nonumber\\
&= \frac{1}{m^2}\sum_{j=2}^m\E\Big\arrowvert  \big(\E_j^*-\E_{j-1}^*\big)\beta_{1j}^*(z)r_j^{*\top}D_{1j}^*(z)^{-1}\big(\E^* \underline{m}_n^*(z)\tilde\Sigma_n+I_q\big)^{-1}\tilde \Sigma_n D_{1j}^*(z)^{-1}r_j^*\Big\arrowvert^2\nonumber
\\
&\leq \frac{4}{m^3}\frac{\arrowvert z\arrowvert}{\Im(z)}\E \Big\arrowvert X_1^\top  L_n^\top D_{12}^*(z)^{-1}\big(\E^* \underline{m}_n^*(z)\tilde\Sigma_n+I_q\big)^{-1}\tilde \Sigma_n D_{12}^*(z)^{-1}  L_nX_1 \Big\arrowvert^2 
\nonumber 
\\
\nonumber 
 &\leq  \frac{4}{m^3}\frac{\arrowvert z\arrowvert}{\Im(z)^5}
 \E [ \|  X_1 \| _2^4]  \cdot 
 \| L_n \|_{S_\infty}^8
\max \Big (\frac{4 \| \tilde \Sigma_n \| _{S_\infty}}{\Im (z)}, 2 \| \tilde \Sigma_n  \| _{S_\infty}  \Big )^{2} \\
\nonumber
&= \mathcal{O} (m^{-1} ) 
\end{align}
where we have used  again Lemma 2.3  in 
\citeSM{silverstein1995}.

        \item {\it Proof of (iii).} It follows from (i) that the left-hand side in \eqref{eq: Viii} is bounded by
        \begin{align}
        \nonumber 
        \E&\bigg\arrowvert \E^*\bigg\{\beta_1^*(z)\bigg[r_1^{*\top}D_1^*(z)^{-1}(\E^*\underline{m}_n^*(z)\tilde\Sigma_n+I_q)^{-1}r_1^*\\
&\hspace{20mm}-\frac{1}{m}\tr\Big(\E^*\big(D_1^*(z)^{-1}\big)(\E^*\underline{m}_n^*(z)\tilde\Sigma_n+I_q)^{-1}\tilde\Sigma_n\Big)\bigg]\bigg\}  \bigg\arrowvert + \mathcal{O}(m^{-1})
\label{hd64}
\end{align}
(uniformly with respect to $\Pi_n$).
        Employing the identity
        \begin{align}
        \label{ar11b}
\beta_1^*(z)-b_n^*(z)&=-\beta_1^*(z) b_n^*(z)\gamma_1^*(z)
=-b_n^*(z)^2\gamma_1^*(z)+b_n^*(z)^2\beta_1^*(z)\gamma_1^*(z)^2
\end{align}   
with  the definition of $\gamma_1^*$ in \eqref{gamma1} (note that $L_nL_n^\top  =\tilde \Sigma_n $ ),
we may rewrite
\begin{align}
 \E &
\bigg \arrowvert \E^*\bigg\{\beta_1^*(z)\bigg[r_1^{*\top}D_1^*(z)^{-1}(\E^*\underline{m}_n^*(z)\tilde\Sigma_n+I_q)^{-1}r_1^*\nonumber\\
&\hspace{20mm}-\frac{1}{m}\tr\Big(\E^*\big(D_1^*(z)^{-1}\big)(\E^*\underline{m}_n^*(z)\tilde\Sigma_n+I_q)^{-1}\tilde\Sigma_n\Big)\bigg]\bigg\}  \bigg \arrowvert 
 \nonumber\\
&=\E 
 \bigg [
 \arrowvert b_n^*(z)\arrowvert^2\bigg\arrowvert\E^*\bigg\{\Big(\gamma_1^*(z)-\beta_1^*(z)\gamma_1^*(z)^2\Big)\bigg[r_1^{*\top}D_1^*(z)^{-1}(\E^*\underline{m}_n^*(z)\tilde\Sigma_n+I_q)^{-1}r_1^* 
 \nonumber\\
&\hspace{15mm}-\frac{1}{m}\tr\Big(\E^*\big(D_1^*(z)^{-1}\big)(\E^*\underline{m}_n^*(z)\tilde\Sigma_n+I_q)^{-1}\tilde\Sigma_n\Big) 
\bigg ] \bigg\}  \bigg\arrowvert  
\bigg ] 
.\label{eq: Viiid}
\end{align}
Using the bounds $\arrowvert b_n^* (z) \arrowvert , \arrowvert\beta_1^*(z)\arrowvert \leq \arrowvert z\arrowvert/\Im(z)$,   the Cauchy-Schwarz inequality and (ii) shows that \eqref{hd64} is bounded  by
\begin{align*}
&\frac{\arrowvert z\arrowvert^2}{\big(\Im(z)\big)^2}\bigg(2\E \arrowvert \gamma_1^* (z) \arrowvert^2 + \frac{2\arrowvert z\arrowvert^2}{\big(\Im(z)\big)^2}\E \arrowvert\gamma_1^*(z)\arrowvert^4\bigg)^{1/2}\\
&\hspace{10mm}\times \bigg[\E^{1/2}_X\bigg\arrowvert r_1^{*\top}D_1^*(z)^{-1}(\E^*\underline{m}_n^*(z)\tilde\Sigma_n+I_q)^{-1}r_1^*\\
&\hspace{25mm}-\frac{1}{m}\tr\Big(D_1^*(z)^{-1}(\E^*\underline{m}_n^*(z)\tilde\Sigma_n+I_q)^{-1}\tilde\Sigma_n\Big)\bigg\arrowvert^2 +\mathcal{O}\big(m^{-1/2}\big)\bigg].
\end{align*}
Note that, conditional on $X_1,\dots, X_n$, the random variable  $D_1^*(z)^{-1}(\E^*\underline{m}_n^*(z)\tilde\Sigma_n+I_q)^{-1}$ is independent of $r_1^*$. Hence,
by Proposition \ref{lemma: formel 2.1} 
(with the matrix $C^*$ in \eqref{eq: C_ia1}), we obtain for the second factor
\begin{align}
\nonumber 
&
\E \bigg\arrowvert r_1^{*\top}D_1^*(z)^{-1}(\E^*\underline{m}_n^*(z)\tilde\Sigma_n+I_q)^{-1}r_1^* -\frac{1}{m}\tr\Big(D_1^*(z)^{-1}(\E^*\underline{m}_n^*(z)\tilde\Sigma_n+I_q)^{-1}\tilde\Sigma_n \Big)\bigg\arrowvert^2 \\
&   ~~~~~~~~~~~~~~~~~~~~~~~~~~~~
= \mathcal{O} \Big  (  {1  \over m } +  {m  \over n }  \Big ). 
\label{hd60}
\end{align}

In order to complete the proof of (iii), we continue to show
\begin{align}\label{eq: bound gamma}
\E \arrowvert \gamma_1^* (z) \arrowvert^p=\mathcal{O}
\Big ( { 1 \over m^{p/2} }
+ \frac{m}{n} \Big)
\ \ \ \text{for any $p\geq 2$}.
\end{align}
To this aim, note first that 
\begin{align}
\label{hd61}
\E \Big\arrowvert {r_1^{*\top }}D_1^*(z)^{-1}r_1^*-\frac{1}{m}\tr\big(\tilde{\Sigma}_nD_1^*(z)^{-1}\big)\Big\arrowvert^p\leq
{K_p (z) }
\Big ( \frac{1}{m^{p/2}} + \frac{m}{n} \Big)
\end{align}
by Proposition  \ref{lemma: formel 2.1}, where the constant depends only on $z$ and $p$. 
But the same consideration as at the beginning of step (ii) provides the  identity
\begin{align*}
\E &\Big\arrowvert \frac{1}{m}\tr\big(\tilde{\Sigma}_nD_1^*(z)^{-1}\big)-\frac{1}{m}\tr\Big(\tilde\Sigma_n\E^*\big( D_{1}^*(z)^{-1}\big)\Big)\Big\arrowvert^p\\
&=\E \bigg\arrowvert \frac{1}{m}\sum_{j=2}^m\big(\E_j^*-\E_{j-1}^*\big) \beta_{1j}^*(z){r_j^*\top} D_{1j}^*(z)^{-1}\tilde\Sigma_n D_{1j}^*(z)^{-1} (z)r_j^*\bigg\arrowvert^p,
\end{align*}
which is bounded by the (discrete) Burkholder-Davis-Gundy inequality, the inequality $\arrowvert  \beta_{1j}^*(z)\arrowvert\leq \arrowvert z\arrowvert/\Im(z)$, Jensen's inequality
\begin{align}
\label{hd62}
K_p&\frac{1}{m^p}\E \bigg(\sum_{j=2}^m\bigg\arrowvert \big(\E_j^*-\E_{j-1}^*\big) \beta_{1j}^*(z)r_j^{*\top} D_{1j}^*(z)^{-1}\tilde{\Sigma}_nD_{1j}^*(z)^{-1} (z)r_j^*\bigg\arrowvert^2\bigg)^{p/2}\\
\nonumber 
&\leq K_p \frac{1}{m^{p/2}}\frac{2^p\arrowvert z\arrowvert^p}{\Im(z)^p}\frac{1}{m}\sum_{j=2}^m\E \bigg\arrowvert r_j^{*\top} D_{1j}^*(z)^{-1}\tilde{\Sigma}_n
D_{1j} ^*(z)^{-1}
(z)r_j^*\bigg\arrowvert^p\\
\nonumber 
&=\mathcal{O}\big(m^{-p/2}\big).
\end{align}
Combining \eqref{hd61} and \eqref{hd62} yields 
\eqref{eq: bound gamma}.
 Hence, we obtain for \eqref{hd64}
 the bound $\mathcal{O} \big ( m^{-1} + m/n ) $, which proves (iii).

\end{itemize}

\subsection{Proof of \eqref{eq: 5.1}} \label{sec55}
The proof follows the martingale arguments of the almost sure convergence of the random part for classical covariance matrices, replacing the expectations involved there by corresponding conditional expectations in the bootstrap world. We present the adapted reasoning for the sake of completeness.

Inserting and subtracting conditional expectations, we rewrite
\begin{align*}
m_n^*(z)-\E^*m_n^*(z)&=\frac{1}{q}\sum_{j=1}^m \E_j^*\tr \big (  D^*(z)^{-1}  \big ) -\E_{j-1}^*\tr  \big ( D^*(z)^{-1}  \big ) 
=: \frac{1}{q}\sum_{j=1}^m\rho_j^* .
\end{align*}
Next, we get by conditional independence of $D^*_j(z) $  from $X_j^*$, the Sherman-Morrison formula and invariance of the trace under cyclic permutation
\begin{align*}
\rho_j^*&=\big(\E_j^*-\E_{j-1}^*\big)\Big[\tr \big(   D^*(z)^{-1}  \big)-\tr \big(
  D^*_j(z)^{-1}  \big) \Big]
=\big(\E_j^*-\E_{j-1}^*\big)\frac{r_j^{*\top}
  D^*_j(z)^{-2} 
r_j^*}{1+r_j^{*\top}   D^*_j(z)^{-1} r_j^*}.
\end{align*}
Moreover, with the diagonal representation $\sum_{j=1}^m r_j^*r_j^{*\top}
=U\Lambda {U}^\top $ for some diagonal matrix $\Lambda$ and orthorgonal  matrix $U$, 
\begin{align*}
    \bigg\arrowvert \frac{r_j^{*\top}  D^*_j(z)^{-2} r_j^*}{1+r_j^{*\top}
      D^*_j(z)^{-1} 
    r_j^*}\bigg\arrowvert &=\bigg\arrowvert\frac{({U}^\top r_j^*)^\top (\Lambda-zI_{q})^{-2}{U}^\top r_j^*}{1+({U}^\top r_j^*)^\top (\Lambda-zI_{q })^{-1} {U}^\top r_j^*}\bigg\arrowvert\\
    &\leq \frac{({U}^\top r_j^*)^\top ((\Lambda-\Re(z)I_{q})^{2}+(\Im(z)I_{q })^2)^{-1} {U}^\top r_j^*}{\Im(1+( {U}^\top r_j^*)^\top (\Lambda-zI_{q})^{-1} {U}^\top r_j^*)} =\frac{1}{\Im(z)},
\end{align*}
where we have used the identity 
$$
\Im\Big(\frac{1}{\lambda-z}\Big)=\frac{\Im(z)}{(\lambda-\Re(z))^2+\Im(z)^2}
$$
for $\lambda \in \R $ in the last identity. Therefore, the family $\rho_1^*,\dots, \rho_m^*$ forms a bounded martingale difference sequence, and writing the expectation $\E$ as expected conditional expectation $\E \E^*$, an application of Burkholder's inequality reveals
\begin{align*}
    \E \big\arrowvert m_n^*(z)-\E^*m_n^*(z)\big\arrowvert^4&\leq \frac{K_4}{q^4}\E \Big( \E^* \sum_{j=1}^m\arrowvert\rho_j^* \arrowvert^2\Big)^2
    \leq\frac{4K_4m^2}{\Im(z)^4q^4}=\mathcal{O}\Big(\frac{1}{m^2}\Big).
\end{align*}
Because of
$$
\frac{m}{q}\big(m_n^*(z)-\E^*m_n^*(z)\big)= \underline{m}_n^*(z)-
\E^*\underline{m}_n^*(z)
$$
the assertion  \eqref{eq: 5.1} follows.

\section{Proof of Theorem \ref{thm: EW1}}
\label{sec7} 

  \def\theequation{C.\arabic{equation}}	
	\setcounter{equation}{0}

We begin with the proof of Lemma \ref{multi}, which will be  crucial for the proof of Theorem \ref{thm: EW1}.

\begin{proof}[Proof of   Lemma \ref{multi}]
For $r \in \mathbb{N}_0$ let $(x)_r = x(x-1)\ldots(x-r+1)$ then
\begin{equation}\label{sti1}
  x^r = \sum^r_{j=0} a_{r,j} (x)_j
\end{equation}
where $\{ a_{r,j} \mid j=0,\ldots,r\}$ are the Stirling numbers of the second kind \citepSM[see][]{riordan1958}.
Using this representation  twice  we obtain
\begin{eqnarray} \label{sti2}
\mathbb{E} \Big [ \prod^k_{\ell = 1} W^{s_\ell}_\ell \Big ]  &=& \sum^{s_\ell}_{j_1=0} \ldots \sum^{s_k}_{j_k=0} a_{s_1,j_1} \ldots a_{s_k,j_k} \mathbb{E} \Big             [ \prod^k_{\ell=1} (W_\ell)_{j_\ell} \Big ] \\  \nonumber 
&\leq & \sum^{s_\ell}_{j_1=0} \ldots \sum^{s_k}_{j_k=0} a_{s_1,j_1} \ldots a_{s_k,j_k}  \prod^k_{\ell=1} \mathbb{E} \Big             [ (W_\ell)_{j_\ell} \Big ] \\\nonumber
& = & \prod^k_{\ell=1} \Big( \sum^{s_\ell}_{j_\ell=0} a_{s_\ell,j_\ell} \mathbb{E} \Big             [ (W_\ell)_{j_\ell} \Big ]  \Big )  \\
& =& \prod^k_{\ell=1}  \mathbb{E} \Big             [ W_\ell^{s_\ell}  \Big ]  
\nonumber
\end{eqnarray}
where the inequality follows  evaluating the factorial moments of the multinomial distribution,
which gives 
\begin{eqnarray*}
  \mathbb{E} \Big [ \prod^k_{\ell =1} (W_\ell)_{j_\ell} \Big ]&=&  \Big( \frac {1}{n}\Big)^m \sum_{i_1 \geq j_1 \ldots i_k \geq j_k} \sum_{i_{k+1} \ldots   i_n \geq 0}
  \frac {m ! \mathds{1} \{\sum^n_{\ell =1}i_\ell=m\}}{\prod^k_{\ell=1}(i_\ell - j_\ell)! i_{k+1}! \ldots i_n!} \\
  \\
   &=&  \frac {m!}{(m-j_1-\ldots -j_k)!} \Big( \frac {1}{n}\Big)^m n^{m-j_1 -\ldots - j_k} \\
  &\leq & \prod_{\ell=1}^k 
\frac {m!}{(m-j_\ell )!} \Big( \frac {1}{n}\Big)^m n^{m-j_\ell }  = 
\prod_{\ell=1}^k 
\mathbb{E} \Big             [ (W_\ell)_{j_\ell} \Big ] ~.
\end{eqnarray*}
Therefore, the assertion will follow from the estimate for $s \geq 1$
\begin{equation}
    \label{ang11}
  \max_{ s  \leq k_m }  
   \Big(\frac {n}{m}\Big)
  \mathbb{E}[W^{s}_\ell] \leq  
1 +   c_\gamma {m^{1+\gamma }  \over n }  ~, 
\end{equation}
which we will prove in the following.  For this purpose we note that 
\begin{eqnarray*}
  \mathbb{E} \Big [  (W_\ell )_{k} \Big ]&=&  
 \frac {m!}{(m-k)!} \Big( \frac {1}{n}\Big)^m n^{m-k} ~ \leq ~ \Big( \frac {m}{n} \Big)^{k}.
\end{eqnarray*}
We now use  \eqref{sti1} and obtain 
for $s \geq 1$  (note that $a_{s,0}=0 $, $a_{s,1} =1$)
\begin{eqnarray} \label{sti2}
\mathbb{E} \big [  W^{s}_\ell  \big ] 
& \leq &  \sum^{s}_{j=0} a_{s,j} \Big( \frac {m}{n}\Big)^{j} 
= {m \over n}  + a_{s,2} \Big ( {m \over n}   \Big)^2 +
\Big ( {m \over n}   \Big)^{2}  R ~, 
\end{eqnarray}
where 
\begin{eqnarray}
  R=   \sum^{s}_{j=3} a_{s,j} \Big( \frac {m}{n}\Big)^{j-2}  \leq  \sum^{s}_{j=3} 
  \Big ( {m \over n}   \Big)^{j-2} s^j  {j^s  \over j!} 
    \label{ang14} 
\end{eqnarray}
and we have used the estimate
$$
a_{s,j} \leq \binom{s}{j} j^{s-j} \leq {s^j \over  j!} j^s 
$$
for the Stirling numbers of the second kind. 
We first show that the term in \eqref{ang14} 
is bounded by  a constant independently of $s$.
For this purpose we use the estimate $\log {m \over n} \leq -  \log m $,  for the terms in the sum in the sum 
\begin{eqnarray*}
  R_j &=&   
  \Big ( {m \over n}   \Big)^{j-2} s^j  {j^s  \over j!}  \leq  \exp \{  j \log s  - (j-2) \log m  +  s \log j\} 
  \\
  & \leq &
  \exp \{  j \log k_m  - (j-2) \log m  +  k_m \log j\} 
\end{eqnarray*}
Observing that  $  k_m =  \lfloor \gamma \log m  \rfloor $ yields for all $m \geq m( \gamma )= e^\gamma  $ 
\begin{eqnarray*}
  R_j
  & \leq &
  \exp \{  2 j \log \log m  - (j-2) \log m  +  \gamma \log m  \log j\} \\
  & \leq &
  \exp \Big  \{  \log m\Big    ( -{j\over 2}   + 2   +  \gamma  \log j \Big   )\Big  \}  ~, 
\end{eqnarray*}
where the second inequality follows from $\log m \leq m^{1/4} $. We now define 
$j^* = j^*(\gamma )$ as the smallest integer such that the inequality 
\begin{eqnarray}
    \label{ang13} 
    \gamma \log j \leq { j \over 4} - {3 \over 2}
\end{eqnarray}    
holds for all $j \geq j^*$    obtain 
$$
 R_j \leq \Big ( {1 \over {m}^{1/4}} \Big )^{j-2} 
 $$
 for all $j\geq j^*$.  Consequently, for sufficiently large $m$ the term 
 $$
 R \leq \sum_{j=3}^{j^*-1} 
  \Big ( {m \over n}   \Big)^{j-2} s^j  {j^s  \over j!}  +
  \sum^{s}_{j=j^*} \Big ( {1 \over {m}^{1/4}} \Big )^{j-2}
 $$
 is bounded and we obtain from \eqref{sti2}, observing  that $a_{s,2} \leq  2^s$, that
 $$
 \mathbb{E} \big [  W^{s}_\ell  \big ] 
 \leq  
\frac {m}{n} \Big \{   1  + c_\gamma  
\frac {m^{1 + \gamma \log 2 }}{n}   
\Big \}  \leq   
\frac {m}{n} \Big \{   1  + c_\gamma    
\frac {m^{1 + \gamma  }}{n} \Big \}  
 ~, 
$$
where the bound is uniform with respect to $s \leq k_m$.
\end{proof}

\begin{proof}[Proof of Theorem \ref{thm: EW1}] 
Applying the same arguments as  in  Lemma 2.2 and 2.3  of 
\citeSM{yinbaikri1988}  and the reasoning at beginning of Section 2 in  \citeSM{baiyin1993}
 to the bootstrap matrix $\widehat \Sigma_n^*$, we
may assume that $| X_{ij} | \leq \sqrt{m} \delta_m $ for some sequence 
$\delta_m $ satisfying the conditions of Lemma \ref{lemma: delta_n} as well as \eqref{ar7}.

Next, we shall prove that
for the sequence  $k_m = 
\lfloor \gamma  \log m \rfloor  $
(with $\gamma > 0 $ to be specified later) we have  
\begin{equation}
\label{ang15} 
\sum_{m=1}^{\infty}\E \Big [ \Big  (\frac{\Arrowvert \widehat{\Sigma}_{n}^{*}\Arrowvert_{S_{\infty}}}{z}\Big )^{k_m} \Big ] <\infty
\end{equation}
for some $z > z_0(\gamma) > 0 $,  where  $z_0(\gamma)$
will be specified later 
according to the cases (a) and (b) in Theorem \ref{thm: EW1}.
In what follows, we suppress the $m$-dependence of $k=k_m$. Let $(W_1,\cdots, W_n)$ denote a multinomial  distributed vector with parameter $(m,(1/n,\cdots,1/n))$. Then (note that $\Sigma_n=I_p$)
\begin{align*}
\E\big\Arrowvert \widehat{\Sigma}_{n}^{*}\big\Arrowvert_{S_{\infty}}^{k}&\leq \E\tr\Big(
\widehat{\Sigma}_{n}^{*k}
\Big)\\
&=\frac{1}{m^k}\sum_{\substack{i_1,\dots, i_k\in\{1,\dots, q\}\\ j_1,\dots, j_k\in\{1,\dots, n\}}} 
\E \big [ W_{j_1}\dots W_{j_k} \big ] 
 \E \big [ X_{i_1j_1}X_{i_2j_1}X_{i_2j_2}\cdots X_{i_kj_k}X_{i_1j_k}
\big ] .
\end{align*}
The difference to the analysis of 
\citeSM{yinbaikri1988}
for the matrix  $\widehat \Sigma_n$ are the additional factors $\E [ W_{j_1}\dots W_{j_k}] $ as well as the range of indices $\{1,\dots, q\}, \{1,\dots, n\}$ instead of $\{1,\dots, p\}, \{1,\dots, n\}$ in the above expression. Note that $n/p=\mathcal{O}(1)$ while $n/q\rightarrow\infty$.

Nevertheless, due to the similarity of our expression to the corresponding expectation analyzed in \citeSM{yinbaikri1988}, we may adopt their strategy of decomposing the summation as follows.  Drawing two parallel lines, the so-called $I$-line and $J$-line, we can construct a directed multigraph   by plotting for a given sequence $(i_1,j_1,i_2,j_2,\dots, i_k,j_k)$ the indices $i_1,\dots, i_k\in\{1,\dots, q\}$ on the $I$-line, the indices $j_1,\dots, j_k\in\{1,\dots, n\}$ on the $J$-line and interpret them as vertices on two disjoint classes on the two parallel lines. Edges will be the directed segments $i_1j_1,j_1i_2,\dots,j_ki_1$. They are $2k$ in number and they are regarded as different from each other, even if they have the same initials and ends. Two edges are said to coincide if they have the same vertex set. If not every edge coincides at least with one other edge, then  
$$
\E\big [ X_{i_1j_1}X_{i_2j_1}X_{i_2j_2}\cdots X_{i_kj_k}X_{i_1j_k} \big ] =0.
$$
In order to treat the remaining terms, we have to distinguish between different types of edges within canonical graphs, meaning graphs that satisfy $i_1=1$, $j_1=1$, $i_k\leq \max \{i_{k-1},\dots, i_1 \} +1 $ and $j_k\leq \max \{i_{k-1},\dots, j_1 \} +1 $ ($k\geq 2$). In the terminology of \citeSM{yinbaikri1988}, an edge is called innovation if its right vertex does not occur before. Depending on whether the right vertex belongs to the $I$-line or $J$-line, it is called row- or column-innovation. An edge is called $T_3$-edge, if there is exactly one innovation before which coincides with it. An edge will be called $T_4$-edge, if it is neither an innovation nor $T_3$. Equipped with these notions, the remaining sum can be split into the sums $\sum'\sum''\sum'''$ 
\begin{align*}
\E \big [ \tr\big (\widehat{\Sigma}_n^{*k}\big) \big ] &=\frac{1}{m^k}\sum'\sum''\sum'''\E [ W_{j_1}\dots W_{j_k} ] \E\big [ X_{i_1j_1}X_{i_2j_1}X_{i_2j_2}\cdots X_{i_kj_k}X_{i_1j_k}\big ].
\end{align*} 
Here, the $\sum'$-summation is over different arrangement of the four different types of edges (row innovation, column innovation, $T_3$ and $T_4$) at the $2k$ positions, the $\sum''$-summation is running over different canonical graphs  with given arrangement of the four types for $2k$ positions, and the $\sum'''$-summation over those constellations for which the graph is isomorphic to the given canonical graph. 

\medskip

If each edge coincides at least with one other edge and if $r$ denotes the number of row innovations and $l$ the number of $T_3$-edges, then there are $l-r$ column innovations and $(2k-2l)$ $T_4$-edges. 
As shown in \citeSM{yinbaikri1988}
page 518 ff,  the number of summands in the first sum is bounded by
\begin{align*}
\sum'&\leq \sum_{l=1}^k\sum_{r=1}^l\binom{k}{r}\binom{k}{l-r}\binom{2k-l}{l},
\intertext{the number of summands in the third sum can be estimated from above by}
\sum'''&\leq q^{r+1}n^{l-r}
\intertext{if the canonical graph corresponding to $\sum'''$ possesses $r$ row innovations and $l$ $T_3$-edges, and finally, if $t$ denotes the number of non-coincident $T_4$-edges,}  
\sum''&\leq k^{2t}(t+1)^{6k-6l},
\end{align*}
where $t$ ranges from $0$ to $2k-2l$.

\medskip

It remains to evaluate the corresponding summands
$$
\E \big [ W_{j_1}\dots W_{j_k}\big ] \E \big [ X_{i_1j_1}X_{i_2j_1}X_{i_2j_2}\cdots X_{i_kj_k}X_{i_1j_k}\big ] 
$$
when there are $r$ row innovations, $l$ $T_3$-edges and $t$ non-coincident $T_4$-edges. As argued in 
\citeSM{yinbaikri1988},
$$
 \Big\arrowvert\E \big [ X_{i_1j_1}X_{i_2j_1}X_{i_2j_2}\cdots X_{i_kj_k}X_{i_1j_k} \big ] \Big\arrowvert \leq k^t\big(\delta_m \sqrt{m}\big)^{2k-2l-t},
 $$
 while our Lemma \ref{multi} implies that
 $$
 \Big\arrowvert \E \big [ W_{j_1}\dots W_{j_k}\Big ] \Big\arrowvert \leq \Big(\frac{m}{n}\Big)^{l-r}
  \Big  (    
1 +  c_\gamma   {m^{1+\gamma}   \over n }  
\Big )  ^{k}   
 $$
 as there are $l-r$ different indices ammong $j_1, \ldots , j_k$.
 Putting these ingredients together, we obtain
 \begin{align*}
 \E \big [ \tr ( \widehat{\Sigma}_n^{*k} ) \big ]  
 &\leq \frac{1}{m^k}  
  \Big  (    
1 +  c_\gamma   {m^{1+\gamma}   \over n }  
\Big )  ^{k}   
 \sum_{l=1}^k\sum_{r=1}^l\bigg\{\binom{k}{r}\binom{k}{l-r}\binom{2k-l}{l}q^{r+1}n^{l-r}\\
 &\hspace{25mm}\times \sum_{t=0}^{2k-2l}k^{2t}(t+1)^{6k-6l}K\Big(\frac{m}{n}\Big)^{l-r}k^t\big(\delta_m \sqrt{m}\big)^{2k-2l-t}\bigg\}\\
 &\leq  q 
 \Big  (    
1 +  c_\gamma   {m^{1+\gamma}   \over n }  
\Big )  ^{k}  
 \sum_{l=1}^k\sum_{r=1}^l\binom{k}{r}\binom{k}{l-r}\binom{2k-l}{l} \Big( \frac{q}{m}\Big)^r  
 \\
 &\hspace{25mm}\times 
 \sum_{t=0}^{2k-2l}k^{3t}(t+1)^{6k-6l}\big(\delta_m \sqrt{m}\big)^{-t} \delta_m ^{2(k-l)}.
 \end{align*}
 Using now the same arguments as in \citeSM{yinbaikri1988},
 pages 519 -- 520 (replacing there $n$ by $m$ and $p$ by $q$)  we finally obtain
 \begin{align*}
 \E \big [ {\rm tr } ( \widehat{\Sigma}_n^{*k} ) \big ]  
 &\leq 
  \Big  (    
1 +  c_\gamma   {m^{1+\gamma}   \over n }  
\Big )  ^{k}  
\Big [  \big ( 2mq \big )^{1/k}
\big ( 1 + \sqrt{\delta_m  } \big )^2 \Big \{ 
\Big (   1 + \sqrt{ q \over m}
\Big ) ^2
+ \big ( {18 \delta_m ^{1/6}\gamma }\big )^6
\Big \} 
\Big ]^{k}
\end{align*}
Note that $ \big  (     1 +  c_\gamma   {m^{1+\gamma}   \over n }   \big )  ^{k}  \to  1 $ and
$\big ( 1 + \sqrt{\delta_m  } \big )^2 \big \{ 
\big (   1 + \sqrt{ q \over m}
\big ) ^2
+ \big ( {18 \delta_m ^{1/6}\gamma }\big )^6
\big \}  \to (1 + \sqrt{c})^2$ as $m\to \infty$.
Furthermore, if $m = o ( \sqrt{n} )$ we use $\gamma=1/(1 + \varepsilon /2 )$ for $\varepsilon > $
to obtain 
\begin{eqnarray*}
   \big ( 2mq \big )^{1/k} \to   e^{2 +   \varepsilon}  
\end{eqnarray*}
which proves \eqref{ang15} for any $z > z(\gamma ) =e^2$
in the case (a) of Theorem \ref{thm: EW1}. 

If $ m = o(\log n)$, we have 
$ { m ^{1+\gamma} \log m \over  n }  \to 0 $ for any $\gamma >0$, and it follows 
 \begin{eqnarray*}
   \big ( 2mq \big )^{1/k} \to   e^{2 / \gamma }  
\end{eqnarray*}
for any $\gamma $. Therefore \eqref{ang15} holds for any $ z > (1 + \sqrt{c})^2$, which completes the proof of Theorem \ref{thm: EW1}(a) and (c).

 \smallskip
 
For a proof of part (b), note that it  
follows from the arguments in  \citeSM{baiyin1993}, 
 \begin{align*}
\P&\Big(\lambda_{\min}\big(\widehat{\Sigma}_n^*\big)<K\Big)\\
&=\P\Big(\lambda_{\min}\big(\widehat{\Sigma}_n^*-(1+c)I_q\big)<K-\big(1-\sqrt{c}\big)^2-2\sqrt{c}\Big)\\
&\leq \P\Big(\big\Arrowvert \widehat{\Sigma}_n^*-(1+c)I_q\big\Arrowvert_{S_{\infty}}>2\sqrt{c}+\big(1-\sqrt{c}\big)^2-K\Big)\\
&\leq \P\Big(\big\Arrowvert \widehat{\Sigma}_n^*-cI_q-\diag\big(\widehat{\Sigma}_n^*\big)\big\Arrowvert_{S_{\infty}}>2\sqrt{c}+\frac{1}{2}\big(\big(1-\sqrt{c}\big)^2-K\big)\Big)\\
& \ \ \ +\P\Big(\big\Arrowvert \diag\big(\widehat{\Sigma}_n^*\big)-I_q\big\Arrowvert_{S_{\infty}}>\frac{1}{2}\big(\big(1-\sqrt{c}\big)^2-K\big)\Big).
 \end{align*}
 Hence, it remains to show that for any $\varepsilon>0$ and any $l\in\N$,
 \begin{align}
 \P\Big(\big\Arrowvert \diag\big(\widehat{\Sigma}_n^*\big)-I_q\big\Arrowvert_{S_{\infty}}>\varepsilon\Big)=o(m^{-l})\label{eq: app2}
 \intertext{and} 
 \P\Big(\big\Arrowvert \widehat{\Sigma}_n^*-\diag\big(\widehat{\Sigma}_n^*\big)-cI_q\big\Arrowvert_{S_{\infty}}>2\sqrt{c}+\varepsilon\Big)=o(m^{-l}). \label{eq: app1}
\end{align}

    {\it Proof of \eqref{eq: app2}.} Since $q=\mathcal{O}(m)$, it is sufficient to  show 
      \begin{align}
      \label{ang16}
  \P \bigg(\bigg\arrowvert \frac{1}{m}\sum_{i=1}^m \Big(\arrowvert X_{i1}^*\arrowvert^{2}-1\Big)\bigg\arrowvert>\varepsilon\bigg)=o(m^{-l}).
  \end{align}

For the  sequence $k=k_m = \gamma \log m$,
an application of Markov's inequality yields an upper bound on the left-hand side
(recall that $| X_{ij} | \leq \delta_m \sqrt{m})$
\begin{align*}
m^{-2k}&\varepsilon^{-2k}\E\bigg[ \sum_{i=1}^m \Big(\arrowvert X_i^*\arrowvert^{2}-\E \arrowvert X_1\arrowvert^{2}\Big)\bigg]^{2k}\\
&=m^{-2k}\varepsilon^{-2k}\E\bigg[ \sum_{i=1}^n W_i\Big(\arrowvert X_i\arrowvert^{2}-\E \arrowvert X_1\arrowvert^{2}\Big)\bigg]^{2k}\\
&= m^{-2k}\varepsilon^{-2k}
{l}\sum_{\substack{i_1\geq 0,\dots, i_n\geq 0\\ i_1+\dots +i_n=2k}}\binom{2k}{i_1\dots i_n}\E\bigg[\prod_{t=1}^n W_t^{i_t}\Big(\arrowvert X_i\arrowvert^{2}-\E \arrowvert X_1\arrowvert^{2}\Big)^{i_t}\bigg]\\
&\leq 2^{2k}m^{-2k}\varepsilon^{-2k}\sum_{l=1}^k\binom{n}{l}\sum_{\substack{i_1\geq 2,\dots, i_l\geq 2\\ i_1+\dots +i_l=2k}}\binom{2k}{i_1\dots i_l}\E\bigg[\prod_{t=1}^l W_t^{i_t}\bigg]\prod_{t=1}^l \E\arrowvert X_1\arrowvert^{2i_t}\\
&\leq K 2^{2k}m^{-2k}\varepsilon^{-2k}\sum_{l=1}^k n^l \sum_{\substack{i_1\geq 2,\dots, i_l\geq 2\\ i_1+\dots +i_l=2k}}\binom{2k}{i_1\dots i_l}\Big(\frac{m}{n}\Big)^l 
 \Big  ( \ 
1 +   c_\gamma  {m^{1+\gamma}   \over n }  
 \Big )  ^{k} 
\prod_{t=1}^l \E\arrowvert X_1\arrowvert^{2i_t}\\
&=K 2^{2k}m^{-2k}\varepsilon^{-2k}
 \Big  ( \ 
1 +   c_\gamma  {m^{1+\gamma}   \over n }  
 \Big )  ^{k} 
\sum_{l=1}^k m^l \sum_{\substack{i_1\geq 2,\dots, i_l\geq 2\\ i_1+\dots +i_l=2k}}\binom{2k}{i_1\dots i_l}\prod_{t=1}^l \E\arrowvert X_1\arrowvert^{2i_t},
\end{align*}
where Lemma \ref{multi} has been applied in the last inequality. 
Arguing as in the proof of Lemma~2' in 
\citeSM{baisil2004}, page 602 (where $f=1$ and 
their  $m$ corresponds to  $k$), we obtain the upper  bound 
\begin{align}
    \label{ang17}
\gamma \log m \Big ( 
{ 16  \gamma \delta_m^2  \log m
\over  
\varepsilon \log ( 4 \delta_m^4 m /
\E | X_{11}|^4) )
}
\Big )^{2 \gamma \log m} &\leq  
\gamma (\log m)\Big (   
{ 32   \gamma \delta_m^2  
\over  
\varepsilon 
}
\Big )^{2 \gamma \log m} \nonumber\\
&=\gamma(\log m) m^{2 \gamma \log \big({ 32   \gamma \delta_m^2  
\over  
\varepsilon 
}\big)}
\end{align}
if $m$ is sufficiently large  such that
$\log ( 4 \delta_m^4 m / \E | X_{11}|^4  )  \geq {1 \over 2} \log m$ (note that by \eqref{ar7} eventually,  $\delta_m \geq m^{-1/8}$  as $m \to \infty $).
Because of 
$\delta_m \to 0 $ as $m \to \infty$,  it follows that for  any $a \in (0,1)$, there exists  an integer  
$m_0=m_0(a)$ such that  the expression in \eqref{ang17} is bounded by  $m^{2\gamma\log a} $ for all 
$m  \geq m_0$, which proves  \eqref{ang16} and completes the proof of \eqref{eq: app2}.

\medskip

{\it Proof of \eqref{eq: app1}.} With the notation $\hat{T}_n^*=\widehat{\Sigma}_n^*-\diag\big(\widehat{\Sigma}_n^*\big)$, it is sufficient to prove the following result. There exists a positive constant $C>0$, such that for every $r\in\N$ and positive $\varepsilon$ and $l$,
\begin{align}
\P\Big(\big\Arrowvert \hat{T}_n^*-cI_q\big\Arrowvert_{S_{\infty}}^r> Cr^42^rc^{r/2}+\varepsilon\Big)=o(m^{-l}).
\end{align}
To this aim, we need to establish the bootstrap analogs of lemmata 1' -- 8' in the appendix of \citeSM{baisil2004}. Since all of them can be deduced by our manipulation technique and Lemma \ref{multi} in a straightforward manner, we omit them at this point.

 \end{proof}

\begin{proof}[Proof of Corollary \ref{thm: EW1a}]
We begin part (b). By the discussion in Section \ref{sec52} we can assume that $R_n=0$, which 
gives 
\begin{align*}
    {x^\top  \widehat \Sigma_n^* x \over  \|x \|^2 }
    &\geq 
    \lambda_{\rm min} \Big ( {1 \over m} \sum_{i=1}^m X_i^*X_i^{*\top} \Big) 
       {x^\top  L_nL_n^\top  x \over  \|x \|^2 }
       \geq 
    \lambda_{\rm min} \Big ( {1 \over m} \sum_{i=1}^m X_i^*X_i^{*\top} \Big) 
     \lambda_{\rm min} ( \tilde \Sigma_n). 
\end{align*}
The assertion follows applying Theorem \ref{thm: EW1} for $q' \times q'$-matrix ${1 \over m} \sum_{i=1}^m X_i^*X_i^{*\top}$ and using the inequality  $ \lambda_{\rm min} ( \tilde \Sigma_n)  \geq  \lambda_{\rm min} ( \Sigma_n) $.
Part (a) is an immediate consequence of the fact that the spectral norm is a matrix norm.
\end{proof}

\section{Proof of Theorem   \ref{thm: new}}
\label{sec8} 

  \def\theequation{D.\arabic{equation}}	
	\setcounter{equation}{0}

Throughout this section, we assume that Assumptions (A1) -- (A3+) are satisfied.
All proofs have in common the truncation steps in \eqref{ar100}   and \eqref{eq: trunk0} discussed in the following  Section \ref{sec82} and \ref{sec82aa}, respectively. 

 \subsection{Reduction to $L_n$} \label{sec82}

Recalling the notation from
Section \ref{sec52}  we will will first prove that  we can replace the matrix $\Pi_n A_n$ in the decomposition \eqref{decompproj}
by the matrix $L_n$, that is 
\begin{align}
\label{ar100}
 &  \hat{T}_n^*(f)-\hat {T}_{n,L_n}^*(f)\rightarrow_{\P}0   
\end{align}
where   
 $ \hat {T}_{n,L_n}^*(f)$ 
denotes the linear spectral statistics corresponding to the matrix
$$
\widehat \Sigma_{n,L_n}^*:=\frac{1}{m} L_n X^*X^{*\top} L_n^\top, 
$$ 
and   $ L_n X^* = (L_nX^*_1, \ldots ,  L_nX^*_m) \in \mathbb{R}^{q \times m} $. Define 
\begin{align*}
\mathcal{D}_n =\Big \{  
\lambda_{\min }  (\widehat \Sigma_{n}^* ) > K_{\rm left} ,~
\lambda_{\min }  (\widehat \Sigma_{n,L_n}^* ) > K_{\rm left} ,~
 \| \widehat \Sigma_{n,L_n}^*   \|_{S_\infty } <  K_{\rm  right} ,~
  \| \widehat \Sigma_{n}^*   \|_{S_\infty } <  K_{\rm  right} \Big \} 
\end{align*}
where constants $K_{\rm left}$ and $K_{\rm  right}$ come from Corollary  \ref{thm: EW1a}. By this result we have $\P ( \mathcal{D}_n^c) = o(m^{-\ell}) $  for any $\ell \in \N $ (note that due to the  Representative Subpopulation Condition  \ref{def: rsc}, it follows that 
$ \big |   \| \widehat \Sigma_{n,L_n}^*   \|_{S_\infty } -
 \| \widehat \Sigma_{n}^*\|_{S_\infty }   
\big |  = o_{\P } (1))$. 
 By the Lipschitz continuity of $f$, the $1$-Wielandt-Hoffman inequality,  and Assumption (A1), it follows that  
\begin{align*}
\big\arrowvert 
\hat {T}_{n,L_n}^* (f ) 
- \hat{T}_n^*(f)
\big\arrowvert
 &
 \leq {\max}_{ \lambda \in [K_{\rm left}, K_{\rm  right}] }  | f'(\lambda ) | 
\sum_{j=1}^q 
\big\arrowvert {\hat \lambda}_{j,L_n}^*-\hat{\lambda}_j^*\arrowvert  + o_\mathbb{P}  (1)\\
& 
\leq {\max}_{ \lambda \in [K_{\rm left}, K_{\rm  right}] }  | f'(\lambda ) | 
\big\Arrowvert \widehat \Sigma_{n,L_n}^{*}-\widehat \Sigma_n^*\big\Arrowvert_{S_1}
+ o_\mathbb{P}  (1) ~,
\end{align*}
 $\hat \lambda_{j,L_n}^*$ is the $j$th eigenvalue of the matrix $ \widehat \Sigma_{n,L_n}^{*}$. By the discussion in Section \ref{sec52}, the right-hand side is of order  $ o_\mathbb{P}  (1)$ if 
$\mathbb{E}_{\Pi_n}
\big [ ||  R_n   ||_{S_{2}}^{2} \big ] = o (1)$, which proves \eqref{ar100}.

Therefore, we will assume in the following discussion  
that given the random projection $\Pi_n$
the matrix $\widehat{\Sigma}_{n}^{*} $ can be represented as
$$
\widehat{\Sigma}_{n}^{*}  = 
{1 \over m } L_nX^*X^{*\top} L_n^\top ~,
$$
 where $L_n$  is a $q \times q'$ matrix 
 satisfying $\| L_n \|_{S_\infty} \leq \alpha < \infty $ (for all $n \in \mathbb{N}$)
 and $X^* $ is an $  q'\times m$ matrix and $q' =  O(q)$.
Note that these arguments only require the existence of moments of order $4$.

\subsection{Reduction to truncated components}
\label{sec82aa}
We will continue truncating the random variables  $X_{ij}$. For this purpose we formulate the following lemma.

  \begin{lemma}\label{lemma: delta_n}
  There exists a sequence $(\delta_m)_{n\in\N}$ converging decreasingly to zero such that
\begin{align} \label{hol101}
  \delta_m^{-4}\E\Big(I_{\{\arrowvert X_{11}\arrowvert\geq\delta_mm^{1/2}\} }X_{11}^4\Big)\longrightarrow 0\ \ \text{as}\ \ n\rightarrow\infty.  
\end{align}
 \end{lemma}
 
 \begin{proof}
The proof of \eqref{hol101} is given on page 559 in \citeSM{baisil2004}, that we repeat here for the reader's convenience. First observe that for any $k\in\N$, there exists a strictly  increasing sequence $(n_k)_{k\in\N}$ with
 $$
 k^4\E\Big( I\big\{\arrowvert X_{11}\arrowvert\geq m_{n_k}^{1/2}/k\big\}X_{11}^4\Big)<\frac{1}{2^k}
 $$
 by monotone convergence, because $\E X_{11}^4<\infty$. Choose $\delta_m=1/k$ for $n\in[n_k,n_{k+1})$, $\delta_m=1$ for $n<n_1$. Then, $\delta_m\searrow 0$ and
 $$
 \delta_m^{-4}\E\Big(I_{\{\arrowvert X_{11}\arrowvert\geq\delta_mm^{1/2}\}}  X_{11}^4\Big)\longrightarrow 0\ \ \text{as}\ \ n\rightarrow\infty. 
 $$ 
 \end{proof}

We choose the sequence  $(\delta_m)$ 
such that  Lemma  \ref{lemma: delta_n} holds and additionally 
such that 
\begin{align}
\label{ar7} 
\delta_m m^{1/8}\rightarrow\infty ~.
\end{align}
With this sequence, we   
show that it is sufficient 
to consider random variables which satisfy 
\begin{align}
\arrowvert  X_{ij}\arrowvert&\leq \delta_m\sqrt{m}\ \ i=1,\dots n,\ j=1,\dots, p\label{eq: trunk1}\\
\E \ X_{11}&=0\\
\Var ( X_{11})&=1\\
\E   X_{11}^4 &\rightarrow  3\label{eq: trunk4}.
\end{align}
For this purpose, we introduce the notation
\begin{eqnarray*}
\check{X}_{ij} = 
\frac{
X_{ij} I \big \{ |X_{ij}| \leq \delta_m \sqrt{m} \big \} - \mathbb{E} \big[ X_{ij} I \big  \{|X_{ij}| \leq  \delta_m \sqrt{m} \big \} \big ] }
{ \sqrt{{\rm Var } \big (  X_{ij} I \big \{ |X_{ij}| \leq  \delta_m \sqrt{m} \big \} 
\big )  } }. 
\end{eqnarray*}
We write 
$$
 \check {\Sigma}_n^{*}:=\frac{1}{m}\sum_{i=1}^m L_n \check  X_i^{*}{\check {X}^{*\top}_i} L_n^\top  
$$ 
and denote by  
 $\check {\lambda}_i^*$  ($i=1,\dots, q$)   its eigenvalues in decreasing order and by 
$$
\check {T}_n^*(f)=\sum_{i=1}^{q}f(\check {\lambda}_i^*)
$$
the corresponding linear spectral statistic.

 \begin{lemma}[Bootstrap truncation lemma]
 \label{lemma: BTL} Grant Assumptions (A1)--(A3+). Then 
  \begin{align}
& \hat{T}_n^*(f)-\check {T}_n^*(f)\rightarrow_{\P}0.\label{eq: trunk0}   
  \end{align}  
\end{lemma}

 \begin{proof}[Proof of Lemma  \ref{lemma: BTL}]

For the sequence  $(\delta_m)$ specified  above we 
set
$$
\tilde{X}_{ij}:=X_{ij}I\{\arrowvert X_{ij}\arrowvert <\delta_m\sqrt{m}\} 
$$ 
and denote by $\tilde{X}_1^*,\dots, \tilde {X}_n^*$ the corresponding iid sample from $n^{-1}\sum_{i=1}^n\delta_{\hat{X}_i}$. With
\begin{align}
\label{hol201}
\widehat \Sigma_n^*&=\frac{1}{m}\sum_{i=1}^mY_i^*Y_i^{* \top }=\frac{1}{m}\sum_{i=1}^m
L_n {X}_i^*{X}_i^{* \top }L_n^\top  
\end{align}
and
\begin{align}
\label{hol202}
\tilde{B}_n^*&=\frac{1}{m}\sum_{i=1}^m L_n \tilde {X}_i^*{\tilde {X}_i^{*\top}} L_n^\top  
\end{align}
we get by the union bound and Lemma \ref{lemma: delta_n} that
\begin{align}
\label{det112222}
\P(\widehat \Sigma_n^*\not=\tilde {B}_n^*)&\leq\P\Big (\tilde{X}_{ij}^*\not= X_{ij}^* \text{ for some } (i,j)\Big ) \leq mq'\,\P\big(\tilde {X}_{11}^*\not={X}_{11}^*\big)\\
&= mq' \,\E\Big (\frac{1}{n}\sum_{i=1}^nI\{\arrowvert X_{i1}\arrowvert\geq\delta_m\sqrt{m}\}\Big )
 = mq' \,\P(\arrowvert X_{11}\arrowvert>\delta_m\sqrt{m})\nonumber \\
&\leq K\delta_m^{-4}\int_{\{\arrowvert X_{11}\arrowvert\geq\delta_m\sqrt{m}\}}\arrowvert X_{11}\arrowvert^4 d \mathbb{P} =o(1). \nonumber
\end{align}
Now, we are passing over
from $\tilde {X}_1,\dots, \tilde {X}_n$
to  the centered and standardized modifications
$\check {X}_1,\dots, \check{X}_n$, where
$$
\check{X}_{ij}:=\frac{\tilde {X}_{ij} -\E\tilde{X}_{ij}}{\sigma_n}\ \ \text{ with }\sigma_n:=\sqrt{\Var(\tilde{X}_{ij})},
$$
and denote by $\check {X}_1^*,\dots,\check {X}_m^*$ the corresponding iid sample from $n^{-1}\sum_{i=1}^n\delta_{\tilde{X}_i}$. 
Further, we introduce 
 $\check {\X}^*=(\check X_1^*, \dots , \check  {X}_m^*)$ and $ {\tilde\X}^*=(\tilde X_1^* , \dots , \tilde X_m^*)$, 
$$
\check {\Sigma}_n^*=\frac{1}{m}\sum_{i=1}^mL_n \check {X}_i^*\check {X}_i^{*\top}L_n^\top 
= L_n \tilde \X ^*  \tilde \X ^{*\top} L_n^\top ,
$$
and denote by   $\check {\lambda}_1^*\geq \check {\lambda}_2^*\geq \dots\geq \check{\lambda}_q^*$  its eigenvalues
with corresponding linear spectral statistic 
$$
\check{T}_n^*(f)=\sum_{j=1}^q f(\check\lambda _j).
$$
By employing the Lipschitz continuity of $f$, the $1$-Wielandt-Hoffman inequality and the reasoning of the proof of  Lemma 2.7 in 
 \citeSM{bai1999}, we deduce the upper bound
\begin{align} 
\nonumber 
\big\arrowvert\check  {T}_n^*(f)- \tilde {T}_n^*(f)\big\arrowvert&\leq 
{\max}_{\lambda \in  [K_{\rm left}, K_{\rm  right}]} 
 | f'(\lambda) | \cdot   
\sum_{j=1}^q \big\arrowvert \check {\lambda}_j^*-\tilde {\lambda}_j^*\arrowvert + o_\P (1) \\
&\leq {2}
{\max}_{\lambda \in  [K_{\rm left}, K_{\rm  right}] }
 | f'(\lambda) | \cdot    \Big(\frac{1}{m}\tr\Big[L_n ({\tilde  \X}-\check{\X})({\X}-\check{\X})^\top L_n ^\top\Big]\Big)^{1/2}
\nonumber \\
&\hspace{50mm}\times\Big(\tr\big(\tilde{B}_n^* \big)+\tr\big(\check {\Sigma}_n^*\big)\Big)^{1/2} + o_\P (1)
,
\label{eq: b1}
\end{align}
where  $
\tilde {T}_n^*(f)=\sum_{j=1}^q f(\tilde \lambda _j) $ denotes the linear spectral statistic  corresponding to the matrix  $\tilde{B}_n^* = L_n \check {\X}^* \check {\X}^{* \top } L_n^\top $.
In order to bound the latter expression, observe that
\begin{align*}
\frac{1}{m}& \tr\Big[L_n (\tilde{\X}-\check{\X})(\tilde{\X}-\check{\X})^\top  L_n ^\top \Big]\\
&\leq \frac{2}{m}\Big(1-\frac{1}{\sigma_n}\Big)^2 q \Arrowvert \tilde {B}_n^*\Arrowvert_{S_\infty} + \frac{2}{m}\frac{1}{\sigma_n^2}\tr\left[L_n (\E \tilde  {X}_{1})(\E \tilde  {X}_1)^\top L_n^\top  \right]\\
&\leq 2c \frac{(\sigma_n^2-1)^2}{\sigma_n^2(1+\sigma_n)^2}\Arrowvert \tilde {B}_n^*\Arrowvert_{S_\infty} + \frac{2c}{\sigma_n^2}(\E \tilde {X}_{11})^2 \Arrowvert L_n\Arrowvert_{S_\infty} ^2.
\end{align*}
But
\begin{align}
\arrowvert \sigma_n^2-1\arrowvert&\leq 2\E\Big(I\{\arrowvert X_{11}\arrowvert\geq \delta_m\sqrt{m}\}\arrowvert X_{11}\arrowvert^2\Big)\nonumber =o(\delta_m^{2}m^{-1})\nonumber
\intertext{and}
\arrowvert \E\tilde {X}_{11}\arrowvert&=o(\delta_mm ^{-3/2}),\label{eq: Edelta}
\end{align}
such that
\begin{align*}
\frac{1}{m}& \tr\Big[L_n (\tilde{\X}-\check{\X})(\tilde{\X}-\check{\X})^\top L_n ^\top \Big]
=o(\delta_m^4m^{-2})\Arrowvert \tilde {B}_n^*\Arrowvert
_{S_\infty}
+o(\delta_m^2m^{-3})\Arrowvert L_n\Arrowvert_{S_\infty}^2 .
\end{align*}
Plugging this bound into \eqref{eq: b1}, we find
\begin{align}
\big\arrowvert\check  {T}_n^*(f)- \tilde {T}_n^*(f)\big\arrowvert&
\leq \mathcal{O}_{\P} 
 (1)  o \big (  \delta_m^2m^{-1}\sqrt{\Arrowvert \tilde{B}_n^*\Arrowvert_{S_\infty}}+o(\delta_m m^{-3/2})\big) \Big ) 
O_{\P}(\sqrt{q} ) , 
\label{eq: tro}
\end{align}
where we used the fact that, by  Theorem \ref{thm: EW1} and \eqref{det112222}, 
 $  
 {\max}_{\lambda \in [K_{\rm left}, K_{\rm  right}]} 
 | f'(\lambda) |
 = \mathcal{O}_{\P} (1) $  and 
\begin{align*}
\tr\big(\tilde {B}_n^* \big ) 
+ \tr\big(\check{\Sigma}_n^*\big)  =\mathcal{O}_{\P}({q}).
\end{align*}
This proves \eqref{eq: trunk0}. Note that these arguments only require the existence of the moments  order $4$. 
  \end{proof}

  Summarizing the discussion of Section \ref{sec82} and \ref{sec82aa}, we will from now assume that the random variables $X_{ij}$ 
 satisfy \eqref{eq: trunk1} - \eqref{eq: trunk4}, that the vectors $X_i$ have $q'=O(q)$ components and that  the matrix $L_n$ is of dimension $q \times q' $.
 Note that the matrix $L_n$ can be a random matrix which is independent of $X_1, \ldots X_n$.

\subsection{Passing over to   the  bootstrapped  process of Stieltjes transforms}
\label{sec83}

Define
 \begin{align*}
{M}_n^*(z) &:= q \big(m_{\mu^{\widehat \Sigma_n^*}}(z)-m_n^0 (z) \big), 
\end{align*}
where  $m_n^0 (z) $ denotes the Stieltjes transform of the measure $\mu_{p/n,{\mu^{\Sigma_n}}}^0$. Moreover, 
$
\mathcal{D}_n =\big \{  
\lambda_{\min }  (\widehat \Sigma_{n}^* ) > K_{\rm left} ~,~
  \| \widehat \Sigma_{n}^*   \|_{S_\infty } <  K_{\rm  right} \big \}.
$ 
 By the relation
\begin{align}
    \label{eq: red 1}
\hat{T}_n^*(f) - 
q \int f d \mu_{p/n,{\mu^{\Sigma_n}}}^0  = -\frac{1}{2\pi i}\ointctrclockwise f (z){M}_{n}^*(z)  \dd z 
\end{align}
provided by the Cauchy integral formula, it follows from Corollary \ref{thm: EW1a}  that the result is deduced from the corresponding limit theorem for
$$
\ointctrclockwise f(z){M}_{n} ^*(z) \mathds{1}_{\mathcal{D}_n} \dd z,
$$
where the curve integral is along any closed curve within a
 region on which $f$ is analytic and which encloses the interval $[K_{\rm left},K_{\rm  right}]$. The latter indeed boils down to proving a conditional Donsker-type theorem of a truncated version of the bootstrapped process $M_n^*(.)$, denoted by $\widehat{M}_n^*(.)$, 
see Section~\ref{sec84}. For a precise definition of $\widehat{M}_n^*(.)$ let $x_l$, $x_r$ be two real numbers with
\begin{align*}
x_l\in\begin{cases}
(0, K_{\rm left} ) & \text{if }c'\in(0,1)\\
(-\infty,~0 ) & \text{if } c'\geq 1
\end{cases}
\end{align*}
and $
x_r> K_{\rm  right}
$, where $K_{\rm left}$ and $K_{\rm  right}$ are the constants introduced in Corollary \ref{thm: EW1a} and $$c' = \limsup_{n\to \infty}\frac{ q'}{m}  .
$$
Moreover, define
$
\mathcal{C}_u=\{ x+iv_0:~x\in [x_l,x_r] \}
$
and
\begin{align*}
\mathcal{C}=\{x_l+iv:v\in[0,v_0]\}\cup \CC_u \cup \{x_r+iv: v\in [0,v_0]\}
\end{align*}
such that the closed curve $\CC \cup \bar \CC$ is contained in a region where $f$ is analytic. Further, for some null sequence $(\varepsilon_n)$ satisfying
\begin{align}
\label{det105}
\varepsilon_n\geq m^{-\alpha}
\end{align}
for some $\alpha\in (0,1)$,  we introduce 
\begin{align}
\CC_{l,n}&:=\begin{cases}
\{x_l+iv:~v\in[m^{-1}\varepsilon_n,v_0]\}, & \text{if }x_l>0\\
 \{x_l+iv:~v\in[0,v_0]\} & \text{if }x_l\leq 0,
\end{cases}\\
\nonumber
\CC_{r,n}&:=\{x_r+iv:~v\in[m^{-1}\varepsilon_n,v_0]\}
\intertext{and}
\label{det11}
\CC_n&:=\CC_{l,n}\cup\CC_u\cup\CC_{r,n}.
\end{align}
Lastly, we define 
\begin{align*}
\widehat M_n^*(z)=\begin{cases}
M_n^*(z), &\text{for }z\in\CC_n,\\
M_n^*(x_r+im^{-1}\varepsilon_n), & \text{for }z=x_r+iv \text{ with }v\in [0,m^{-1}\varepsilon_n] \\
M_n^*(x_l+im^{-1}\varepsilon_n), & \text{for }z=x_l+iv\text{ with }x_l>0 \text{ and }v \in[0,m^{-1}\varepsilon_n] ,
\end{cases}
\end{align*}
and because of $m(\bar{z})=\overline{m(z)}$ for any Stieltjes transform $m$, we have 
$
\widehat{M}_n^*(\bar{z}):=\overline{\widehat M_n^*(z)}$ for $z\in\mathcal{C}$. 
Since 
\begin{align} 
\widehat{T}_n^*(f)\mathds{1}_{\mathcal{D}_n}&
- 
q \int f d \mu_{p/n,{\mu^{\Sigma_n}}}^0 \mathds{1}_{\mathcal{D}_n} 
\nonumber \\
=&\ointctrclockwise_{\CC \cup \bar\CC} f(z) M_{n}^*(z) \mathds{1}_{\mathcal{D}_n}
\dd z \nonumber\\
=& \ointctrclockwise_{\CC \cup \bar\CC} f(z) \widehat M_{n}^*(z) \mathds{1}_{\mathcal{D}_n} \dd z
+\mathcal{O}\bigg(8 \varepsilon_n {q \over m}   \| f \|_{\CC\cup\bar\CC} \Big( \big | 
K_{\rm  right}
-x_r \big |^{-1}+
\big | 
K_{\rm left}
-x_l  \big |^{-1} \Big)\bigg)\nonumber\\
=&\ointctrclockwise_{\CC \cup \bar\CC} f(z) \widehat M_n^*(z)\dd z + o_{\P}(1)\label{eq: AR1005}
\end{align}
by Corollary \ref{thm: EW1a},  is sufficient to consider $\widehat M_n^*$ in what follows.  
The essential part of the proof of Theorem \ref{thm: new} consists of verifying the following Donsker-type result.  Gaussianity of  \eqref{eq: red 1} 
then follows with \eqref{eq: AR1005} from the continuous mapping Theorem.

\begin{proposition}[Functional CLT for the conditional process $\widehat {M}_n^{*}$ in probability]\label{prop}
Grant the conditions of  Theorem \ref{thm: new}, then 
\begin{align*}
d_{BL}\bigg\{\LL\Big(\big( \widehat {M}_n^{*}(z) - \E^* \big [ \widehat {M}_n^{*}(z) \big ] \big)_{z\in\CC}\Big\arrowvert X_1,\dots,X_n, \Pi_n \Big),\, \LL(Z)\bigg\}\longrightarrow_{\P} 0
\end{align*}
with a  centered Gaussian process $(Z)$ on $\CC$ satisfying $Z(\bar z)=\overline{Z(z)}$ and
\begin{align*}
\E\big(Z(z_1), Z(z_2)\big)= 2 \frac{(\underline{m}_{c,H}^0)'(z_1)(\underline{m}_{c,{H}}^0)'(z_2)}{\big(\underline{m}_{c,H}^0(z_1)-\underline{m}_{c,{H}}^0
(z_2)\big)^2}-\frac{2}{(z_1-z_2)^2} \ \text{ for }z_1,z_2\in\mathcal{C}
\end{align*}
(understood as its continuous extrapolation for the removable singularities at $z_1=z_2$).
\end{proposition}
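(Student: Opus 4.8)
The plan is to establish the functional CLT in probability by decomposing $\widehat M_n^*$ into a finite-dimensional convergence part and a tightness part, both conditional on $(X_1,\dots,X_n,\Pi_n)$ in probability. First I would represent, for fixed $z\in\CC_n$, the centered conditional Stieltjes transform $M_n^*(z)$ as a sum of martingale differences with respect to the filtration generated by $X_1^*,\dots,X_j^*$ (conditionally on the original sample and $\Pi_n$): writing $M_n^*(z)=\sum_{j=1}^m(\E_j^*-\E_{j-1}^*)\tr\big(D^*(z)^{-1}\big)$ and using the Sherman--Morrison identity exactly as in the proof of \eqref{eq: 5.1}, one gets $M_n^*(z)=\sum_{j=1}^m Y_j^*(z)$ with $Y_j^*(z)=-(\E_j^*-\E_{j-1}^*)\big(\beta_j^*(z)\,r_j^{*\top}D_j^*(z)^{-2}r_j^*\big)$. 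The first main task is to verify the hypotheses of the conditional bootstrap martingale CLT (Theorem \ref{thm: bmclt} in the online supplement): a conditional Lindeberg condition, which follows from the uniform boundedness of the truncated components together with the bound $|r_j^{*\top}D_j^*(z)^{-2}r_j^*/(1+r_j^{*\top}D_j^*(z)^{-1}r_j^*)|\le 1/\Im(z)$ already derived in Section \ref{sec55}; and — the crux — convergence in probability of the sum of conditional squared moments $\sum_{j=1}^m\E_{j-1}^*\big[Y_j^*(z_1)Y_j^*(z_2)\big]$ to the deterministic limit $\E(Z(z_1)Z(z_2))$ given in the statement.

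For this covariance computation I would follow the scheme of \cite{baisil2004} but carried out in the conditional bootstrap world: replace $r_j^{*\top}D_j^*(z)^{-2}r_j^*$ by its conditional expectation (using Proposition \ref{lemma: formel 2.1} and \eqref{eq: bound gamma} to control the fluctuation, at the cost of an $\mathcal{O}_\P(m/n)$-type error that is negligible since $m=o(\sqrt n)$), then express everything through $\E^*\underline m_n^*(z)$ and its derivative, and finally invoke Theorem \ref{thm: Boot1} together with the Representative Subpopulation Condition \eqref{eq: similarity3} to pass from $\underline{\tilde m}_n^0$ to $\underline m_H^0$ — this is precisely what guarantees the \emph{right} (bootstrap-consistent) limit rather than some $c=\infty$ degenerate limit. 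The derivative structure $(\underline m_H^0)'(z_1)(\underline m_H^0)'(z_2)/(\underline m_H^0(z_1)-\underline m_H^0(z_2))^2$ emerges from differentiating the MP-type fixed point relation \eqref{eq: mbarnull}, exactly as in the classical case; the extra $-2/(z_1-z_2)^2$ is the usual contribution from the identity-covariance normalization. Together with a Cramér--Wold argument over finitely many points $z_1,\dots,z_k\in\CC$, this yields convergence of the conditional finite-dimensional distributions in probability.

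The remaining step is tightness, and here — as flagged in item (iv) of the proof outline in Section \ref{sec43} — the classical quadratic-moment approach of \cite{baisil2004} is \emph{not} available, because the required second-moment bounds on increments would need conditioning arguments that fail for sampling with replacement. Instead I would derive \emph{fractional} moment bounds: for some $p\in(1,2)$ and $z,z'\in\CC_n$,
$$
\E\,\big|\widehat M_n^*(z)-\widehat M_n^*(z')\big|^p\le K\,|z-z'|^{p}
$$
uniformly on $\CC_n$, which suffices for tightness of the continuous (interpolated) process on the compact contour $\CC$ by a Kolmogorov-type criterion. The key tool is Corollary \ref{thm: EW1a}, which controls the exceedance probabilities of the extremal eigenvalues of $\widehat\Sigma_n^*$ and hence keeps the resolvent norms bounded with overwhelming probability even for $z$ with $\Im z$ of order $\varepsilon_n/m$; the fractional power is what allows one to absorb the contribution of the small-probability bad event without the divergent second moments. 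Since we are proving convergence of conditional laws \emph{in probability}, it is enough — as carried out in Section \ref{sec953} of the online supplement — to combine the conditional f.d.d. convergence in probability with \emph{unconditional} tightness, via the standard subsequence/diagonal argument already used at the end of Section \ref{sec53}. I expect the tightness estimate, specifically obtaining the uniform fractional-moment bound on resolvent increments near the real axis using only Corollary \ref{thm: EW1a}, to be the main obstacle; the covariance identification, while lengthy, is essentially a conditional rerun of known computations once Theorem \ref{thm: Boot1} and Proposition \ref{lemma: formel 2.1} are in hand.
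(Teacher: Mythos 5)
Your plan follows essentially the same route as the paper's proof: the martingale-difference representation of $M_n^*$ fed into the conditional bootstrap martingale CLT (Theorem \ref{thm: bmclt}), identification of the limiting covariance via the non-centered quadratic-form bounds together with the Representative Subpopulation Condition to pass from $\underline{\tilde m}_n^0$ to $\underline m_H^0$, unconditional tightness via fractional $(1+r)$-moment bounds on increments of $\widehat M_n^*$ using Corollary \ref{thm: EW1a}, and the combination of conditional finite-dimensional convergence in probability with unconditional tightness by the subsequence argument of Section \ref{sec953}. One caveat: the conditional Lindeberg condition \eqref{eq: cond 2} does not follow from the almost-sure bound $|Y_j^*(z)|\le 2/\Im(z)$ alone — the paper verifies it through the fourth-moment estimate $\E|Y_j^*(z)|^4=\mathcal{O}(\delta_m^4/m+m/n)$ obtained from Proposition \ref{lemma: formel 2.1a}, the version of the quadratic-form bound adapted to the truncation scheme, rather than Proposition \ref{lemma: formel 2.1} which you cite.
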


\begin{proposition} \label{revprop1}
Grant the conditions of  Theorem \ref{thm: new}, then  
$$
\sup_{z \in {\cal C}_n }
\bigg |  
 \E^* \big [ \widehat {M}_n^{*}(z) \big ]   
- c  \int \frac{ (\underline{m}_{c,H}^0(z) )^3 t^2 \, d 
{H}(t) }{(1 + t  \underline{m}_{c,H}^0(z) )^3 }
\Big [ 
 1 - c 
 \int \frac{ (\underline{m}_{c,H}^0(z) )^2 t^2 \, d {H}(t) }{(1 + t  \underline{m}_{c,H}^0(z) )^2 }   \Big]^{-2} \bigg | = o_{\P} (1). 
$$
    
\end{proposition}

A central  tool in the proof of this result is  an
analog of Proposition \ref{lemma: formel 2.1} for the truncation in Section~\ref{sec82} with a bound which is not depending on $z\in\mathcal{C}_n$.
These results will be presented first in the following section. The proof of Proposition \ref{prop}
is given in Section \ref{proofrevprop1}, while Proposition \ref{revprop1} is proved in Section \ref{proofrevprop1}. 

To conclude the proof, we note that it follows \eqref{eq: red 1} that
\begin{align*}
\hat{T}_n^*(f) - 
q \int f d \mu_{p/n,{\mu^{\Sigma_n}}}^0   = & -\frac{1}{2\pi i}\ointctrclockwise f (z) \big ( \widehat{M}_{n}^*(z) - \E^* [\widehat{M}_{n}^*(z)] \big )  \dd z \\
& -\frac{1}{2\pi i}\ointctrclockwise f (z)\E^* [\widehat{M}_{n}^*(z)]   \dd z + o_{\P} (1).
\end{align*}
Therefore, by Proposition \ref{prop} and \ref{revprop1} and the continuous mapping theorem, 
$$
d_{BL} \bigg (
\mathcal L \bigg (\hat{T}_n^*(f) - 
q \int f d \mu_{p/n,{\mu^{\Sigma_n}}}^0 ~ \Big |  Y_1, \ldots, Y_n \bigg )~, ~{\cal N} ( \mu , \sigma^2)  \bigg ) = o_{\P} (1),
$$
where $\mu$ and $\sigma^2$ are expectation and variance of the limiting normal distribution of the statistic 
$\hat{T}_n(f) - 
p \int f d \mu_{p/n,{\mu^{\Sigma_n}}}^0 $ \citep[see][]{baisil1998}. Finally, we note that 
\begin{align*}
{m \over n}  \hat T_n (f) - 
q \int f d \mu_{p/n,{\mu^{\Sigma_n}}}^0   = {m \over n}  \Big  ( \hat T_n (f) - 
p \int f d \mu_{p/n,{\mu^{\Sigma_n}}}^0 \Big )  = o_\P (1) ,
\end{align*}
which completes the proof of Theorem \ref{thm: new}.

\section{Proofs of Propositions \ref{prop} and \ref{revprop1}}

 \def\theequation{E.\arabic{equation}}

\subsection{Non-standard results on quadratic forms} \label{sec82a}

For the statement of  an
analog of Proposition \ref{lemma: formel 2.1} for the truncation in Section \ref{sec82} we study the following matrices  in the quadratic form
   \begin{eqnarray}
   \label{eq: C_i0}
   C^*  &=&
    C^*  (z) =
   L_n^\top  D_1^*(z)^{-1} L_n  ,   \\
   \label{eq: Cende}
    C^*  &=&
    C^*  (z) =
   L_n^\top  D_1^*(z)^{-1}M L_n  ,   \\     \label{eq: C_i}
   C^*  &=&
    C^*  (z) =
   L_n^\top  D_1^*(z)^{-2} L_n  ,   \\
 \label{eq: C_ia}  
    C^*  & =& 
      C^*  (z_1,z_2) =
      L_n ^\top D_{12}^*(z_1)^{-1} 
 L_nL_n^\top D_{12}^*(z_2)^{-1} 
  L_n  ~,
  \\
 \label{eq: C_ib}  
    C^*  & =& 
      C^*  (z_1,z_2) = L_n ^\top  \E_j^* \big [  D_{1j}^*(z_1)^{-1} \big ]
 L_n L_n^\top D_{1j}^*(z_2)^{-1} 
  L_n  ~, \\
    \label{eq: C_ic}
   C^*  &=&   C^*  (z) =
   L_n^\top  D_1^*(z)^{-2} L_n  ,   \\
    C^*  &=&
     \label{eq: C_id}  
    C^*  (z_1,z_2) =
   L_n^\top  D_1^*(z_1)^{-2}   D_1^*(z_2)^{-1} L_n ,  \\
      C^*  &=&
     \label{eq: C_ie}  
    C^*  (z_1,z_2) =
   L_n^\top  D_1^*(z_1)^{-1}   D_1^*(z_2)^{-1} L_n  
   \end{eqnarray}
 for $z, z_1,z_2\in\mathcal{C}_n$.  $M\in \C^{q\times q}$ is deterministic and of bounded spectral norm, uniformly in $n$. Recall that the notation $\E_X$ means integration with respect to $X=(X_1, \ldots , X_n)$. In other words (as the projection is independent of  $X$), the expectation is taken conditional on $\Pi_n $.

 \begin{proposition}
 \label{lemma: formel 2.1a}
For any 
 $p\geq 2$, there exists some constant $K_p>0$, such that for any $n\in \N$, \begin{align*}
 \E_X
 \big\arrowvert X_1^{*\top}
 C^*X_1^*-\tr C^* \big\arrowvert^p  \leq  
  K_p  
 \Big( m^{p-1} \delta_m^{2p-4} + 
 \frac{m^{p+1}}{n}
 \Big)
 ~, 
\end{align*}
where $K_p$ is a constant depending 
only on $p$ and the matrix 
$C^*$ is   given by  one of the matrices in   \eqref{eq: C_i} - \eqref{eq: C_ie}. 
 \end{proposition}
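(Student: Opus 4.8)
The plan is to bootstrap the result from Proposition~\ref{lemma: formel 2.1} (which is the $z$-dependent version), upgrading it in two directions simultaneously: replacing the $m^{p/2}$ term by $m^{p-1}\delta_m^{2p-4}$ (this is exactly the loss one incurs when one no longer has finite high moments but only the truncation bound $|X_{ij}|\le\delta_m\sqrt m$), and removing the $z$-dependence of the constant on the contour $\mathcal C_n$. First I would reproduce the reduction already carried out in the proof of Proposition~\ref{lemma: formel 2.1}: introduce $\tilde X_i^*=X_i^*\mathds 1\{X_i^*\ne X_1\}$, the tilded resolvents $\tilde D_1^*$, and the count $\Delta_n^*$ of ties with $X_1$; use the Sherman--Morrison identity \eqref{eq: property} to split $C^*=\tilde C^*-(\text{rank-one correction})\cdot M$, where $\tilde C^*$ no longer involves $X_1$. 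The point is that for $\tilde C^*$ one \emph{may} condition on $X_1,\dots,X_n$ and apply the standard centered-quadratic-form bound (Lemma~B.26 in \citeSM{baisilverstein2010}), which in the truncated regime gives the $m^{p-1}\delta_m^{2p-4}$ term rather than $m^{p/2}$; and the rank-one correction contributes $O(m^{p+1}/n)$ exactly as in the estimate \eqref{hol1a}, since there $X_1$ has uniformly bounded-by-$\delta_m\sqrt m$ components, so each factor $|X_1^\top \tilde C^* X_1|$ is $O(m\delta_m^2\|\tilde C^*\|_{S_\infty})=O(m)$ after absorbing $\delta_m^2\le 1$ — or, if one wants the sharper power, one tracks the $\delta_m$'s and the $\Delta_n^*$ moments via \eqref{hd50} and $\E(\Delta_n^*)^p=O((m/n)\,m^{p-1})$. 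So structurally the proof is the same as that of Proposition~\ref{lemma: formel 2.1}, with two changes: (a) the conditional quadratic-form bound is applied in its truncated form, and (b) one must verify that the operator norms of all the matrices $C^*$ in \eqref{eq: C_i0}--\eqref{eq: C_ie} are bounded \emph{uniformly in $z\in\mathcal C_n$}.

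The key new ingredient, and the main obstacle, is precisely (b): uniform control of $\|D_1^*(z)^{-1}\|_{S_\infty}$, $\|D_1^*(z)^{-2}\|_{S_\infty}$, $\|\,\E_j^*[D_{1j}^*(z)^{-1}]\,\|_{S_\infty}$ etc.\ along the contour $\mathcal C_n$, whose horizontal part $\mathcal C_u$ is at fixed height $v_0$ (harmless, $\|D_1^*(z)^{-1}\|_{S_\infty}\le 1/v_0$) but whose vertical parts $\mathcal C_{l,n},\mathcal C_{r,n}$ descend all the way down to imaginary part $m^{-1}\varepsilon_n$, where the naive bound $1/\Im(z)$ blows up like $m/\varepsilon_n$. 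This is where Theorem~\ref{thm: EW1} / Corollary~\ref{thm: EW1a} enter: on the event $\mathcal D_n=\{\lambda_{\min}(\widehat\Sigma_n^*)>K_l,\ \|\widehat\Sigma_n^*\|_{S_\infty}<K_r\}$, the spectrum of $\widehat\Sigma_n^*$ (and, after the rank-one/rank-two perturbation arguments, of $\widehat\Sigma_{n,1}^*$ etc.) stays inside $[K_l,K_r]$, while $x_l<K_l$ and $x_r>K_r$ are at positive distance from that interval; hence for $z$ on the vertical parts of $\mathcal C_n$ one has $\mathrm{dist}(z,\mathrm{spec})\ge \min(|x_l-K_l|,|x_r-K_r|)=:\eta>0$, giving $\|D_1^*(z)^{-1}\|_{S_\infty}\le 1/\eta$ uniformly. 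Since $\P(\mathcal D_n^c)=o(m^{-\ell})$ for all $\ell$ by Corollary~\ref{thm: EW1a}, the contribution of $\mathcal D_n^c$ to $\E_X|X_1^{*\top}C^*X_1^*-\tr C^*|^p$ is controlled by the trivial bound $\|C^*\|^p\lesssim (m/\varepsilon_n)^{2p}$ on $|X_1^{*\top}C^*X_1^*|\le \|X_1^*\|^2\|C^*\|_{S_\infty}\le m\delta_m^2\cdot q/(m^{-1}\varepsilon_n)^2$ times $o(m^{-\ell})$, which is $o(m^{p-1}\delta_m^{2p-4})$ once $\ell$ is taken large enough relative to $\alpha$ in \eqref{det105}. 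Thus the estimate splits as (contribution on $\mathcal D_n$, handled by the $\eta$-uniform resolvent bounds plus the argument of Proposition~\ref{lemma: formel 2.1}) plus (contribution on $\mathcal D_n^c$, handled by the crude bound times the super-polynomially small probability).

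Concretely I would proceed as follows. Step 1: record that on $\mathcal D_n$ (and on the analogous events for the minor matrices $\widehat\Sigma_{n,1}^*$, $\widehat\Sigma_{n,12}^*$, $\widehat\Sigma_{n,1j}^*$ obtained by interlacing, still of probability $1-o(m^{-\ell})$) all resolvents $D_1^*(z)^{-1}$, $D_{12}^*(z)^{-1}$, $D_{1j}^*(z)^{-1}$ have spectral norm $\le 1/\eta$ uniformly for $z\in\mathcal C_n$, and their squares $\le 1/\eta^2$; consequently each of the matrices in \eqref{eq: C_i0}--\eqref{eq: C_ie} has $\|C^*\|_{S_\infty}\le K\|L_n\|_{S_\infty}^2\eta^{-2}\|M\|_{S_\infty}$, bounded uniformly in $n$ and $z\in\mathcal C_n$. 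Step 2: on $\mathcal D_n$, run the $\tilde C^*$-decomposition \eqref{eq: property}; apply the truncated conditional moment bound to $X_1^\top\tilde C^*X_1-\tr\tilde C^*$, obtaining $K_p(m^{p-1}\delta_m^{2p-4})$ (here one uses $\E\tilde X_{11}^4\to 3$ and $|X_{ij}|\le\delta_m\sqrt m$, exactly as in the classical truncated CLT of \citeSM{baisil2004}); and bound the rank-one correction difference by the argument leading to \eqref{hd51} and \eqref{hol1a}, using \eqref{hd50}, $\E(\bar\Delta_n^*)^p=O(m^{p+1}/n)$ and the uniform operator bounds from Step~1, to get $K_p\,m^{p+1}/n$. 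Step 3: bound the $\mathcal D_n^c$-contribution by $\big(\E\|C^*\|_{S_\infty}^{2p}\mathds 1_{\mathcal D_n^c}\big)$-type estimates using the crude bound $\|C^*\|_{S_\infty}\le K\|L_n\|^2_{S_\infty}(m/\varepsilon_n)^2$ valid for \emph{all} $z\in\mathcal C_n$ (from $\Im z\ge m^{-1}\varepsilon_n$) together with $\|X_1^*\|^2\le q\delta_m^2 m\le Km^2$, times $\P(\mathcal D_n^c)=o(m^{-\ell})$; choosing $\ell$ large enough (permissible since the bound is uniform over all $\ell$) makes this term $o(m^{p-1}\delta_m^{2p-4})$. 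Step 4: combine, use $\delta_m\le 1$ to absorb lower-order terms, and conclude via \eqref{eq: anfang} that the unconditional bound transfers to $\E_X|X_1^{*\top}C^*X_1^*-\tr C^*|^p$. I expect Step~1 — pinning down which event ensures the $\eta$-gap for each of the seven matrices \eqref{eq: C_i0}--\eqref{eq: C_ie}, with the correct interlacing/perturbation bookkeeping for $D_{1j}^*$ and $\E_j^*[D_{1j}^*(z_1)^{-1}]$ — to be the delicate bookkeeping part; everything else is a rerun of the proof of Proposition~\ref{lemma: formel 2.1} with $\delta_m$'s carried along.
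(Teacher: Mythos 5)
Your proposal is correct and follows essentially the same route as the paper: the good event with uniform spectral localization of $\widehat\Sigma_{n,1,\ldots,j}^*$ (the paper's $\mathcal{A}_n$ in \eqref{det21}) yielding a $z$-uniform resolvent bound on $\mathcal{C}_n$ via the gap to $[K_l,K_r]$, the Sherman--Morrison reduction to $\tilde C^*$ with the truncated quadratic-form bound giving $m^{p-1}\delta_m^{2p-4}$, the tie-count $\Delta_n^*$ producing $m^{p+1}/n$, and the crude bound times $o(m^{-\ell})$ on the complement. Only two harmless imprecisions: the bound is obtained after passing to the unconditional expectation of $X_1^\top C^*X_1$ as in \eqref{eq: anfang} (not by conditioning on $X_1,\dots,X_n$, under which the components of $X_1^*$ are not independent), and $\E(\Delta_n^*)^p=O(m/n)$ rather than $O((m/n)m^{p-1})$, which is what actually delivers the stated $m^{p+1}/n$ term.
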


\begin{proof}
We 
denote by 
  $\widehat \Sigma_{n, 1, \ldots ,j}^* $ the matrix   which  is obtained  from $\widehat \Sigma_{n}^*$ by omitting the terms involving $X_1^* , \ldots , X_j^*$
  (for $j=0$ this is $\Sigma_n^*$) and the matrix 
  $\tilde \Sigma_{n, 1}^*$
  is the empirical covariance matrix of the  vectors $\tilde{X}_2^*,\dots, \tilde{X}_m^*$ with normalizing  factor $1/m$ which  are defined in \eqref{eq: xsterntilde}.  By Corollary \ref{thm: EW1a}  (with an adaptation of its proof to the matrices $\widehat \Sigma_{n, 1}  ^*, \widehat \Sigma_{n, 1,2}  ^*$
we find constants  $K_{\rm left}$ and $K_{\rm  right}$  
such that 
 the event
\begin{align}
    \label{det21}
    \mathcal{A}_n:= 
    \Big \{  
    &   \lambda_{\min} (\tilde \Sigma_{n, 1}^*) \geq K_{\rm left},
    ~  \| \tilde \Sigma_{n, 1}  ^* \|_{S_\infty }\leq K_{\rm  right},  ~
    \\
   & \nonumber
    ~~~~~~~~  ~~~~~~~~  ~~~~~~~~ 
 \lambda_{\min} (\widehat \Sigma_{n, 1, \ldots ,j}  ^*) \geq K_{\rm left},~
    \| \widehat \Sigma_{n, 1, \ldots ,j}  ^* \|_{S_\infty}   \leq K_{\rm  right} ~\text{for } j=0,1,2 
    \Big \} ,
\end{align}
satisfies    for all $\ell \in \mathbb{N}$
  \begin{align}
      \label{det33}
      \mathbb{P}( \mathcal{A}_n^c)  =  o ( m^{-\ell}) ~.
  \end{align} 
We will begin proving the statement for matrices of the form \eqref{eq: C_i0}. Observing the arguments as given in the proof of Proposition \ref{lemma: formel 2.1} we obtain 
\begin{align}
 \E_X
 \big\arrowvert X_1^{*\top}
 C^*X_1^*-\tr C^* \big\arrowvert^p  & \leq 
 2^{p-1} \Big (  \E_X \big\arrowvert X_1^\top
 \tilde  C^* X_1-\tr  \tilde  C^* 
 \big\arrowvert^p  \label{det35} \\
& ~~~~~~~~~~~
+ 
 \E_X \big\arrowvert X_1^\top (
 \tilde C^* -   C^*  ) X_1-\tr ( \tilde  
 C^* -    C^*  )
 \big\arrowvert^p 
 \Big ) 
 ~,  \nonumber
\end{align}
where matrix $\tilde C^*$ is defined by \eqref{det36} with $M=I_q$.
By the same reasoning leading to equation (3.2) in \citeSM{baisil2004} with $a(v) =1 $ and  $B(v) = 
\tilde C^*$ it follows that  
$$ \E_X \big\arrowvert X_1^\top
 \tilde  C^* X_1-\tr  \tilde  C^* 
 \big\arrowvert^p 
 \leq c {\delta_m^{2p-4 \vee 0} \over m^{1 \wedge p} } =  c {\delta_m^{2p-4 } \over m }~.
 $$
 Note that the matrix $\tilde  C^*$ satisfies the corresponding assumption for such an  estimate, that is 
\begin{align}
    \label{det32}
\| \tilde  C^*    \|_{S_\infty}  \leq  \| L_n  
\|_{S_\infty}^2
\| ( \tilde  D^*_1 (z))^{-1}   \|_{S_\infty}  \leq  c( 
\mathds{1}_{\mathcal{A}_n} + 
\mathds{1}_{\mathcal{A}_n^c} 
m^{1+ \alpha}  ) 
\leq  c( 
1  + 
\mathds{1}_{\mathcal{A}_n^c}m^{1+ \alpha}  ) ,
\end{align}
   where $\tilde{D}_1^*(z)$ is defined as $D_1^*(z)$ with $X_2^*,\dots, X_m^*$ replaced by $\tilde{X}_2^*,\dots, \tilde{X}_m^*$.

   Now we turn to  the second term in \eqref{det35} and consider
   \begin{align*}
    \E_X \big | \tr ( \tilde  
 C^* -    C^*  )
 \big | ^p       & \leq  m^p   \E_X \big\|  \tilde  
 C^* -    C^*  
 \big\|_{S_\infty} ^p    \leq 
 m^p  \| L_n  
\|_{S_\infty}^{2p}  \E_X \big\|   
D_1^* (z)^{-1} -    \tilde D_1^* (z)^{-1}
 \big\|_{S_\infty} ^p  \\
& \leq  c  m^p  \E_X \big [ \|   
D_1^* (z)^{-1} \|_{S_\infty} ^p   \|    \tilde D_1^* (z)^{-1}
 \|_{S_\infty} ^p  \|
 \widehat \Sigma_{n,1}^* -  \tilde \Sigma_{n,1}^*
\|_{S_\infty} ^p \big ] ,
   \end{align*}
   where the last inequality follows from the formula $B^{-1} - A^{-1}  = A^{-1} (A-B) B^{-1}$.
Note that we have
\begin{align}
    \label{ang21}
    \|   
D_1^* (z)^{-1} \|_{S_\infty}  &\leq 
\max_{i=1}^q {1\over | z - \lambda_i ( \widehat \Sigma_{n,1}^* ) | } 
\leq 
\max_{i=1}^q {1\over | \Re ( z - \lambda_i ( \widehat \Sigma_{n,1}^* ) )| }\mathds{1}_{\mathcal{A}_n}+ \frac{\arrowvert z\arrowvert}{\Im(z)}\mathds{1}_{\mathcal{A}_n^c}  
\\
\nonumber
& \leq \max \Big \{ {1\over | x_r - K_{\rm  right}| },  {1\over | x_l - K_{\rm left}| }
\Big \}  + \frac{\arrowvert z\arrowvert}{\Im(z)}\mathds{1}_{\mathcal{A}_n^c}
\end{align}
where $K_{\rm left}$ and $K_{\rm  right}$ are the constants from Corollary  \ref{thm: EW1a}. Moreover, 
$$
\| 
 \widehat \Sigma_{n,1}^* -  \tilde \Sigma_{n,1}^*
\|_{S_\infty} \leq   \| \Sigma_n \|_{S_\infty} \|X_1\|^2 {\Delta_n^* \over m } ,
$$
and 
\begin{align}
    \label{det37a}
{1 \over m^{p/2} } \E_X   \|X_1\|^p   & \leq c ,  \\
    \label{det37b}
\E_X   \| \Delta_n^*\|^p    & \leq c_p {m \over n } 
\end{align}
(note that \eqref{det37a} follows from the fact that here the random variable $X_1$ is of dimension $q'$ and has independent components  bounded by $\delta_m \sqrt{m}$).
Combining these estimates and 
and using the corresponding  bound for the quantity  $\|   \tilde 
D_1^* (z)^{-1} \|_{S_\infty}$ 
and  obtain  (observing \eqref{det33}), we arrive at 
  \begin{align}
  \nonumber 
   \E_X \big | & \tr ( \tilde  
 C^* -    C^*  )
 \big | ^p \\      
& \leq  c  m^p  \Big \{  \E_X  \big [ \mathds{1}_{\mathcal{A}_n} \| 
 \widehat \Sigma_{n,1}^* -  \tilde \Sigma_{n,1}^*
\|_{S_\infty} ^p \big ] 
+  \big |{ z \over  \Im (z)  } \big |^{2p} \E_X  \big [ \mathds{1}_{\mathcal{A}_n^c} \| 
 \widehat \Sigma_{n,1}^* -  \tilde \Sigma_{n,1}^*
\|_{S_\infty} ^p \big ] 
\big \}  \nonumber
 \\
\nonumber 
& \leq  c  m^p  \Big \{ {1 \over m^p}  \E_X 
|\Delta_n^*|^p 
\E_X \|X_1 \|^{2p}
+  \big | { z \over  \Im (z)  } \big |^{2p} \big ( \mathbb{P} ( {\mathcal{A}_n^c} ) \big )^{1/2}
\big (  \E_X \| 
 \widehat \Sigma_{n,1}^* -  \tilde \Sigma_{n,1}^*
\|_{S_\infty} ^{2p}  \big )^{1/2} 
\big \} 
\nonumber 
\\
\nonumber 
& \leq  c  m^p  \Big \{ {m \over n} 
+  m^{2p(1+\alpha)} (\mathbb{P} ( {\mathcal{A}_n^c} ) \big )^{1/2} \sqrt{m\over n} 
\Big \}  \\
& \leq c {m^{p+1}\over n}~.
\label{det38}
   \end{align}
Similarly, we obtain 
\begin{align*}
     \E_X \big\| X_1^\top (
 \tilde C^* -   C^*  ) X_1
 \big| ^p & \leq  \E_X \big [ \| X_1\|1^{2p}  \| C^* -   C^*  \|^p_{S_\infty } \big ] \\
 & \leq c \E_X \Big [ \| X_1\|^{2p}
 \|   
D_1^* (z)^{-1} \|_{S_\infty} ^p   \|    \tilde D_1^* (z)^{-1}
 \|_{S_\infty} ^p   \| X_1\|^{2p} 
 \big ( {\Delta_n^* \over m }\big )^p
\Big  ] \\
& \leq {c  \over  m^p}
 \Big \{  \E_X  \big [ \mathds{1}_{\mathcal{A}_n} |\Delta_n^*|^p 
\|X_1 \|^{4p} \big ] 
+  \big |{ z \over  \Im (z)  } \big |^{2p} \E_X  \big [ \mathds{1}_{\mathcal{A}_n^c}   |\Delta_n^*|^p 
\|X_1 \|^{4p}  \big ] 
\Big \}  \\
& \leq  {c  \over  m^p} \Big \{ {m^{2p+1} \over n } + m^{(1+\alpha)2p} 
\big ( \mathbb{P} ( {\mathcal{A}_n^c}) \big )^{1/2} \big ( \E_X [ |\Delta_n^*|^{2p} 
\|X_1 \|^{8p} ]
\big )^{1/2}
\Big \} \\
& \leq c  {  m^{p+1} \over  n},
\end{align*}
by \eqref{det33}. Combining this estimate with \eqref{det38} and \eqref{det32} yields the statement of Proposition \ref{lemma: formel 2.1a} for the matrix \eqref{eq: C_i0}.

The statement for the other matrices follow by similar arguments, which are omitted for the sake of brevity. For, example, for the matrix \eqref{eq: C_ic} we use the identities
\begin{align*}
    \tilde D_1^* (z)^{-2}  - D_1^* (z)^{-2} &=  D_1^* (z)^{-2} \big (
     D_1^* (z)^{2} -\tilde D_1^* (z)^{2}
    \big ) \tilde D_1^* (z)^{-2} \\
  D_1^* (z)^{2} -\tilde D_1^* (z)^{2}    & = \big (  D_1^* (z) -  \tilde D_1^* (z) \big )\tilde D_1^* (z) + D_1^* (z) \big ( D_1^* (z) - \tilde D_1^* (z)\big ) 
\end{align*}
  which gives 
\begin{align*}
 \|  \tilde D_1^* (z)^{-2}  - D_1^* (z)^{-2} \|_{S_\infty} & \leq c
  \|  \tilde D_1^* (z)^{-2}   \|_{S_\infty}  \|    D_1^* (z)^{-2} \|_{S_\infty}  \\
  & ~~~~~~~~~~~~~~~ \times 
  \big ( 2 |z| + 
  \|   \widehat \Sigma_{n,1}^*   \|_{S_\infty} 
 +   \|   \tilde  \Sigma_{n,1}^*    \|_{S_\infty} \big ) 
    \|  \widehat \Sigma_{n,1}^*  -\tilde  \Sigma_{n,1}^*  \|_{S_\infty} .
\end{align*}
We now proceed in the same way as before 
multiplying with $( \mathds 1_{\mathcal{A}_n} +\mathds 1_{\mathcal{A}_n}^c)$, where we use 
$$ 
\|   \widehat \Sigma_{n,1}^*   \|^2_{S_\infty}  \leq  \|   \widehat \Sigma_{n,1}^*   \|^2_{S_2} \leq
{1 \over m^2 } \sum_{i,j=1}^m \|X_i\|^2 \| X_j\|^2 
$$
on $\mathcal{A}_n^c$.
\end{proof}

\begin{remark} \label{det101}
{\rm 
    Note that Proposition \ref{lemma: formel 2.1a} will replace Proposition \ref{lemma: formel 2.1} in the following discussion. Moreover,  in the case $p=2$ both results yield the same estimate. This fact will be of importance as we will use some of the 
    estimates for Section \ref{sec53}  - \ref{sec55} in the following discussion which also hold under Assumption (A3+) (instead of (A3)) and the truncation scheme considered in this section. 
    }
\end{remark}

\begin{proposition}
     \label{lemrev5}
    \begin{align*}
&  \sup_{z_1,z_2 \in  {\cal C}_n} 
\E^*  \big\arrowvert X_1^{*\top}
 C^*(z_1,z_2) X_1^*-\tr C^* (z_1,z_2) \big\arrowvert^2  =  
O_{\mathbb P}
 \Big( m  + 
 \frac{m^{3}}{n}
 \Big) \\
 &   \sup_{z_1,z_2 \in  {\cal C}_n} 
\E^*  \big\arrowvert X_1^{*\top}
 C^*(z_1,z_2) X_1^*-\tr C^* (z_1,z_2) \big\arrowvert^4  =  
O_{\mathbb P}
 \Big( m^3 \delta_n^{4} + 
 \frac{m^{5}}{n}
 \Big)
 ~, 
\end{align*}
where 
\begin{align*}
C^*  (z_1,z_2) & =(D_1^*(z_1))^{-1} \\
C^*  (z_1,z_2) & =(D_{12}^*(z_1))^{-1} \\
C^*  (z_1,z_2) &  = ( \E^* \underline{m}_{n}^* (z_1)  \tilde \Sigma_n 
    + I_q \big )^{-1}\tilde \Sigma_n \\
C^*  (z_1,z_2) &  = (D_1^*(z_1))^{-1} ( \E^* \underline{m}_{n}^* (z_1)  \tilde \Sigma_n 
    + I_q \big )^{-1}\tilde \Sigma_n (D_1^*(z_1))^{-1} \\
     C^* (z_1,z_2)   &=  
    C^*  (z_1,z_2) =
   L_n^\top  D_1^*(z_1)^{-2}   D_1^*(z_2)^{-1} L_n ,  \\
    C^* (z_1,z_2)   &=  
    C^*  (z_1,z_2) =
   L_n^\top  D_1^*(z_1)^{-1}   D_1^*(z_2)^{-1} L_n ,
\end{align*}
\end{proposition}
\begin{proof}
Exemplary, we consider a matrix with $z=z_1=z_2$, for which we use the notation $C^*(z):=C^*(z,z)$ (the other cases can be treated similarly).
 Let $\hat I =\{ i_1^*, \ldots , i_m^* \}$ denote the random subset of chosen indices by the bootstrap and note that $\# \hat I \leq m $, then 
\begin{align}  
\nonumber 
  \E \Big  [    \sup_{z \in {\cal C}_n}
\E^*  \big\arrowvert X_1^{*\top}
 C^*(z) X_1^*-\tr C^* (z) \big\arrowvert^4  
 \Big  ] 
 &= \E \bigg  [ 
 {1 \over n } 
  \sup_{z \in {\cal C}_n} 
  \sum_{i =1}^n   \big | X_i^\top  C^*(z) X_i - {\rm tr} ( C^*(z) ) \big |^4 \bigg ] 
  \\ 
  \nonumber 
&\leq \E \bigg  [  \sup_{z \in {\cal C}_n}  {1 \over n }  \sum_{i \in \hat I}   \big | X_i^\top  C^*(z) X_i - {\rm tr} ( C^*(z) ) \big |^4 \bigg ]  \\
& + \E \bigg  [  \sup_{z \in {\cal C}_n}  {1 \over n } \sum_{i \in \hat I^c }   \big | X_i^\top  C^*(z) X_i - {\rm tr} ( C^*(z) ) \big |^4 \bigg ] 
\nonumber \\
& =: \E [S_1] +  \E [S_2]~.
\label{rev11}
\end{align}
We will now consider both terms separately starting with $  \E  [S_1]$.
\begin{align*}
 \E [S_1 ]  &= \E \Big[  \E \Big[  
\sup_{z \in {\cal C}_n}  {1 \over n } \sum_{i \in \hat I}   \big | X_i^\top  C^*(z) X_i - {\rm tr} ( C^*(z) ) \big |^4 
 \Big | \hat I \Big ]
\Big ]   \\ 
&\leq  \E \Big[ 
 {1 \over n } \sum_{i \in \hat I} 
 \E \Big[  
\sup_{z \in {\cal C}_n}    \big | X_i^\top  C^*(z) X_i - {\rm tr} ( C^*(z) ) \big |^4 
 \Big | \hat I \Big ]
\Big ]  
\\ 
&\leq  \E \Big[ 
 {1 \over n } \sum_{i \in \hat I} 
 \E \Big[  
  \big ( ( \| X_i\|^2 + m )  \sup_{z \in {\cal C}_n}   \| C^*(z)\|_{{ S}_\infty}
\big )^4
 \Big | \hat I \Big ]
\Big ]
\end{align*}
Recalling the definition of the set ${\cal A}_n$ in \eqref{det21}  it follows
that 
\begin{align*}
 \E [S_1 \mathds{1}_{{\cal A}_n} ] & \lesssim
\E \Big [  {1 \over n } \sum_{i \in \hat I} \E \Big [
 \sum_{k_1,k_2, k_3,k_4=1}^{q'} X_{ik_1}^2
 X_{ik_2}^2X_{ik_4}^2X_{ik_4}^2
 \Big ] 
 \Big | \hat I \Big ] \\
 &\leq {m \over n} 
 \sum_{k_1,k_2, k_3,k_4=1}^{q'}  \E 
 \big [X_{1k_1}^2
 X_{1k_2}^2X_{1k_4}^2X_{1k_4}^2
 \big ]  \lesssim 
 {m^5 \over n} 
\end{align*}
On the other hand, on the set ${\cal A}_n^c$, 
$\sup_{z \in {\cal C}_n} \| C^*(z) \|    \lesssim {1 \over  | \Im (z) | } \leq m^{1+ \alpha }
$ and $X_{ik}^2 \leq \delta_n^2 m $
while $m^{\ell} \P ({\cal A}_n^c) = o(1) $  for any $\ell \in \N$. This gives 
$$
\E [S_1 \mathds{1}_{{\cal A}_n^c} ] = O( m^{- \ell} ) 
$$
for any $\ell \in \N$. 
We now turn to the term $\E S_2$ and introduce the notation
\begin{align*}
M^*_n(z):= 
 {1 \over n } \sum_{i \in \hat I^c}   \big | X_i^\top  C^*(z) X_i - {\rm tr} ( C^*(z) ) \big |^4     
\end{align*}
It follows that
 $$
 | M_n^*(z) | \lesssim \big  | M_{n1}^* (z) | +   | M_{n2}^* (z) |  
 $$
 where
 \begin{align*}
     M_{n1}^* (z) &= 
     {1 \over n} \sum_{i \in \hat I^c}  \Big | \sum_{j=1}^{q'} (X_{ij}^2-1) c_{jj}^*(z)
     \Big |^4 
     \\ 
       M_{n2}^* (z) &=
       {1 \over n} \sum_{i \in \hat I^c} \Big |  \sum_{j\not = j'}^{q'}  c_{jj'}^*(z)
   X_{ij}  X_{ij'} \Big |^4
 \end{align*}
We will prove that  
\begin{align}
        \label{rev1}
        \sup_{z \in {\cal C}_n } |M_n^*(z) | = O_{\mathbb P} (r_n), 
    \end{align}
     where $r_n= m^3 \delta_m^{4} + 
 \frac{m^{5}}{n}$, 
 by considering the terms $M_{n1}^*$  and $M_{n2}^*$ separately. Note that by  Theorem \ref{thm: EW1}, this statement is obvious  for $M_{n\ell}\mathds{1}_{{\cal A}_n^c}$.  To prove \eqref{rev1} it is therefore sufficent  to  show that 
 \begin{itemize}
     \item[(i)] For any  $z \in {\cal C}_n $ we have
     \begin{align}
         \label{rev12
         }
        \mathds{1}_{{\cal A}_n} M_{n\ell}^*(z)  = O_{\mathbb P} (r_n) ~~, ~~~~ \ell =1,2 .
         \end{align}
         This statement follows directly  from Proposition \ref{lemma: formel 2.1a}.
         \item[(ii)]  The sequences $( r_n^{-1} M_{n\ell}^*(z) \colon z \in {\cal C}_n )_{n \in \N}$ are stochastically equicontinuous, for which  we establish  
         \begin{align}
    \label{rev12}
    \sup_{n} 
    \sup_{ z_1,z_2 \in {\cal C}_n }    \mathbb{E}  \Big [  \mathds{1}_{{\cal A}_n} 
    { |  M_{n\ell} ^{*}  (z_1)  -  M_{n\ell}^{*}  (z_2) |^{2}   \over r_n^{2} |z_1 - z_2 |^{2}  }
 \Big ] \leq K ~,~~~\ell =1,2. 
\end{align}
\citep[cf.][]{billingsley1999}. Here we assume w.l.o.g. that $z_1$ and $z_2$ have the real or imaginary part.
 \end{itemize}

 {\it Proof of (ii) for $M_{n2}^*$:}
 \begin{align*}
& \mathbb{E}  \Big [ 
\mathds{1}_{{\cal A}_n}
    { |  M_{n\ell} ^{*}  (z_1)  -  M_{n\ell}^{*}  (z_2) |^{2}   \over r_n^{2} |z_1 - z_2 |^{2}  }
 \Big ] = {1 \over r_n^2|z_1 - z_2 |^{2}  } {1 \over n^2} \E \Big[
 \mathds{1}_{{\cal A}_n} 
 \sum_{i_1,i_2 \in \hat I^c} \sum_{\substack{j_1\dots ,j_4 \\
 j_1'\dots ,j_4' 
 \\
 j_\ell \not = j_\ell'}}^{q'} \Big \{ \big | c^*_{j_1,j_1'}(z_1) \dots 
 c^*_{j_4,j_4'}(z_1)   \\
 & 
~~~~~ -  c^*_{j_1,j_1'}(z_2) \dots 
 c^*_{j_4,j_4'}(z_2) \big | ^2 X_{i_1j_1}X_{i_1j_1'}
 \dots X_{i_1j_4}X_{i_1j_4'} X_{i_2j_1}X_{i_2j_1'}
 \dots X_{i_2j_1}X_{i_2j_4'} \Big \} 
 \Big ]
  \end{align*}
We first consider the outer sum with indices $i_1=i_2 \in \hat I^c $. By conditioning on $\hat I $ and observing $\{ X_i \colon ~|  i \in \hat I^c \} $ and $C^*(z) $ are stochastically independent conditionally on $\hat I$ it follows that 
\begin{align*}
 &    {1 \over r_n^2|z_1 - z_2 |^{2}  } {1 \over n^2} \E \Big[ \E \Big[
 \sum_{i_1 \in \hat I^c} \sum_{\substack{j_1\dots ,j_4 \\
 j_1'\dots ,j_4' \\
 j_\ell \not = j_\ell'
 }}^{q'} \Big \{  \mathds{1}_{{\cal A}_n}  \big | c^*_{j_1,j_1'}(z_1) \dots 
 c^*_{j_4,j_4'}(z_1)   \\
 & 
~~~~~~~~~~~~~~~~~~~~  -  c^*_{j_1,j_1'}(z_2) \dots 
 c^*_{j_4,j_4'}(z_2) \big |^2 X_{i_1j_1}^2X_{i_1j_1'}^2
 \dots X_{i_1j_4}^2X_{i_1j_4'}^2  \Big | \hat  I  \Big ] \Big \}
 \Big ] \\
  &    {1 \over r_n^2|z_1 - z_2 |^{2}  } {1 \over n^2} \E \Big[ 
 \sum_{i_1 \in \hat I^c} \sum_{\substack{j_1\dots ,j_4 \\
 j_1'\dots ,j_4'\\
 j_\ell \not = j_\ell' 
 }}^{q'} \Big \{ 
 \E \Big[  \mathds{1}_{{\cal A}_n} \big | c^*_{j_1,j_1'}(z_1) \dots 
 c^*_{j_4,j_4'}(z_1)   \\
 & 
~~~~~~~~~~~~~~~~~~~~  -  c^*_{j_1,j_1'}(z_2) \dots 
 c^*_{j_4,j_4'}(z_2) \big |^2  \Big | \hat I  \Big] 
\E \Big[  X_{i_1j_1}^2X_{i_1j_1'}^2
 \dots X_{i_1j_4}^2X_{i_1j_4'}^2  \Big | \hat  I  \Big ] \Big \} 
 \Big ]
  \end{align*}
  A first order  Taylor expansion yields
 \begin{align}
   c^*_{j_1,j_1'}(z_1) \dots &
 c^*_{j_4,j_4'}(z_1)  
  -  c^*_{j_1,j_1'}(z_2) \dots 
 c^*_{j_4,j_4'}(z_2) \label{rev16} \\ &= 
\Big \{ {c^*}'_{j_1,j_1'}(\xi_{z_1,z_2}^{j_1,j_1'})  c^*_{j_2,j_2'}(z_2)  c^*_{j_3,j_3'}(z_2) 
 c^*_{j_4,j_4'}(z_2) 
\nonumber
\\
 &\ \ \ \ \ \ +  
  c^*_{j_1,j_1'}(z_2) 
  {c^*}'_{j_2,j_2'}(\xi_{z_1,z_2}^{j_2,j_2'})
c^*_{j_2,j_2'}(z_2)
 c^*_{j_4,j_4'}(z_2) 
  \nonumber 
\\
 &\ \ \ \ \ \ +    c^*_{j_1,j_1'}(z_2)
  c^*_{j_2,j_2'}(z_2) {c^*}'_{j_3,j_3'}(\xi_{z_1,z_2}^{j_3,j_3'})
 c^*_{j_4,j_4'}(z_2)
 \nonumber 
 \\
 &\ \ \ \ \ \ +   c^*_{j_1,j_1'}(z_2)
 c^*_{j_2,j_2'}(z_2)  
 c^*_{j_3,j_3'}(z_2) 
 {c^*}'_{j_4,j_4'}(\xi_{z_1,z_2}^{j_4,j_4'}) \Big \} (z_1-z_2) 
  \nonumber 
 \end{align}
Next, we use this expansion to derive the  bound 
\begin{align}
\nonumber & 
\E \Big[  \mathds{1}_{{\cal A}_n}
\sum_{\substack{j_1\dots ,j_4 \\
 j_1'\dots ,j_4' \\
 j_\ell \not = j_\ell'
 }}^{q'}  
  \big ( c^*_{j_1,j_1'}(z_1) \dots 
 c^*_{j_4,j_4'}(z_1)  
  -  c^*_{j_1,j_1'}(z_2) \dots 
 c^*_{j_4,j_4'}(z_2) \big )^2  \Big | \hat I  \Big] \E \Big[  X_{i_1j_1}^2X_{i_1j_1'}^2
 \dots X_{i_1j_4}^2X_{i_1j_4'}^2  \Big | \hat  I  \Big ] \\
 & ~~~~~~~~~~~~ 
 = |z_1-z_2|^2 o(r_n^2) 
\label{rev14}    
\end{align}
To prove this we decompose the sum in partial sums where 
\begin{align}
    \label{rev15}
 \E \Big[  X_{i_1j_1}^2X_{i_1j_1'}^2
 \dots X_{i_1j_4}^2X_{i_1j_4'}^2  \Big | \hat  I  \Big ] =
  \E \Big[  X_{i_1j_1}^2X_{i_1j_1'}^2
 \dots X_{i_1j_4}^2X_{i_1j_4'}^2    \Big ] 
\end{align}
attains the same value (here the identity holds because $\hat I$ is independent of $X_1, \ldots X_n$). We first consider the case where  all indices in \eqref{rev15} are different for which  the right and side reduces to $1$.  In this case we have 
\begin{align*}
 \mathds{1}_{{\cal A}_n} \sup_{\xi \in {\cal C}_n}  | {c^*}'_{jj'}(\xi) |^2 \leq  \mathds{1}_{{\cal A}_n} \sup_{\xi \in {\cal C}_n}  \sum_ {jj'=1}^{q'}  | {c^*}'_{jj'} (\xi) |^2 \leq 
m  \mathds{1}_{{\cal A}_n} \sup_{\xi \in {\cal C}_n}   \| {C^*}' (\xi ) \|^2_{S_\infty } \lesssim m ,
\end{align*}
where the last inequality follows from the fact that 
for the matrices under consideration  the spectral norm of the derivative of the matrix  $C^*$
is  uniformly bounded on ${\cal C}_n$  on the set ${\cal A}_n$ (see Lemma \ref{lemrev2} and \eqref{ang21}).  Therefore, we obtain for the corresponding partial sum in \eqref{rev14}  the bound (up to a constant)
\begin{align}
  m  \mathds{1}_{{\cal A}_n} \sup_{z \in {\cal C}_n} \sum_{\substack{j_1j_2 ,j_3 \\
 j_1'j_2' ,j_3' \\
 j_\ell \not = j_\ell'
 }}^{q'}   |c_{j_1j_1'} (z)|^2 |c_{j_2j_2'} (z)|^2 |c_{j_3j_3'} (z)|^2  \leq  m \mathds{1}_{{\cal A}_n}  \sup_{z \in {\cal C}_n} \| C^* (z) \|_{S_2}^6 \lesssim m^4 
 \label{rev17}
\end{align}
The remaining partial sums can be treated in the same way  using the bound $\E [ X_{ij}^{2k} ] \leq E [ X_{ij}^{4} ] ( \delta_n \sqrt{m})^{2k-4}$ for $k \geq 2$ and  $\sum_{ij=1}^{q'} |c^*_{ij} (z) |^{2k} \leq \| C^*(z) \|_{{\cal S}_2}^{2k} \lesssim m^k $ on ${\cal A}_n$, while respecting the reduced number of summands.
\smallskip

 {\it Proof of (ii) for $M_{n1}^*$:} Follows by similar but even   simpler arguments.
 
\end{proof}

\subsection{Bootstrap version of the Martingale-CLT} 
\begin{theorem}[Bootstrap Martingale-CLT]\label{thm: bmclt}
Let $(X_j)_{j\in\N}$ be an iid-sequence, $(\Pi_n)_{n\in\N}$ be some  further sequence of  random variables independent of $(X_j)_{j\in\N}$, and  $m\leq n$ with $m=m(n)\rightarrow\infty$. Conditional on  $X_1,\dots, X_n$,  let 
$$
X_1^*,\dots, X_m^*\overset{iid}{\sim}\hat{\P}_n=\frac{1}{n}\sum_{k=1}^n\delta_{X_i}
$$ denote the'$m$ out of $n$' bootstrap sample. Suppose that conditional on $X_1,\dots, X_n$ and $\Pi_n$, $$Y_{n,1}^*,\dots, Y_{n,m}^*$$ is a real square integrable martingale difference sequence with respect to the bootstrap canonical filtration $(\FF_{n,k}^*)_{k=1}^m$ with $\FF_{n,k}^*=\sigma(X_1^*,\dots, X_k^*\arrowvert X_1,\dots, X_n, \Pi_n)$. 
Assume that, as $n\rightarrow\infty$, 
\begin{align}\label{eq: cond 1}
\sum_{k=1}^{m}\E^*\big(\arrowvert Y_{n,k}^*\arrowvert^2\big\arrowvert \FF_{n,k-1}^*\big)\overset{\P}{\longrightarrow}\sigma^2
\end{align}
for some constant $\sigma^2>0$ and
\begin{align}\label{eq: cond 2}
\sum_{k=1}^m\E^*\Big[\arrowvert Y_{n,k}^*\arrowvert^2\ind_{\{\arrowvert Y_{n,k}^*\arrowvert\geq\varepsilon\}}\Big]\overset{\P}{\longrightarrow}0
\end{align}
for each $\varepsilon>0$. Then, as $n\rightarrow\infty $,
\begin{align}
\LL\bigg(\sum_{k=1}^mY_{n,k}^*\bigg\arrowvert X_1,\dots,X_n,\Pi_n\bigg)\Longrightarrow\NN(0,\sigma^2)\ \ \text{in probability}.
\end{align}
\end{theorem}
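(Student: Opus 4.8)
The plan is to reduce the conditional statement, which must hold \emph{in probability} over the randomness of $X_1,\dots,X_n$ and $\Pi_n$, to the classical martingale central limit theorem (e.g.\ Theorem~35.12 in Billingsley or Hall--Heyde), applied pathwise. Fix a realization $\omega$ of $(X_j)_{j\in\N}$ and $(\Pi_n)_{n\in\N}$. Conditional on this realization, $Y_{n,1}^*,\dots,Y_{n,m}^*$ is a genuine (finite) martingale difference array with respect to the filtration $(\FF_{n,k}^*)_{k=1}^m$, so the deterministic-array version of the martingale CLT applies and yields $\LL(\sum_{k=1}^m Y_{n,k}^*\mid X_1,\dots,X_n,\Pi_n)\Rightarrow \NN(0,\sigma^2)$ \emph{provided} the two Lindeberg-type hypotheses hold for that particular realization along the given subsequence in $n$. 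The subtlety is that the hypotheses \eqref{eq: cond 1} and \eqref{eq: cond 2} are only assumed to hold in probability, not almost surely, so one cannot invoke the pathwise CLT directly for the full sequence $n\to\infty$.

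The standard device to bridge this gap is the subsequence principle together with metrization of weak convergence. First I would recall that weak convergence on $\R$ is metrized by, say, the dual bounded-Lipschitz metric $d_{BL}$ (or the L\'evy metric $d_L$), so it suffices to show $d_{BL}\big(\LL(\sum_{k}Y_{n,k}^*\mid X_1,\dots,X_n,\Pi_n),\NN(0,\sigma^2)\big)\to_\P 0$. To prove this convergence in probability, take an arbitrary subsequence $(n_\ell)$; by \eqref{eq: cond 1} and \eqref{eq: cond 2} one can extract a further subsequence $(n_\ell')$ along which \emph{both}
$\sum_{k=1}^{m(n_\ell')}\E^*(|Y_{n_\ell',k}^*|^2\mid\FF_{n_\ell',k-1}^*)\to\sigma^2$ and, for each $\varepsilon$ in a countable dense set of positive reals, $\sum_{k}\E^*[|Y_{n_\ell',k}^*|^2\ind\{|Y_{n_\ell',k}^*|\ge\varepsilon\}]\to 0$ hold almost surely (using the diagonal argument over the countable family of $\varepsilon$'s, exactly as in the Cantor-diagonalization step used in Section~\ref{sec53} of the paper). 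Along such an almost surely convergent subsubsequence, the pathwise martingale CLT delivers $\LL(\sum_k Y_{n_\ell',k}^*\mid X_1,\dots,X_n,\Pi_n)\Rightarrow\NN(0,\sigma^2)$ almost surely, hence $d_{BL}(\cdot,\NN(0,\sigma^2))\to 0$ a.s.\ along $(n_\ell')$. Since every subsequence has a further subsequence along which $d_{BL}$ converges to $0$ a.s., the whole sequence converges to $0$ in probability, which is the claim.

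Two points need care when invoking the classical CLT pathwise. (i) One should verify that the deterministic-array martingale CLT one cites is exactly of the Lindeberg-condition form \eqref{eq: cond 2} with a \emph{conditional-variance} hypothesis of the form \eqref{eq: cond 1}; the Hall--Heyde formulation (Corollary~3.1 in their monograph) is the convenient reference, requiring precisely convergence of $\sum_k\E(Y_{n,k}^2\mid\FF_{n,k-1})$ to $\sigma^2$ and the Lindeberg condition on the unconditional second moments. (ii) The Lindeberg condition in \eqref{eq: cond 2} is phrased with the unconditional $\E^*$ rather than with conditional expectations, which is even slightly stronger than what the pathwise CLT needs, so no additional work is required there. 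The main (and essentially only) obstacle is the bookkeeping of the subsequence extraction: one must make sure the diagonal argument simultaneously handles the variance convergence and the Lindeberg condition for a dense set of truncation levels, and that the resulting almost-sure statements are compatible on a common probability-one event not depending on the realization of the $\Pi_n$. This is routine given the machinery already deployed in Section~\ref{sec53}, and I would simply mirror that argument.
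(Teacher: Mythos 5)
Your proposal is correct, but it proves the theorem by a genuinely different route than the paper. The paper does not reduce to the classical deterministic-array martingale CLT at all: it gives a self-contained adaptation of the characteristic-function proof to the conditional setting, first under a temporary uniform bound on $\sum_k\E^*(Y_{n,k}^{*2}\mid\FF_{n,k-1}^*)$. Claim I establishes \emph{locally uniform} convergence in probability of the conditional characteristic function $\E^*\exp(itZ_n^*)$ to $\exp(-t^2\sigma^2/2)$ via the usual Taylor expansions with all expectations replaced by $\E^*$; Claim II deduces conditional tightness in probability from the integrated characteristic-function inequality; Claim III upgrades this to $\int f\,d\hat\Q_n\to_\P\int f\,d\NN(0,\sigma^2)$ for each bounded Lipschitz $f$ by Stone--Weierstrass approximation with trigonometric polynomials on a torus; Claim IV obtains uniformity over the $BL$ unit ball by Arzel\`a--Ascoli; and the temporary bound is finally removed by the standard stopping argument. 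Your subsequence-principle reduction is shorter and leverages Hall--Heyde/Billingsley directly, and it is sound, but one step must be made explicit: the conditional-variance sum in \eqref{eq: cond 1} is a function of the bootstrap draws $X_1^*,\dots,X_{m-1}^*$ as well as of the data, so extracting a subsequence along which it converges ``almost surely'' yields an almost-sure statement under the \emph{joint} law (on a common probability space carrying all the bootstrap arrays); to feed the pathwise CLT you must then disintegrate (Fubini) to conclude that for $\P$-almost every realization of $(X_j)$ and $\Pi_n$ the sum converges $\P^*$-almost surely, hence in conditional probability, which is the hypothesis the deterministic-array theorem actually requires. The Lindeberg quantity in \eqref{eq: cond 2} is already a function of the data alone, and monotonicity in $\varepsilon$ makes the diagonalization over a countable dense set of truncation levels suffice, as you indicate. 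What the paper's longer route buys is independence from any coupling/disintegration bookkeeping and, as by-products, the explicit locally uniform characteristic-function convergence and tightness statements that it reuses when passing to the $d_{BL}$ formulation; what your route buys is brevity and a direct appeal to a standard reference.
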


\begin{proof}
Preliminary, we assume that there exists some constant $c>0$ with
\begin{align}\label{eq: step 11}
\sup_{n}\sum_{k=1}^m \E^*\big(Y_{n,k}^{*2}\big\arrowvert Y_1^*,\dots,Y_{k-1}^*\big) \leq c.
\end{align}

{\sc Claim I.} With
$
Z_n^*=\sum_{k=1}^mY_{n,k}^*, 
$
\begin{align}\label{eq: step 1}
\sup_{t\in K}\Big\arrowvert \E^*\exp(it Z_n^*)-\exp\Big(-\frac{1}{2}t^2\sigma^2\Big)\Big\arrowvert \overset{\P}{\longrightarrow}0
\end{align}
for any compact subset  $K\subset\R$.

\medskip
{\it Proof of Claim I.}
The proof follows the lines in the proof of the classical martingale CLT, replacing all expectations  by conditional expectations $\E^*$ and the canonical filtration correspondingly. However, we state here locally uniform stochastic convergence rather than pointwise  stochastic convergence of the characteristic functions, which requires some extra care with the transfer of arguments. 

\medskip

Write 
\begin{align*}
\sigma_{n,l}^{*2}&=\E^*(Y_{n,l}^{*2}\arrowvert Y_1^*,\dots, Y_{l-1}^*)\ \ \text{for }2\leq l\leq m,\ \ \sigma_{n,1}^{*2}=\E^*Y_{n,1}^{*2},\\
\Sigma_{n,l}&=\sum_{k=1}^l\sigma_{n,k}^{*2}\ \ \text{ for } 0\leq l\leq m, \ \text{and}\\
Z_{n,l}^*&=\sum_{k=1}^lY_{n,k}^* \ \ \text{for } 0\leq l\leq m.
\end{align*} 
Let $K\subset\R$ be compact. Then
\begin{align*}
\sup_{t\in K}\bigg\arrowvert\E^*&\bigg[\exp\Big(itZ_n^*\Big)-\exp\Big(-\frac{1}{2}t^2\sigma^2\Big)\bigg]\bigg\arrowvert\\
&\leq \sup_{t\in K}\E^*\bigg[\bigg\arrowvert 1-\exp\Big(\frac{1}{2}t^2\Sigma_{n,m}\Big)\exp\Big(-\frac{1}{2}t^2\sigma^2\Big)\bigg]\\
&\quad\quad + \sup_{t\in K}\bigg\arrowvert \E^*\bigg[\exp\Big(\frac{1}{2}t^2\Sigma_{n,m}\Big)\exp\Big(itZ_n^*\Big)-1\bigg]\bigg\arrowvert \\
&=: A_K+B_K.
\end{align*}
Choose some $0\leq t_K\in\R$ satisfying $-t_K\leq x\leq t_K$ for all $x\in K$. 
As $\Sigma_{n,m}\rightarrow_{\P}\sigma^2$ by assumption and \eqref{eq: step 11},  
\begin{align*}
\E A_K\leq \E\bigg(\exp\Big(\frac{1}{2}t_K^2\arrowvert \Sigma_{n,m}-\sigma^2\arrowvert\Big)-1\bigg)\rightarrow 0.
\end{align*}
As concerns $B_K$, 
\begin{align*}
B_K&=\sup_{t\in K}\bigg\arrowvert \sum_{k=1}^m\E^*\bigg[\exp\Big(it Z_{n,{k-1}}^*\Big)\exp\Big(\frac{1}{2}t^2\Sigma_{n,k}\Big)\bigg(\exp(itY_{n,k}^*)-\exp\Big(-\frac{1}{2}t^2\sigma_{n,k}^{*2}\Big)\bigg)\bigg]\bigg\arrowvert\\
&\leq \exp\Big(\frac{1}{2}t_K^2 c\Big)\sup_{t\in K}\sum_{k=1}^m\E^*\bigg[\bigg\arrowvert\E^*\bigg(\exp\big(itY_{n,k}^*\big)-\exp\Big(-\frac{1}{2}t^2\sigma_{n,k}^{*2}\Big)\bigg\arrowvert \FF_{n,k-1}^*\bigg)\bigg\arrowvert\bigg].
\end{align*}
Still assuming the temporary condition \eqref{eq: step 1}, it remains to prove that the latter expression converges to $0$ in probability. Taylor's approximation reveals
\begin{align}\label{eq: step 12}
\exp\Big(it Y_{n,k}^*\Big)=1+ it Y_{n,k}^*-\frac{1}{2}t^2Y_{n,k}^{*2} +\theta_{n,k}^*(t)
\end{align}
with
\begin{align*}
\sup_{t\in K}\arrowvert \theta_{n,k}^*(t)\arrowvert &\leq \sup_{t\in K}\min\Big\{\arrowvert t Y_{n,k}^*\arrowvert^3, \arrowvert tY_{n,k}^*\arrowvert^2\Big\}\\
&\leq (t_K^2+t_K^3)\Big(Y_{n,k}^{*2}\ind_{\{\arrowvert Y_{n,k}^{*}\arrowvert\geq\varepsilon\}}+\varepsilon Y_{n,k}^{*2}\Big)
\end{align*}
for any $\varepsilon>0$ as well as
\begin{align}\label{eq: step 13}
\exp\Big(\frac{1}{2}t^2\sigma_{n,k}^{*2}\Big)=1-\frac{1}{2}t^2\sigma_{n,k}^{*2}+\theta_{n,k}'(t)
\end{align}
with
\begin{align*}
\sup_{t\in K}\arrowvert\theta_{n,k}'(t)\arrowvert& \leq \sup_{t\in K}\Big(\frac{1}{2}t^2\sigma_{n,k}^{*2}\Big)^2\exp\Big(\frac{1}{2}t^2\sigma_{n,k}^{*2}\Big) \leq t_K^4\sigma_{n,k}^{*4}\exp\Big(\frac{1}{2}t_K^2c\Big).
\end{align*}
Therefore, with 
$$
c_K=t_K^2+t_K^3+t_K^4\exp\Big(\frac{1}{2}t_K^2c\Big),
$$
we arrive at
\begin{align}
\sup_{t\in K}\sum_{k=1}^m&\E^*\bigg[\bigg\arrowvert\E^*\bigg(\exp\big(itY_{n,k}^*\big)-\exp\Big(-\frac{1}{2}t^2\sigma_{k,n}^{*2}\Big)\bigg\arrowvert \FF_{n,k-1}^*\bigg)\bigg\arrowvert\bigg]\nonumber\\
&\leq  c_K\sum_{k=1}^m\bigg[\E^*\Big(Y_{n,k}^{*2}\ind_{\{\arrowvert Y_{n,k}^{*}\arrowvert\geq\varepsilon\}}\Big)+\varepsilon\E^*(\sigma_{n,k}^{*2})+\E^*(\sigma_{n,k}^{*4})\bigg]\nonumber\\
&\leq c_K\bigg[\varepsilon c+c\E^*\Big(\max_{k=1,\dots, n}\sigma_{n,k}^{*2}\Big)+\sum_{k=1}^m \E^*\Big(Y_{n,k}^{*2}\ind_{\{\arrowvert Y_{n,j}^{*}\arrowvert\geq\varepsilon\}}\Big)\bigg].\label{eq: step 17}
\end{align}
Because of $\sigma_{n,l}^{*2}\leq \varepsilon^2+\sum_{k=1}^m\E^*(Y_{n,k}^{*2}\ind_{\{\arrowvert Y_{n,l}^{*}\arrowvert\geq\varepsilon\}})$ for any $1\leq l\leq m$, \eqref{eq: cond 2} reveals that \eqref{eq: step 17} is upper bounded by
\begin{align*}
c_K(\varepsilon c+c\varepsilon^2)+ o_{\P}(1).
\end{align*}
Since $\varepsilon>0$ was chosen arbitrarily, this proves  \eqref{eq: step 1}.

\medskip

{\sc Claim II.}  The sequence $(Z_n^*)$ is tight in probability, i.e. for any $\varepsilon>0$, there exists some compact subset $K_{\varepsilon}\subset\R$, such that
\begin{align}\label{eq: step 2}
\limsup_{n\to\infty}\P\Big(\P\big( Z_n^*\not\in K_{\varepsilon}\big\arrowvert X_1,\dots, X_n,\Pi_n\big)> \varepsilon\Big)=0.
\end{align}

\medskip
{\it Proof of Claim II.} Using the identity
\begin{align*}
\int_{-u}^u\big(1-\exp(itx)\big)\dd t = 2u -\frac{\exp(iux)-\exp(-iux)}{ix}=2u-\frac{2\sin(ux)}{x},
\end{align*}
we obtain by the Theorem of Fubini
\begin{align*}
\frac{1}{u}&\int_{-u}^u\big(1-\E^*\exp(itZ_n^*)\big)\dd t\\
&=\int\Big(2-\frac{2\sin(ux)}{ux}\Big)\dd \P^{Z_n^*\arrowvert X_1,\dots, X_n,\Pi_n}(x)\\
&\geq 2\int_{\{\arrowvert x\arrowvert\geq 2/u\}}\Big(1-\Big\arrowvert\frac{\sin(ux)}{ux}\Big\arrowvert\Big)\dd \P^{Z_n^*\arrowvert X_1,\dots, X_n,\Pi_n}(x)\\
&\geq \P\Big(\arrowvert Z_n^*\arrowvert\geq\frac{2}{u}\Big\arrowvert X_1,\dots, X_n,\Pi_n\Big),
\end{align*}
where we used $\arrowvert \sin(v)/v\arrowvert\leq 1$ in the first inequality and $\arrowvert \sin(ux)\arrowvert\leq 1$ in the last line. Fix now $\varepsilon>0$ and choose $u>0$ such that
\begin{align*}
\frac{1}{u}\int_{-u}^u\bigg(1-\exp\Big(-\frac{1}{2}t^2\sigma^2\Big)\bigg)\dd t\leq\frac{\varepsilon}{2}. 
\end{align*}
With $K_{\varepsilon}=[-2/u, 2/u]$, we obtain 
\begin{align*}
\limsup_{n\rightarrow\infty}&\, \P \Big(\P\big( Z_n^*\not\in K_{\varepsilon}\big\arrowvert X_1,\dots, X_n,\Pi_n\big)> \varepsilon\Big)\\
&\leq \limsup_{n\rightarrow\infty}\P\bigg(\frac{1}{u}\int_{-u}^u\Big(1-\E^*\exp(itZ_n^*)\Big)\dd t>\varepsilon\bigg)\\
&\leq \limsup_{n\rightarrow\infty}\P\bigg(\frac{1}{u}\int_{-u}^u\Big(\E^*\exp(itZ_n^*)-\exp\Big(-\frac{1}{2}t^2\sigma^2\Big)\Big)\dd t>\frac{\varepsilon}{2}\bigg).
\end{align*}
The last expression is equal to zero by \eqref{eq: step 1}, which proves \eqref{eq: step 2}. 

\medskip

{\sc Claim III.} For any bounded Lipschitz function $f$,
\begin{align}\label{eq: step 3}
\int f(Z_n^*)\dd \hat{\Q}_n  \overset{\P}{\longrightarrow}\int f\dd \NN(0,\sigma^2)\ \ \ (n\rightarrow\infty),
\end{align}
where $\hat{\Q}_n$ denotes the random distribution $\P(~\cdot ~\arrowvert X_1,\dots, X_n,\Pi_n)$.

\medskip
\noindent
{\it Proof of Claim III.}
For any $\varepsilon>0$, fix $K_{\varepsilon}=[-c_{\varepsilon},c_{\varepsilon}]$ which satisfies $\NN(0,\sigma^2)(K_{\varepsilon})\geq 1-\varepsilon$ and \eqref{eq: step 2}.  Next, for any bounded Lipschitz function $f$  and any $\varepsilon>0$, there exists some bounded Lipschitz function $\tilde{f}_{\varepsilon}$ with
$$
f_{\arrowvert K_{\varepsilon}}=\tilde{f}_{\varepsilon\arrowvert K_{\varepsilon}}\ \ \ \text{and}\ \ \ \tilde{f}_{\varepsilon}(x)=0\ \text{for all}\ x\in [- c_{\varepsilon}-\Arrowvert f\Arrowvert_{\sup}, c_{\varepsilon}+\Arrowvert f\Arrowvert_{\sup}]^c,
$$
and
\begin{align*}
\bigg\arrowvert& \int f(Z_n^*)\dd \hat{\Q}_n -\int f\dd \NN(0,\sigma^2)\bigg\arrowvert\\
&\leq 2 \Arrowvert f\Arrowvert_{\sup} \hat{\Q}_n\big(Z_n^*\not\in K_{\varepsilon}\big)+ 2\Arrowvert f\Arrowvert_{\sup}\NN(0,\sigma^2)\big(K_{\varepsilon}^c\big)\\
&\quad\quad+\bigg\arrowvert \int \tilde{f}_{\varepsilon}(Z_n^*)\dd \hat{\Q}_n -\int \tilde{f}_{\varepsilon}\dd \NN(0,\sigma^2)\bigg\arrowvert\\
&\leq \bigg\arrowvert \int \tilde{f}_{\varepsilon}(Z_n^*)\dd \hat{\Q}_n -\int \tilde{f}_{\varepsilon}\dd \NN(0,\sigma^2)\bigg\arrowvert +4\Arrowvert f\Arrowvert_{\sup}\varepsilon+o_{\P}(1).
\end{align*}
Identifying the endpoint $-c_{\varepsilon}-\Arrowvert f\Arrowvert_{\sup}$  with $c_{\varepsilon}+\Arrowvert f\Arrowvert_{\sup}$ yields the torus $T_{\varepsilon}$ on which $f_{\varepsilon}$ is continuous. As the complex linear combinations of the monomials 
$$
m_j:x\mapsto \exp \left(i \frac{2\pi}{2c_{\varepsilon}+2}jx\right), \ j\in\Z,
$$ restricted to $T_{\varepsilon}$ form a point-separating self-adjoint $\C$-algebra of functions on $T_{\varepsilon}$ which includes constants, the Stone-Weierstra{\ss} theorem reveals that they are dense in the space of continuous complex functions on $T_{\varepsilon}$ with respect to the topology of uniform convergence. Hence,  there exists some linear combination $P_{f,\varepsilon}=\sum_{j=1}^m\alpha_jm_j$ such that
\begin{align}\label{eq: step 3a}
\sup_{-c_{\varepsilon}-\Arrowvert f\Arrowvert_{\sup}\leq x\leq c_{\varepsilon}+\Arrowvert f\Arrowvert_{\sup}}\Big\arrowvert \tilde{f}_{\varepsilon}(x)-P_{f,\varepsilon}(x)\Big\arrowvert<\varepsilon.
\end{align}
Moreover, since $\tilde{f}_{\varepsilon}$ is bounded in absolute value by $1$ and $P_{f,\varepsilon}$ is $(2c_{\varepsilon}+2)$-periodic, \eqref{eq: step 3a} reveals $\Arrowvert P_{f,\varepsilon}\Arrowvert_{\sup}\le \Arrowvert f\Arrowvert_{\sup} +\varepsilon$.
Claim I and Claim II then imply
\begin{align*}
\bigg\arrowvert& \int \tilde{f}_{\varepsilon}(Z_n^*)\dd \hat{\Q}_n -\int \tilde{f}_{\varepsilon}\dd \NN(0,\sigma^2)\bigg\arrowvert\\
&\leq \int_{-c_{\varepsilon}-\Arrowvert f\Arrowvert_{\sup}}^{c_{\varepsilon}+\Arrowvert f\Arrowvert_{\sup}}\Big\arrowvert \tilde{f}_{\varepsilon}(x)-P_{f,\varepsilon}(x)\Big\arrowvert\dd\Big( \hat{\Q}_n^{Z_n^*}+ \NN(0,\sigma^2)\Big)\\
&\quad\quad +(\Arrowvert f\Arrowvert_{\sup}+\varepsilon)\hat{\Q}_n\big(Z_n^*\not\in K_{\varepsilon}\big)+ (\Arrowvert f\Arrowvert_{\sup}+\varepsilon)\NN(0,\sigma^2)\big(K_{\varepsilon}^c\big)\\
&\quad\quad +\bigg\arrowvert \int P_{f,\varepsilon}(Z_n^*)\dd \hat{\Q}_n -\int P_{f,\varepsilon}\dd \NN(0,\sigma^2)\bigg\arrowvert\\
&\leq 2\varepsilon +2(\Arrowvert f\Arrowvert_{\sup}+\varepsilon)\varepsilon +o_{\P}(1).
\end{align*}
Summarizing,
\begin{align*}
\bigg\arrowvert& \int f(Z_n^*)\dd \hat{\Q}_n -\int f\dd \NN(0,\sigma^2)\bigg\arrowvert\leq 6\Arrowvert f\Arrowvert_{\sup}\varepsilon+2\varepsilon(1+\varepsilon)+o_{\P}(1).
\end{align*}
Since $\varepsilon>0$ is arbitrary, this proves \eqref{eq: step 3}.

\medskip

{\sc Claim IV.} As $n\rightarrow\infty$,
\begin{align}\label{eq: step 4}
 d_{BL}\Big(\LL\big(Z_n^*\big\arrowvert X_1,\dots,X_n,\Pi_n\big),\,\NN(0,\sigma^2)\Big)\overset{\P}{\longrightarrow}0.
 \end{align}
{\it Proof of Claim IV.} As in the proof of claim III, for any $\varepsilon>0$, fix $K_{\varepsilon}=[-c_{\varepsilon},c_{\varepsilon}]$ which satisfies $\NN(0,\sigma^2)(K_{\varepsilon})\geq 1-\varepsilon$ and \eqref{eq: step 2}.
Denote the closed unit ball of bounded Lipschitz functions as introduced in Subsection \ref{sec51} by
$
B=\big\{f\in BL: \Arrowvert f\Arrowvert_{BL}\leq 1\big\}
$
and define 
$$
B_{K_{\varepsilon}}=\big\{g: K_{\varepsilon}\rightarrow\R: g=f_{\arrowvert K_{\varepsilon}}\ \text{for some}\ f\in B\big\}.
$$
Then the set $B_{K_{\varepsilon}}$ is closed with respect to the topology of uniform convergence and therefore compact by the Arzel\`a-Ascoli theorem. Hence, for any $\varepsilon>0$, there exist $N\in N$ and $f_1,\dots, f_N\in B$, such that for any $f\in B$, there exists some $g_f\in\{f_1,\dots,f_N\}$ with
\begin{align*}
\sup_{x\in K_{\varepsilon}}\arrowvert f(x)-g_f(x)\arrowvert <\varepsilon.
\end{align*}
As a consequence,
\begin{align*}
\sup_{f\in B}\bigg\arrowvert \int &(f-g_f)(\dd \hat{\Q}_n^{Z_n^*}-\dd  \NN(0,\sigma^2))\bigg\arrowvert\\
&\leq \sup_{f\in B}\int_{K_{\varepsilon}}\arrowvert f-g_f\arrowvert\dd(\hat{\Q}_n^{Z_n^*} +  \NN(0,\sigma^2)) + 2\hat{\Q}_n^{Z_n^*}(K_{\varepsilon}^c) +  2\NN(0,\sigma^2)(K_{\varepsilon}^c)\\
&\leq 6\varepsilon+o_{\P}(1)
\end{align*}
and therefore,
\begin{align*}
d_{BL}&\Big(\LL\big(Z_n^*\big\arrowvert X_1,\dots,X_n,\Pi_n\big),\, \NN(0,\sigma^2)\Big)\\
&\leq \max_{j=1,\dots, N}\bigg\arrowvert \int f_j(\dd \hat{\Q}_n^{Z_n^*}-\dd  \NN(0,\sigma^2))\bigg\arrowvert+6\varepsilon + o_{\P}(1)\\
&=6\varepsilon + o_{\P}(1)
\end{align*}
by \eqref{eq: step 3}. As $\varepsilon>0$ was arbitrary, this proves \eqref{eq: step 4}.

\medskip

To remove assumption \eqref{eq: step 11}, the same argument as in the proof of the classical martingale-CLT can be applied.
\end{proof}

\subsection{Proof of Proposition \ref{prop}}
\label{sec84}

\subsubsection{Weak convergence of finite dimensional distributions in probability}
\label{sec841}

Recall  that $\E_j^*$ denotes 
the conditional expectation operator corresponding to $\hat{\P}_n$ with respect to the $\sigma$-field generated by $X_1^*,\dots, X_j^*$ (conditional on $\Pi_n$)
Similar calculations as in Section 2 (p. 569-570) of  \citeSM{baisil2004} 
and an application of Proposition \ref{lemma: formel 2.1a}  for the matrices $L_n^\top  D_1^*(z)^{-1} L_n$ and $L_n^\top  D_1^*(z)^{-2} L_n$
yield for $ z\in \CC_n $
\begin{align*}
\widehat M_n^{*}(z) - \E^* \big [ 
\widehat M_n^{*}(z) \big ] & := 
q\big(m_{\mu^{\widehat \Sigma_n^*}}(z)-\E^* m_{\mu^{\widehat \Sigma_n^*}}(z)\big) \\
& = \sum_{j=1}^m Y_j^*+O_{\P} \Big ( \delta_m^2  + \sqrt{m^2 \over n} \Big )
= \sum_{j=1}^m Y_j^*+o_{\P} (1))~, 
\end{align*}
where 
\begin{align}
Y_j^*&=-(\E_j^*-\E_{j-1}^*)\Big(\bar{\beta}_j^*(z)\delta_j^*(z)-\bar{\beta}_j^*(z)^2\varepsilon_j^*(z)\frac{1}{m}\tr\big(L_nL_n^\top  D_j^*(z)^{-2}\big)\Big)\nonumber\\
&=-(\E_j^*-\E_{j-1}^*)\Big(\frac{\dd}{\dd z}\bar{\beta}_j^*(z)\varepsilon_j^*(z)\Big).\nonumber
\end{align}
For example, by the algebraic manipulations on page 569 in this reference we obtain
$$
\widehat M_n^{*}(z)  = \sum_{i=1}^m Y_j^*
+  \sum_{i=1}^m Z_j^*
$$
where 
$$
Z_j^* = (\E_j^*-\E_{j-1}^*) \big [  
\bar{\beta}_j^*(z) (\varepsilon_j^*(z) 
\delta_j^*(z)- {\beta}_j^*(z) r_j^{*\top} D_j^{-2} (z) r_j^* \varepsilon_j^*(z)^2)  \big].
$$
The $L^2$-norm of the first term is now  estimated as follows
\begin{align}
\nonumber 
\E \Big [ \E^* \Big | \sum_{i=1}^m
(\E_j^*-\E_{j-1}^*) (
\bar{\beta}_j^*(z) \varepsilon_j^*(z) 
\delta_j^*(z) ) \Big |^2
\Big ] & = 
\sum_{i=1}^m
\E  \big |
(\E_j^*-\E_{j-1}^*) (
\bar{\beta}_j^*(z) \varepsilon_j^*(z) 
\delta_j^*(z) ) \big |^2
\Big ]  \\
& \leq  4 \sum_{i=1}^m
\E  \big |
\bar{\beta}_j^*(z) \varepsilon_j^*(z) 
\delta_j^*(z)  \big |^2
\Big ]\nonumber\\
&= O_{\P} \Big ( \delta_m^2  + \sqrt{m^2 \over n} \Big )~,
\label{det111}
\end{align}
where the last estimate follows from Proposition \ref{lemma: formel 2.1a}
and the bounds $\arrowvert\bar{\beta}_j^*(z)\arrowvert\leq \arrowvert z\arrowvert/\Im(z)$. 
The second term can be estimated by similar arguments and is of order
$O_{\P}  ( \delta_m^4  + m^2/ n  )$.
Note that one crucial difference to the analysis of \citeSM{baisil2004} is now caused by the fact that the random variables $\varepsilon_j^*$ and $\delta_j^*$ are not centered anymore with respect to $\E_{X_j^*}^*$:
\begin{align*}
\E_{X_j^*}^*\big(r_j^{*\top}D_j^*(z)^{-k}r_j^*\big)&= \frac{1}{m}\tr\Big( L_nL_n^\top   D_j^*(z)^{-k}\Big)\not=  \frac{1}{m}\tr\Big(L_nL_n^\top   D_j^*(z)^{-k}\Big),\ \ k=1,2.
\end{align*}
In view of the limiting distribution result, it is therefore sufficient to study linear combinations
\begin{align*}
\sum_{i=1}^r\sum_{j=1}^m\alpha_iY_j^*(z_i)\ \ \text{with }\alpha_1,\dots, \alpha_r\in\C,\ r\in\N,
\end{align*}
due to the Cram\'er-Wold device (since the real parts of these linear combinations are running over all real linear combinations of $\Re(\sum_{j=1}^mY_j^*(z_i)),\Im(\sum_{j=1}^mY_j(z_i))$, $i=1,\dots, r$, as $\alpha_1,\dots, \alpha_r$ varies over $\C$). 
 Note furthermore that it is sufficient to consider the case $\Im (z_i) > 0$ ($i=1, \ldots  , r$), because the distribution of any $C ( \CC , \R^2) $-valued random variable $Z$ is uniquely determined by its finite dimensional distributions $\mathcal{L}(Z(z_1),\dots, Z(z_k))$ with $z_1,\dots, z_k$ belonging to a dense subset of $\CC$ and $k\in\N$. 
For this purpose, we shall prove that the conditions of 
 Theorem \ref{thm: bmclt} in the online supplement
are 
satisfied for $$
\Re\Big (\sum_{i=1}^r\sum_{j=1}^m\alpha_iY_j^*(z_i)\Big ).
$$
By 
 the bounds $\arrowvert\bar{\beta}_j^*(z)\arrowvert\leq \arrowvert z\arrowvert/\Im(z)$ and 
 the estimate
$$
\Big\arrowvert\frac{1}{m}\tr\big(L_nL_n^\top   D_j^*(z)^{-2}\big)\Big\arrowvert\leq \frac{q}{m}
\Arrowvert L_n \Arrowvert^2_{S_{\infty}}\cdot\Arrowvert D_j^*(z)^{-2}\Arrowvert_{S_{\infty}}\leq  c(1+o(1))\frac{
\Arrowvert L_n \Arrowvert^2_{S_{\infty}}
}{\Im(z)^2},
$$
we find by Proposition \ref{lemma: formel 2.1a} 
(with $p=2$) that 
\begin{align*}
\E\arrowvert Y_j^*(z)\arrowvert^4&\leq K\Big (\frac{\arrowvert z\arrowvert^4}{\Im(z)^4}\E\arrowvert\delta_j^*(z)\arrowvert^4+\frac{\arrowvert z\arrowvert^8}{\Im(z)^{16}}c^4(1+o(1))\E\arrowvert\varepsilon_j^*(z)\arrowvert^4\Big)  =\mathcal{O}\Big(\frac{\delta_m^4}{m}+\frac{m}{n}\Big)
\end{align*}
as $n\rightarrow\infty$. Consequently,
\begin{align}
\nonumber 
\E \bigg [ 
\sum_{j=1}^m\E^*\bigg(\bigg\arrowvert \sum_{i=1}^r\alpha_iY_j^*(z_i)\bigg\arrowvert^2 I\bigg\{\sum_{i=1}^r\alpha_iY_j^*(z_i)\arrowvert\geq\varepsilon\bigg\}\bigg)
\bigg ] & \leq\frac{1}{\varepsilon^2}\sum_{j=1}^m\E\bigg\arrowvert\sum_{j=1}^r\alpha_iY_j^*(z_i)\bigg\arrowvert^4 \\
& 
=  \mathcal{O}\Big(\delta_m^4
+\frac{m^2}{n}\Big)
= o(1 ) 
\label{det112}
\end{align}
as $n\rightarrow\infty$, and condition \eqref{eq: cond 2} of Theorem \ref{thm: bmclt} is fulfilled. 

In order to verify  condition~\eqref{eq: cond 1}, it is sufficient to show that for $z_1,z_2$ with $\Im(z_1)\not=0$, $\Im(z_2)\not=0$, 
\begin{equation}\label{eq: 3.10}
\sum_{j=1}^m\E^*_{j-1}Y_j^*(z_1)Y_j^*(z_2)
\longrightarrow_\mathbb{P} 
\begin{cases}
\frac{1}{3}\frac{(\underline{m}_{c,H}^0)'''(z_1)}{(\underline{m}_{c,H}^0)'(z_1)}
-\frac{1}{2}\Big(\frac{(\underline{m}_{c,H}^0)''(z_1)}{(\underline{m}_{c,H}^0)'(z_1)}\Big)^2 &\text{ if }z_1=z_2\\ 
~&~\\
2\frac{(\underline{m}_{{c,H}}^0)'(z_1)(\underline{m}_{{c,H}}^0)'(z_2)}{\big(\underline{m}_{c,H}^0(z_1)-\underline{m}_{{c,H}}^0
(z_2)\big)^2}-\frac{2}{(z_1-z_2)^2}&\text{ if }z_1\not=z_2
\end{cases}
\end{equation}
(note that $\overline{Y_j^*(z)}=Y_j^*(\bar{z})$).
By the theorem of dominated convergence,
\begin{align*}
\frac{\partial^2}{\partial z_2\partial z_1}\eqref{eq: 3.10}=\eqref{eq: 3.11}
\end{align*}
with 
\begin{align}\label{eq: 3.11}
\sum_{j=1}^m\E_{j-1}^*\Big[ (\E_j^*-\E_{j-1}^*)\big(\bar{\beta}_j^*(z_1)\varepsilon_j^*(z_1)\big) (\E_j^*-\E_{j-1}^*)\big( \bar{\beta}_j^*(z_2)\varepsilon_j^*(z_2)\big)\Big].
\end{align}
As for the classical CLT of linear spectral statistics, it follows from Vitali's convergence theorem that the convergence  of \eqref{eq: 3.10} in probability  follows from the corresponding stochastic convergence of \eqref{eq: 3.11}.
For analyzing \eqref{eq: 3.11}  we shall prove the following claims. 

\medskip

{\sc Claim I.} 
\begin{align}
\sum_{j=1}^m&\,\E_{j-1}^*\Big[(\E_j^*-\E_{j-1}^*)\big(\bar{\beta}_j^*(z_1)\varepsilon_j^*(z_1)\big)(\E_j^*-\E_{j-1}^*)\big(\bar{\beta}_j^*(z_2)\varepsilon_j^*(z_2)\big)\Big]\nonumber\\
&=\sum_{j=1}^m\E_{j-1}^*\Big[\E_j^*\big(\bar{\beta}_j^*(z_1)\varepsilon_j^*(z_1)\big)\E_j^*\big(\bar{\beta}_j^*(z_2)\varepsilon_j^*(z_2)\big)\Big]+o_{\P}(1).
\end{align}

\medskip

{\sc Claim II.}

\begin{align}
\sum_{j=1}^m&\, \E_{j-1}^*\Big[\E_j^*\big(\bar{\beta}_j^*(z_1)\varepsilon_j^*(z_1)\big)\E_j^*\big(\bar{\beta}_j^*(z_2)\varepsilon_j^*(z_2)\big)\Big]\nonumber\\
&=\sum_{j=1}^mb_n^*(z_1)b_n^*(z_2)\E_{j-1}^*\Big[\E_j^*\big(\varepsilon_j^*(z_1)\big)\E_j^*\big(\varepsilon_j^*(z_2)\big)\Big] + o_{\P}(1).
\end{align}

\medskip

{\sc Claim III.}

\begin{align*}
\sum_{j=1}^m&\, b_n^*(z_1)b_n^*(z_2)\E_{j-1}^*\big[\E_j^*\big(\varepsilon_j^*(z_1)\big)\E_j^*\big(\varepsilon_j^*(z_2)\big)\big]\nonumber\\
&=2 b_n^*(z_1)b_n^*(z_2)\frac{1}{m^2}\sum_{j=1}^m\tr \Big(L_n^\top  \E_j^*\big(D_j^*(z_1)^{-1}\big)
L_nL_n^\top  
\E_j^*\big(D_j^*(z_2)^{-1}\big)L_n
\Big)
+o_{\P}(1).
\end{align*}

\medskip

In order to prove stochastic convergence and to determine the limit in probability of the right-hand side in Claim III, we shall prove the representation
\begin{align*}
\frac{1}{m^2}&\sum_{j=1}^m\tr \Big(L_n^\top  
\E_j^*\big(D_j^*(z_1)^{-1}\big)
L_n L_n^\top   
\E_j^*\big(D_j^*(z_2)^{-1}\big)L_n
\Big)\\
&= a_n(z_1,z_2)\frac{1}{m}\sum_{j=1}^m \Big(1-\frac{j-1}{m}a_n(z_1,z_2)\Big)^{-1} + o_{\P}(1)
\end{align*}
for some function $a_n(z_1,z_2)$ that will be specified in Claim VI. This is the most involved part of the proof. Claims IV and V are intermediate steps on this way. For this purpose, we 
recall the notation
of $D_{ij}^*(z)$, $\beta_{ij}^*(z)$ and $b_1^*(z)$  in \eqref{ar11},
\eqref{ar20}, and \eqref{ar13}, respectively, which will be used intensively in the following discussion.

\medskip

{\sc Claim IV.} There exists some constant $K>0$, such that for all $n\in\N$ and any $j\leq m$
\begin{align}
\tr&\Big(\E_j^*\big(D_j^*(z_1)^{-1}\big)
L_n L_n^\top  
\E_j^*\big(D_j^*(z_2)^{-1}\big)
L_n L_n^\top  
\Big)\nonumber\\
&\ \ \ \times \bigg[1{-}\frac{j-1}{m^2}b_1^*(z_1)b_1^*(z_2)\tr\bigg(
\Big(z_2 I-\frac{m-1}{m}b_1^*(z_2) L_n L_n^\top  \Big)^{-1}
L_n L_n^\top  
\nonumber\\
&\hspace{45mm}\times\Big(z_1 I-\frac{m-1}{m}b_1^*(z_1)
L_n L_n^\top  
\Big)^{-1}
L_n L_n^\top  
\bigg)\bigg]\nonumber\\
&= \tr\bigg[\Big(z_1 I-\frac{m-1}{m}b_1^*(z_1) L_n L_n^\top  \Big)^{-1}
L_n L_n^\top  
\Big(z_2 I-\frac{m-1}{m}b_1^*(z_2) L_n L_n^\top  \Big)^{-1}
L_n L_n^\top  
\bigg] \label{eq: IV}\\
&\hspace{120mm}~+R(z_1,z_2)\nonumber
\end{align}
with $\E\arrowvert R(z_1,z_2)\arrowvert\leq K\sqrt{m}$.

\medskip

\medskip

{\sc Claim V.} Recall the notation
of $\underline{\tilde m}_n^0$ 
in \eqref{det102}. For  any $j\leq m$, 
\begin{align}
&
\tr\Big(\E_j^*\big(D_j^*(z_1)^{-1}\big)
L_nL_n^\top  
\E_j^*\big(D_j^*(z_2)^{-1}\big)L_n L_n^\top  \Big)\nonumber\\
&\quad\quad
\times\bigg[1-\frac{j-1}{m^2}\underline{\tilde m}_n^0(z_1)\underline{\tilde m}_n^0(z_2)\nonumber\\
&\hspace{40mm}\cdot\tr\Big((I+ \underline{\tilde m}_n^0(z_2)
L_nL_n^\top  )^{-1}
L_nL_n^\top  
(I+\underline{\tilde m}_n^0(z_1)L_nL_n^\top  )^{-1}L_nL_n^\top  \Big)\bigg]\nonumber\\
&=\frac{1}{z_1z_2}\tr\Big((I+ \underline{\tilde m}_n^0(z_2)L_nL_n^\top  )^{-1}L_nL_n^\top  (I+\underline{\tilde m}_n^0(z_1)L_nL_n^\top  )^{-1}L_nL_n^\top  \Big)+R'(z_1,z_2)\label{eq: ClaimV}
\end{align}
with $\arrowvert R'(z_1,z_2)\arrowvert =\mathcal{O}_{\P}(\sqrt{m})$.

\medskip

\medskip
{\sc Claim VI:} We shall conclude the stochastic convergence in \eqref{eq: 3.10}:
\begin{equation*}
\sum_{j=1}^m\E^*_{j-1}Y_j^*(z_1)Y_j^*(z_2)
\longrightarrow_\mathbb{P} 
\begin{cases}
\frac{1}{3}\frac{(\underline{m}_{c,H}^0)'''(z_1)}{(\underline{m}_{c,H}^0)'(z_1)}
-\frac{1}{2}\Big(\frac{(\underline{m}_{c,H}^0)''(z_1)}{(\underline{m}_{c,H}^0)'(z_1)}\Big)^2 &\text{ if }z_1=z_2\\ 
~&~\\
2\frac{(\underline{m}_{{c,H}}^0)'(z_1)(\underline{m}_{{c,H}}^0)'(z_2)}{\big(\underline{m}_{c,H}^0(z_1)-\underline{m}_{{c,H}}^0
(z_2)\big)^2}-\frac{2}{(z_1-z_2)^2}&\text{ if }z_1\not=z_2.
\end{cases}
\end{equation*}
Note that the expression for $z_2=z_2$ is the continuous extrapolation of the one for $z_1\not=z_2$ for the removable singularities at $z_1=z_2$.

\subsection*{Proofs of Claim I -- Claim VI} \label{sec952}
$ $

\medskip

{\it Proof of Claim I.}
Due to the identity
\begin{align*}
\E_{j-1}^*&\,\Big[(\E_j^*-\E_{j-1}^*)\big(\bar{\beta}_j^*(z_1)\varepsilon_j^*(z_1)\big)(\E_j^*-\E_{j-1}^*)\big(\bar{\beta}_j^*(z_2)\varepsilon_j^*(z_2)\big)\Big]\\
&=\E_{j-1}^*\Big[\E_j^*\big(\bar{\beta}_j^*(z_1)\varepsilon_j^*(z_1)\big)\E_j^*\big(\bar{\beta}_j^*(z_2)\varepsilon_j^*(z_2)\big)\Big]\\
&\quad\quad\quad\quad- \E_{j-1}^*\big(\bar{\beta}_j^*(z_1)\varepsilon_j^*(z_1)\big)\E_{j-1}^*\big(\bar{\beta}_j^*(z_2)\varepsilon_j^*(z_2)\big),
\end{align*}
the claim follows if
\begin{align*}
\sum_{j=1}^m \E_{j-1}^*\big(\bar{\beta}_j^*(z_1)\varepsilon_j^*(z_1)\big)\E_{j-1}^*\big(\bar{\beta}_j^*(z_2)\varepsilon_j^*(z_2)\big)\longrightarrow_{\P}0.
\end{align*}
Employing the operator identity $\E_{j-1}^*=\E_{j-1}^*\E_{X_j^*}^*$, the independence of $\bar{\beta}_j^*(z)$ and $D_j^*(z)$ from $X_j^*$ and the bound
$$
\arrowvert \bar{\beta}_j^*(z)\arrowvert  \leq \frac{\arrowvert z\arrowvert}{\Im(z)},
$$
we deduce  
with $A_j(z)=\E_{j-1}^*\big(\bar\beta_j^*(z)D_j^*(z)^{-1}\big)$ and
Lemma \ref{lemma: neu1} that 
\begin{align}
\nonumber 
\E\bigg\arrowvert &\sum_{j=1}^m \E_{j-1}^*\big(\bar{\beta}_j^*(z_1)\varepsilon_j^*(z_1)\big)\E_{j-1}^*\big(\bar{\beta}_j^*(z_2)\varepsilon_j^*(z_2)\big)\bigg\arrowvert\\
\nonumber 
&\leq \sum_{j=1}^m\E\bigg\arrowvert  \E_{j-1}^*\big(\bar{\beta}_j^*(z_1)\E_{X_j^*}^*\varepsilon_j^*(z_1)\big)\E_{j-1}^*\big(\bar{\beta}_j^*(z_2)\E_{X_j^*}^*\varepsilon_j^*(z_2)\big)\bigg\arrowvert\\
\nonumber 
&
 = \sum_{j=1}^m\E\Big\arrowvert \E_{X_j^*}^*\big(r_j^{*\top}A_j(z_1)r_j^*-{1 \over m} \tr A_j(z_1)\big)\E_{X_j^*}^*\big(r_j^{*\top}A_j(z_2)r_j^*-{1 \over m}  \tr A_j(z_2)\big)\Big\arrowvert
\\
&\leq K(z_1,z_2) \Big({\delta_m^2 m  \over \sqrt{n}} +\frac{m^2}{n}\Big)
\label{det113} 
\end{align}

\medskip
{\it Proof of Claim II.}
Inserting the conditional expectation operator $\E^*$, the proof follows by representing the difference as a martingale difference sum and Burkholder's inequality with the exponent $2$ and Lemma \ref{lemma: ueber 4.3}. 

\medskip

{\it Proof of Claim III.}
Since $\arrowvert b_n^*(z)\arrowvert \leq \arrowvert z\arrowvert/\Im(z)$, $\E_{j-1}^*D_j^*(z)^{-1}=\E_{j}^*D_j^*(z)^{-1}$ and 
\begin{align*}
\E\bigg\arrowvert  \sum_{j=1}^m&\bigg\{\E_{j-1}^*\Big[\E_j^*\big(\varepsilon_j^*(z_1)\big)\E_j^*\big(\varepsilon_j^*(z_2)\big)\Big]\\
&\quad\quad\quad -\frac{2}{m^2}\tr \Big(L_n^\top  \E_j^*\big(D_j^*(z_1)^{-1}\big)
L_n L_n^\top 
\E_j^*\big(D_j^*(z_2)^{-1}\big)L_n^\top  \Big)\bigg\}\bigg\arrowvert \\
&\leq \sum_{j=1}^m\E\bigg\arrowvert \E_{j-1}^*\Big[\E_j^*\big(\varepsilon_j^*(z_1)\big)\E_j^*\big(\varepsilon_j^*(z_2)\big)\Big]\\
&\quad\quad\quad\quad -\frac{2}{m^2}\tr \Big(L_n^\top  \E_j^*\big(D_j^*(z_1)^{-1}L_n L_n^\top  \E_j^*\big(D_j^*(z_2)^{-1}\big) L_n^\top 
\Big)\bigg\arrowvert\\
&= \sum_{j=1}^m \E \bigg\arrowvert \E_{X_j^*}^*\bigg[\big(X_j^{*\top}A_{j} (z_1)X_j^*-\tr A_{j}(z_1) )\big(X_j^{*\top}A_{j}(z_2) X_j^*- \tr A_{j}(z_2)) \\
& ~~~~~~~~~~~~~~~~~~~~~~~~~~~~~~~~~~~~~~~~~~~~~~~~~~~~~~~~~~~~~~~~~~~~~~~~~~~~~~~~~~~~~~~~
-2 \tr(A_{j} (z_1) A_{j} (z_2) )\bigg]\bigg\arrowvert
\end{align*}
 with
$$
A_{j} (z_k)= \frac{1}{m}L_n^\top  \E_{j-1}^*\big(D_j^*(z_k)^{-1}\big)L_n 
,\ \ k=1,2,
$$
the proof is an immediate consequence of Lemma \ref{lemma: 1.15 neu}.

\medskip
{\it Proof of Claim IV.}
The algebraic manipulations in \cite{baisil2004} on page 572 provide the representation
\begin{align}\label{eq: Dj}
D_j^*(z_1)^{-1}&= -\Big(z_1 I-  \frac{m-1}{m}b_1^*(z_1) L_n L_n^\top  \Big)^{-1} + b_1^*(z_1)A^*(z_1)+ B^*(z_1)+C^*(z_1)
\end{align} 
with
\begin{align*}
A^*(z_1)&=\sum_{\substack{1\leq i\leq m\\i\not=j}}\Big(z_1I-\frac{m-1}{m}b_1^*(z_1)
L_n L_n^\top  \Big)^{-1}
\Big(r_i^* {r_i^{*}}^\top -\frac{1}{m} L_n L_n^\top  \Big)D_{ij}^*(z_1)^{-1},\\
B^*(z_1)&= \sum_{\substack{1\leq i\leq m\\i\not=j}}\big(\beta_{ij}^*(z_1)-b_1^*(z_1)\big)\Big(z_1I -\frac{m-1}{m}b_1^*(z_1)
L_n L_n^\top  \Big)^{-1} r_i^* {r_i^{*}}^\top D_{ij}^*(z_1)^{-1}
\intertext{and}
C^*(z_1)&=\frac{1}{m}b_1^*(z_1)\Big(z_1 I -\frac{m-1}{m}b_1^*(z_1)
L_n L_n^\top  \Big)^{-1}
L_nL_n^\top \\
&\hspace{40mm}\times \sum_{\substack{1\leq i\leq m\\i\not=j}}\Big(D_{ij}^*(z_1)^{-1}-D_j^*(z_1)^{-1}\Big).
\end{align*}
In order to prove Claim IV, we establish the following steps.

\begin{itemize}
\item[(i)] 
If a possibly random $q\times q$-matrix $M$ satisfies $\Arrowvert M\Arrowvert_{S_{\infty}}\leq c$, then
\begin{align*}
\E\big\arrowvert \tr\big(B^*(z_1)M\big)\big\arrowvert\leq Kc\frac{\arrowvert z_1\arrowvert^2(1+m/(q\Im(z_1)))}{\Im(z_1)^5}\sqrt{m}.
\end{align*}
\item[(ii)]If a possibly random $q\times q$-matrix $M$ satisfies $\Arrowvert M\Arrowvert_{S_{\infty}}\leq c$, then
\begin{align*}
\E\big\arrowvert \tr\big(C^*(z_1)M\big)\big\arrowvert\leq Kc\frac{\arrowvert z_1\arrowvert(1+m/(q\Im(z_1)))}{\Im(z_1)^3}.
\end{align*}
\item[(iii)] We have
\begin{align}
\tr \Big(\E_j^*&\big(A^*(z_1)\big) L_n L_n^\top 
D_j^*(z_2)^{-1}
L_n L_n^\top  
\Big)\nonumber\\
&= \tr \Big(\E_j^*\big(\check{A}^*(z_1)\big)
L_n L_n^\top 
D_j^*(z_2)^{-1}
L_n L_n^\top  \Big)+\check{R}(z_1,z_2),\nonumber
\end{align}
with 
\begin{align*}
\check{A}^*(z_1)&=\sum_{1\leq i < j}\Big(z_1I  -\frac{m-1}{m}b_1^*(z_1) L_nL_n^\top  \Big)^{-1} \Big(r_i^* {r_i^{*}}^\top -\frac{1}{m}
L_n L_n^\top  \Big)D_{ij}^*(z_1)^{-1}
\end{align*} 
and $\E\arrowvert\check{R}(z_1,z_2)\arrowvert\leq K(z_1,z_2)\sqrt{m}$.
\item[(iv)] Moreover,
\begin{align*}
\tr \Big(\E_j^*\big(\check{A}^*(z_1)\big)L_nL_n^\top D_j^*(z_2)^{-1}L_nL_n^\top \Big)= A_1^*(z_1,z_2)+ R''(z_1,z_2),
\end{align*}
where $\E\arrowvert R''(z_1,z_2)\arrowvert\leq K\sqrt{m}$ and 
\begin{align*}
A_1^*(z_1,z_2)
&=-\sum_{1\leq i<j}\beta_{ij}^*(z_2) {r_i^{*}}^\top \E_j^*\Big(D_{ij}^*(z_1)^{-1}\Big)L_nL_n^\top D_{ij}^*(z_2)^{-1}r_i^* {r_i^{*}}^\top D_{ij}^*(z_2)^{-1}\\
&\hspace{30mm}\times L_nL_n^\top \Big(z_1I-\frac{m-1}{m}b_1^*(z_1)L_nL_n^\top \Big)^{-1}r_i^*.
\end{align*}
\item[(v)] Each summand of $A_1^*(z_1,z_2)$ satisfies the approximation
\begin{align*}
\beta_{ij}^*(z_2) &{r_i^{*}}^\top \E_j^*\Big(D_{ij}^*(z_1)^{-1}\Big)L_nL_n^\top D_{ij}^*(z_2)^{-1}r_i^* {r_i^{*}}^\top D_{ij}^*(z_2)^{-1}\\
&\hspace{25mm}\times L_nL_n^\top \Big(z_1I-\frac{m-1}{m}b_1^*(z_1)L_nL_n^\top \Big)^{-1}r_i^*\\
& =
b_1^*(z_2)\frac{1}{m^2}\tr\Big[\E_j^*\Big(D_{ij}^*(z_1)^{-1}\Big)L_nL_n^\top D_{ij}^*(z_2)^{-1} L_nL_n^\top \Big]\\
&\hspace{25mm}\times\tr\bigg[D_{ij}^*(z_2)^{-1} L_nL_n^\top \Big(z_1I-\frac{m-1}{m}b_1^*(z_1)L_nL_n^\top \Big)^{-1}L_nL_n^\top \bigg]\\
&\quad + R'''(z_1,z_2)
\end{align*}
with $\E\arrowvert R'''(z_1,z_2)\arrowvert\leq Km^{-1/2}$.
\end{itemize}

\medskip

Having established these five steps, the proof of Claim IV is conducted as follows. By the triangle and Jensen inequality,
\begin{align*}
\E&\bigg\arrowvert \tr\Big(\E_j^*\big(D_j^*(z_1)^{-1}\big)L_nL_n^\top \E_j^*\big(D_j^*(z_2)^{-1}\big)L_nL_n^\top \Big)\\
&\quad - \tr \bigg[\Big(z_1I-\frac{m-1}{m}b_1^*(z_1)L_nL_n^\top \Big)^{-1}L_nL_n^{\top}\E_j^*\big(D_j^*(z_2)^{-1}\big)L_nL_n^\top \bigg]\\
&\quad - b_1^*(z_1)\tr\Big(\E_j^*\big(A^*(z_1)\big)L_nL_n^\top \E_j^*\big(D_j^*(z_2)^{-1}\big)L_nL_n^\top \Big)\bigg\arrowvert\\
&= \E\bigg\arrowvert \E_j^*\Big[\tr \Big(B^*(z_1)L_nL_n^\top \E_j^*\big(D_j^*(z_2)^{-1}\big)L_nL_n^\top \Big)\Big]\\
&\quad \quad\quad+  \E_j^*\Big[\tr \Big(C^*(z_1)L_nL_n^\top \E_j^*\big(D_j^*(z_2)^{-1}\big)L_nL_n^\top \Big)\Big]\bigg\arrowvert\\
&\leq \E\Big\arrowvert\tr \Big(B^*(z_1)L_nL_n^\top \E_j^*\big(D_j^*(z_2)^{-1}\big)L_nL_n^\top \Big)\Big\arrowvert\\
&\quad +   \E\Big\arrowvert\tr \Big(C^*(z_1)L_nL_n^\top \E_j^*\big(D_j^*(z_2)^{-1}\big)L_nL_n^\top \Big)\Big\arrowvert\\
&\leq K(z_1,z_2)\sqrt{m},
\end{align*}
where the last inequality is established in steps (i) and (ii). By Jensen's inequality, the bound $\arrowvert b_1^*(z_1)\arrowvert\leq \arrowvert z_1\arrowvert/\Im(z_1)$ and steps (iii) and  (iv), 
\begin{align*}
\E&\bigg\arrowvert  \tr\Big(b_1^*(z_1)\E_j^*\big(A^*(z_1)\big)L_nL_n^\top \E_j^*\big(D_j^*(z_2)^{-1}\big)L_nL_n^\top \Big)- b_1^*(z_1)\E_j^*\big(A_1^*(z_1,z_2)\big)
\bigg\arrowvert\\
&\leq K(z_1,z_2)\sqrt{m}.
\end{align*}
It remains to prove the approximation 
\begin{align}
\E&\bigg\arrowvert b_1^*(z_1)\E_j^*\big(A_1^*(z_1,z_2)\big) +\nonumber \tr\Big(\E_j^*\big(D_j^*(z_1)^{-1}\big)L_nL_n^\top \E_j^*\big(D_j^*(z_2)^{-1}\big)L_nL_n^\top \Big)\nonumber\\
&\hspace{3mm} \times \frac{j-1}{m^2}b_1^*(z_1)b_1^*(z_2)\tr\bigg(\E_j^*\big(D_j^*(z_2)^{-1}\big)L_nL_n^\top 
\nonumber \Big(z_1 I_q-\frac{m-1}{m}b_1^*(z_1)L_nL_n^\top \Big)^{-1}L_nL_n^\top \bigg)\bigg\arrowvert\nonumber\\
&\leq K(z_1,z_2)\sqrt{m}.\label{eq: intermed}
\end{align}
Applying the approximation of step (v), we deduce
\begin{align*}
\E\Big\arrowvert b_1^*(z_1)\E_j^*\big(A_1^*(z_1,z_2)\big)-b_1^*(z_1)\E_j^*\big(\tilde{A}_1^*(z_1,z_2)\big)\Big\arrowvert\leq K(z_1,z_2)\sqrt{m},
\end{align*}
with
\begin{align*}
\tilde{A}_1^*&(z_1,z_2)\\
&=-\sum_{1\leq i<j\leq m}b_1^*(z_2)\frac{1}{m^2}\tr\Big[\E_j^*\Big(D_{ij}^*(z_1)^{-1}\Big)L_nL_n^\top D_{ij}^*(z_2)^{-1} L_nL_n^\top \Big]\\
&\hspace{5mm}\times\tr\bigg[D_{ij}^*(z_2)^{-1} L_nL_n^\top \Big(z_1I-\frac{m-1}{m}b_1^*(z_1)L_nL_n^\top \Big)^{-1}L_nL_n^\top \bigg].
\end{align*}
By Lemma 2.10 of \cite{baisil1998}, we may successively replace $\E_j^*\big(D_{ij}^*(z)^{-1}\big)$ by $\E_j^*\big(D_j^*(z)^{-1}\big)$ within the traces
\begin{align*}
&\bigg\arrowvert \tr\Big[\E_j^*\Big(D_{ij}^*(z_1)^{-1}\Big)L_nL_n^\top D_{ij}^*(z_2)^{-1} L_nL_n^\top \Big]\\
&\hspace{5mm}\times\tr\bigg[D_{ij}^*(z_2)^{-1} L_nL_n^\top \Big(z_1I-\frac{m-1}{m}b_1^*(z_1)L_nL_n^\top \Big)^{-1}L_nL_n^\top \bigg]\\
&\ \  -  \tr\Big[\E_j^*\Big(D_{j}^*(z_1)^{-1}\Big)L_nL_n^\top D_{j}^*(z_2)^{-1} L_nL_n^\top \Big]\\
&\hspace{5mm}\times\tr\bigg[D_{j}^*(z_2)^{-1} L_nL_n^\top \Big(z_1I-\frac{m-1}{m}b_1^*(z_1)L_nL_n^\top \Big)^{-1}L_nL_n^\top \bigg]\bigg\arrowvert\\
&\leq K(z_1,z_2)\cdot m,
\end{align*}
which proves \eqref{eq: intermed} and therefore the equation
\begin{align}\label{eq: D.5200}
\tr&\Big(\E_j^*\big(D_j^*(z_1)^{-1}\big)
L_n L_n^\top  
\E_j^*\big(D_j^*(z_2)^{-1}\big)
L_n L_n^\top  
\Big)\nonumber\\
&\ \ \ \times \bigg[1+\frac{j-1}{m^2}b_1^*(z_1)\tr\bigg(
\E_j^*\big(D_{j}^*(z_2)^{-1}\big) L_n L_n^\top  
\Big(z_1 I-\frac{m-1}{m}b_1^*(z_1)
L_n L_n^\top  
\Big)^{-1}
L_n L_n^\top  
\bigg)\bigg]\nonumber\\
&= -\tr\bigg[\Big(z_1 I-\frac{m-1}{m}b_1^*(z_1) L_n L_n^\top  \Big)^{-1}
L_n L_n^\top  
\E_j^*\big(D_{j}^*(z_2)^{-1}\big)
L_n L_n^\top  
\bigg] 
~+\tilde R(z_1,z_2)
\end{align}
with $\E\arrowvert \tilde R(z_1,z_2)\arrowvert\leq K\sqrt{m}$. Now, inserting   the representation \eqref{eq: Dj} into \eqref{eq: D.5200}, this time for $D_j^*(z_2)^{-1}$, and using (i) and (ii) together with the bound
\begin{align*}
\E\big\arrowvert \tr\big(A(z_2)M\big)\big\arrowvert&\leq \sum_{i\not=j}\E^{1/2}\bigg\arrowvert {r_i^*}^{\top}D_{ij}^*(z_2)^{-1}M\Big(z_2I-\frac{m-1}{m}b_1^*(z_1)L_nL_n^\top \Big)^{-1}r_i^*\\
&\ \ \ \ \ \ -\frac{1}{m}\tr\Big(D_{ij}^*(z_2)^{-1}M\Big(z_2I-\frac{m-1}{m}b_1^*(z_1)L_nL_n^\top \Big)^{-1}\Big)\bigg\arrowvert^2\\
&\leq K(z_2,\Arrowvert M\Arrowvert_{S_{\infty}})\sqrt{m}\Big(1+\frac{m}{\sqrt{n}}\Big)
\end{align*}
 for non-random matrices $M$ of uniformly bounded spectral norm by   Proposition \ref{lemma: formel 2.1a}, Claim~IV is verified.

\medskip
\begin{itemize}
\item {\it Proof of (i).} First, applying the same reasoning as for inequality (2.10) in \cite{baisil2004}, we obtain the spectral norm bound
\begin{align}\label{eq: snb}
\Big\Arrowvert\Big(z_1I-\frac{m-1}{m}b_1^*(z_1)L_nL_n^\top \Big)^{-1}\Big\Arrowvert_{S_{\infty}}\leq \frac{1+m/(q\Im(z_1))}{\Im(z_1)}.
\end{align}
By the Cauchy-Schwarz inequality, the upper bounds $\arrowvert \beta_{12}^*(z)\arrowvert,\arrowvert b_1^*(z)\arrowvert\leq \arrowvert z\arrowvert/\Im(z)$ and  $\Arrowvert D_{ij}^*(z)^{-1}\Arrowvert_{S_{\infty}}\leq 1/\Im(z)$, \eqref{eq: snb}, 
  and Lemma \ref{lemma: ueber 4.3},
\begin{align*}
 \E\big\arrowvert \tr\big(B^*(z_1)M\big)\big\arrowvert 
& \leq m\E^{1/2}\big\arrowvert\beta_{12}^*(z_1)-b_1^*(z_1)\big\arrowvert^2\\
&\hspace{10mm}\times \E^{1/2}\Big\arrowvert  {r_i^{*}}^\top D_{ij}^*(z_1)^{-1}M\Big(z_1I-\frac{m-1}{m}b_1^*(z_1)L_nL_n^\top \Big)^{-1}r_i^*\Big\arrowvert^2\\
&
\leq \frac{\arrowvert z_1\arrowvert^2}{(\Im(z_1))^2}
  \E^{1/2}\bigg[\big\arrowvert r_1^{*\top}r_1^*\big\arrowvert^2\big\Arrowvert D_{ij}^*(z_1)^{-1}\big\Arrowvert_{S_{\infty}}^2\Arrowvert M\Arrowvert_{S_{\infty}}^2\\
&\hspace{30mm}\times\Big\Arrowvert \Big(z_1I-\frac{m-1}{m}b_1^*(z_1)L_nL_n^\top \Big)^{-1}\Big\Arrowvert_{S_{\infty}}^2\bigg]\\
&\leq   K  \frac{\arrowvert z_1\arrowvert^2(1+m/(q\Im(z_1)))}{\Im(z_1)^5}\sqrt{m}.
\end{align*}
\item {\it Proof of (ii).} By the cyclic invariance of the trace, the bound $\arrowvert b_1^*(z)\arrowvert\leq\arrowvert z\arrowvert/\Im(z)$, the submultiplicativity of $\Arrowvert\cdot\Arrowvert_{S_{\infty}}$, \eqref{eq: snb}, and Lemma 2.6 in \citeSM{silversteinbai1995},
\begin{align*}
\E\big\arrowvert \tr\big(C^*(z_1)M\big)\big\arrowvert
& \leq \sum_{\substack{1\leq i\leq m\\i\not=j}}\E\bigg[\frac{1}{m}\big\arrowvert b_1^*(z_1)\big\arrowvert  \bigg\arrowvert  \tr\bigg( \Big(D_{ij}^*(z_1)^{-1}-D_j^*(z_1)^{-1}\Big)M\\
&\hspace{40mm}\times  \Big(z_1I-\frac{m-1}{m}b_1^*(z_1)L_nL_n^\top \Big)^{-1}L_nL_n^\top \bigg)\bigg\arrowvert\bigg]\\
&\leq Kc\frac{\arrowvert z_1\arrowvert(1+m/(q\Im(z_1)))}{\Im(z_1)^3}.
\end{align*}

\item {\it Proof of (iii)}. Because of $\E_j^*=\E_j^*\E_{X_i^*}^*$ for $i>j$, the triangle and Cauchy-Schwarz inequality,
\begin{align*}
 \E\bigg\arrowvert &\tr \Big(\E_j^*\big(A^*(z_1)\big)L_nL_n^\top D_j^*(z_2)^{-1}L_nL_n^\top \Big) -  \tr \Big(\E_j^*\big(\check{A}^*(z_1)\big)L_nL_n^\top D_j^*(z_2)^{-1}L_nL_n^\top \Big)\bigg\arrowvert\\
& =\E\bigg\arrowvert \sum_{i>j}\tr\Big[\Big(z_1I-\frac{m-1}{m}b_1^*(z_1)L_nL_n^\top \Big)^{-1}\E_{X_i^*}^*\Big(r_i^* {r_i^{*}}^\top -\frac{1}{m}L_nL_n^\top \Big)\\
&\hspace{8mm}\times \E_j^*\big(D_{ij}^*(z_1)^{-1}\big)L_nL_n^\top D_j^*(z_2)^{-1}L_nL_n^\top \Big]\bigg\arrowvert\\
& \leq \frac{1}{m}\sum_{i>j}\E^{1/2}\Big\Arrowvert \frac{1}{n}\sum_{i=1}^n Y_iY_i^\top-L_nL_n^\top\Big\Arrowvert_{S_2}^2\\
&\hspace{8mm}\times
\E^{1/2}\Big\Arrowvert 
\E_j^*\big(D_{ij}^*(z_1)^{-1}\big)L_nL_n^\top D_j^*(z_2)^{-1}L_nL_n^\top \Big(z_1I-\frac{m-1}{m}b_1^*(z_1)L_nL_n^\top \Big)^{-1}\Big\Arrowvert_{S_2}^2.
\end{align*}
Applying \eqref{eq: snb} and the estimates $\Arrowvert D_{ij}^*(z)\Arrowvert_{S_{\infty}}$, $ \Arrowvert D_{j}^*(z)\Arrowvert_{S_{\infty}}\leq 1/\Im(z)$, 
\begin{align*}
\E^{1/2}&\Big\Arrowvert 
\E_j^*\big(D_{ij}^*(z_1)^{-1}\big)L_nL_n^\top D_j^*(z_2)^{-1}L_nL_n^\top  \times\Big(z_1I-\frac{m-1}{m}b_1^*(z_1)L_nL_n^\top  \Big)^{-1}\Big\Arrowvert_{S_2}^2\\
&\leq K(z_1,z_2)\sqrt{m},
\end{align*}
while 
\begin{align}
\label{det114}
\E^{1/2}\Big\Arrowvert \frac{1}{n}\sum_{i=1}^n Y_iY_i^\top - L_nL_n^\top \Big\Arrowvert_{S_2}^2=\mathcal{O}\Big(\frac{m}{\sqrt{n}}\Big), 
\end{align}
since both matrices are of dimension $q \times q$.
This  proves (iii). 
\item{\it Proof of (iv).}
Adding and subtracting $D_{ij}^*(z_2)$ and applying the Sherman-Morrison formula to the difference $D_j^*(z_2)^{-1}-D_{ij}^*(z_2)^{-1}$ in the subsequent expression $A_1^*(z_1,z_2)$, we obtain the decomposition
\begin{align*}
\tr \Big(\E_j^*\big(\check{A}^*(z_1)\big)&L_nL_n^\top  D_j^*(z_2)^{-1}L_nL_n^\top  \Big) =A_1^*(z_1,z_2) + A_2^*(z_1,z_2)+A_3^*(z_1,z_2) 
\end{align*}
with
\begin{align*}
A_1^*(z_1,z_2)&=-\sum_{1\le i<j} \beta_{ij}^*(z_2) {r_i^{*}}^\top \E_j^*\big(D_{ij}^*(z_1)^{-1}\big)L_nL_n^\top  D_{ij}^*(z_2)^{-1}r_i^*\\
&\hspace{10mm}\times 
 {r_i^{*}}^\top 
 {r_i^{*}}^\top D_{ij}^*(z_2)^{-1}L_nL_n^\top  \Big(z_1I_q-\frac{m-1}{m}b_1^*(z_1)L_nL_n^\top  \Big)^{-1}r_i^*\\
A_2^*(z_1,z_2)&= -\tr\bigg\{\sum_{1\leq i<j}\Big(z_1I-\frac{m-1}{m}b_1^*(z_1)L_nL_n^\top  \Big)^{-1}\frac{1}{m}L_nL_n^\top  \\
&\hspace{15mm}\times\E_j^*\big(D_{ij}^*(z_1)^{-1}\big)L_nL_n^\top  \Big(D_j^*(z_2)^{-1}-D_{ij}^*(z_2)^{-1}\Big)L_nL_n^\top  \bigg\}\\
A_3^*(z_1,z_2)&=\tr\bigg\{\sum_{1\leq i<j}\Big(z_1I-\frac{m-1}{m}b_1^*(z_1)L_nL_n^\top  \Big)^{-1}\Big(r_i^* {r_i^{*}}^\top -\frac{1}{m}L_nL_n^\top  \Big)\\
&\hspace{10mm}\times \E_j^*\big(D_{ij}^*(z_1)^{-1}\big)L_nL_n^\top  D_{ij}^*(z_2)^{-1}L_nL_n^\top  \bigg\}.
\end{align*}
By Lemma 2.6 in \citeSM{silversteinbai1995} and \eqref{eq: snb},
\begin{align*}
\big\arrowvert A_2^*(z_1,z_2)\big\arrowvert \leq K\frac{1+m/(q\Im(z_1))}{(\Im(z_1))^2}.
\end{align*}
Analogously to the proof of (i), we find
\begin{align*}
\E\big\arrowvert A_3^*(z_1,z_2)\big\arrowvert \leq K\frac{1+m/(q\Im(z_1))}{(\Im(z_1))^3}\sqrt{m}.
\end{align*}

\item {\it Proof of (v).} By the Cauchy-Schwarz inequality and 
 Lemma   \ref{lemma: ueber 4.3}, we have 
\begin{align*}
\hspace{-5mm}\E\bigg\arrowvert &\big(\beta_{ij}^*(z_2)-b_1^*(z_2)\big) {r_i^{*}}^\top \E_j^*\big(D_{ij}^*(z_1)^{-1}\big)L_n L_n^\top  D_{ij}^*(z_2)^{-1}r_i^*\\
&\hspace{20mm}\times  {r_i^{*}}^\top D_{ij}^*(z_2)^{-1}L_n L_n^\top  \Big(z_1I  -\frac{m-1}{m}b_1^*(z_1)L_nL_n^\top  \Big)^{-1}r_i^*\bigg\arrowvert\\
&
 \leq \E^{1/2}\big\arrowvert \beta_{ij}^*(z_2)-b_1^*(z_2)\big\arrowvert^2\cdot \E^{1/2}\bigg\arrowvert  {r_i^{*}}^\top \E_j^*\big(D_{ij}^*(z_1)^{-1}\big)L_nL_n^\top  D_{ij}^*(z_2)^{-1}r_i^*
\\
&\hspace{20mm}
\ \times  {r_i^{*}}^\top D_{ij}^*(z_2)^{-1}L_nL_n^\top  \Big(z_1I-\frac{m-1}{m}b_1^*(z_1)L_nL_n^\top \Big)^{-1}r_i^*\bigg\arrowvert^2
\\
&\leq K(z_1,z_2)\frac{1}{\sqrt{m}},
\end{align*}
where the last inequality follows by  the Cauchy-Schwarz inequality, 
\eqref{eq: snb}, the bound
$|{r_1^{*}}^\top  C r^*_1| \leq \| C\|_{S_\infty}  \| r_1^* \|^2$  and  the fact that $\E  \| r_1^* \|^p \leq  c $, which follows from \eqref{det37a}. 
Next,
\begin{align*}
\hspace{-5mm}\E\bigg\arrowvert &b_1^*(z_2) {r_i^{*}}^\top \E_j^*\big(D_{ij}^*(z_1)^{-1}\big)L_nL_n^\top  D_{ij}^*(z_2)^{-1}r_i^*\\
&\hspace{20mm}\times  {r_i^{*}}^\top D_{ij}^*(z_2)^{-1}L_nL_n^\top  \Big(z_1I-\frac{m-1}{m}b_1^*(z_1)L_nL_n^\top  \Big)^{-1}r_i^*\\
&- b_1^*(z_2)\frac{1}{m^2}\tr\Big[\E_j^*\big(D_{ij}^*(z_1)^{-1}\big)L_nL_n^\top  D_{ij}^*(z_2)^{-1} L_nL_n^\top  \Big]\\
&\hspace{5mm}\times\tr\bigg[D_{ij}^*(z_2)^{-1} L_nL_n^\top  \Big(z_1I-\frac{m-1}{m}b_1^*(z_1)L_nL_n^\top  \Big)^{-1}L_nL_n^\top  \bigg]\bigg\arrowvert\\
&\leq \E\bigg\arrowvert b_1^*(z_2)\frac{1}{m}\tr\Big[\E_j^*\big(D_{ij}^*(z_1)^{-1}\big)L_nL_n^\top  D_{ij}^*(z_2)^{-1} L_nL_n^\top  \Big]\\
&\hspace{10mm}\times\bigg(  {r_i^{*}}^\top D_{ij}^*(z_2)^{-1}L_nL_n^\top  \Big(z_1I-\frac{m-1}{m}b_1^*(z_1)L_nL_n^\top  \Big)^{-1}r_i^*\\
&\hspace{15mm}-\frac{1}{m}\tr\bigg[D_{ij}^*(z_2)^{-1} L_nL_n^\top  \Big(z_1I-\frac{m-1}{m}b_1^*(z_1)L_nL_n^\top  \Big)^{-1}L_nL_n^\top  \bigg]\bigg)\bigg\arrowvert\\
&
+ \E\bigg\arrowvert b_1^*(z_2)\bigg( {r_i^{*}}^\top \E_j^*\big(D_{ij}^*(z_1)^{-1}\big)L_nL_n^\top  D_{ij}^*(z_2)^{-1}r_i^*
\\
&\hspace{30mm}
-\frac{1}{m}\tr\Big[\E_j^*\big(D_{ij}^*(z_1)^{-1}\big)L_nL_n^\top  D_{ij}^*(z_2)^{-1} L_nL_n^\top  \Big]\bigg)
\\
&\hspace{10mm}
\times 
{r_i^{*}}^\top D_{ij}^*(z_2)^{-1}L_nL_n^\top  \Big(z_1I-\frac{m-1}{m}b_1^*(z_1)L_nL_n^\top  \Big)^{-1}r_i^*
\bigg\arrowvert. 
\end{align*}
By an application of the Cauchy-Schwarz inequality to the first term,  \eqref{eq: snb}, the bounds $\arrowvert b_1^*(z)\arrowvert\leq \arrowvert z\arrowvert/\Im(z)$ and $\Arrowvert D_{ij}^*(z)^{-1}\Arrowvert_{S_{\infty}}\leq 1/\Im(z)$,  and  Proposition  \ref{lemma: formel 2.1a}, the right-hand side of the last inequality is bounded by
$$
K(z_1,z_2)\frac{1}{\sqrt{m}}.
$$
\end{itemize}

\medskip
{\it Proof of Claim V.} 
We shall prove
\begin{equation}
\big\arrowvert b_1^*(z) + z\underline{\tilde m}_n^0(z)\big\arrowvert=\mathcal{O}_{\P}(m^{-1/2}).\label{eq: VHb}
\end{equation}
First,
\begin{align*}
\big\arrowvert b_1^*(z)-b_n^*(z)\big\arrowvert &=\Big\arrowvert b_1^*(z)b_n^*(z)\frac{1}{m}\E^*\tr\Big(L_n L_n^\top \big(D_1^*(z)^{-1}-D_{12}^*(z)^{-1}\big)\Big)\Big\arrowvert\\
&\leq \frac{\arrowvert z\arrowvert^2}{\big(\Im(z)\big)^3}\frac{\Arrowvert L_n \Arrowvert_{S_{\infty}}^2}{m}
\end{align*}
by the inequalities $\arrowvert b_1^*(z)\arrowvert, \arrowvert b_n^*(z)\arrowvert\leq \arrowvert z\arrowvert/\Im(z)$ and Lemma 2.6 in \citeSM{silversteinbai1995}. Next, note that the estimate 
\eqref{eq: VHa} is still valid under the truncation scheme used in section. This follows by showing the inequality
$\E \arrowvert \gamma_1^* (z) \arrowvert^2=\mathcal{O}
( { 1 \over m }
+ \frac{m}{n} )$
 by an application of Proposition \ref{lemma: formel 2.1a} with $p=2$ and using the same arguments following equation \eqref{eq: bound gamma}. Therefore, it follows that 
$$
\E\arrowvert b_n^*(z)-\E^*\beta_1^*(z)\arrowvert=\mathcal{O}(m^{-1/2}), 
$$
and by  \eqref{ar9}
we have 
$\E^*\beta_1^*(z)= - z  \E^* \underline{m}_n^*(z)$. Together with \eqref{eq: 5.1b}, 
which is also  valid under the truncation scheme used in section (see Remark \ref{det101}),
we conclude \eqref{eq: VHb}. Replacing successively $b_1^*(z_i)$ in the left-hand side of \eqref{eq: IV} by $ z_i  \underline{\tilde{m}}_n^0(z_i)$, $i=1,2$, and employing Lemma 2.6 in \citeSM{silversteinbai1995}, the proof of Claim V is completed. 

\medskip

{\it Proof of Claim VI.}
Note that we may rewrite \eqref{eq: ClaimV} as 
\begin{align*}
{1 \over q}
\tr\Big(\E_j^*&\big(D_j^*(z_1)^{-1}\big)
L_n L_n^\top 
\E_j^*\big(D_j^*(z_2)^{-1}\big)L_n L_n^\top  \Big)\\
&\quad\times\bigg[1+\frac{j-1}{m^2}\underline{\tilde m}_n^0(z_1)\underline{\tilde m}_n^0(z_2)\int \frac{t^2}{\big(1+t\underline{\tilde m}_n^0(z_1)\big)\big(1+t \underline{\tilde m}_n^0(z_2)\big)}d\mu^{\tilde \Sigma_n}(t)\bigg]\\
&= {q \over m } \frac{1}{z_1z_2}\int  \frac{t^2}{\big(1+t\underline{\tilde m}_n^0(z_1)\big)\big(1+ t\underline{\tilde m}_n^0(z_2)\big)}d\mu^{\tilde \Sigma_n}(t) + \mathcal{O}_{\P}(m^{1/2}).
\end{align*}
As for (2.19) in \cite{baisil2004},
\begin{align*}
\limsup_n\bigg\arrowvert {q \over m} \frac{1}{z_1z_2}\int  \frac{t^2}{\big(1+ t\underline{\tilde m}_n^0(z_1)\big)\big(1+ t \underline{\tilde m}_n^0(z_2)\big)}d\mu^{\tilde \Sigma_n}(t)  \bigg\arrowvert <1.
\end{align*}
Denoting
$$
a_n(z_1,z_2)={q \over m} 
\frac{1}{z_1z_2}\int  \frac{t^2}{\big(1+t\underline{\tilde m}_n^0(z_1)\big)\big(1+  t \underline{\tilde m}_n^0(z_2)\big)}d\mu^{\tilde \Sigma_n}(t), 
$$
\eqref{eq: 3.11}
can be written as
$$
a_n(z_1,z_2)\frac{1}{m}\sum_{j=1}^m \Big(1-\frac{j-1}{m}a_n(z_1,z_2)\Big)^{-1} + O_{\P}(m^{-1/2}).
$$
We now use the Representative  Subpopulation Condition \ref{def: rsc} to 
conclude that
\begin{equation}\label{AR10000}
a_n(z_1,z_2  ) \longrightarrow 
 \frac{c}{z_1z_2}\int  \frac{t^2}{\big(1+t\underline{ m}^{0}_{c,H} (z_1)\big)\big(1+  t \underline{ m}^{0}_{c,H} (z_2)\big)}d H(t),
\end{equation}
in probability. For this purpose,  we note that it follows from \eqref{eq: similarity3}, \eqref{det108}  and Condition \ref{def: rsc}
that $ \mu^{\tilde \Sigma_n}   \Rightarrow   H  $
in probability and  $\underline{\tilde m}_n^0(z) \to  \underline{ m}^{0}
_{c,H}(z)$ in probability, see equation \eqref{det103}.  Together with the fact that the sequence of  bounded continuous functions
$$
t\mapsto \frac{t^2}{\big(1+t\underline{ \tilde  m}_{n}^0 (z_1)\big)\big(1+  t \underline{\tilde  m}_n^0(z_2)\big)}
$$
converges in probability uniformly on compacts to its limit, this implies \eqref{AR10000}.  
The final steps in the proof of \eqref{eq: 3.10} are then the same as on page 578 in \citeSM{baisil2004} and omitted for the sake of brevity.

\subsubsection{Proof of Proposition \ref{prop} 
under conditional  tightness in probability  of $M_n^{*} $} \label{sec953}
We start with the following lemma whose assumption can be deduced from the unconditional tightness of the sequence $\mathcal{L} ({\widehat M_n^*} )$
by an  application of Markov's inequality.
Subsequently, we use the abbreviation $U:=(X_1,, X_2, \dots  \Pi_1, \Pi_2, \dots  )$ and denote by $\P^{\widehat{M}_n^*}(\cdot\,\arrowvert u)$ the conditional distribution $\mathcal{L}\big((\widehat{M}_n^*(z))_{z\in\mathcal{C}}\arrowvert  U=u  \big)$.

\begin{lemma} \label{lemneu}
Assume that for every $\varepsilon>0$ and $\eta>0$, there exists some compact set $K$ such that
\begin{equation}\label{eq: tt}
\sup_n\P\Big(\P^{\widehat{M}_n^*}(K^c\arrowvert U )>\varepsilon\Big)< \eta.
\end{equation}   
Then there exists some array of measurable sets $(A_{m,n})_{m,n\in\N}$ satisfying $\sup_n\P(U\in A_{m,n}^c)\rightarrow 0$ as $m\rightarrow\infty$ such that the family
$$
\Big(\P^{\widehat{M}_n^*}(\cdot\arrowvert u): u\in A_{m,n},n\in\N \Big)
$$
is tight for every $m\in\N$. 
\end{lemma}

\begin{proof}
    Let
$\varepsilon\searrow 0$ be some null sequence and $\eta_k=2^{-k}$, $k\in\N$. Let $(K_k)$ be some increasing sequence of compacts such that $K_k$ satisfies \eqref{eq: tt} for $\varepsilon_k$ and $\eta_k$. Define
$$
A_{m,n}^c:=\bigcup_{k\geq m}\Big\{u: \P^{\widehat{M}_n^*}(K_k^c\arrowvert u)>\varepsilon_k\Big\}.
$$
Every measure $\P^{\widehat{M}_n^*}(\cdot\arrowvert u)$ with $u\in A_{m,n}$  satisfies $\P^{\widehat{M}_n^*}(K_k^c\arrowvert u)\leq\varepsilon_k$, $k\geq m$. Hence, the family 
$$
\Big(\P^{\widehat{M}_n^*}(\cdot\arrowvert u): u\in A_{m,n},n\in\N\Big)
$$ 
is tight for every $m\in\N$ and by the sigma-subadditivity,
$$
\sup_n\P(A_{n,m}^c )\leq\sum_{k\geq m}\frac{1}{2^k}\longrightarrow 0\ \ \text{as }m\rightarrow\infty.
$$
\end{proof}

Assume now that the condition of Lemma \ref{lemneu} holds. Then the weak convergence of the finite dimensional distribution in probability implies
the assertion of Proposition \ref{prop}, which can be seen as follows.
We define for any measure $R$ on the Borel field on the continuous function on $\CC$ the operation 
$$
R\cdot \mathds{1}_A (u)  = \begin{cases}
   R & \text{ if } u \in A \\
   \delta_0 & \text{  otherwise. } 
\end{cases}
$$
Then
\begin{align}
   \mathbb{P} \big ( d_{\rm BL}  &\big ( \mathbb{P}^{\widehat{M}_n^*}(\cdot | U ), \mathcal{L} (Z) \big ) > \varepsilon \big )\nonumber\\
   &  \leq 
    \mathbb{P} \big ( d_{\rm BL}  \big ( \mathbb{P}^{\widehat{M}_n^*}(\cdot | U ), \mathcal{L} (Z) \big ) > \varepsilon , ~ U \in  A_{m.n}  \big ) + 
     \mathbb{P} \big ( U \in  A_{m.n}^c\big ) \nonumber \\
      &  = 
    \mathbb{P} \big ( d_{\rm BL}  \big ( \mathbb{P}^{\widehat{M}_n^*}(\cdot | U ) \mathds{1}_{A_{m,n} }(U)  , \mathcal{L} (Z) \mathds{1}_{A_{m,n} }(U) \big ) > \varepsilon , ~  U \in  A_{m.n}  \big ) + 
     \mathbb{P} \big ( U \in  A_{m.n}^c\big ) \nonumber \\
  &  \leq 
    \mathbb{P} \big ( d_{\rm BL}  \big ( \mathbb{P}^{\widehat{M}_n^*}(\cdot | U ) \mathds{1}_{A_{m,n} }(U)  , \mathcal{L} (Z) \mathds{1}_{A_{m,n} }(U) \big ) > \varepsilon  \big ) + 
     \mathbb{P} \big (   U \in  A_{m.n}
     ^c\big )
     \label{det104}
     \end{align}
By Lemma \ref{lemneu}, the family $ \big \{ \mathbb{P}^{\widehat{M}_n^*}(\cdot | u ) \mathds{1}_{A_{m,n} }(u) \big \}_{n \in \mathbb{N}}$
is tight uniformly in $u$ for every $m \in \mathbb{N}$. The same holds true for the sequence $\big \{ \mathcal{L} (Z) \mathds{1}_{A_{m,n} }(u) \}_{n \in \mathbb{N}} $ which attains only the measures  $\mathcal{L} (Z)$ and $\delta_0$.
Now, let $n_k$ denote an arbitrary subsequence. Then for every $u$, there exists a further subsequence $n_k' = n_k' (u) $ of $n_k$   such that 
\begin{align*}
\mathbb{P}^{\widehat{M}_{n_k'}^*}(\cdot | u ) \mathds{1}_{A_{m,n_k'} }(u) & \Rightarrow \nu (u) \\
\mathcal{L} (Z) \mathds{1}_{A_{m,n_k'} }(u)
& \Rightarrow \bar \nu (u)
\end{align*}
for some measures $\nu (u)  $ and $\bar \nu (u) $. Moreover, by the weak convergence of the finite dimensional distributions in probability established in Section \ref{sec841}, we can choose this  subsequence such that additionally, 
$$
\mathcal{L} \big ( (\widehat M_n^* (z_1), \ldots , \widehat M_n^* (z_\ell ) ) |   u \big )  \Rightarrow 
\mathcal{L} \big ( (Z (z_1), \ldots , Z (z_\ell ) ) |   u \big ) 
$$
for all $z_1, \ldots , z_\ell   \in (\mathbb{Q} + i\mathbb{Q}^+ ) \cap \CC$  for $\ell \in \mathbb{N}$ and for all $u \in A $, where $A $  is a measurable set with $\mathbb{P} (A) =1 $.  
If $\bar \nu (u)  = \delta_0$, 
then there exists a $k_0 = k_0(u)  $ such that $u \in A_{m,n_k'}^c$ for all $k \geq k_0$. In this case,  $$d_{\rm BL}  \bigg ( \mathbb{P}^{\widehat{M}_{n_k'}^*}(\cdot | u ) \mathds{1}_{A_{m,n_k'} }(u)  , \ \mathcal{L} (Z) \mathds{1}_{A_{m,n_k'} }(u) \bigg ) =0 \ \ \text{for all $k \geq k_0$.}$$
 Otherwise, if $\bar \nu (u) = \mathcal{L} (Z)$, there exists a $k_1=k_1(u) $ such that
$u \in A_{m,n_k'} $ for all $k \geq k_1$. Consequently it follows  that $ \nu (u) = \bar \nu (u)$
which implies $$d_{\rm BL}  \bigg ( \mathbb{P}^{\widehat{M}_{n_k'}^*}(\cdot | u ) \mathds{1}_{A_{m,n_k'} }(u)  , \ \mathcal{L} (Z) \mathds{1}_{A_{m,n_k'} }(u) \bigg)  
\mathds{1}_{A} (u)  \rightarrow 0 .$$ 
Summarizing, we have shown that for any subsequence $(n_k)$ and any $u$, there exists some further subsubsequence $(n_k'(u))$ such that
$$
d_{\rm BL}  \bigg ( \mathbb{P}^{\widehat{M}_{n_k'}^*}(\cdot | u )   ,\  \mathcal{L} (Z)  \bigg)  
\mathds{1}_{A\cap A_{m,n_k'(u)}} (u)  \rightarrow 0.
$$
Therefore,
 $$
 d_{\rm BL}  \bigg ( \mathbb{P}^{\widehat{M}_{n}^*}(\cdot | u )   ,\  \mathcal{L} (Z)  \bigg)\mathds{1}_{A\cap A_{m,n}}(u)    \rightarrow 0
 ~~\text { for every } u ~.
 $$
By dominated convergence and $\P(A)=1$, this in turn implies
 $$
 d_{\rm BL}  \bigg ( \mathbb{P}^{\widehat{M}_{n}^*}(\cdot | U )   ,\  \mathcal{L} (Z)  \bigg)\mathds{1}_{A_{m,n}}(U)    \rightarrow_{\P} 0.  $$
 Thus, it follows from \eqref{det104} that
$$
\limsup_{n\to \infty} 
   \mathbb{P} \big ( d_{\rm BL}  \big ( \mathbb{P}^{\widehat{M}_n^*}(\cdot | U ), \mathcal{L} (Z) \big ) > \varepsilon \big ) 
\leq  \sup_{n \in \mathbb{N}}
  \mathbb{P} \big ( U \in  A_{m.n}^c\big )
$$
As the left-hand side does not depend on $m$, the assertion of Proposition \ref{prop}  now follows by taking the limit $ m \to \infty$.

\subsubsection{Conditional tightness of the process $\widehat M_n^{*} - \E^* [\widehat M_n^{*}]  $ in probability}
\label{sec842}

It is  sufficient to prove the conditional  moment condition  (12.51) in \citeSM{billingsley1999}, which follows from  \begin{align}
\label{det66}
    \sup_{n} \P \bigg ( 
    \sup_{ z_1,z_2}     
    \mathbb{E}^*   \Big [
    { |  \widehat M_n^{*}  (z_1) 
    - \E^* [  \widehat M_n^{*}  (z_1)] - ( \widehat M_n^{*}  (z_2) - \E^* [ \widehat M_n^{*}  (z_2)] |^{2}   \over |z_1 - z_2 |^{2}  }
    \Big ] \geq  K  \bigg )  = o(1)
\end{align}
for $K \to \infty$. 
Using similar arguments as in \citeSM{baisil2004}, p.582 we obtain
\begin{align}
 \label{det1} 
 & {   \widehat M_n^{*}  (z_1) 
   - \E^* [  \widehat M_n^{*}  (z_1)] - ( \widehat M_n^{*}  (z_2) - \E^* [ \widehat M_n^{*}  (z_2)] )   \over z_1 - z_2   } \\
    \nonumber  
& 
~~~~~~~~~~~~~~~~~~~~~~~~~~~~~~~~~~~~~~~~~~~~~~~~~~~=  H_{1n} (z_1,z_2) + H_{2n} (z_1,z_2) + H_{3n} (z_1,z_2)
 \end{align}
 where
 \begin{align*}
 H_{1n} (z_1,z_2) &= 
 \sum_{j=1}^m 
 \big ( \mathbb{E}_{j}^* -\mathbb{E}_{j-1}^* \big ) \beta_j^* (z_1) \beta_j^* (z_2) \big (r_j{^*}^\top (D_j^* (z_1))^{-1} (D_j^* (z_2))^{-1} r_j^* \big ) ^2 
 \\  
  H_{2n} (z_1,z_2) &= 
-   \sum_{j=1}^m 
 \big ( \mathbb{E}_{j}^* -\mathbb{E}_{j-1}^* \big ) \beta_j^* (z_1)  r_j{^*}^\top (D_j^* (z_1))^{-2} (D_j^* (z_2))^{-1} r_j^* 
 \\
 H_{3n} (z_1,z_2)  &= -   
\sum_{j=1}^m 
 \big ( \mathbb{E}_{j}^* -\mathbb{E}_{j-1}^* \big ) \beta_j^* (z_2)  r_j{^*}^\top (D_j^* (z_2))^{-2} (D_j^* (z_1))^{-1} r_j^* 
\end{align*}
We 
begin deriving a uniform conditional moment  bound for the quantity $H_{2n} (z_1,z_2) $. For this purpose it is crucial to define 
\begin{equation}
\label{rev1000}
\begin{split}
\tilde b _n^* (z)  & = \Big (  1 +  {1 \over m} {\rm tr} ( \E^*\big  [  \mathds{1}_{{\cal A}_n}   L_nL_n^\top D_1^*(z)^{-1}  \big ] \big )  \Big )^{-1} \\
\tilde  \gamma_1^* (z)  &= {r_1^*}^\top D_1^*(z)^{-1} r_1^*  - {1 \over m} {\rm tr} ( \E^*\big  [  \mathds{1}_{{\cal A}_n}   L_nL_n^\top D_1^*(z)^{-1}  \big ] \big )~
\end{split}
\end{equation}
and  obtain the identity 
\begin{align}
    \label{r4}
   \tilde  b _n^*(z) = \beta_1^*(z) +  \beta_1^*(z) \tilde  b _n^*(z)  \tilde  \gamma_1^*(z) 
\end{align}
where $\beta_1^*(z)$
 is  defined in \eqref{ar11a}. 
 Note that for the subsequent arguments, it is essential that the indicator $\mathds{1}_{{\cal A}_n} $
is included inside of the conditional expectation $\mathbb{E}^*$.
It  then follows by a  
straightforward calculation that 
$$
     H_{2n} (z_1,z_2) = \tilde  b_n^*(z_1)W_1^*(z_1,z_2) - \tilde  b_n^*(z_1)W_2^*(z_1,z_2)
$$
where 
\begin{align*}
  W_1^* (z_1,z_2)   &=   \sum_{j=1}^m  \big (  \mathbb{E}^*_j 
   - \mathbb{E}^*_{j-1} \big)
  r_j{^*}^\top D_j^* (z_1)^{-2} D_j^* (z_2)^{-1} r_j^* ~,
  \\ 
  W_2^* (z_1,z_2) &= 
   \sum_{j=1}^m  \big (  \mathbb{E}^*_j 
   - \mathbb{E}^*_{j-1} \big)  \Big [
 \beta_j^*(z_1) 
   r_j{^*}^\top D_j^* (z_1)^{-2} D_j^* (z_2)^{-1} r_j^*  \tilde  \gamma_j^*(z_1)
  \Big   ] ~.
\end{align*}
Note that 
\begin{align}
    \label{det41}
   \E ^* | H_{2n}(z_1,z_2)  |^{2} \leq  C \big ( 
   | b_n^*(z_1) |^2  \E ^*  | W_1^*  (z_1,z_2)  |^{2}  +  |  b_n^*(z_1)| ^2 \E ^*  | W_2^* (z_1,z_2)  |^{2}\big ) ~,
\end{align}
and 
note that  by Lemma \ref{lemrev1} it follows that 
\begin{align}
    \label{det43a} 
 \sup_{z\in  {\cal C}_n} | \tilde  b _n^*(z) |  
\end{align}
is a tight sequence. 
We will show at the end of this section that  
\begin{align}
  \sup_{z_1,z_2 \in  {\cal C}_n} \E^*  | W_j^* (z_1,z_2)  |^{2}  ~~~\text{are tight sequences ($j=1,2$).}
      \label{det43b}
\end{align} 
 Therefore, observing \eqref{det41} it follows that $
 \sup_{z_1,z_2 \in {\cal C}_n} \E ^*|  H_{2n} (z_1,z_2) |^2  
$  is tight as well. The  term $H_{3n} (z_1,z_2)$ in  the decomposition \eqref{det1}  can be treated 
in the same way.

To prove tightness of the sequence  $
 \sup_{z_1,z_2 \in {\cal C}_n} \E ^*|  H_{1n} (z_1,z_2) |^2 $ 
 we use the same arguments as in \citeSM{baisil2004} and obtain the representation
\begin{align}
    \label{det65b}
H_{1n} (z_1,z_2) = 
\tilde b_n^*(z_1)\tilde  b_n^*(z_2)Y_1^*(z_1,z_2) -\tilde  b_n^*(z_2)Y_2^*(z_1,z_2) -\tilde  b_n^*(z_1)\tilde  b_n^*(z_2)Y_3^*(z_1,z_2),
\end{align}
where
\begin{align*}
    Y_1^*(z_1,z_2) &= \sum_{j=1}^m 
 \big ( \mathbb{E}_{j}^* -\mathbb{E}_{j-1}^* \big ) 
 \Big [ \big (r_j{^*}^\top (D_j^* (z_1))^{-1} (D_j^* (z_2))^{-1} r_j^* \big ) ^2  
 \\
 & ~~~~~~~~~~~~~~~~~~~~~
 - \Big ( {1 \over m }  {\rm tr } \big ( L_n (D_j^* (z_1))^ {-1} (D_j^* (z_2) )^{-1}
 L_n^\top  \big ) \Big )^2 \Big ], 
 \\
     Y_2^* (z_1,z_2) &=   \sum_{j=1}^m 
 \big ( \mathbb{E}_{j}^* -\mathbb{E}_{j-1}^* \big ) \beta_j^* (z_1) \beta_j^* (z_2) \big (r_j{^*}^\top (D_j^* (z_1))^{-1} (D_j^* (z_2))^{-1} r_j^* \big ) ^2 \tilde  \gamma_j^* (z_2),  \\
      Y_3^* (z_1,z_2) &=   \sum_{j=1}^m 
 \big ( \mathbb{E}_{j}^* -\mathbb{E}_{j-1}^* \big ) \beta_j^* (z_1) \beta_j^* (z_2) \big (r_j{^*}^\top (D_j^* (z_1))^{-1} (D_j^* (z_2))^{-1} r_j^* \big ) ^2 \tilde  \gamma_j^* (z_1). \\
\end{align*}
This yields for some constant $C>0$ 
\begin{align}
\mathbb{E}^* [ |H _{1n} (z_1,z_2)|^{2} ] 
& \leq C \big ( | \tilde  b_n^*(z_1) \tilde b_n^*(z_2) |^{2 }   
 \E^* \big [ | Y_1^* (z_1,z_2)|^{2} 
\big ]  +    |  \tilde  b_n^*(z_2) |^{2 } \big ]    
 \E^* \big [ | Y_2^* (z_1,z_2)|^{2} 
\big ]  \big )  
\nonumber \\
\label{det65c}
&\ \ \ \ \  + 
 | \tilde  b_n^*(z_1)\tilde  b_n^*(z_2) |^{2}    
 \E^*  \big [ | Y_3^* (z_1,z_2) |^{2} 
\big ]    .
\end{align}
At the end of this proof  we will show
\begin{align}
        \sup_{z_1,z_2 \in  {\cal C}_n}   \mathbb{E}^* \big [ |Y_j^* (z_1,z_2) |^2 \big ]  ~~~ \text{ are tight sequences $(j=1,2,3)$.}
          \label{det65e} 
\end{align} 
It then follows that the  sequence  $
 \sup_{z_1,z_2 \in {\cal C}_n} \E ^*|  H_{1n} (z_1,z_2)  |^2 $ 
is tight as well and combining this result with \eqref{det1} 
yields \eqref{det66}.

Therefore, it remains  to show the estimates, \eqref{det43b} and \eqref{det65e}, which will be done next. 

\medskip

{\bf Proof of \eqref{det43b}:} Note that  
\begin{align*}
  W_1^* (z_1,z_2) &=   \sum_{j=1}^m 
 \big (  \mathbb{E}^*_j 
   - \mathbb{E}^*_{j-1} \big)
    \Big \{ 
  r_j{^*}^\top D_j^* (z_1)^{-2} D_j^* (z_2)^{-1} r_j^*  \\
  \nonumber \\
   & ~~~~~~~~~~~~~~~~~~~~~~~~~~~~~~~~~ -  {1 \over m} \tr  \big (  
  L_n L_n^\top  D_j^* (z_1)^{-2}
   D_j^* (z_2)^{-1} \big )
   \Big 
   \} ~, 
\end{align*}
  and Proposition \ref{lemrev5}  with the matrix \eqref{eq: C_id}  gives 
\begin{align}
   \sup_{z_1,z_2 \in  {\cal C}_n}  \mathbb{E}^* [ 
| W_1^* (z_1,z_2) |^2] &\ =  \sup_{z_1,z_2 \in  {\cal C}_n}  \sum_{j=1}^m\mathbb{E}^*
\Big [ 
 \Big |
 \big (  \mathbb{E}^*_j 
   - \mathbb{E}^*_{j-1} \big) \Big \{ r_j{^*}^\top D_j^* (z_1)^{-2} D_j^* (z_2)^{-1} r_j^*  \nonumber \\
   & ~~~~~~~~~~~~~~~~~~~~~~~~~~~~~~~~~
- {1 \over m } {\rm tr}  \big ( 
L_n^\top D_j^* (z_1)^{-2} D_j^* (z_2)^{-1}L_n
\big ) \Big ) \Big |^2 
\Big ]
\nonumber \\
 & 
\nonumber
\ \leq  4 m   \sup_{z_1,z_2 \in  {\cal C}_n}  
 \mathbb{E}^* 
\Big [ \Big | 
r_1{^*}^\top D_1^* (z_1)^{-2} D_1^* (z_2)^{-1} r_1^*   \nonumber \\
   & ~~~~~~~~~~~~~~~~~~~~~~~~~~~~~~~~~
- {1 \over m } {\rm tr}  \big ( 
L_n^\top D_1^* (z_1)^{-2} D_1^* (z_2)^{-1}L_n
\big ) \Big |^2
\Big ] \nonumber \\
&\  =  4 m O_\P  \Big ( {1\over m}  + {m \over n } \Big )  
 =  O_\P (1)  . 
\label{det115}
\end{align} 
As concerns the term $W_2^*$ $(z_1,z_2)$, we find 
\begin{align*}
       \sup_{z_1,z_2 \in  {\cal C}_n}  & \mathbb{E}^* 
       [ | W_2^*(z_1,z_2) |^2 ]  \\
       &=     \sup_{z_1,z_2 \in  {\cal C}_n}     \sum_{j=1}^m  \mathbb{E}^* 
      \Big [ \Big |
\big (  \mathbb{E}^*_j 
   - \mathbb{E}^*_{j-1} \big)  \big [
 \beta_j^*(z_1) 
   r_j{^*}^\top D_j^* (z_1)^{-2} D_j^* (z_2)^{-1} r_j^*  \tilde \gamma_j^*(z_1)
  \big   ] \Big |^2
  \Big ] \\
  &\  \leq      4m  \sup_{z_1,z_2 \in  {\cal C}_n} \mathbb{E}^*  \big |
 \beta_1^*(z_1) 
   r_1{^*}^\top D_1^* (z_1)^{-2} D_1^* (z_2)^{-1} r_1^*  \tilde \gamma_1^*(z_1)
\big |^2 ~.
\end{align*}
Note that on the set ${\mathcal{A}_n} $, 
\begin{align}    
\|r_1^* \|^2  & =  \|  r_1^* {r_1^*}^\top  \|_{S_\infty} 
\leq  \Big \|  \sum_{j=1}^m r_j^* {r_j^*}^\top  \Big \|_{S_\infty}  +\Big \|  \sum_{j=2}^m r_j^* {r_j^*}^\top  \Big \|_{S_\infty}  \leq 2 K_{\rm  right},
\label{det62aa}
\end{align}
and,  recalling the definition of $D^*$ in \eqref{ang2}  and  \eqref{ang21}, yields on ${\cal A}_n$
\begin{equation}
\label{det62}
|  \beta_1^*(z) | = | 1 -{r_j^*}^\top  D^*(z)^{-1}{r_j^*} |  \leq 1  + \|  D^*(z)^{-1} \|_{S_\infty}
\| {r_j^*} \|^2 < c .
\end{equation}
We now use the decomposition $\mathds{1} = \mathds{1}_{\mathcal{A}_n} +  \mathds{1}_{\mathcal{A}_n^c}$ and   \eqref{det33}  
to obtain 

\begin{align*}
      \sup_{z_1,z_2 \in  {\cal C}_n}  \mathbb{E}^* 
       [ | W_2^*(z_1,z_2) |^2 ]  & \leq  4m 
        \sup_{z_1 \in  {\cal C}_n}  \mathbb{E} ^* \big |
    \mathds{1}_{\mathcal{A}_n} 
   \tilde  \gamma_1^*(z_1)
\big |^2   + o_\P (1)  .
\end{align*}
Recalling the definition of $\varepsilon_1^* (z) $ in \eqref{ang3}, Burkholder's inequality,  we obtain  the bound (for any $\ell \in \N$)
\begin{align*}
\sup_{z \in {\cal C}_n} & \E^* \big [    |  \tilde  \gamma_1^*(z)    |^2   \big  ] \lesssim \sup_{z \in {\cal C}_n} \E^* \big [    |   \gamma_1^*(z)  - \varepsilon_1^* (z)   |^2  \big ]
+ \sup_{z \in {\cal C}_n}   \E^* \big [ |\varepsilon^*_1 (z) |^2 \big ]   \\
&  ~~~~~~~~~~~~~~~~~~
~~~~~~~~~~~~~~~~~~~~~~~~~~
    ~~~~~~~~~~~~
    ~~~~~~~~~~~~
    ~~~~~~~~~~~~ 
+ \sup_{z \in {\cal C}_n} \Big | 
{1 \over m} {\rm tr} ( \E^*\big  [  \mathds{1}_{{\cal A}_n^c}   \tilde \Sigma_n D_1^*(z)^{-1}  \big ] \Big |^2
\\
 &  \lesssim \sup_{z \in {\cal C}_n} {1 \over m^2 } \E^* \Big | \sum_{j=2}^m (\E^*_{j} -\E^*_{j-1} ) \beta_{1j}^*(z) {r_j^*}^\top  D_{1j}^*(z)^{-1}\tilde \Sigma_n  
 D_{1j}^*(z)^{-1}{r_j^*}
     \Big |^2 + \sup_{z \in {\cal C}_n}   \E^* \big [ |\varepsilon^*_1 (z) |^2 \big ] \\
     &  ~~~~~~~~~~~~~~~~~~
    ~~~~~~~~~~~~~~~~~~~~~~~~~~
    ~~~~~~~~~~~~
    ~~~~~~~~~~~~
    ~~~~~~~~~~~~
    ~~~~~~~~~~~~
    ~~~~~~~~~~~~
    ~~~~~~~~~~~
      + O_{\P} \big ( m^{ 2(1+\alpha)-\ell}\big ) 
 \\
 &
 \lesssim {1 \over m } \sup_{z \in {\cal C}_n} 
 \E^* \big [ \big |  
 \beta_{12}^*(z) {r_2^*}^\top  D_{12}^*(z)^{-1}\tilde \Sigma_n 
 D_{12}^*(z)^{-1}
{r_2^*}
\big  |^2 \big ] 
    +  \sup_{z \in {\cal C}_n}    \E^* \big [ |\varepsilon^*_1 (z) |^2 \big ] \\
&  ~~~~~~~~~~~~~~~~~~
    ~~~~~~~~~~~~~~~~~~~~~~~~~~
    ~~~~~~~~~~~~
    ~~~~~~~~~~~~
    ~~~~~~~~~~~~
    ~~~~~~~~~~~~
    ~~~~~~~~~~~~
    ~~~~~~~
 + O_{\P} \big ( m^{ 2(1+\alpha)-\ell}\big ) 
\\
 &
 \lesssim {1 \over m } \sup_{z \in {\cal C}_n} 
 \E^* \big [ \big |  
 \beta_{12}^*(z) {r_2^*}^\top  D_{12}^*(z)^{-1}\tilde \Sigma_n 
 D_{12}^*(z)^{-1}
{r_2^*}
\big  |^2   \mathds{1}_{{\cal A}_n}  \big ] 
+  \sup_{z \in {\cal C}_n}    \E^* \big [ |\varepsilon^*_1 (z) |^2 \big ] \\
     &  ~~~~~~~~~~~~~~~~~~
    ~~~~~~~~~~~~~~~~~~~~~~~~~~
    ~~~~~~~~~~~~
    ~~~~~~~~~~~~
    ~~~~~~~~~~~~
    ~~~~~~~~~~~~
    ~~~~~~~~~~~~
    ~~~~~~~
      + O_{\P} \big ( m^{ 2(1+\alpha)-\ell}\big ) 
\\
& = O_{\P} \Big ( {1\over m} \Big) + O_{\P} \big ( m^{ 2(1+\alpha)-\ell} \big )  + \sup_{z \in {\cal C}_n}   \E^* \big [ |\varepsilon^*_1 (z) |^2 \big ] ,
\end{align*}
where $\beta_{12}^*(z) $ is defined in \eqref{ar20}
and is bounded on  the set ${\mathcal{A}_n} $, 
which follows by similar calculations as used for the derivation of \eqref{det62}). Moreover, by Proposition  \ref{lemrev5}, we have 
 $$
 \sup_{z \in {\cal C}_n}   \E^* \big [ |\varepsilon^*_1 (z) |^2 \big ]  = O_{\mathbb P}
 \Big( {1 
 \over m} + 
 \frac{m}{n}
 \Big),
 $$
 which gives 
 \begin{align}
     \label{rev100}
    m  \sup_{z \in {\cal C}_n} & \E^* \big [    |  \tilde  \gamma_1^*(z)    |^2   \big  ] = 
O_{\P}  (  1 ) +   O_{\mathbb P}
 \Big(  
 \frac{m^2}{n}
 \Big) = 
O_{\P}  (  1 )
 \end{align}
 and proves \eqref{det43b}.

\medskip

{\bf Proof of \eqref{det65e}:}  For the sake of brevity, we 
restrict ourselves to the term $ Y_1^*$. Corresponding results  for $Y_2^* $ and $Y_3^*$ are derived in a similar way.
 Note that 
\begin{align*}
 \sup_{z \in {\cal C}_n}  \mathbb{E}^*& |Y_1^* (z_1,z_2) |^2  = \sup_{z \in {\cal C}_n}
  \sum_{j=1}^{m} \E^* 
  \Big |  \big ( \mathbb{E}_{j}^* -\mathbb{E}_{j-1}^* \big ) 
 \Big [ \big (r_j{^*}^\top (D_j^* (z_1))^{-1} (D_j^* (z_2))^{-1} r_j^* \big ) ^2  \\
 & ~~~~~~~~~~~~~~~~~~~~~~~~~~~~~~~~~~~~~~~~~~~~
 - \Big ( {1 \over m }  {\rm tr } \big ( L_n (D_j^* (z_1))^ {-1} (D_j^* (z_2) )^{-1}
 L_n^\top  \big ) \Big )^2 \Big ]
  \Big |^2 \\
 &  \leq 
  4 m \sup_{z \in {\cal C}_n} \E^* 
  \Big | 
 \big (r_1{^*}^\top (D_1^* (z_1))^{-1} (D_1^* (z_2))^{-1} r_1^* \big ) ^2\\
 &\hspace{60mm}- \big ( {1 \over m }  {\rm tr } \big ( L_n (D_1^* (z_1))^ {-1} (D_1^* (z_2) )^{-1}
 L_n^\top  \big ) \big )^2 
  \Big |^2
  \end{align*}
We recall the definition of the set $\mathcal{A}_n$ defined in \eqref{det21}, the decomposition 
$\mathds{1} = \mathds{1}_{\mathcal{A}_n} +  \mathds{1}_{\mathcal{A}_n^c}$ 
  and the identity
 $|a^2-b^2|^2 = |a-b|^4 + 2( \bar a b + a\bar  b)
 |a-b|^2$, which give  (observing \eqref{det33})
 \begin{align*}
   m  \sup_{z \in {\cal C}_n} \E^*   \mathds{1}_{\mathcal{A}_n^c}&
  \Big | 
 \big (r_1{^*}^\top (D_1^* (z_1))^{-1} (D_1^* (z_2))^{-1} r_1^* \big ) ^2   - \big ( {1 \over m }  {\rm tr } \big ( L_n (D_1^* (z_1))^ {-1} (D_1^* (z_2) )^{-1}
 L_n^\top  \big ) \big )^2  \Big |^2\\
 & =  O_\P (1)  ,  
 \end{align*} 
  and 
  \begin{align*}
 &  \sup_{z \in {\cal C}_n} \E^*   \mathds{1}_{\mathcal{A}_n}
  \Big | 
 \big (r_1{^*}^\top (D_1^* (z_1))^{-1} (D_1^* (z_2))^{-1} r_1^* \big ) ^2   - \big ( {1 \over m }  {\rm tr } \big ( L_n (D_1^* (z_1))^ {-1} (D_1^* (z_2) )^{-1}
 L_n^\top  \big ) \big )^2  \Big |^2 \\
  & \leq   \sup_{z \in {\cal C}_n} \mathbb{E}^*   \Big | 
  r_1{^*}^\top (D_1^* (z_1))^{-1} (D_1^* (z_2))^{-1} r_1^*    -  {1 \over m }  {\rm tr } \big ( L_n (D_1^* (z_1))^ {-1} (D_1^* (z_2) )^{-1}
 L_n^\top  \big ) \Big |^4 \\ 
 & + 4 \sup_{z \in {\cal C}_n}  \mathbb{E}^*   \Big [ \Big |
  r_1{^*}^\top (D_1^* (z_1))^{-1} (D_1^* (z_2))^{-1} r_1^*    -  {1 \over m }  {\rm tr } \big ( L_n (D_1^* (z_1))^ {-1} (D_1^* (z_2) )^{-1}
 L_n^\top  \big ) \Big |^2 
 \\
 & \times
  \mathds{1}_{\mathcal{A}_n}
 \| (D_1^* (z_1))^ {-1} (D_1^* (z_2) )^{-1} \|_{{S}_\infty}^2  {q \over m} \|r_1^* \|^2
 \Big ]  \\
&  = O_\P  \Big (  {1 \over m} \Big ), 
\end{align*}
where we have used Proposition \ref{lemrev5} for the matrix   \eqref{eq: C_ie} and the estimate \eqref{ang21} on the set $\mathcal{A}_n$.  The assertion \eqref{det65e} for $ Y_1^*$ now follows.

\subsubsection{Auxiliary results for the proof of Proposition \ref{prop}}
 $ $

  \medskip

Recall that in the next two Lemmas, we consider 
truncated, centered and normalized $q'$-dimensional random vector $X_1 , \ldots, X_n$ 
(see the discussion in Section \ref{sec82} and \ref{sec82aa}).

    \begin{lemma}\label{lemma: neu1}
For $k=1, \ldots , m $  let 
\begin{align*}
A_{k} (z)=L_n^\top  \E_{k-1}^*\big( \bar\beta_k^*(z) D_k^* (z)^{-1}\big) L_n  \ .
\end{align*}
 Then 
  \begin{align}
  \label{det107}
&  \E \bigg\arrowvert \E_{X_k^*}^*\Big(X_k^{*\top}A_k (z_1) X_k^*-\tr(A_k(z_1))\Big)\E_{X_k^*}^*\Big(X_k^{*\top}A_k(z_2)X_k^*-\tr(A_k(z_2))\Big)\bigg\arrowvert \\
 \nonumber 
&   ~~~~~~~~~~~~~ ~~~~~~~~~~~~~ ~~~~~~~~~~~~~ ~~~~~~~~~~~~~~~~~~~~~~~~~~  \leq K (z_1,z_2) 
\Big({\delta_m^2m^{2} \over \sqrt{n}} +\frac{m^3}{n}\Big)
  \end{align}
  \end{lemma}
  
 \begin{proof}
Evaluating $\E_{X_k^*}^*$ provides the upper bound  
 \begin{align}
  \E\Big\arrowvert &\E_{X_k^*}^*\Big(X_k^{*\top}A_k (z_1) X_k^*-\tr(A_k  (z_1) )\Big)\E_{X_k^*}^*\Big(X_k^{*\top}A_k  (z_2)X_k^*-\tr(A_k  (z_2) )\Big)\Big\arrowvert \nonumber\\
  &\leq  \E\bigg\arrowvert \frac{1}{n^2}\sum_{i=1}^n\big(X_i^\top A_k  (z_1) X_i-\tr(A_k  (z_1))\big)\big(X_{i}^\top A_k  (z_2) X_{i}-\tr(A_k  (z_2))\big)\bigg\arrowvert \nonumber\\ 
  &\quad\quad +  \E\bigg\arrowvert \frac{1}{n^2}\sum_{\substack{i,i'\in\{1,\dots n\} :\\ i\not= i'}}\big(X_i^\top A_k  (z_1)X_i-\tr(A_k  (z_1))\big)\big(X_{i'}\top A_k  (z_2) X_{i'}-\tr(A_k  (z_2))\big)\bigg\arrowvert.\label{eq: term2!}
    \end{align} 
By the Cauchy-Schwarz inequality, we have 
  \begin{align}
  \label{det106} 
   \E\bigg\arrowvert &\frac{1}{n^2}\sum_{i=1}^n\big(X_i^\top A_k  (z_1) X_i-\tr(A_k  (z_1))\big)\big(X_{i}^\top A_k  (z_2) X_{i}-\tr(A_k  (z_2))\big)
   \bigg\arrowvert\\
   \nonumber
   &\leq \frac{1}{n}\E^{1/2}\big\arrowvert X_1^\top A_k  (z_1)X_1-\tr(A_k  (z_1))\big\arrowvert^2\E^{1/2}\big\arrowvert X_1^\top A_k  (z_2)X_1-\tr(A_k  (z_2))\big\arrowvert. 
   \end{align}
   We now investigate one factor  using Jensen's inequality for the conditional expectation and  the bound 
   $
   | \bar\beta_j^*(z)| \leq \frac{\arrowvert z \arrowvert }{\Im(z ) } 
   $:
\begin{align*}
  \E \big\arrowvert &X_1^\top A_k  (z_1)X_1-\tr(A_k  (z_1))\big\arrowvert^2\\  
  & \leq
 \Big (  \frac{\arrowvert z_1\arrowvert }{\Im(z_1)} \Big )^2
   \E \big\arrowvert X_1^\top  L_n^\top D_k^* (z_1)^{-1}  (z_1) L_n X_1-\tr( L_n^\top  D_k^* (z_1)^{-1}L_n )\big\arrowvert^2 
   \\
   & \leq K \Big ( m + {m^3 \over n} \Big ),
\end{align*}
where last inequality follows by the same arguments as given after formula \eqref{det35} in the proof Proposition \ref{lemma: formel 2.1a}. Therefore, the right-hand side of \eqref{det106} is bounded by
$$
 K \frac{\arrowvert z_1\arrowvert \arrowvert z_2\arrowvert}{\Im(z_1)\Im(z_2)} \Big(1+\frac{m^2}{n}\Big)\frac{m}{n}.  
 $$

We define $i^*_\ell $  as the index $j \in \{ 1, \ldots , n \} $ with $X_\ell ^* = X_j$
For $k=1, \ldots , m $, we denote $\hat{I}_k=\{1,\dots,n\}\cap\{i_1^*,\ldots,  i_{k-1}^*,  i_{k+1}^*, \ldots , i_m^*\}$ and observe  that $\sharp\hat{I}_k\leq m$  and that  $\hat{I}_k, X_1,\dots,X_n$ are jointly independent. With the notation
 $$
 R_{ki} (z) = X_i^\top A_k (z) X_i-\tr(A_k (z) ),\ \ i=1,\dots, n ,µ k=1,  \ldots , m, 
 $$
 we decompose the expression in \eqref{eq: term2!} into
\begin{align*}
\E\bigg\arrowvert &\frac{1}{n^2}\sum_{\substack{i,i'\in\{1,\dots n\} :\\ i\not= i'}} R_{ki  }(z_1) R_{ki' }  (z_2) \bigg\arrowvert\\
  &\leq \E\bigg\arrowvert \frac{1}{n^2}\sum_{i,i'\in\hat{I}_k, i\not=i'}
   R_{ki  }(z_1) R_{ki' }  (z_2) 
   \bigg\arrowvert+\ \E\bigg\arrowvert \frac{1}{n^2}\sum_{i,i'\in\hat{I}_k^c, i\not=i'}  R_{ki  }(z_1) R_{ki' }  (z_2)
   \bigg\arrowvert\\
  &\quad\quad +  \E\bigg\arrowvert \frac{1}{n^2}\sum_{i\in\hat{I}_k^c, i'\in\hat{I}_k} R_{ki  }(z_1) R_{ki' }  (z_2)  \bigg\arrowvert
+ \E\bigg\arrowvert \frac{1}{n^2}\sum_{i\in\hat{I}_k, i'\in\hat{I}_k^c} R_{ki  }(z_1) R_{ki' }  (z_2) \bigg\arrowvert
  \end{align*} 
and bound each summand separately. 
By the Cauchy-Schwarz inequality, we have 
\begin{align*}
\E\bigg\arrowvert \frac{1}{n^2}\sum_{i,i'\in\hat{I}_k, i\not=i'}R_{1i}R_{2i'}\bigg\arrowvert&\leq \frac{1}{n^2}\sum_{i\not=i'}\E^{1/4} ( I {\{ i\in\hat{I}_k \}} )\E^{1/4} ( I {\{ i'\in\hat{I}_k \}} )\E^{1/4} \arrowvert R_{1i} \arrowvert^4 \E^{1/4} \arrowvert R_{2i'}\arrowvert^4 \\
&
\leq \sqrt{K} \Big({\delta_m^2m^{2} \over \sqrt{n}} +\frac{m^3}{n}\Big).
\end{align*}
For the last  inequality,  we use  Jensen's inequality for the conditional expectation,  the same arguments as given after formula \eqref{det35} in the proof Proposition \ref{lemma: formel 2.1a} and  the fact that $\E ( I {\{ i\in\hat{I}_k \}} ) \leq {m\over n}$, which follows because $\E ( I {\{ i\in\hat{I}_k \}} )$ is independent of $i$ and $\sum_{i=1}^n\E ( I {\{ i\in\hat{I}_k \}} ) \leq m$.

Observing that, by  Jensen's inequality, the  conditional 
expectations 
$$\E [ R_{ki}  (z_1) \arrowvert \hat{I}_k, \{X_j:j\in\hat{I}_k\} ] $$ vanish for $i \in \hat I^c_k$, the 
   conditional independence of $R_{ki (z) }$ and $R_{ki'}(z)$ for $i\not=i'$, $i,i'\in\hat{I}^c_k$ given $\hat{I}_k$ and $\{X_j:j\in\hat{I}_k\}$, and Lemma 2.7 of \citeSM{baisil1998},
\begin{align*}
\E^2&\bigg\arrowvert \frac{1}{n^2}\sum_{i,i'\in\hat{I}_k^c, i\not=i'}R_{ki} (z_1) R_{ki'} (z_2) \bigg\arrowvert\\
&\leq \E\frac{1}{n^4}\sum_{i,i'\in\hat{I}_k^c, i\not=i'}\bigg\{\E\Big[ R_{ki} (z_1) \overline{R}_{ki} (z_1) R_{ki'}(z_2) \overline{R}_{ki'}(z_2) \Big\arrowvert \hat{I}_k, \{X_j:j\in\hat{I}_k\}\Big]\\
&\quad \quad\quad \quad\quad\quad\quad\quad\quad+ \E\Big[R_{ki}
(z_1) R_{ki}(z_2) \overline{R}_{ki'} (z_1) \overline{R}_{ki'} (z_2) \Big\arrowvert \hat{I}_k, \{X_j:j\in\hat{I}_k\}\Big]\bigg\}\\
&=  \E\frac{1}{n^4}\sum_{i,i'\in\hat{I}k^c, i\not=i'}\bigg\{\E\big[ \arrowvert R_{ki} (z_1) \arrowvert^2\big\arrowvert \hat{I}_k, \{X_j:j\in\hat{I}_k\}\big]\E\big[ \arrowvert R_{ki'} (z_2)\arrowvert^2\big\arrowvert \hat{I}_k, \{X_j:j\in\hat{I}_k\}\big]\\
& \quad \quad\quad  + \E\big[R_{ki}(z_1) R_{ki} (z_2)\big\arrowvert \hat{I}_k, \{X_j:j\in\hat{I}_k\}\big]\E\big[ \overline{R}_{ki'} (z_1)\overline{R}_{2i'} (z_2)\big\arrowvert \hat{I}_k, \{X_j:j\in\hat{I}_k\}\big]\bigg\}\\
&\leq K \frac{m^2}{n^2}.
\end{align*}
Next, by the same arguments as above and the same arguments as given after formula \eqref{det35} in the proof of Proposition \ref{lemma: formel 2.1a},
\begin{align*}
\E^2\bigg\arrowvert &\frac{1}{n^2}\sum_{i\in\hat{I}_k^c, i'\in\hat{I}_k}R_{ki} (z_1) R_{ki'} (z_2) \bigg\arrowvert\\
&\leq \E\frac{1}{n^4}\sum_{\substack{i,l\in \hat{I}_k^c\\ i',l'\in\hat{I}_k}}R_{ki'} (z_2) \overline{R}_{kl'} (z_2)\E\Big[R_{ki} (z_1) \overline{R}_{kl} (z_1) \Big\arrowvert \hat{I}_k, \{X_j:j\in\hat{I}_k\}\Big]\\
&=\E\frac{1}{n^4}\sum_{\substack{i\in \hat{I}_k^c\\ i',l'\in\hat{I}_k}}R_{ki'} (z_2 )\overline{R}_{kl'} (z_2 )\E\Big[R_{ki}(z_1) \overline{R}_{ki} (z_1) \Big\arrowvert \hat{I}_k, \{X_j:j\in\hat{I}_k\}\Big]\\
&\leq  K (z_1)  \frac{m}{n^3}\sum_{i,i', i\not=i'}\E^{1/2}\arrowvert R_{ki'} (z_2) \arrowvert^2 \E^{1/2}\arrowvert R_{kl'} (z_2) \arrowvert^2 \\
&\leq K (z_1, z_2)   \frac{m^2}{n}.
\end{align*}
Here, we   applied Lemma B.26 of \citeSM{baisilverstein2010} to obtain 
 $$
\E\Big[R_{ki}(z_1) \overline{R}_{ki} (z_1) \Big\arrowvert \hat{I}_k, \{X_j:j\in\hat{I}_k\}\Big] \leq  K(z_1 ) m , 
$$ 
 which yields the  second inequality.
The last expression
$$
 \E\bigg\arrowvert \frac{1}{n^2}\sum_{i\in\hat{I}_k, i'\in\hat{I}_k^c}R_{ki} (z_1) R_{ki'} (z_2)\bigg\arrowvert
 $$
 in the decomposition of \eqref{eq: term2!} can be bounded analogously. Summarizing these  calculations yield \eqref{det107}, which completes the proof. 
  \end{proof}

\medskip
\begin{lemma}\label{lemma: 1.15 neu}
For any $k=1, \ldots , m $  define 
\begin{align*}
&A_{k}(z)= \frac{1}{m}
L_n^\top  \E_{k-1}^*\big(D_k^*(z)^{-1}\big)L_n  = \frac{1}{m}L_n^\top  \E_{k}^*\big(D_k^*(z)^{-1}\big)L_n ~.
\end{align*}
  Then there exists some positive constant $K(z_1,z_2)$ (independent of $k$)  such that  
\begin{align*}
\E&\Big\arrowvert \E_{X_k^*}\Big[\big(X_k^{*\top}A_k (z_1) X_k^*-\tr A_k (z_1) \big)\big(X_k^{*\top}A_k (z_2) X_k^*-\tr A_k (z_2) \big)\\
&\hspace{80mm}- 2 \tr(A_k (z_1) A_k (z_2 ))\Big]\Big\arrowvert\\
&\leq  K (z_1,z_2) \Big ( \frac{\delta_m^2}{\sqrt{n}}+\frac{m}{n} +  o \Big(\frac{1}{m}\Big) \Big )  .
\end{align*}
\end{lemma}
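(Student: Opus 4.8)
The statement is a refinement of Lemma~\ref{lemma: neu1}, the difference being that here we must keep track of the \emph{cross term} $2\tr(A_k(z_1)A_k(z_2))$ and show that the bootstrapped quadratic form is correctly centered around $\tr A_k(z_1)+\tr A_k(z_2)$ \emph{and} has the right interaction. The plan is to evaluate $\E^*_{X_k^*}$ explicitly. Since $X_k^*$ is drawn uniformly from $X_1,\dots,X_n$ conditionally on the remaining data, $\E^*_{X_k^*}[\,\cdot\,] = \frac1n\sum_{i=1}^n (\,\cdot\,)$ with $X_k^*$ replaced by $X_i$. Writing $R_{ki}(z)=X_i^\top A_k(z)X_i-\tr A_k(z)$ as in the proof of Lemma~\ref{lemma: neu1}, the quantity inside the modulus becomes
\begin{align*}
\frac1n\sum_{i=1}^n R_{ki}(z_1)R_{ki}(z_2) - 2\tr(A_k(z_1)A_k(z_2)).
\end{align*}
The first step is to recognize that $\frac1n\sum_i R_{ki}(z_1)R_{ki}(z_2)$ concentrates around $\E_{X}[R_{k1}(z_1)R_{k1}(z_2)\mid A_k]$ (up to the usual difficulty that $A_k$ depends on all the $X_i$ through the resolvent, which forces working with the unconditional expectation exactly as in Proposition~\ref{lemma: formel 2.1a} and Lemma~\ref{lemma: neu1}). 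The fluctuation of $\frac1n\sum_i R_{ki}(z_1)R_{ki}(z_2)$ around its mean is of order $n^{-1/2}$ times a bound on $\E^{1/2}|R_{k1}(z_1)|^2\,\E^{1/2}|R_{k1}(z_2)|^2$, and by the estimate following \eqref{det35} in the proof of Proposition~\ref{lemma: formel 2.1a} (applied with $p=2$ to the matrix $L_n^\top D_k^*(z)^{-1}L_n$ times the $\tfrac1m$-prefactor built into $A_k$, so that $\E|R_{ki}(z)|^2 = O(m^{-1}+m/n\cdot m^{-2}\cdot m^2)=O(\delta_m^2 m^{-1}+m/n\cdot\ldots)$ — more precisely $O(\delta_m^2/m + 1/n)$ after accounting for the $1/m$ normalization), this yields a contribution of order $\delta_m^2/\sqrt n + 1/n$ to the bound, and a cross-term of $i\ne i'$ type that is handled by the $\hat I_k$ vs.\ $\hat I_k^c$ splitting exactly as in Lemma~\ref{lemma: neu1}, contributing $O(m/n)$.

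\textbf{Key steps.} First I would split
\begin{align*}
\Big|\frac1n\sum_i R_{ki}(z_1)R_{ki}(z_2)-2\tr(A_k(z_1)A_k(z_2))\Big|
&\le \Big|\frac1n\sum_i R_{ki}(z_1)R_{ki}(z_2)-\E_X R_{k1}(z_1)R_{k1}(z_2)\Big|\\
&\quad + \big|\E_X R_{k1}(z_1)R_{k1}(z_2)-2\tr(A_k(z_1)A_k(z_2))\big|,
\end{align*}
noting that $\E_X$ here must be the genuine unconditional expectation because $A_k$ is not independent of the $X_i$; this is where the $m=o(\sqrt n)$ regime enters, via the same Sherman–Morrison/telescoping device as in Proposition~\ref{lemma: formel 2.1}. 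For the second term, under the truncation and standardization of Section~\ref{sec82aa} the $X_{ij}$ have $\E X_{11}=0$, $\Var X_{11}=1$, $\E X_{11}^4\to 3$ (assumption \eqref{eq: trunk4}), so Lemma~B.26 of \citeSM{baisilverstein2010} gives $\E_X[X_1^\top B_1 X_1 \cdot X_1^\top B_2 X_1] = \tr B_1\tr B_2 + \tr(B_1 B_2) + \tr(B_1 B_2^\top) + (\E X_{11}^4-3)\sum_j (B_1)_{jj}(B_2)_{jj}$ for deterministic $B_1,B_2$; since $A_k(z_1),A_k(z_2)$ are symmetric this produces exactly $\tr A_k(z_1)\tr A_k(z_2) + 2\tr(A_k(z_1)A_k(z_2))$ plus an error term of size $|\E X_{11}^4-3|\cdot\|A_k(z_1)\|_{S_\infty}\|A_k(z_2)\|_{S_\infty}\cdot q = o(1/m)\cdot O(1)$ after conditioning on the good event $\mathcal A_n$ from \eqref{det21} (the complement contributes $o(m^{-\ell})$ by \eqref{det33}), hence the $o(1/m)$ term in the bound. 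After subtracting $\tr A_k(z_1)\tr A_k(z_2)$ (which comes from $\E_X R_{k1}(z_1)\cdot\E_X R_{k1}(z_2)$-type contribution but actually cancels because $R$ is centered — I would be careful that $\E_X R_{k1}(z)\ne 0$ since $A_k$ and $X_1$ are dependent, but that discrepancy is itself $O(m/n)$-type and absorbed), the remaining term is precisely $2\tr(A_k(z_1)A_k(z_2))$ up to the claimed error. For the first (fluctuation) term I would mirror line-by-line the four-way decomposition over $\hat I_k,\hat I_k^c$ in the proof of Lemma~\ref{lemma: neu1}, with the only change that products $R_{ki}R_{ki}$ (same index twice) replace $R_{ki}R_{ki'}$, so the diagonal sum $\frac1{n}\sum_i$ rather than $\frac1{n^2}\sum_{i\ne i'}$ is what must be controlled, giving the $\delta_m^2/\sqrt n$ instead of $\delta_m^2 m^2/\sqrt n$ — i.e.\ a factor $m^2$ better, consistent with the sharper bound claimed.

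\textbf{Main obstacle.} The delicate point, as in Lemma~\ref{lemma: neu1} and Proposition~\ref{lemma: formel 2.1}, is that $A_k(z)$ depends on $X_1,\dots,X_n$ through the resolvent $D_k^*(z)^{-1}$ and through the multinomial-type counts (which vectors among $X_1,\dots,X_n$ appear in $\widehat\Sigma_n^*$), so $X_k^*$ and $A_k$ are \emph{not} unconditionally independent; one cannot simply invoke a standard centered-quadratic-form inequality. The resolution is the Sherman–Morrison reduction to $\tilde A_k$ built from $\tilde X_j^* = X_j^* \mathds 1\{X_j^*\ne X_k\}$ (the analog of \eqref{eq: xsterntilde}, so that $X_k=X_1$ in the notation there with the roles relabeled), which \emph{is} independent of $X_k$, incurring a correction governed by $\Delta_k^*$ whose moments are $O(m/n)$ by \eqref{det37b}; this is exactly the mechanism that produces the $m/n$ terms and the requirement $m=o(\sqrt n)$. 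A secondary subtlety is ensuring that the $1/m$-normalization inside $A_k$ (versus the $\tfrac1{\sqrt m}$-scaled $r_j^*$'s) bookkeeps correctly so that the final bound has the stated homogeneity — I would track the powers of $m$ explicitly once at the start and then cite the $p=2$ case of Proposition~\ref{lemma: formel 2.1a} as a black box for every quadratic-form moment that arises. All remaining manipulations (Burkholder, Cauchy–Schwarz, the $\hat I_k$-splitting, bounding $\|D_k^*(z)^{-1}\|_{S_\infty}$ on $\mathcal A_n$ via \eqref{ang21}) are routine and I would not reproduce them in full.
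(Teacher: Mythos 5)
Your proposal is correct and follows essentially the same route as the paper's proof: evaluate $\E^*_{X_k^*}$ as the empirical average $\tfrac1n\sum_i$, split over $i\in\hat I_k$ versus $i\in\hat I_k^c$, use conditional independence on $\hat I_k^c$ together with the Bai--Silverstein quadratic-form identity and $\E X_{11}^4\to 3$ to recover $2\tr(A_k(z_1)A_k(z_2))$ up to the $o(1/m)$ term, and control the $\hat I_k$ part via Cauchy--Schwarz with $\E\mathds{1}\{i\in\hat I_k\}\le m/n$ and the non-centered quadratic-form moment bounds of Proposition \ref{lemma: formel 2.1a}. The only cosmetic difference is that your preliminary centering at the deterministic $\E_X[R_{k1}(z_1)R_{k1}(z_2)]$ sits awkwardly against the random $2\tr(A_k(z_1)A_k(z_2))$, but your subsequent reversion to the $\hat I_k$-decomposition of Lemma \ref{lemma: neu1} resolves this exactly as the paper does.
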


\begin{proof}
Recall the notation of $\hat I_k$ in the proof of Lemma \ref{lemma: neu1}.
Evaluating $\E_{X_1^*}^*$ provides the upper bound 
\begin{align}
\E&\Big\arrowvert \E_{X_k^*}\Big[\big(X_k^{*\top}A_k (z_1) X_k^*-\tr A_k (z_1) \big)\big(X_k ^{*\top}A_k (z_2) X_k^*-\tr A_k (z_2)\big)\nonumber\\
&\hspace{80mm}-  2\tr(A_k (z_1) A_k (z_2) )\Big]\Big\arrowvert\nonumber\\
&=\E\Big\arrowvert \frac{1}{n}\sum_{i=1}^n \big(X_i^\top A_k (z_1) X_i-\tr A_k (z_1) \big)\big(X_i^\top A_k (z_2 ) X_i-\tr A_k (z_2) \big)
\nonumber \\
&\hspace{80mm}- 2\tr(A_k (z_1) A_k (z_2))\Big\arrowvert\nonumber\\
&\leq \E\Big\arrowvert \frac{1}{n}\sum_{i\in\hat{I}_k} \big(X_i^\top A_k (z_1) X_i-\tr A_k (z_1) \big)\big(X_i^\top A_k (z_2) X_i-\tr A_k (z_2 )\big)\nonumber\\
&\hspace{80mm}- 2 \tr(A_k (z_1)A_k (z_2) )\Big\arrowvert\label{eq: sum1}\\
&\quad\quad +  \E\Big\arrowvert \frac{1}{n}\sum_{i\in\hat{I}_k^c} \big(X_i^\top A_k (z_1) X_i-\tr A_k (z_1) \big)\big(X_i^\top A_k (z_2 )X_i-\tr A_k (z_2) \big)\nonumber\\
&\hspace{80mm}- 2 \tr(A_k (z_1) A_k (z_2) )\Big\arrowvert\label{eq: sum2}
\end{align}
We start with bounding \eqref{eq: sum1}. Since   $\arrowvert \tr (A_k (z_1) A_k (z_2) )\arrowvert  \leq  m \| A_k (z_1) \|_{S_\infty} \| A_k (z_2) \|_{S_\infty} $,  $\| A_k (z_1) \|_{S_\infty}\leq K(z_1)/m$ and  $\sharp \hat{I}_k\leq m$, we have  
$$
\E\Big\arrowvert \frac{\sharp\hat{I}_k}{n}\tr(A_1A_2)\Big\arrowvert\leq K (z_1, z_2) \frac{1}{n}.
$$
Next, by the Cauchy-Schwarz inequality and
by the same arguments as given after formula \eqref{det35} in the proof Proposition \ref{lemma: formel 2.1a},  it follows that  
\begin{align*}
\E&\Big\arrowvert \frac{1}{n}\sum_{i\in\hat{I}_k} \big(X_i^\top A_k (z_1) X_i-\tr A_k (z_1)  \big)\big(X_i^\top A_k (z_2) X_i-\tr A_k (z_2)  \big)\Big\arrowvert\\
&\leq \frac{1}{n}\sum_{i=1}^n\E^{1/2} \big(\mathds{1}_{\{i\in\hat{I}_k\}} \big) \E^{1/4}\big\arrowvert  X_i^\top A_k (z_1)  X_i-\tr A_k (z_1)  \big\arrowvert^4\\
&\hspace{60mm}\times \E^{1/4}\big\arrowvert  X_i^\top A_k (z_2) X_i-\tr A_k (z_2) \big\arrowvert^4\\
&
\leq K(z_1,z_2) \frac{1}{\sqrt{n}}\Big(\delta_m^2+\frac{m}{\sqrt{n}}\Big).
\end{align*}
As concerns \eqref{eq: sum2}, 
we use the same arguments as in the proof of Lemma \ref{lemma: neu1} and obtain
\begin{align*}
&  \E^2\bigg\arrowvert \frac{1}{n}\sum_{i\in\hat{I}_k^c} \big(X_i^\top A_k (z_1)  X_i-\tr A_k (z_1)  \big)\big(X_i^\top A_k (z_2) X_i-\tr A_k (z_2)  \big)- 2 \tr(A_k (z_1)  A_k (z_2) )\bigg\arrowvert\\
 &\leq \E \bigg \{ \E\bigg[\frac{1}{n^2}\sum_{i,i'\in\hat{I}_k^c}\Big[\big(X_i^\top A_k (z_1)  X_i-\tr A_k (z_1)  \big)\big(X_i^\top A_k (z_2) X_i-\tr A_k (z_2) \big)\\
 &\hspace{90mm}- 2\tr(A_k (z_1) A_k (z_2) )\Big]\\  &\hspace{20mm}\times \Big[\big(X_{i'}^\top \overline{A_k (z_1) }X_{i'}-\tr \overline{A_k (z_1)  }\big)\big(X_{i'}^\top \overline{A_k (z_2)  }X_{i'}-\tr \overline{A_k (z_2) }\big)\\
&\hspace{90mm}-2 \tr(\overline{A_k (z_1) A_k (z_2) })\Big]\\
&\hspace{107mm}
\bigg\arrowvert \hat{I}_k,\{X_j: j\in\hat{I}_k\}\bigg] \bigg \}
\\
 &= \E\frac{1}{n^2}\sum_{i\in\hat{I}_k^c}\E\bigg[\Big\arrowvert \big(X_i^\top A_k (z_1)  X_i-\tr A_k (z_1)  \big)\big(X_i^\top A_k (z_2) X_i-\tr A_k (z_2) \big)\\
 &\hspace{40mm}- 2 \tr(A_l (z_1)  A_k (z_2) )\Big\arrowvert^2\bigg\arrowvert \hat{I}_k,\{X_j: j\in\hat{I}_k\}\bigg] ~+ ~ K(z_1,z_2)o\Big(\frac{1}{m^2}\Big) \\
 &\leq\frac{2}{n}\E\bigg\{\E^{1/2} \Big[\big\arrowvert X_i^\top A_k (z_1)  X_i-\tr A_k (z_1)  \big\arrowvert^4\Big\arrowvert \hat{I}_k,\{X_j: j\in\hat{I}_k\}\Big]\\
&\quad\quad\quad\quad\quad\quad\quad\quad\times\E^{1/2}\Big[\big\arrowvert \big\arrowvert X_i^\top A_k(z_2)  X_i-\tr A_k (z_2) \big\arrowvert^4\Big\arrowvert \hat{I}_k,\{X_j: j\in\hat{I}_k\}\Big]\bigg\}\\
 &\hspace{66mm} + \frac{8}{n}\E\arrowvert \tr(A_k (z_1)  A_k (z_2) )\arrowvert^2  ~+ ~K(z_1,z_2)o\Big(\frac{1}{m^2}\Big) \\
 &\leq K(z_1,z_2) \Big( \frac{\delta_m^4}{mn} + \frac{m}{n^2}  +  o\Big(\frac{1}{m^2}\Big) \Big),
 \end{align*}
 where the last line follows from Lemma B.26 
 in \citeSM{baisilverstein2010} and the truncation at level $\delta_m\sqrt{m}$. In this estimate, the term  $K(z_1,z_2)o(1/m^2)$ is caused by the sum over the mixed products with indices $i\not=i'$ where the conditional expectation factorizes. Its bound is due to  \eqref{eq: trunk4}  and  formula (1.15) in \citeSM{baisil2004}, using that
 $$
 \Big | \sum_{i=1}^{q'} (A_k (z_1))_{ii} (A_k (z_2))_{ii}  \Big | \leq  \|A_k (z_1) \|_{S_2} 
  \|A_k (z_2) \|_{S_2} \leq \frac{K(z_1,z_2)}{m}. 
  $$

\end{proof}

\begin{lemma}\label{lemma: ueber 4.3}
There exists some constant $K(z) >0$, such that
   \begin{align*}
   \E\arrowvert \bar{\beta}_1^*(z)-b_n^*(z)\arrowvert^2\leq K(z) \frac{1}{m} ~, \\
   \E\arrowvert {\beta}_{12}^*(z)-b_1^*(z)\arrowvert^2\leq K(z) \frac{1}{m}~. 
   \end{align*}     
\end{lemma}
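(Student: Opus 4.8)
The two estimates are of the standard ``martingale increment'' type: both $\bar\beta_1^*(z)-b_n^*(z)$ and $\beta_{12}^*(z)-b_1^*(z)$ are differences of reciprocals of quantities of the form $1 + m^{-1}\operatorname{tr}(L_nL_n^\top\,(\cdot)^{-1})$ versus $1 + m^{-1}\E^*\operatorname{tr}(L_nL_n^\top\,(\cdot)^{-1})$, and the plan is to linearise the reciprocal, bound the two factors $\bar\beta_1^*(z)$, $b_n^*(z)$ (resp.\ $\beta_{12}^*(z)$, $b_1^*(z)$) by $|z|/\Im(z)$ using their defining formulae \eqref{ar11a}--\eqref{ar13}, and then reduce to a variance estimate for a single trace of a resolvent around its $\E^*$-expectation. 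Concretely, for the first inequality write
\begin{align*}
\bar\beta_1^*(z)-b_n^*(z) = \bar\beta_1^*(z)\,b_n^*(z)\,\frac{1}{m}\Big(\E^*\operatorname{tr}\big(L_nL_n^\top D_1^*(z)^{-1}\big)-\operatorname{tr}\big(L_nL_n^\top D_1^*(z)^{-1}\big)\Big),
\end{align*}
so that $\E|\bar\beta_1^*(z)-b_n^*(z)|^2 \le (|z|/\Im z)^4 m^{-2}\,\E\big|\operatorname{tr}(L_nL_n^\top D_1^*(z)^{-1}) - \E^*\operatorname{tr}(L_nL_n^\top D_1^*(z)^{-1})\big|^2$.

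\textbf{Main step: the variance of the trace.} The crux is to show that this last centred trace has second moment of order $O(m)$, giving the overall bound $O(1/m)$. This is done exactly as in the proof of \eqref{eq: 5.8} in Section \ref{sec54}: expand $D_1^*(z)^{-1}-\E^*[D_1^*(z)^{-1}]$ as the telescoping martingale-difference sum $\sum_{j=2}^m(\E_j^*-\E_{j-1}^*)D_1^*(z)^{-1}$, replace $D_1^*(z)^{-1}$ by $D_{1j}^*(z)^{-1}$ inside each increment (a perturbation which vanishes under $\E_j^*-\E_{j-1}^*$ since $D_{1j}^*$ does not depend on $X_j^*$), apply the Sherman--Morrison identity so that each increment becomes $-(\E_j^*-\E_{j-1}^*)\big(\beta_{1j}^*(z)\,D_{1j}^*(z)^{-1}r_j^*r_j^{*\top}D_{1j}^*(z)^{-1}\big)$, and use orthogonality of the $m-1$ increments with respect to $\E^*$. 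Each term is bounded using $|\beta_{1j}^*(z)|\le|z|/\Im(z)$, $\|D_{1j}^*(z)^{-1}\|_{S_\infty}\le 1/\Im(z)$, $\|L_n\|_{S_\infty}\le\alpha$ and the fact that the $X_i$ have $q'=O(q)=O(m)$ bounded components, so that $\E\|X_1\|_2^4 = O(m^2)$; summing $m$ such $O(m)$-terms and dividing by $m^2$ in the telescoping count yields $\E|\operatorname{tr}(L_nL_n^\top D_1^*(z)^{-1}) - \E^*\operatorname{tr}(\cdot)|^2 = O(m)$, hence $\E|\bar\beta_1^*(z)-b_n^*(z)|^2 = O(1/m)$.

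\textbf{The second inequality.} For $\E|\beta_{12}^*(z)-b_1^*(z)|^2$ the argument is identical after replacing $D_1^*(z)$ by $D_{12}^*(z)$ (and the scalar $\bar\beta_1^*$ by $\beta_{12}^*$ defined in \eqref{ar20}, with $b_1^*$ from \eqref{ar13}): write $\beta_{12}^*(z)-b_1^*(z) = \beta_{12}^*(z)\,b_1^*(z)\,m^{-1}\big(\E^*\operatorname{tr}(L_nL_n^\top D_{12}^*(z)^{-1}) - \operatorname{tr}(L_nL_n^\top D_{12}^*(z)^{-1})\big)$, bound the two prefactors by $|z|/\Im(z)$, and invoke the same telescoping-plus-Sherman--Morrison variance computation, now decomposing $D_{12}^*(z)^{-1}-\E^*[D_{12}^*(z)^{-1}]$ over the indices $j\in\{3,\dots,m\}$. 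No new difficulty arises; the only bookkeeping point is that the sum runs over $m-2$ rather than $m-1$ indices, which does not affect the order. The main obstacle in the whole proof is really just the clean execution of the martingale variance bound, which however is entirely parallel to material already carried out for \eqref{eq: 5.8}, so one may safely abbreviate by pointing to that computation.
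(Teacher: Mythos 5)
Your treatment of the first inequality is correct, and it is in fact a more direct route than the paper's: since both $\bar\beta_1^*(z)$ and $b_n^*(z)$ have only the normalized trace $m^{-1}\tr(L_nL_n^\top D_1^*(z)^{-1})$ (resp.\ its $\E^*$-expectation) in their denominators, the identity $\bar\beta_1^*-b_n^*=\bar\beta_1^*\,b_n^*\cdot m^{-1}(\E^*\tr(\cdot)-\tr(\cdot))$ is valid, and the martingale/Sherman--Morrison variance bound (exactly as in \eqref{eq: 5.8} and \eqref{hd62}) gives $\E|m^{-1}\tr(\cdot)-m^{-1}\E^*\tr(\cdot)|^2=O(1/m)$ with no use of the quadratic-form concentration. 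The paper instead splits $\bar\beta_1^*-b_n^*=(\bar\beta_1^*-\beta_1^*)+(\beta_1^*-b_n^*)$ by Minkowski and controls both pieces through $\E|\gamma_1^*(z)|^2$ and Proposition \ref{lemma: formel 2.1a}, which yields the weaker-looking bound $O(1/m+m/n)$; your route avoids that machinery for this piece.

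The second inequality, however, contains a genuine gap. By \eqref{ar20} the denominator of $\beta_{12}^*(z)$ is $1+r_1^{*\top}D_{12}^*(z)^{-1}r_1^*$, a \emph{quadratic form}, not a trace, while $b_1^*(z)$ in \eqref{ar13} has $1+m^{-1}\E^*\tr(L_nL_n^\top D_{12}^*(z)^{-1})$. The identity you write, with $m^{-1}(\E^*\tr(\cdot)-\tr(\cdot))$ as the numerator, is therefore false; the correct numerator is
\begin{align*}
\frac{1}{m}\E^*\tr\big(L_nL_n^\top D_{12}^*(z)^{-1}\big)-r_1^{*\top}D_{12}^*(z)^{-1}r_1^*
=\Big[\frac{1}{m}\E^*\tr(\cdot)-\frac{1}{m}\tr(\cdot)\Big]+\Big[\frac{1}{m}\tr\big(L_nL_n^\top D_{12}^*(z)^{-1}\big)-r_1^{*\top}D_{12}^*(z)^{-1}r_1^*\Big].
\end{align*}
Your martingale argument handles only the first bracket. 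The second bracket is the fluctuation of a \emph{non-centered} quadratic form: $r_1^*$ and $D_{12}^*$ are not independent because of possible ties in the bootstrap sample, and conditionally on the data the components of $X_1^*$ are neither independent nor standardized, so Lemma B.26 of Bai--Silverstein is not applicable. Controlling it is precisely the content of Proposition \ref{lemma: formel 2.1a} (applied with $p=2$ to the matrix $C^*=L_n^\top D_{12}^*(z)^{-1}L_n$), which gives $\E|\cdot|^2\le K(1/m+m/n)$ and hence $O(1/m)$ only under $m=o(\sqrt n)$. So "no new difficulty arises" is not accurate: the whole difficulty of the second estimate lies in this term, and your proposal omits it.
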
    
\begin{proof}
  Note that by Proposition \ref{lemma: formel 2.1a}, we have
  \begin{align*}
      \E \big | \bar{\beta}_j^*(z) -{\beta}_j^*(z)   \big |^2  
      & \leq \E  \bigg | 
\frac{r_j^{*^\top}{D_j^*}(z)^{-1}r_j^* - 
{1\over m } \tr\big(L_n L_n^{^\top}{D_j^*}(z)^{-1}) }{(1+r_j^{*^\top}{D_j^*}(z)^{-1}r_j^*) (1+{1\over m }\tr\big(L_n L_n^{^\top}{D_j^*}(z)^{-1}\big)} \bigg |^2    \\
& \leq \Big ( { | z | \over \Im (z)} \Big )^2
\E  \Big | 
{r_j^{*^\top}{D_j^*}(z)^{-1}r_j^* - 
{1\over m } \tr\big(L_n L_n^{^\top}{D_j^*}(z)^{-1}) } \Big |^2 \\
& \leq  K(z) \Big ( {1 \over m  }  + {m \over n  } \Big )  \leq  K(z)  {1 \over m  } ~. 
  \end{align*}
  Moreover,
  \begin{align*}
      \E\arrowvert {\beta}_1^*(z)-b_n^*(z)\arrowvert^2   & = \E |  {\beta}_1^*(z) b_n^*(z)) \gamma_1^*(z) |^2  \\
      & \leq
      \Big ( { | z | \over \Im (z)} \Big )^2 
      \E |   \gamma_1^*(z) |^2 \leq K(z) \Big ( {1 \over m  }  + {m \over n  } \Big ) \leq  K(z)  {1 \over m  },
  \end{align*}
  which follows as in \eqref{eq: bound gamma}, where use Proposition \ref{lemma: formel 2.1a} instead of Proposition \ref{lemma: formel 2.1} because of the different truncation scheme. The first assertion now follows by Minkowskii's inequality.  The second inequality is obtained analogously.
\end{proof}

\subsection{Proof of Proposition \ref{revprop1}}

\label{proofrevprop1}

\begin{proof}  Note that $M_n ^*$ and  $\widehat {M}_n^{*} $ coincide on ${\cal C}_n$.   By the same calculations as on pages 588-589 \cite{baisil2004} we have 
\begin{align}
\nonumber 
 \E^* \big [ \widehat {M}_n^{*}(z) \big ] 
 &=   q ( \E^* \underline{m}_n^*(z)  -  \underline{\tilde m}_n^0(z))   \\
\label{rev41}
& = -\underline{\tilde m}_n^0(z) {q\over m}  m  A_n^*(z) \Big [ 1 - {q \over m} \E^* \underline{ m}_n^*(z) \underline{\tilde m}_n^0(z) \int \frac{t^2 \, d \mu^{\tilde \Sigma_n} (t) }{(1 + t  \E^* \underline{m}_n^*(z) )(1 + t \underline{\tilde m}_n^0(z))}   \Big ]^{-1} , 
\end{align}
 where
\begin{align}
\begin{split}
A_n^*(z) & = {q\over m}  \int \frac{  d \mu^{\tilde \Sigma_n}  (t)  }{1 + t \E^* \underline{m}_n^*(z)}
- {q\over m}  + z \E^* \underline{m}_n^*(z) (z) + 1 \\
&
= -\E^* \underline{m}_n^*(z) \left( -z - \frac{1}{\E^* \underline{m}_n^*(z) } + {q \over m} \int \frac{t \,  d \mu^{\tilde \Sigma_n}  (t)  }{1 + t \E^*\underline{m}_n^*(z)} \right). 
\end{split}
\label{r5}
\end{align} 
Recall that $\tilde{ \underline{m}}^0_n = \underline{m}_{p/n, \mu^{\tilde \Sigma_n}}^0(z) $ is the solution of the Mar\u{c}enko-Pastur equation \eqref{eq: mbarnull} for  $\mu^{\tilde \Sigma_n}$. 
As $$\mu^{\tilde \Sigma_n} \Rightarrow H$$ and ${\cal C}$ lies outside the compact support of $H$, the sequence $\tilde{ \underline{m}}^0_n$ converges uniformly on ${\cal C}_n$ to $ \underline{m}_H^0$ and by Lemma \ref{lemrev0}, the sequence 
$(\E^* \underline{m}_n^* )_{n \in \N} $
has the same uniform limit in probability. 
Therefore we obtain by Lemmas  \ref{lemrev0} , \ref{lemrev2}  and the same arguments as given on page 585 in \cite{baisil2004} that the sequence 
$$
 1 - {q \over m} \E^* \underline{ m}_n^*(z) \underline{\tilde m}_n^0(z) \int \frac{t^2 \, d \mu^{\tilde \Sigma_n} (t) }{(1 + t  \E^* \underline{m}_n^*(z) )(1 + t \underline{\tilde m}_n^0(z))}   
$$
is uniformly bounded away from $0$ in probability and its inverse converges uniformly to 
\begin{align}
    \label{rev40}
\Big [ 
 1 - c 
 \int \frac{ (\underline{m}_{c,H}^0(z) )^2 t^2 \, d {H}(t) }{(1 + t  \underline{m}_{c,H}^0(z) )^2 }   \Big]^{-1} .
\end{align}
Hence it is sufficient to prove uniform convergence of the sequence $(qA_n^*(z) )_{n\in \N}$. To this aim we note that we  obtain from \eqref{r3}
\begin{align}
\begin{split}
    m A_n^*(z)= 
& m \Big ( {q \over m } \int { d \mu^{\tilde \Sigma_n}  (t) \over 1 + t \E^*\underline{m}^*_n(z)} + z {q \over m }  \E^*{m}^*_n(z) \Big )
\\
&
= m  \E^* \bigg \{ \beta_1^*(z)  \Big [ r_1^{\ast} D_1^*(z) ^{-1}
    \big ( \E^* \underline{m}_{n}^* (z) \tilde \Sigma_n  
    + I_q \big )^{-1}  r_1^*
   \\
   & - \frac{1}{m} 
\E^* \operatorname{tr} \big \{  \big ( \E^* \underline{m}_{n}^*(z)\tilde \Sigma_n  
    + I_q \big )^{-1}\tilde \Sigma_n  D^*(z)^{-1}
    \big \}
     \Big ] \bigg \} 
     \end{split} ~. 
     \label{r6}
\end{align}
Recalling the definition of 
$\tilde b _n^* (z)$, $\tilde  \gamma_1^* (z) $ in \eqref{rev1000} and the identity \eqref{r4}
we now investigate the difference
\begin{align*}
  &  \E^* \big[ \operatorname{tr} \big \{  \big ( \E^* \underline{m}_{n}^*(z) \tilde  \Sigma_n 
    + I_q \big )^{-1}\tilde  \Sigma_n D_1 ^*(z)^{-1}
    \big \} \big ] - \E^* 
    \big[  \operatorname{tr} \big \{  \big ( \E^* \underline{m}_{n}^*(z) \tilde  \Sigma_n
    + I_q \big )^{-1} \tilde  \Sigma_n D^*(z)^{-1}
    \big \} \big ]  \\
    &= \E^* \big [ \beta_1^*(z)   {\rm tr } 
     \big \{  \big ( \E^* \underline{m}_{n}^*(z) \tilde  \Sigma_n
    + I_q \big )^{-1} \tilde \Sigma_n D_1^*(z)^{-1} r_1^* {r_1^*}^\top D_1^*(z)^{-1}  \big \}  \big  ] \\
    &=    \tilde  b_n^*(z) 
    \E^* \big [ ( 1- \beta_1^*(z)   \tilde   \gamma_1^*(z) ) 
  {r_1^*}^\top   D_1^*(z)^{-1}  \big ( \E^* \underline{m}_{n}^*(z) \tilde  \Sigma_n
    + I_q \big )^{-1} \tilde \Sigma_n D_1^*(z)^{-1} r_1^* 
    \big ]
\end{align*}
where we used the Sherman-Morrison formula  and \eqref{r4} for the last identity. 
By Lemma  \ref{lemrev2}, Lemma   \ref{lemrev1}, \eqref{det62aa} and \eqref{det62}  and the fact that the spectral norm of $D_1^*(z)^{-1}$ is uniformly bounded on ${\cal A}_n$ it follows that 
\begin{align*}
 \sup_{z \in {\cal C}_n } & \Big |  
    \E^* \big [ \mathds{1}_{{\cal A}_n}  \beta_1^*(z)   \tilde \gamma_1^*(z)  
  {r_1^*}^\top   D_1^*(z)^{-1}  \big ( \E^* \underline{m}_{n}^*(z) \tilde  \Sigma_n
    + I_q \big )^{-1} \tilde \Sigma_n D_1^*(z)^{-1} r_1^* 
    \big ]    \Big | \\
    & \leq  C \sup_{z \in {\cal C}_n }  \| \big ( \E^* \underline{m}_{n}^*(z) \tilde  \Sigma_n
    + I_q \big )^{-1}  \|_{S_\infty}   
    \E^*   \big [ \mathds{1}_{{\cal A}_n} | \tilde    \gamma_1^*(z)  | 
    \big ]  \\ 
    &=o_{\mathbb{P} }(1)
\end{align*}
Similarly, on ${\cal A}_n^c $ we use the estimates 
$$
\|r_1^*\|^2 \leq \delta_n^2m ~, ~~\sup_{z \in {\cal C}_n} \| D_1^*(z)^{-1} \|_{S_\infty} \lesssim {m^{1+ \alpha}}~, ~~ \sup_{z \in {\cal C}_n} | \beta_1^*(z) | \lesssim { \sup_{z \in {\cal C}_n}  |z| \over |\Im (z)|  } \lesssim  {
m^{1+ \alpha}}, 
$$
and obtain 
\begin{align*}
 \sup_{z \in {\cal C}_n } & \Big |  
    \E^*  \mathds{1}_{{\cal A}_n^c} \big [ \beta_1^*(z)    \tilde  \gamma_1^*(z)  
  {r_1^*}^\top   D_1^*(z)^{-1}  \big ( \E^* \underline{m}_{n}^*(z) \tilde  \Sigma_n
    + I_q \big )^{-1} \tilde \Sigma_n D_1^*(z)^{-1} r_1^* 
    \big ]    \Big | \\
    & \lesssim \sup_{z \in {\cal C}_n }  \Big \| \big ( \E^* \underline{m}_{n}^*(z) \tilde  \Sigma_n
    + I_q \big )^{-1}  \Big \|_{S_\infty}   
    \E^*   \big [ \mathds{1}_{{\cal A}_n^c }  
    \big ]   {m^{10+4 \alpha}} \delta_n^4 \\
    &=o_{\mathbb{P}}(1)
\end{align*}
Consequently, it follows from \eqref{r4b} in Lemma \ref{lemrev1} that 
\begin{align}
\nonumber 
    \E^*& \big[  \operatorname{tr} \big \{  \big ( \E^* \underline{m}_{n}^*(z) \tilde  \Sigma_n 
    + I_q \big )^{-1}\tilde  \Sigma_n D_1 ^*(z)^{-1}
    \big \} \big ] \\
    & ~~~~~~~~~~~~~~~~~~~~~~~~
    - \E^* 
    \big[  \operatorname{tr} \big \{  \big ( \E^* \underline{m}_{n}^*(z) \tilde  \Sigma_n
    + I_q \big )^{-1} \tilde  \Sigma_n D^*(z)^{-1}
    \big \} \big ] 
  \label{r9} \\\nonumber
    &=    \tilde  b_n^*(z) 
    \E^* \big [ 
  {r_1^*}^\top   D_1^*(z)^{-1}  \big ( \E^* \underline{m}_{n}^*(z) \tilde  \Sigma_n
    + I_q \big )^{-1} \tilde \Sigma_n D_1^*(z)^{-1} r_1^* 
    \big ]\\
    &\ \ \ \ \ \ + o_{\mathbb{P},{\rm unif}}
 (1) ~,\nonumber
 \end{align}
 where $X_n(z) = o_{\mathbb{P},{\rm unif}} (1)$ means that $\sup_{z \in {\cal C}_n} | X_n(z) | = o_{\mathbb{P}} (1)$. 
Using this approximation in \eqref{r6} yields

 \begin{align}
\begin{split}
     m A_n^*(z)  & = 
 m   \Big ( {q \over m } \int { d \mu^{\tilde \Sigma_n}  (t) \over 1 + \E^*\underline{m}^*_n(z)} + z {q \over m }  \E^*{m}^*_n(z) \Big )
\\
&
= m  \E^* \bigg \{ \beta_1^*(z)  \Big [ r_1^{\ast} D_1^*(z) ^{-1}
    \big ( \E^* \underline{m}_{n}^* (z) \tilde \Sigma_n  
    + I_q \big )^{-1}  r_1^*
   \\
   & - \frac{1}{m} 
\E^* \operatorname{tr} \big \{  \big ( \E^* \underline{m}_{n}^*(z)\tilde \Sigma_n  
    + I_q \big )^{-1}\tilde \Sigma_n  D^*_1(z)^{-1}
    \big \}
     \Big ]  \\
     & + 
     \tilde   b _n^*(z) 
   \E^* [ \beta_1^*(z) ] \E^* \big [ 
  {r_1^*}^\top   D_1^*(z)^{-1}  \big ( \E^* \underline{m}_{n}^*(z) \tilde  \Sigma_n
    + I_q \big )^{-1} \tilde \Sigma_n D_1^*(z)^{-1} r_1^* 
    \big ]
     \bigg \} + o_{\mathbb{P},{\rm unif}}(1)~.
     \end{split}
     \label{r6a}
\end{align}
Replacing $\beta_1^*(z) =  \tilde  b _n^*(z) -  \tilde b _n^*(z)^2  \tilde \gamma_1^*(z)  + \beta_1^*(z)  \tilde   b _n^*(z)^2  \tilde \gamma_1^*(z)^2 $, where $ \tilde  \gamma_1^*(z)$  and $ \tilde  b _n^*(z)$ are defined in \eqref{rev1000}, we get for the first part
\begin{align}
\nonumber 
 &   m \E^* \bigg \{ \beta_1^*(z)  \Big [ r_1^{\ast} D_1^*(z) ^{-1}
    \big ( \E^* \underline{m}_{n}^* (z)  \tilde \Sigma_n 
    + I_q \big )^{-1}  r_1^* \\
    \nonumber
 & ~~~~~~~~   - \frac{1}{m} 
\E^* \operatorname{tr} \big \{  \big ( \E^* \underline{m}_{n}^*(z) \tilde \Sigma_n 
    + I_q \big )^{-1} \tilde \Sigma_n  D^*_1(z)^{-1}
    \big \}
     \Big ] \bigg \}
     \\
     \nonumber 
      &  =   \tilde   b_n^*(z)  m \E^*  \Big [ r_1^{\ast} D_1^*(z) ^{-1}
    \big ( \E^* \underline{m}_{n}^* (z)  \tilde \Sigma_n 
    + I_q \big )^{-1}  r_1^*
     \\
    \nonumber
 & ~~~~~~~~    
    - \frac{1}{m} 
\E^* \operatorname{tr} \big \{  \big ( \E^* \underline{m}_{n}^*(z) \tilde \Sigma_n 
    + I_q \big )^{-1} \tilde \Sigma_n  D^*_1(z)^{-1}
    \big \}
     \Big ]  
     \\
     \nonumber 
     &- m   \tilde  b _n^*(z)^2 \E^* \Big \{   \tilde  \gamma_1^*(z) \Big [ {r_1^{\ast}}^{\top} D_1^*(z) ^{-1}
    \big ( \E^* \underline{m}_{n}^* (z)  \tilde \Sigma_n 
    + I_q \big )^{-1}  r_1^*
      \\
    \nonumber
 & ~~~~~~~~   -  \frac{1}{m} 
\E^* \operatorname{tr} \big \{  \big ( \E^* \underline{m}_{n}^*(z) \tilde \Sigma_n 
    + I_q \big )^{-1} \tilde \Sigma_n  D^*_1(z)^{-1}
    \big \}
     \Big ]
    \Big  \} \\ 
    \nonumber 
     &  +   \tilde  b _n^*(z)^2 
   m \E^* \bigg \{  \beta_1^*(z)   \tilde  \gamma_1^*(z)^2 \bigg [ {r_1^{\ast}}^{\top} D_1^*(z) ^{-1}
    \big ( \E^* \underline{m}_{n}^* (z)  \tilde \Sigma_n 
    + I_q \big )^{-1}  r_1^* 
    \\
    \nonumber 
    &   -  {1 \over m }
\E^* \bigg \{ \operatorname{tr} \big \{  \big ( \E^* \underline{m}_{n}^*(z) \tilde \Sigma_n 
    + I_q \big )^{-1} \tilde \Sigma_n  D^*_1(z)^{-1}
    \big \}
      \bigg \} \bigg ] \bigg \}  \\
      \nonumber 
     & =     
     - m  \tilde  b _n^*(z)^2 \E^* \bigg \{   \tilde  \gamma_1^*(z) \Big [ {r_1^{\ast}}^{\top} D_1^*(z) ^{-1}
    \big ( \E^* \underline{m}_{n}^* (z)  \tilde \Sigma_n 
    + I_q \big )^{-1}  r_1^*
     \Big ]
    \bigg \} \\ 
    \nonumber 
          &  +   \tilde  b _n^*(z)^2 
     \bigg( m \E^* \bigg \{  \beta_1^*(z)   \tilde  \gamma_1^*(z)^2 \bigg [ {r_1^{\ast}}^{\top} D_1^*(z) ^{-1}
    \big ( \E^* \underline{m}_{n}^* (z)  \tilde \Sigma_n 
    + I_q \big )^{-1}  r_1^* 
    \\
    \nonumber 
    &   -  {1 \over m }
\E^* \bigg \{ \operatorname{tr} \big \{  \big ( \E^* \underline{m}_{n}^*(z) \tilde \Sigma_n 
    + I_q \big )^{-1} \tilde \Sigma_n  D^*_1(z)^{-1}
    \big \}
      \bigg \} \bigg ] \bigg \} \bigg ) 
     + o_{\mathbb{P},{\rm unif}} (1)  \\ 
     \label{rev20a}     
     & =        - m  \tilde  b _n^*(z)^2 \E^* \bigg \{   \tilde \gamma_1^*(z) \Big [ {r_1^{\ast}}^{\top} D_1^*(z) ^{-1}
    \big ( \E^* \underline{m}_{n}^* (z)  \tilde \Sigma_n 
    + I_q \big )^{-1}  r_1^*
     \Big ]
    \bigg \} \\
      \label{rev20b} 
          &  +  \tilde  b _n^*(z)^2 
     \bigg( m  \E^* \bigg \{  \beta_1^*(z)   \tilde  \gamma_1^*(z)^2 \bigg [ {r_1^{\ast}}^{\top} D_1^*(z) ^{-1}
    \big ( \E^* \underline{m}_{n}^* (z)  \tilde \Sigma_n 
    + I_q \big )^{-1}  r_1^* 
    \\
     \nonumber  
    &   -  {1 \over m }
\operatorname{tr} \big \{  \big ( \E^* \underline{m}_{n}^*(z) \tilde \Sigma_n 
    + I_q \big )^{-1} \tilde \Sigma_n  D^*_1(z)^{-1}
    \big \}
       \bigg ] \bigg \} \bigg ) 
       \nonumber 
      \\ 
     &+  \tilde  b _n^*(z)^2 {\rm Cov}^* \Big ( \beta_1^*(z)  \tilde   \gamma_1^*(z)^2, {\tr}  \big ( D_1^*(z) ^{-1}
     ( \E^* \underline{m}_{n}^* (z)  \tilde \Sigma_n 
    + I_q \big )^{-1} \tilde \Sigma_n \big) \Big )  + o_{\mathbb{P},{\rm unif}} (1)
    \label{rev20c}
\end{align}
uniformly for $z \in {\cal C}_n$, 
where we have used Lemma \ref{lemrev1} and  \ref{lemrev3}. We now prove that the  term \eqref{rev20c} is of order $o_{\P,{\rm unif}}(1)$.
Note first that by the Cauchy-Schwarz inequality 
\begin{align}
\nonumber 
&  
\sup_{z \in {\cal C}_n} \Big |   \tilde  b _n^*(z)^2 {\rm Cov}^* \Big ( \beta_1^*(z)   \tilde  \gamma_1^*(z)^2, {\tr}  \big ( D_1^*(z) ^{-1}
     ( \E^* \underline{m}_{n}^* (z)  \tilde \Sigma_n 
    + I_q \big )^{-1} \tilde \Sigma_n \big) \Big )   \Big |  \\
    \label{rev20}
    & ~~ \leq   \sup_{z \in {\cal C}_n}  |  \tilde  b_n^*(z)| \big  \{   \E^* [ | \beta_1^*(z)   |^2 |  \tilde   \gamma_1^*(z)| ^4 
    ] 
   \big  \}^{1/2} \\ 
    & ~~ \times    \sup_{z \in {\cal C}_n}  |  \tilde  b_n^*(z)|
    \big \{ \E^* \big | {\tr}  \big ( D_1^*(z) ^{-1}
     ( \E^* \underline{m}_{n}^* (z)  \tilde \Sigma_n 
    + I_q \big )^{-1}\tilde \Sigma_n  \big) 
       \nonumber \\
    & ~~~~~~~~~~~~~~~~~~~~~~~~~-
    \E^* \big [ {\tr}  \big ( D_1^*(z) ^{-1}
     ( \E^* \underline{m}_{n}^* (z)  \tilde \Sigma_n 
    + I_q \big )^{-1} \tilde \Sigma_n \big) \big ] \big | ^2 \big \}^{1/2}
    \nonumber
\end{align}
First, note that  by Lemma \ref{lemrev1} it follows that 
$$
\sup_{z\in  {\cal C}_n} | \tilde  b _n^*(z) |  =O_{\mathbb{P}} (1)~.
$$
Moreover, it is easy to see that 
$  \E^* [ | \beta_1^*(z)   |^2 |   \tilde  \gamma_1^*(z) |^4  \mathds{1}_{{\cal A}_n^c} 
    ]  = o_{\P, {\rm unif} } (1)$.  On the other hand on ${\cal A}_n$ it follows from \eqref{det62}  that 
$$
\sup_{z \in {\cal C}_n}
\E^* [ | \beta_1^*(z)   |^2  \tilde  \gamma_1^*(z)^4|  \mathds{1}_{{\cal A}_n} 
    ]  \lesssim 
    \sup_{z \in {\cal C}_n}
    \E^* [ |  |   \tilde  \gamma_1^*(z)|  ^4\mathds{1}_{{\cal A}_n} 
    ]
$$
Recalling the definition of $\varepsilon_1^* (z) $ in \eqref{ang3}, Burkholder's inequality,  we obtain  the bound (for any $\ell \in \N$)
\begin{align*}
\sup_{z \in {\cal C}_n} & \E^* \big [    |  \tilde  \gamma_1^*(z)    |^4   \big  ] \lesssim \sup_{z \in {\cal C}_n} \E^* \big [    |   \gamma_1^*(z)  - \varepsilon_1^* (z)   |^4  \big ]
+ \sup_{z \in {\cal C}_n}   \E^* \big [ |\varepsilon^*_1 (z) |^4 \big ]   \\
&  ~~~~~~~~~~~~~~~~~~
~~~~~~~~~~~~~~~~~~~~~~~~~~
    ~~~~~~~~~~~~
    ~~~~~~~~~~~~
    ~~~~~~~~~~~~ 
+ \sup_{z \in {\cal C}_n} \Big | 
{1 \over m} {\rm tr} ( \E^*\big  [  \mathds{1}_{{\cal A}_n^c}   \tilde \Sigma_n D_1^*(z)^{-1}  \big ] \Big |^4
\\
 &  \lesssim \sup_{z \in {\cal C}_n} {1 \over m^4 } \E^* \Big | \sum_{j=2}^m (\E^*_{j} -\E^*_{j-1} ) \beta_{1j}^*(z) {r_j^*}^\top  D_{1j}^*(z)^{-1}\tilde \Sigma_n  
 D_{1j}^*(z)^{-1}{r_j^*}
     \Big |^4 + \sup_{z \in {\cal C}_n}   \E^* \big [ |\varepsilon^*_1 (z) |^4 \big ] \\
     &  ~~~~~~~~~~~~~~~~~~
    ~~~~~~~~~~~~~~~~~~~~~~~~~~
    ~~~~~~~~~~~~
    ~~~~~~~~~~~~
    ~~~~~~~~~~~~
    ~~~~~~~~~~~~
    ~~~~~~~~~~~~
    ~~~~~~~~~~~
      + O_{\P} \big ( m^{ 4(1+\alpha)-\ell}\big ) 
 \\
 &
 \lesssim {1 \over m^2 } \sup_{z \in {\cal C}_n} 
 \E^* \big [ \big |  
 \beta_{12}^*(z) {r_2^*}^\top  D_{12}^*(z)^{-1}\tilde \Sigma_n 
 D_{12}^*(z)^{-1}
{r_2^*}
\big  |^4 \big ] 
    +  \sup_{z \in {\cal C}_n}    \E^* \big [ |\varepsilon^*_1 (z) |^4 \big ] \\
&  ~~~~~~~~~~~~~~~~~~
    ~~~~~~~~~~~~~~~~~~~~~~~~~~
    ~~~~~~~~~~~~
    ~~~~~~~~~~~~
    ~~~~~~~~~~~~
    ~~~~~~~~~~~~
    ~~~~~~~~~~~~
    ~~~~~~~
 + O_{\P} \big ( m^{ 4(1+\alpha)-\ell}\big ) 
\\
 &
 \lesssim {1 \over m^2 } \sup_{z \in {\cal C}_n} 
 \E^* \big [ \big |  
 \beta_{12}^*(z) {r_2^*}^\top  D_{12}^*(z)^{-1}\tilde \Sigma_n 
 D_{12}^*(z)^{-1}
{r_2^*}
\big  |^4   \mathds{1}_{{\cal A}_n}  \big ] 
+  \sup_{z \in {\cal C}_n}    \E^* \big [ |\varepsilon^*_1 (z) |^4 \big ] \\
     &  ~~~~~~~~~~~~~~~~~~
    ~~~~~~~~~~~~~~~~~~~~~~~~~~
    ~~~~~~~~~~~~
    ~~~~~~~~~~~~
    ~~~~~~~~~~~~
    ~~~~~~~~~~~~
    ~~~~~~~~~~~~
    ~~~~~~~
      + O_{\P} \big ( m^{ 4(1+\alpha)-\ell}\big ) 
\\
& = O_{\P} \Big ( {1\over m^2} \Big) + O_{\P} \big ( m^{ 4(1+\alpha)-\ell} \big )  + \sup_{z \in {\cal C}_n}   \E^* \big [ |\varepsilon^*_1 (z) |^4 \big ] ,
\end{align*}
where $\beta_{12}^*(z) $ is defined in \eqref{ar20}. Note that on the set ${\mathcal{A}_n} $, $\| r_2^*\|^2$ is bounded (see \eqref{det62aa}) and  we have $ \|  D^*_{12} (z)^{-1} \|_{S_\infty} < C$ and $  |  
 \beta_{12}^*(z) | \leq C $   (which follows by similar calculations as used for the derivation of \eqref{det62}). Moreover, by Proposition   \ref{lemrev5}, we have 
 $$
 \sup_{z \in {\cal C}_n}   \E^* \big [ |\varepsilon^*_1 (z) |^4 \big ]  = O_{\mathbb P}
 \Big( {\delta_m^{4} 
 \over m} + 
 \frac{m}{n}
 \Big),
 $$
 which gives 
 \begin{align}
     \label{rev100}
     \sup_{z \in {\cal C}_n} & \E^* \big [    |  \tilde  \gamma_1^*(z)    |^4   \big  ] = 
O_{\P} \Big ( {1\over m^2} \Big) +   O_{\mathbb P}
 \Big( {\delta_m^{4} 
 \over m} + 
 \frac{m}{n}
 \Big)
 \end{align}
 This implies that the first factor in \eqref{rev20} converges to $0$ in probabity. Moreover, we note for later purposes that this also reveals the approximation
 \begin{align}
     \label{rev29a}
   \sup_{z \in {\cal C}_n}  \E^* | \beta_1^* (z)  -  \tilde  b _n^*(z)| = o_{\P} (1) 
 \end{align}
 We now consider  the second factor in  \eqref{rev20} and  introduce the notation
$$
M(z) = ( \E^* \underline{m}_{n}^* (z)  \tilde \Sigma_n 
    + I_q \big )^{-1}\tilde \Sigma_n   . 
    $$
Similar arguments as used in equation (4.7) in \cite{baisil2004} give 
\begin{align}
\label{rev22}
& 
\sup_{z \in {\cal C}_n}  
     \E^* \big | {\tr}  \big ( D_1^*(z) ^{-1}
     M(z)  \big) -  \E^* \big [ {\tr}  \big ( D_1^*(z) ^{-1}
     M(z)  \big) \big ] \big | ^2    \\
     \nonumber
     &   ~~~~~
    =  \sum_{j=2}^m 
    \sup_{z \in {\cal C}_n}    \E^* \big | (\E_j^* - \E_{j-1}^*)  \big [ \beta_{1j}^*(z)  {r_j^*}^\top D_{1j} ^*(z)^{-1} M(z) D_{1j} ^*(z)^{-1} {r_j^*} \big ]\big |^2  \\
       &   ~~~~~ 
          \nonumber
          \leq 8      
     \sum_{j=2}^m  \bigg \{
    \sup_{z \in {\cal C}_n}    \E^* \big | \bar \beta_{1j}^* (z) \big \{ 
     {r_j^*}^\top D_{1j}^*(z)^{-1} M(z) D_{1j}^*(z)^{-1} {r_j^*}  \\
        \nonumber
        &  ~~~~~~~~ 
        -{1\over m} \tr \big ( \tilde  \Sigma_n D_{1j}^*(z)^{-1} M(z) D_{1j}^*(z)^{-1}
     \big )  \} 
     \big |^2
      \\
         \nonumber
       &   ~~~~~ + 
 \sup_{z \in {\cal C}_n}    \E^* \big [ | \bar \beta_{1j}^*(z)  -  \beta_{1j}^*(z) |^2   \big |  {r_j^*}^\top D_{1j}^*(z)^{-1} M(z) D_{1j}^*(z)^{-1} {r_j^*}  
     \big |^2 \big ] 
       \bigg \} , 
\end{align}
where
 $\bar \beta_{1j}$ is defined in \eqref{ar20a}.
Proposition   \ref{lemrev5} gives for the first  sum the order $O_{\P}(1)$. For the second sum we note that it sufficient to consider the estimates on ${\cal A}_n$. The second factor in this term is stochastically bounded (on ${\cal A}_n$) and it remains to consider the first factor. Here we use the identity 
$\beta_{1j}^* (z)  - 
\bar \beta_{1j}^* (z) =  \beta_{1j}^* (z) \bar  \beta_{1j}^* (z) \varepsilon_{1j}^*(z)$ and obtain 
\begin{align*}
   \sup_{z \in {\cal C}_n}    \E^* \big [ \mathds{1}_{{\cal A}_n} | \bar \beta_{1j}^*(z)  -  \beta_{1j}^*(z) |^2  \big ]  & \lesssim 
   \sup_{z \in {\cal C}_n}    \E^* \big [ \mathds{1}_{{\cal A}_n} | \varepsilon_{1j}^*(z)|^2 \big ] = O_{\P} \Big ( {1 \over m} \Big )
\end{align*}
by Proposition  \ref{lemrev5} and \eqref{det62}.  This shows that \eqref{rev22} is stochastically bounded,
that is 
\begin{align}
 \label{rev29}  
 \sup_{z \in {\cal C}_n}  
     \E^* \big | {\tr}  \big ( D_1^*(z) ^{-1}
     M(z)  \big) -  \E^* \big [ {\tr}  \big ( D_1^*(z) ^{-1}
     M(z)  \big) \big ] \big | ^2  = O_{\P}(1)
\end{align}
 This gives  for the term \eqref{rev20c}
\begin{align*}
 \sup_{z \in {\cal C}_n}  | \eqref{rev20c}   |  & = O_{\mathbb P}
 \Big( \Big(  {\delta_m^{4} 
 } + 
 \frac{m^2}{n}
 \Big)^{1/2}\Big )   
= o_{\P} (1) .
\end{align*}
Similarly, using  the Cauchy-Schwarz inequality together with Proposition   \ref{lemrev5}  and \eqref{rev100} we obtain for  the term \eqref{rev20b}
and obtain by 
\begin{align*}
    \sup_{z \in  {\cal C}_n} |  \eqref{rev20b}| & \leq  m  \sup_{z \in  {\cal C}_n} | \tilde  b_n^*(z)|^2   
    \Big (  \sup_{z \in  {\cal C}_n}
           \E^* \big [   |\beta_1^*(z) |^2 | \tilde \gamma_1^*(z)|^4 \big ] \\
           & ~~~~~\times 
           \sup_{z \in  {\cal C}_n} \E^* 
           \Big  [ \big | {r_1^{\ast}}^{\top} D_1^*(z) ^{-1}
    \big ( \E^* \underline{m}_{n}^* (z)  \tilde \Sigma_n 
    + I_q \big )^{-1}  r_1^* 
  \\
  & ~~~~~~~~~~
  -  {1 \over m }
\operatorname{tr} \big \{  \big ( \E^* \underline{m}_{n}^*(z) \tilde \Sigma_n 
    + I_q \big )^{-1} \tilde \Sigma_n  D^*_1(z)^{-1}
    \big \}
       \big |^2  \Big ]  \Big )^{1/2} \\
       & = o_{\P} (1)
\end{align*}
Finally, we  derive a uniform approximation in probability for the term \eqref{rev20a}  we introduce the notaion
$$
H(z) = \big ( \E^* \underline{m}_{n}^* (z)  \tilde \Sigma_n 
    + I_q  \big )^{-1}
$$
and  
observe the decomposition 
\begin{align}
\nonumber
 & m \E^* \Big [ \tilde    \gamma_1^*(z) \big [ {r_1^{\ast}}^{\top} D_1^*(z) ^{-1}
    H(z)   r_1^*
     \big ]
    \Big]  \\
    & \label{rev24}
    ~~~=
   m  \E^* \Big [  \Big (  {r_1^{\ast}}^{\top} D_1^*(z) ^{-1}r_1^{\ast} -
   {1 \over m } \tr \big ( D_1^*(z) ^{-1} \tilde \Sigma_n \big ) \Big ) \\
   \nonumber
   & ~~~~~~~~~~~~~~~~~~~~~~ \times \Big (  {r_1^{\ast}}^{\top} D_1^*(z)^{-1} H(z)  r_1^{\ast} -
   {1 \over m } \tr \big ( D_1^*(z)^{-1} H(z)  \tilde \Sigma_n \big ) \Big )
    \Big]    \\
  & 
  \label{rev26} ~~~ + {1 \over m } {\rm Cov \big ( \tr  ( D_1^*(z) ^{-1} \tilde \Sigma_n ) , \tr (   D_1^*(z) ^{-1} H(z) \tilde \Sigma_n  ) \big ) } \\
   \label{rev25}
    & ~~~
    - \tr \big ( \E^* \big[ D_1^*(z) ^{-1} H(z) \tilde \Sigma_n \big]  \big ) \E^*  \Big \{ 
 {r_1^{\ast}}^{\top} D_1^*(z)^{-1} H(z)  r_1^{\ast}
 \\
\nonumber & ~~~~~~~~~~~~~~~~~~~~~~ - {1 \over m}  \tr \big (  D_1^*(z)^{-1} H(z) \tilde \Sigma_n
\big ) 
    \Big \} \\ 
        & 
         \label{rev27}
         ~~~ + 
    \E^* \Big [ \tr \big (   D_1^*(z) ^{-1}  \tilde \Sigma_n   \big ) \Big \{ 
 {r_1^{\ast}}^{\top} D_1^*(z)^{-1} H(z)  r_1^{\ast}
  - {1 \over m}  \tr \big (  D_1^*(z)^{-1} H(z) \tilde \Sigma_n
\big ) 
    \Big \} \Big ] 
    \\     
        \label{rev28}
             & ~~~ + 
    \E^* \Big [ \tr \big (   D_1^*(z) ^{-1}  H(z) \tilde \Sigma_n   \big ) \Big \{ 
 {r_1^{\ast}}^{\top} D_1^*(z)^{-1}  r_1^{\ast}
  - {1 \over m}  \tr \big (  D_1^*(z)^{-1}  \tilde \Sigma_n
\big ) 
    \Big \} \Big ] ~. 
\end{align}
By Lemma \ref{lemrev3} it follows that the terms \eqref{rev25} - \eqref{rev28}  are of order $o_{\P, {\rm unif}} (1)$.
The  Cauchy-Schwarz inequality and the same arguments as given in the derivation of \eqref{rev29} show  that the term \eqref{rev26} is of order
$o_{\P, {\rm unif}} (1)$
as well, which gives
\begin{align}
\nonumber
 & m \E^* \Big [ \tilde    \gamma_1^*(z) {r_1^{\ast}}^{\top} D_1^*(z) ^{-1}
    H(z)   r_1^*
    \Big]  \\
    & \label{rev24a}
    ~~~=
   m  \E^* \Big [  \Big (  {r_1^{\ast}}^{\top} D_1^*(z) ^{-1}r_1^{\ast} -
   {1 \over m } \tr \big ( D_1^*(z) ^{-1} \tilde \Sigma_n \big ) \Big ) \\
   \nonumber
   & ~~~~~\times \Big (  {r_1^{\ast}}^{\top} D_1^*(z)^{-1} H(z)  r_1^{\ast} -
   {1 \over m } \tr \big ( D_1^*(z)^{-1} H(z)  \tilde \Sigma_n \big ) \Big )
    \Big]  + o_{\P, {\rm unif}} (1). 
    \end{align}
    Using \eqref{rev29a}
    and  this estimate in \eqref{rev20a} yields for \eqref{r6a}
 \begin{align}
    \nonumber
    mA_n^*(z) = 
& m \Big ( {q \over m } \int { d \mu^{\tilde \Sigma_n}  (t) \over 1 + \E^*\underline{m}^*_n(z)} + z {q \over m }  \E^*{m}^*_n(z) \Big )
\\
&
= \tilde    b _n^*(z)^2 
    \E^* \big [ 
  {r_1^*}^\top   D_1^*(z)^{-1}  \big ( \E^* \underline{m}_{n}^*(z) \tilde  \Sigma_n
    + I_q \big )^{-1} \tilde \Sigma_n D_1^*(z)^{-1} r_1^* 
    \big ]
       \label{r6b} \\
     & - \tilde   b _n^*(z)^2
      m  \E^* \Big [  \Big (  {r_1^{\ast}}^{\top} D_1^*(z) ^{-1}r_1^{\ast} -
   {1 \over m } \tr \big ( D_1^*(z) ^{-1} \tilde \Sigma_n \big ) \Big )
    \nonumber     
        \\
   \nonumber
   & ~~~~~\times \Big (  {r_1^{\ast}}^{\top} D_1^*(z)^{-1} H(z)  r_1^{\ast} -
   {1 \over m } \tr \big ( D_1^*(z)^{-1} H(z)  \tilde \Sigma_n \big ) \Big )
    \Big]  + o_{\P, {\rm unif}} (1) \\
    &  = S_n(z) + o_{\P, {\rm unif}} (1) ,  
\end{align}
It now follows from Lemma \ref{lemrev1} and  Proposition  \ref{lemrev5} that the sequence $(S_n)_{n \in \N}$ is uniformly bounded in probability.  Moreover, similar arguments show that the sequence of derivatives on ${\cal C}_n$  is uniformly bounded in probability as well. 
Therefore, the sequence $(\tilde S_n) _{n \in \N}$ is  equicontinuous  on 
${\cal C} \cap \C^+$, where $\tilde S_n$ is the constant continuation of $S_n$ from ${\cal C}_n$ to 
${\cal C} \cap \C^+$.
Consequently, the uniform limit of $(mA_n^*(z))_{n \in \N}$
can be determined pointwise (by the Arzel\`a-Ascoli Theorem). 

This limit can now be obtained by exactly the same arguments as given on page $589$-$592$ in \cite{baisil2004}, where the expectation is replaced by the conditional expectation $\E^*$ and the convergence is correspondingly in probability. This gives 
$$
mA_n^*(z) \to 
-   \int \frac{ (\underline{m}_{c,H}^0(z) )^2 t^2 \, d {H}(t) }{(1 + t  \underline{m}_{c,H}^0(z) )^3 }
\Big [ 
 1 - c 
 \int \frac{ (\underline{m}_{c,H}^0(z) )^2 t^2 \, d {H}(t) }{(1 + t  \underline{m}_{c,H}^0(z) )^2 }   \Big]^{-1}
$$
Combining this limit with the limit in \eqref{rev40} and observing \eqref{rev41} finally gives
$$
\sup_{z \in {\cal C}_n }
\bigg |  \E \big [
\widehat {M}_n^{*}(z) \big ]      
- c  \int \frac{ (\underline{m}_{c,H}^0(z) )^3 t^2 \, d {H}(t) }{(1 + t  \underline{m}_{c,H}^0(z) )^3 }
\Big [ 
 1 - c 
 \int \frac{ (\underline{m}_{c,H}^0(z) )^2 t^2 \, d {H}(t) }{(1 + t  \underline{m}_{c,H}^0(z) )^2 }   \Big]^{-2} \bigg | = o_{\P} (1) 
$$
\end{proof}

\bigskip

\subsubsection{Auxiliary results for the proof of Proposition \ref{revprop1}}

\begin{lemma}
    \label{lemrev0}
    $$
    \sup_{z \in \mathcal{C}_n}
\big | 
 \E^* \underline{m}_n^* (z) 
- \underline{\tilde m}_n^0 (z)  \big |   
= o_{\mathbb P}(1)
$$
\end{lemma}
\begin{proof} The proof follows by  
    the same arguments as given in the derivation of equation (4.1) in \cite{baisil2004}
(see also equation \eqref{eq: 5.1b}),  noting that 
$ \E^*\big [ {\mu}^{\Sigma_n^*} \big ]   \Rightarrow \mu_H^0$  in probability by dominated convergence convergence.
\end{proof}

\begin{lemma}
    \label{lemrev2}
    $$
     \sup_{z \in \mathcal{C}_n}   \| (\ \E^* \underline{m}_n^* (z) \tilde \Sigma_n   + I)^{-1}  \big \|_{S_\infty}
     = O_{\mathbb{P}} (1) $$
\end{lemma}
\begin{proof}
    A simple calculation shows   \begin{align*}
 \sup_{z \in \mathcal{C}_n}   \| (\ \E^* \underline{m}_n^* (z) \tilde \Sigma_n   + I)^{-1}  \big \|_{S_\infty} \leq  \Big [ 
 \inf_{z \in \mathcal{C}_n} \min_{\|x\|_2=1}
 \big \| 
 \big (\underline{m}_0 (z) \tilde \Sigma_n   + I) \big )x
 \big \|_2 - R_n
 \Big ]^{-1},
\end{align*}
where
$$
R_n =
\sup_{z \in \mathcal{C}_n}
\big | 
 \E^* \underline{m}_n^* (z) 
- \underline{\tilde m}_n^0 (z)  \big |  \cdot 
 \big \|  \tilde \Sigma_n    
\big \|_{S_\infty}
= o_{\mathbb P}(1), 
$$
where the last estimate follows by Lemma \ref{lemrev0}.
\end{proof}
\begin{lemma}\label{lemrev1}

\begin{align}
\label{r4a}
    \sup_{z  \in {\cal C}_n} |\tilde   \gamma_1^*(z) | & =o_{\mathbb{P}} (1) \\
    \sup_{z\in  {\cal C}_n} |\tilde   b _n^*(z) | & =O_{\mathbb{P}} (1)
    \label{r4b}
    \end{align}
\end{lemma}
\begin{proof} 
Note that $ \tilde   \gamma_1^*$  depends on $n$ and in this proof we highlight this dependence by the notation $\tilde   \gamma_{1,n}^*$. Let  be 
an open connected and bounded set $D$ such that ${\cal C} 
\cap \mathbb{C}^+ \subset D \subset \mathbb{C}^+$  and $\bar D \cap [K_{\rm left}, K_{\rm  right}]  =  \emptyset $.
Recalling the definition of  the set ${\cal A}_n$ in \eqref{det21}  we have  
\begin{align*}
 \sup_{z\in  D}  \mathds{1}_{{\cal A}_n} \big |  {r_1^*}^\top  D_1^*(z)^{-1} r_1^*\big |  \leq \mathds{1}_{{\cal A}_n} \| {r_1^*}\|^2  \sup_{z\in  D} \|  D_1^*(z)^{-1}\|_{{\cal S}_\infty } & = O_\P (1)  \\  
 \sup_{z\in D }  \Big | 
{1 \over m} {\rm tr} \big ( \E^*\big  [  \mathds{1}_{{\cal A}_n}   L_nL_n^\top D_1^*(z)^{-1}  \big ]
 \big)  \Big | & = O_\P (1) 
\end{align*}
Consequently, 
$$
\sup_{z  \in D} | \tilde \gamma_{1,n}^*(z) |  =O_{\mathbb{P}} (1).
$$
Therefore, for any $\delta >0 $, there exists a constant $K_\delta >0 $ such that 
\begin{align}
    \label{r8a}
\limsup_{n \to \infty }
\mathbb{P} \big (B_{n,\delta} \big ) \leq \delta~,
\end{align}
where
\begin{align}
    \label{r8}
B_{n,\delta} = \Big \{ \omega \in \Omega ~\Big |~ \sup_{z \in D }   | \tilde  \gamma_{1,n}^*(z) | >  K_\delta 
\Big \} 
\end{align}
 As shown in  \eqref{rev100}, we have  for each $z \in D$ 
\begin{align}
   \tilde  \gamma_{1,n}^*(z) = o_{\mathbb{P}}(1).
\end{align}
Let $z_1, z_2, \ldots $ denote a subsequence in $D$ with accumulation point in $D$ and let $n_k$ be an arbitrary subsequence. Then there exists a further subsequence  $n_k'$ such that    
$$
   \tilde   \gamma_{1,n_k'}^*(z_1) =o(1)
$$
on a set $N_1^c \subset \Omega$ with $\mathbb{P} (N_1) =0 $.  By the same reasoning there exists a further subsequencene  ${n_k''}$ of ${n_k'}$ such that 
$$
   \tilde   \gamma_{1,n_k''}^*(z_2) =o(1)
$$
on a set $N_2^c \subset \Omega$ with $\mathbb{P} (N_2) =0 $. By Cantor's diagonalization principle we obtain a sequence  denoted by $\tilde n_k$  such that
$$
   \tilde   \gamma_{1,\tilde n_k}^*(z_j) =o(1)
$$
for all $j \in \mathbb{N}$ and all $\omega$ outside the null set $N = \cup_{j \in \mathbb{N}} N_j \subset \Omega$. Moreover, the sequence of functions  $z \to     \tilde  \gamma^*_{1,\tilde n_k} (z) (\omega)  \mathds{1}_{B_{\tilde n_k,\delta}^c}(\omega)   $ is bounded for every $\omega \in N^c $.   By Vitali's Theorem  the sequence  $z \to     \tilde  \gamma^*_{1, \tilde n_k} (z) (\omega)  \mathds{1}_{B_{\tilde n_k,\delta}^c}(\omega)   $ converges uniformly  to $0$  on $D$ for  every $\omega \in N^c $.  As the initial sequence $n_k$ was arbitrary we conclude
\begin{align*}
    \lim_{n\to \infty } \mathbb{P} \Big ( \sup_{z \in D} \big | 
     \tilde  \gamma^*_{1, n} (z)  \mathds{1}_{B_{ n,\delta}^c} 
    \big | > \eta 
    \Big )  =0 
\end{align*}
for any $\eta >0$. Finally, we obtain from the definition of $B_{ n,\delta}$
\begin{align*}
    \limsup _{n\to \infty } \mathbb{P} \Big ( \sup_{z \in D} \big | 
    \tilde  \gamma^*_{1, n} (z)   
    \big | > \eta 
    \Big )  \leq
    \limsup_{n\to \infty } \mathbb{P} \Big ( \sup_{z \in D} \big | 
    \tilde   \gamma^*_{1, n} (z)  \mathds{1}_{B_{ n,\delta}} 
    \big | > {\eta  \over 2} 
    \Big ) 
     \leq \delta ~. 
\end{align*}
As $\delta >0 $ was arbitrary the assertion \eqref{r4a} follows.

To prove \eqref{r4b} we note that the statement is obvious for the functions $z \to \tilde  b _n^* (z) \mathds{1}_{{\cal A}_n^c}$. On the other hand  we obtain on ${\cal A}_n$ from \eqref{r4}
$$
b _n^* (z) = { \tilde  \beta_1^*(z) \over 
1-  \beta_1^*(z) \tilde  \gamma_{1,n}^* (z)}~.
$$
By \eqref{det62} $\beta_1^*$ is uniformly bounded on ${\cal A}_n$ and the assertion follows from \eqref{r4a}.
\end{proof}

\begin{lemma}
    \label{lemrev3}
\begin{align}
\sup_{z \in {\cal C}_n}  \E^*    \Big |
\E^*_{X_1^*} \Big [   {X_1^{\ast}}^\top C^*(z)    X_1^*
    -  
\operatorname{tr} \big \{  C^*(z)   
    \big \}
     \Big ] \Big |  
      = o_{\mathbb{P}} (1) 
     \label{r10}
     \end{align}
     where, for example
   \begin{align*} 
     M(z) & =  L_n^\top  D_1^*(z) ^{-1}  \big ( \E^* \underline{m}_{n}^* (z)  \tilde \Sigma_n 
    + I_q \big )^{-1}  L_n \\
     M(z)   &=  {  \tr \big (   D_1^*(z) ^{-1}  \big ( \E^* \underline{m}_{n}^* (z)  \tilde \Sigma_n 
    + I_q \big )^{-1}   \tilde \Sigma_n   \big )  \over m } L_n^\top   D_1^*(z) ^{-1}  L_n^\top    \\
      M(z)   &=  {  \tr \big (   D_1^*(z) ^{-1}   \tilde \Sigma_n   \big )  \over m }   L_n^\top  D_1^*(z) ^{-1}  \big ( \E^* \underline{m}_{n}^* (z)  \tilde \Sigma_n 
    + I_q \big )^{-1}  L_n 
     \end{align*}
\end{lemma}
\begin{proof}[Proof of Lemma \ref{lemrev3}] Let $\hat I =\{ i_1^*, \ldots , i_m^* \}$ denote the random subset of chosen indices by the bootstrap and note that $\# \hat I \leq m $, then
\begin{align}
\nonumber 
& 
  m  \E^*_{X_1^*}   \Big [ {r_1^{\ast}}^\top  C^*(z)  r_1^*
    - \frac{1}{m} 
\E^* \operatorname{tr} \big \{  C^*(z) \tilde \Sigma_n  
    \big \}
     \Big ] = {1 \over n} \sum_{i=1}^n 
     X_i^\top C^*(z) X_i - \E^* \operatorname{tr} \big \{  C^*(z) \tilde \Sigma_n  
    \big \} \\
    \label{r11}
   \quad   &= 
    {1 \over n} \sum_{i\in \hat I } \Big \{ \sum_{k=1}^{q'} (X_{ik}^2-1)C_{kk}^*(z) + \sum_{k\neq k'}^{q'} X_{ik}X_{ik'}C_{kk'}^*(z)
    \Big \} \\
    \quad  & + {1 \over n} \sum_{i\in \hat I^c } \Big \{ \sum_{k=1}^{q'} (X_{ik}^2-1)C_{kk}^*(z) + \sum_{k\neq k'}^{q'} X_{ik}X_{ik'}C_{kk'}^*(z)
    \Big \} 
    \nonumber 
\end{align}
We now investigate both terms separately:
\begin{align}
\nonumber
& 
\E \Big [ \sup_{z \in {\cal C}_n}  
\Big | 
   {1 \over n} \sum_{i\in \hat I } \Big \{ \sum_{k=1}^{q'} (X_{ik}^2-1)C_{kk}^*(z) + \sum_{k\neq k'}^{q'} X_{ik}X_{ik'}C_{kk'}^*(z)
    \Big \}   \Big |    \Big]  \\
    \nonumber
    & \quad \leq 
 \E \Big [ \E \Big [ \sup_{z \in {\cal C}_n}  \Big | 
   {1 \over n} \sum_{i\in \hat I } \Big \{ \sum_{k=1}^{q'} (X_{ik}^2-1)C_{kk}^*(z) + \sum_{k\neq k'}^{q'} X_{ik}X_{ik'}C_{kk'}^*(z)
    \Big \}  \Big |~ \big | \hat I   \Big ]    \Big]  \\ 
     & \quad \leq 
 \E \Big [ \E \Big [ \sup_{z \in {\cal C}_n}  \Big | 
   {1 \over n} \sum_{i\in \hat I } \Big \{ \sum_{k=1}^{q'} (X_{ik}^2-1)C_{kk}^*(z)  \Big | +  \sup_{z \in {\cal C}_n}  \Big | {1 \over n} \sum_{i\in \hat I } \sum_{k\neq k'}^{q'} X_{ik}X_{ik'}C_{kk'}^*(z)
    \Big \}  \Big |~ \big | \hat I   \Big ]    \Big] 
    \label{r11a}
\end{align}
For the first term we have
\begin{align}
\nonumber 
  &   \E \Big [  \E \Big [ \sup_{z \in {\cal C}_n}  \Big | 
  \sum_{k=1}^{q'}   {1 \over n}   \sum_{i\in \hat I }  (X_{ik}^2-1)C_{kk}^*(z) \Big | ~ \big | ~\hat I \Big ] \Big ]  \\
  \nonumber 
  & \leq 
  \E \Big [  \E \Big [  
 \Big \{ \sum_{k=1}^{q'}  \Big (  {1 \over n}   \sum_{i\in \hat I }  (X_{ik}^2-1) \Big )^2 \Big \} ^{1/2}
  \sup_{z \in {\cal C}_n}
  \Big \{ \sum_{k=1}^{q'}
| C_{kk}^*(z)|^2 \Big \} ^{1/2}
 \mathds{1}_{{\cal A}_n} \Big | ~ ~\hat I \Big ] \Big ] + o(1)  \\
 & \leq  C \sqrt{m} \Big \{ 
 \E \Big [  
\sum_{k=1}^{q'}  \Big (  {1 \over n}   \sum_{i\in \hat I }  (X_{ik}^2-1) \Big )^2
 \Big \}^{1/2}  \leq C {m^{3/2} \over n } + o(1)  = o(1) 
 \label{r12}
\end{align}
For the second term we obtain similarly
\begin{align*}
  &  \E \Big [ \E \Big [  \sup_{z \in {\cal C}_n}  \Big | {1 \over n} \sum_{k\neq k'} \sum_{i\in \hat I } ^{q'} X_{ik}X_{ik'}C_{kk'}^*(z)
    \Big \}  \Big |~ \big | \hat I   \Big ]    \Big]  \\
    & \quad   \leq  
    \E \Big [ \E \Big [    \Big \{ \sum_{k\neq k'} \Big ( {1 \over n} \sum_{i\in \hat I } ^{q'} X_{ik}X_{ik'} \Big )^2 \Big \} ^{1/2}  \sup_{z \in {\cal C}_n}  
    \Big \{ \sum_{k\neq k'}  | C_{kk'}^*(z)| ^2 \Big \} ^{1/2}\mathds{1}_{{\cal A}_n} 
   ~ \Big |  \hat I   \Big ]    \Big]  + o(1)  \\
   & \quad \leq C \sqrt{m} \E  \Big [  \Big \{ \E \Big [  \sum_{k\neq k'}
   {1 \over n^2} \sum_{i,j \in \hat I} X_{ik}X_{ik'}X_{jk}X_{jk'}
   \Big ] \Big \}^{1/2} ~\Big | ~\hat I \Big ]  \Big ]  + o(1) \\
   & \quad \leq C {m^2 \over n} + o(1) = o(1)~,  
\end{align*}
which proves that the first term in \eqref{r11} is of order $o(1)$. To derive a corresponding statement for the second term, we note that similar aguments as given in \eqref{r12} show that
\begin{align} 
  &   \E \Big [  \E \Big [ \sup_{z \in {\cal C}_n}  \Big | 
  \sum_{k=1}^{q'}   {1 \over n}   \sum_{i\in \hat I^c }  (X_{ik}^2-1)C_{kk}^*(z) \Big | ~ \big | ~\hat I \Big ] \Big ]  \leq C {m \over \sqrt{n} } + o(1)  = o(1) 
 \label{r13}
\end{align}
For the remaining term we use a chaining argument. For this purpose  we introduce  the notation $\Delta_{kk'} (z_1,z_2) = C_{kk'}(z_1) - C_{kk'}(z_2) $, defined $\Delta (z_1,z_2)$ as the corresponding matrix with these entries  and note that conditional on $\hat I$ the random variables 
$\mathds{1}_{{\cal A}_n}$ and 
$\Delta_{kk'} (z_1,z_2)$ is independent of $X_i$, whenever $i \in \hat I^c $. This yields
\begin{align*} 
&  \E \Big \{  \E \Big [ \mathds{1}_{{\cal A}_n}
\Big | 
 {1 \over n} \sum_{i \in \hat I^c} \sum_{k \neq k'}  X_{ik}  X_{ik'} \Delta_{kk'} (z_1,z_2)
\Big |^2  ~\Big | ~\hat I \Big ]  \Big \}  \\
& \quad =
 {1 \over n^2} \sum_{i \in \hat I^c}
  \sum_{k_1 \neq k_1'}  
   \sum_{k_2 \neq k_2'} 
  \E \Big \{  \E [X_{ik_1}  X_{ik_1'} X_{ik_2}  X_{ik_2'} ~ | ~\hat I ] \cdot \E [ \mathds{1}_{{\cal A}_n}\Delta_{k_1k_1'} (z_1,z_2) \bar \Delta_{k_2k_1'} (z_1,z_2)  ~ | ~\hat I
  ]  \Big \} \\
  & \quad \leq {2 \over n} 
 \E \big \{  \E \big [ \mathds{1}_{{\cal A}_n}
  \| \Delta (z_1,z_2) \|^2_{S_2} | \hat I \big ] \big \}  
  \leq 2 { m \over n} |z_1 -z_2|^2
\end{align*}
Now note that  we have 
\begin{align*}
    \Delta (z_1,z_2) & = C^*(z_1) -  C^*(z_2) \\
    & = (M(z_1) - M(z_2))D_1^*(z_1)^{-1} + M(z_2)  ( D_1^*(z_1)^{-1}  - D_1^*(z_2)^{-1}  ) 
\end{align*}
Therefore we obtain 
\begin{align*}
   \mathds{1}_{{\cal A}_n} \| \Delta (z_1,z_2) \|_{S_2} &  \leq  \sqrt{m}   \mathds{1}_{{\cal A}_n}  \| \Delta (z_1,z_2) \|_{S_\infty } \\
   & \leq C \sqrt{m}  \big \{ \| (M(z_1) - M(z_2) \|_{S_\infty }
   +  \mathds{1}_{{\cal A}_n} \| D_1^*(z_1)^{-1}  - D_1^*(z_2)^{-1}  \|_{S_\infty }
   \big \} 
   \\
    & \leq \sqrt{m}  C |z_1-z_2| ,
\end{align*}
where we have used the fact that  the function $z \to  \E^* \underline{m}^* (z) $ is analytic,   (4.3) c.f. in \cite{baisil2004}. This finally yields
\begin{align*} 
&    \E \Big [ \mathds{1}_{{\cal A}_n}
\Big | 
 {1 \over n} \sum_{i \in \hat I^c} \sum_{k \neq k'}  X_{ik}  X_{ik'} \Delta_{kk'} (z_1,z_2)
\Big |^2  \Big ]  
  \leq C { m \over n} |z_1 -z_2|^2~.
\end{align*}
Employing standard  chaining with the  Young-Orlicz  module $\phi (t)=t^2$ reveals the chaining bound 
$$
\int_{0}^{\ell \sqrt{m\over n}} \sqrt{\sqrt{m} \over \varepsilon \sqrt{n} } d \leq C \sqrt{ m \over n} 
$$
where $\ell$ denotes the length of the curve ${\cal C} $. Hence, it follows that
$$
\E \Big [ \mathds{1}_{{\cal A}_n} \sup_{z \in {\cal C}_n}  \Big | {1 \over n} \sum_{k\neq k'} \sum_{i\in \hat I^c } ^{q'} X_{ik}X_{ik'}C_{kk'}^*(z)
    \Big \}    \Big ]    \Big]  = O \Big ( \sqrt{m \over n} \Big ) .
$$
The term corresponding to 
$\mathds{1}_{{\cal A}_n}$ can be treated by the same arguments as given for the second term in the last line of \eqref{r11a}, which completes the proof.
\end{proof}

\bibliographystyleSM{apalike}
 \setlength{\bibsep}{2pt}
\bibliographySM{references}

\end{document}